\documentclass{article}
\usepackage[utf8]{inputenc}
\usepackage{amssymb,amsmath, amsthm, mathabx,mathtools}
\usepackage{amsmath,amsfonts,amssymb,amsthm,epsfig,epstopdf,titling,url,array}
\usepackage{tikz}
\usetikzlibrary{shapes,snakes}
\usepackage{color, enumerate}
\usepackage{comment, authblk}
\usepackage{hyperref}
\usepackage{pgfplots}
\usepackage{bm}
\usepackage{stmaryrd}
\usepackage{graphicx}
\usepackage{caption}
\usepackage{subcaption}
\usepackage{a4wide}
\usepackage{titlesec}
\usepackage[a4paper, total={6in, 8in}]{geometry}
\usepackage{epstopdf}


\usepackage{sectsty}
\usepackage{lipsum}
\usepackage{algorithm}
\usepackage[noend]{algpseudocode}
\usepackage{algpseudocode}
\usepackage{pifont}

\usepackage{showlabels}

\titleformat*{\subsection}{\large\bfseries}
\numberwithin{equation}{section}

\pgfplotsset{compat=newest}
\pgfplotsset{plot coordinates/math parser=false}
\newlength\figureheight
\newlength\figurewidth




\usepackage{enumerate}
\usepackage{mathrsfs}
\usepackage{wrapfig}

\flushbottom

\usepackage{color}

\topmargin=-0.15in
\oddsidemargin=0in
\evensidemargin=0in
\textwidth=6.5in

\numberwithin{equation}{section}

\newcommand{\wO}{\widetilde{\mathcal{O}}}

\newcommand{\beq}{\begin{equation}}
\newcommand{\bEq}{\end{equation}}

\newcommand{\bx}{{\bf{x}}}
\newcommand{\by}{{\bf{y}}}

\newcommand{\bz} {{\bf {z}}}

\newcommand{\al}{\alpha}

\newcommand{\be}{\begin{equation}}
\newcommand{\ee}{\end{equation}}

\newcommand{\e}{{\varepsilon}}

\newcommand{\fa}{{\mathfrak a}}
\newcommand{\fb}{{\mathfrak b}}
\newcommand{\fc}{{\mathfrak c}}

\newcommand{\bU}{ {\bf  U}}
\newcommand{\bV}{ {\bf  V}}
\newcommand{\bW}{ {\bf  W}}

\usepackage{amsmath} 
\usepackage{amssymb}
\usepackage{amsthm}

\setlength{\unitlength}{1cm}

\renewcommand{\b}[1]{\bm{\mathrm{#1}}} 

\renewcommand{\cal}{\mathcal}

\newcommand{\wh}{\widehat}
\newcommand{\wt}{\widetilde}

\newcommand{\ii}{\mathrm{i}} 
\newcommand{\dd}{\mathrm{d}}


\renewcommand{\epsilon}{\varepsilon}
\renewcommand{\leq}{\leqslant}
\renewcommand{\geq}{\geqslant}



\renewcommand{\le}{\leq}
\renewcommand{\ge}{\geq}



\newcommand{\E}{\mathbb{E}}
\newcommand{\R}{\mathbb{R}}
\newcommand{\C}{\mathbb{C}}
\newcommand{\N}{\mathbb{N}}
\newcommand{\Z}{\mathbb{Z}}

\newcommand{\IE}{\mathbb{I} \mathbb{E}}


\DeclareMathOperator{\diag}{diag}
\DeclareMathOperator{\tr}{Tr}

\DeclareMathOperator{\re}{Re}
\DeclareMathOperator{\im}{Im}

\DeclareMathOperator{\OO}{O}
\DeclareMathOperator{\oo}{o}

\DeclareMathOperator{\bO}{{\mathbf O}}

\DeclareMathOperator{\bv}{\mathbf{v}}
\DeclareMathOperator{\bu}{\mathbf{u}}
\DeclareMathOperator{\bw}{\mathbf{w}}

\DeclareMathOperator{\bbE}{\mathbb{E}}
\DeclareMathOperator{\bbN}{\mathbb{N}}
\DeclareMathOperator{\bbP}{\mathbb{P}}

\DeclareMathOperator{\sI}{\mathcal{I}}

\theoremstyle{plain} 
\newtheorem{theorem}{Theorem}[section]
\newtheorem*{theorem*}{Theorem}
\newtheorem{lemma}[theorem]{Lemma}
\newtheorem{assumption}[theorem]{Assumption}
\newtheorem*{lemma*}{Lemma}

\newtheorem*{corollary*}{Corollary}
\newtheorem{proposition}[theorem]{Proposition}
\newtheorem*{proposition*}{Proposition}
\newtheorem{claim}[theorem]{Claim}

\newtheorem{definition}[theorem]{Definition}
\newtheorem*{definition*}{Definition}
\theoremstyle{remark}

\newtheorem*{example*}{Example}

\newtheorem*{remark*}{Remark}
\newtheorem*{remarks*}{Remarks}

\setcounter{tocdepth}{1}

\renewcommand{\Im}{{\rm{Im}}}

\setcounter{secnumdepth}{5}

\makeatletter
\def\env@dmatrix{\hskip -\arraycolsep
	\let\@ifnextchar\new@ifnextchar
	\extrarowheight=2ex
	\array{*\c@MaxMatrixCols{>{\displaystyle}c}}}

\makeatother

\allowdisplaybreaks

\title{Limiting distribution of the sample canonical correlation coefficients of high-dimensional random vectors} 
\author[1]{Fan Yang  \thanks{E-mail: fyang75@wharton.upenn.edu. This work was partially supported by the Wharton Dean’s Fund for Postdoctoral Research.}}
\affil[1]{Department of Statistics and Data Science, University of Pennsylvania}

\begin{document}
\maketitle

\begin{abstract} 
In this paper, we prove a CLT for the sample canonical correlation coefficients between two high-dimensional random vectors with finite rank correlations. More precisely, consider two random vectors $\wt{\bx}=\mathbf x  + A \mathbf z  $ and $\wt{\by}=\mathbf y + B \mathbf z $, where $\mathbf x \in \R^p$, $\mathbf y \in \R^q$ and $\mathbf z\in \R^r$ are independent random vectors with i.i.d.\;entries of mean zero and variance one, and $A \in \R^{p\times r}$ and $B\in \R^{q\times r}$ are two arbitrary deterministic matrices. Given $n$ samples of $\wt{\bx}$ and $\wt{\by}$, we stack them into two matrices $\cal X= X+AZ$ and $\cal Y= Y+BZ$, where $X\in \R^{p\times n}$, $Y\in \R^{q\times n}$ and $Z\in \R^{r\times n}$ are random matrices with i.i.d.\;entries of mean zero and variance one. Let $\wt\lambda_1 \ge \wt\lambda_2\ge \cdots \ge \wt\lambda_{r}$ be the largest $r$ eigenvalues of the sample canonical correlation (SCC) matrix $\cal C_{\cal X\cal Y}=(\cal X\cal X^\top)^{-1/2}\cal X\cal Y^\top (\cal Y\cal Y^\top)^{-1}\cal Y \cal X^\top (\cal X\cal X^\top)^{-1/2}$, and let $t_1\ge t_2 \ge \cdots\ge t_r$ be the squares of the population canonical correlation coefficients between $\wt{\bx}$ and $\wt{\by}$. Under certain moment assumptions, we show that there exists a threshold $t_c \in(0, 1)$ such that if $t_i>t_c$, then \smash{$\sqrt{n} (\wt\lambda_i-\theta_i)$} converges weakly to a centered normal distribution, where $\theta_i $ is a fixed outlier location determined by $t_i$. Our proof uses a self-adjoint linearization of the SCC matrix and a sharp local law on the inverse of the linearized matrix.
\end{abstract}


\section{Introduction}\label{sec_intro}

Given two random vectors $\wt{\mathbf x}\in \R^p$ and $\wt{\mathbf y}\in \R^q$, canonical correlation analysis (CCA) has been one of the most classical methods to study the correlations between them since the seminal work by Hotelling \cite{Hotelling}. More precisely, CCA seeks two sequences of orthonormal vectors, such that the projections of $\wt{\mathbf x}$ and $\wt{\by}$ onto these vectors have maximized correlations. These correlations are referred to as {\it canonical correlation coefficients} (CCCs), 
which can be characterized as the square roots of the eigenvalues of the population canonical correlation (PCC) matrix 
\be\nonumber
\wt{\bm\Sigma}:=\Sigma_{xx}^{-1/2}\Sigma_{xy}\Sigma_{yy}^{-1}\Sigma_{yx}\Sigma_{xx}^{-1/2},
\ee  
where $\Sigma_{xx}$, $\Sigma_{yy}$, $\Sigma_{xy}$ and $\Sigma_{yx}$ are the population covariance and cross-covariance matrices defined by
$$\Sigma_{xx}:= \mathbb E (\wt{\bx}\wt{\bx}^\top )-(\E\wt{\mathbf x})(\E\wt{\mathbf x})^\top,\quad \Sigma_{yy}:=\E (\wt{\by}\wt{\by}^\top )-(\E\wt{\mathbf y})(\E\wt{\mathbf y})^\top,$$
$$ \Sigma_{xy}=\Sigma_{yx}^\top:=\E (\wt{\bx}\wt{\by}^\top )-(\E\wt{\mathbf x})(\E\wt{\mathbf y})^\top.$$ 
In this paper, we consider the following standard signal-plus-noise model for $\wt{\bx}$ and $\wt{\by}$:
\be\label{data_model}
\wt{ \mathbf x} =\mathbf x + A \mathbf z   , \quad \wt \by=\mathbf y+ B \mathbf z ,
\ee
where $ \mathbf x\in \R^p$ and $\by\in \R^q$ are two independent noise vectors with i.i.d.\;entries of mean zero and variance one,  $\bz\in \R^r$ is a shared signal vector with i.i.d.\;entries of mean zero and variance one (which yields a rank-$r$ correlation), and $A \in \R^{p\times r}$ and $B \in \R^{q\times r}$ are two arbitrary deterministic matrices. Under the model \eqref{data_model}, the PCC matrix is given by a rank-$r$ matrix
\be\label{def_PCC} \wt{\bm\Sigma}=(I_p + AA^\top)^{-1/2}AB^\top (I_p + BB^\top)^{-1}BA^\top(I_p + AA^\top)^{-1/2} ,\ee
and we denote the $r$ non-trivial eigenvalues of $\wt{\bm\Sigma}$ as $t_1\ge t_2\ge \cdots \ge t_r \ge 0$.

We can study $\wt{\bm\Sigma}$ and the population CCCs via their sample counterparts, i.e., the {\it sample canonical correlation} (SCC) matrix and the sample CCCs. More precisely, let $(\wt{\bx}_i,\wt{\by}_i)$, $1\le i \le n$, be $n$ i.i.d.\;samples of $( \wt{\mathbf x}, \wt{\mathbf y})$.
We stack them (as column vectors) into two matrices 
\be\label{assm data}
{\cal X}:= n^{-1/2}\begin{pmatrix}\wt{\bx}_1,\wt\bx_2, \cdots ,\wt\bx_n\end{pmatrix}=X+ AZ,\quad {\cal Y}:= n^{-1/2}\begin{pmatrix}\wt{\by}_1,\wt\by_2, \cdots ,\wt\by_n\end{pmatrix}=Y+ B Z ,\ee
where $n^{-1/2}$ is a convenient scaling, with which we can write the sample covariance and cross-covariance matrices concisely as
$$\wt S_{xx}:={\cal X}{\cal X}^\top  , \quad \wt S_{yy}:={\cal Y}{\cal Y}^\top , \quad \wt S_{xy}=\wt S_{yx}^\top :={\cal X}{\cal Y}^\top  ,$$ 
and $X$, $Y$ and $Z$ are respectively $p\times n$, $q\times n$ and $r\times n$ matrices with i.i.d. entries of mean zero and variance $n^{-1}$. 
Then, we define the SCC matrix as
$$\cal C_{\cal X\cal Y}:=\wt S_{xx}^{-1/2}\wt S_{xy}\wt S_{yy}^{-1}\wt S_{yx}\wt S_{xx}^{-1/2}$$
and denote their eigenvalues by
$\smash{\wt\lambda_1 \ge \wt\lambda_2\ge \cdots \ge \wt\lambda_{p\wedge q}\ge 0}.$ 
The square roots of these eigenvalues are referred to as \emph{sample canonical correlation coefficients}. Equivalently, the sample CCCs are the cosines of the principal angles between the two subspaces spanned by the rows of $\cal X$ and $\cal Y$, respectively. If $n\to \infty$ while $p,$ $q$ and $r$ are fixed, it is easy to see that the SCC matrix converges to the PCC matrix almost surely by the law of large numbers, and hence every sample CCC converges almost surely to the corresponding population CCC. 
On the other hand, in this paper, we focus on the high-dimensional setting with a low-rank signal: ${p}/{n}\to c_1$ and ${q}/{n}\to c_2$ as $n\to \infty$ for some constants $c_1\in (0,1)$ and $c_2 \in (0, 1-c_1)$, and $r$ is a fixed integer that does not depend on $n$. In this case, the behavior of the SCC matrix deviates greatly from that of the PCC matrix.

\vspace{5pt}
\noindent{\bf Related work.} In the null case with $r=0$, the eigenvalue statistics of the SCC matrix have been well-understood. If $\cal X$ and $\cal Y$ are Gaussian matrices, then the eigenvalues of $\cal C_{\cal X\cal Y}$ reduce to those of a double Wishart matrix, which belongs to the famous Jacobi ensemble \cite{CCA_TW}. It was shown in \cite{Wachter} that, almost surely, the empirical spectral distribution (ESD) of the double Wishart matrix converges weakly to a deterministic probability distribution (cf. \eqref{LSD} below). By analyzing the joint eigenvalue density of the Jacobi ensemble, Johnstone \cite{CCA_TW} proved that the largest eigenvalues of double Wishart matrices satisfy the Tracy-Widom law asymptotically. Alternatively, the Tracy-Widom law of double Wishart matrices can also be obtained as a consequence of the results in \cite{CCA_TW2} for F-type matrices. In the general non-Gaussian case, the convergence of the ESD of $\cal C_{\cal X\cal Y}$ was proved in \cite{CCA_ESD},  the CLT of the linear spectral statistics for $\cal C_{\cal X\cal Y}$ was proved in \cite{CCA_CLT}, and  the Tracy-Widom law of the largest eigenvalue of $\cal C_{\cal X\cal Y}$ was proved in \cite{CCA2} under the assumption that the entries of $\wt\bx$ and $\wt\by$ have finite moments up to any order. The moment assumption for the Tracy-Widom law was later relaxed to the finite fourth moment condition in \cite{PartIII}. 

Some arguments in the literature for the null case are based on the fact that the subspaces spanned by the rows of $\cal X$ and $\cal Y$ are approximately uniformly (Haar) distributed random subspaces, which, however, does not hold for the non-null case with $r> 0$. This makes the study of the non-null case more challenging. Assuming that $\cal X$ and $\cal Y$ are both Gaussian matrices, the asymptotic behaviors of the likelihood ratio processes of CCA under the null hypothesis of no spikes (i.e., $r=0$) and the alternative hypothesis of a single spike (i.e., $r=1$) were studied in \cite{johnstone2020}. If either $p$ or $q$ is fixed as $n\to \infty$, the asymptotic distributions of the sample CCCs were derived in \cite{Fujikoshi2017} under the Gaussian assumption. On the other hand, if $p$ and $q$ are both proportional to $n$, the limiting distributions of the sample CCCs have been established under the Gaussian assumption in \cite{CCA}, which we discuss in more detail now.

\vspace{5pt}
\noindent{\bf BBP transition.} Suppose $X$, $Y$ and $Z$ are independent random matrices with i.i.d.\;Gaussian entries. Bao et al. \cite{CCA}  proved that for any $1\le i \le r$, the behavior of $\wt\lambda_i$ undergoes a sharp transition across the threshold $t_c$ defined by
\be\label{tc}t_c :=  \sqrt{\frac{c_1 c_2}{(1-c_1)(1-c_2)}}.\ee
More precisely, the following dichotomy occurs: 
\begin{itemize}
	\item[(1)] if $t_i < t_c$, then $\wt\lambda_i$ sticks to the right edge $\lambda_+$ (cf. \eqref{lambdapm} below) of the limiting bulk eigenvalue spectrum of the SCC matrix, and $n^{2/3}( \wt\lambda_i - \lambda_+)$ converges weakly to the Tracy-Widom law; 
	
	\item[(2)] if $t_i>t_c$, then $\wt\lambda_i$ lies around a fixed location $\theta_i \in (\lambda_+,1)$ (cf. \eqref{gammai} below), and $n^{1/2}(\wt\lambda_i - \theta_i)$ converges weakly to a centered normal random variable.
\end{itemize}
Following the notation in random matrix theory literature, we call $\wt\lambda_i$ in case (2) an \emph{outlier}. The above abrupt change of the behavior of \smash{$\wt\lambda_i$} when $t_i$ crosses $t_c$ is generally referred to as a \emph{BBP transition}, which dates back to the seminal work of Baik, Ben Arous and P\'{e}ch\'{e} \cite{BBP} on spiked sample covariance matrices. The phenomenon of BBP transition has been observed in many random matrix ensembles deformed by low-rank perturbations. Without attempting to be comprehensive, we refer the reader to \cite{capitaine2012,capitaine2009,FP_2007,KY,KY_AOP,Peche_2006} about deformed Wigner matrices,  \cite{bai2008_spike, BBP, Baik2006, principal,FP_2009, IJ2,DP_spike} about spiked sample covariance matrices, \cite{DY2, yang2018,YLDW2020} about spiked separable covariance matrices, and \cite{belinschi2017,benaych-georges2011,BENAYCHGEORGES2011,DY_JMVA,DY_IEEE,wang2017,ZhangPan2020} about several other types of deformed random matrix ensembles. The SCC matrix $\cal C_{{\cal X}{\cal Y}}$ considered in this paper can be regarded as a low-rank perturbation of the SCC matrix in the null case with $r=0$.

\vspace{5pt}
\noindent{\bf Main results and basic ideas.} A natural question is whether the above BBP transition holds universally if we only assume certain moment conditions on the entries of $X$, $Y$ and $Z$. Answering this question is not only theoretically interesting from the point of view of random matrix theory, but also crucial for modern applications of CCA in e.g., statistical learning, wireless communications, financial economics and population genetics. In this paper, we solve this problem and prove that the BBP transition occurs as long as the entries of $X$ and $Y$ satisfy the bounded $(8+\e)$-th moment condition (with $\e$ denoting an arbitrarily small positive constant). More precisely, we obtain the following results when $t_i>t_c$. 
\begin{itemize}
	\item[(i)] In Theorem \ref{main_thm1}, assuming that the entries of $X$, $Y$ and $Z$ have bounded moments up to any order, we prove that \smash{$n^{1/2}(\wt\lambda_i - \theta_i)$} converges weakly to a centered normal random variable. 
	
	\item[(ii)] In Theorem \ref{main_thm2}, we prove the CLT for \smash{$\wt\lambda_i$} under a relaxed bounded $(8+\e)$-th moment condition on the entries of $X,Y$ and a bounded $(4+\e)$-th moment condition on the entries of $Z$. 
\end{itemize}
On the other hand, when $t_i<t_c$, the Tracy-Widom law of $n^{2/3}( \wt\lambda_i - \lambda_+)$ was proved in \cite{PartI}. For the reader's convenience, we will state it in Theorem \ref{main_thm3}.

The proof in \cite{CCA} depends crucially on the fact that multivariate Gaussian distributions are rotationally invariant under orthogonal transforms, which makes it hard to be extended to the non-Gaussian case. To circumvent this issue, we employ an entirely different approach---a linearization method developed in \cite{PartIII}. More precisely, we define a $(p+q+2n)\times (p+q+2n)$ random matrix $H$ that is linear in $X$ and $Y$ (cf.\;equation  \eqref{linearize_block} below) and call its inverse $G:=H^{-1}$ as \emph{resolvent}. We found that the eigenvalues of the SCC matrix $\cal C_{{\cal X}{\cal Y}}$ are precisely the solutions to a determinant equation in terms of a linear functional of $G$ (cf.\;equation \eqref{detereq temp2} below). Moreover, an (almost) optimal local law for this linear functional was obtained in \cite{PartIII}. 
In \cite{PartI}, we obtained a large deviation estimate on the outlier sample CCCs: if $t_i>t_c$, then \smash{$\wt\lambda_i$} converges to $\theta_i$ with convergence rate $\OO(n^{-1/2+\e})$ (which is slightly larger than the correct order of fluctuation $n^{-1/2}$). 
With the local law and the large deviation estimate as main inputs, we can reduce the problem to proving the CLT for a (different) linear functional of $G$, denoted by $\cal E(X,Y,Z)$ (cf.\;Section \ref{sec mainthm}). 

The main technical part of our proof is to show that $\cal E(X,Y,Z)$ converges weakly to a centered Gaussian random variable. Our basic idea is to use the classical moment method, that is, showing that the moments of $\cal E(X,Y,Z)$ match those of a Gaussian random variable asymptotically. One method to calculate the moments of $\cal E(X,Y,Z)$ is to use the simple identity $1=HG$ and apply a cumulant expansion formula (cf.\;Lemma \ref{lemma_add_cumu} below) to the resulting expression. However, the calculation for this strategy will be rather tedious. Instead, we adopt a strategy in \cite{KY,KY_AOP}, that is, we first prove the CLT in an ``almost Gaussian" case (i.e., a case where most of the entries of $X$ and $Y$ are Gaussian), and then show that the general case is sufficiently close to the almost Gaussian case. This strategy allows us to divide the lengthy calculation into several parts that are more manageable. In particular, the resolvent expansion formula can be replaced by a simpler Gaussian integration by parts formula. {We refer the reader to Section \ref{sec_overview} for a more detailed review of our proof.}

Finally, we remark that the limiting variance of $n^{1/2}(\wt\lambda_i - \theta_i)$ depends on the fourth cumulants of the entries of $X$, $Y$ and $Z$ in an intricate way, which has not been identified in the Gaussian case. We also perform simulations to verify this deviation from the CLT result in \cite{CCA} (cf.\;Figure \ref{fig_simul_gauss}). 


\vspace{5pt}

\noindent{\bf Organizations.} The rest of this paper is organized as follows. In Section \ref{main_result}, we define the model and state the main results, Theorem \ref{main_thm1} and Theorem \ref{main_thm2}, on the limiting distributions of the outlier sample CCCs. {In Section \ref{sec_overview}, we introduce the linearization method, define the resolvent, and give a brief overview of the proof strategy for Theorem \ref{main_thm1} and Theorem \ref{main_thm2}. The proof of Theorem \ref{main_thm1} will be given in Sections \ref{sec pfstart}--\ref{secpfmain1}. In Section \ref{sec pfstart}, we use the linearization method to reduce the problem to showing a CLT for a linear functional of the resolvent. In Section \ref{sec almostGauss}, we establish the CLT of the outlier sample CCCs in an almost Gaussian case, where most of the entries of $X$ and $Y$ are Gaussian. Section \ref{sec Gauss} contains the proof of Lemma \ref{Gauss lemma}, which is a key lemma for the proof in Section \ref{sec almostGauss}, while Section \ref{appd GJG} gives the proof of Theorem \ref{lemmaGHF}, which is used in the proof of Lemma \ref{Gauss lemma}.} In Section \ref{secpfmain1}, we complete the proof of Theorem \ref{main_thm1} by showing that the general setting of Theorem \ref{main_thm1} is close to the almost Gaussian case asymptotically. Finally, utilizing Theorem \ref{main_thm1} and a comparison argument, we complete the proof of Theorem \ref{main_thm2} in Section \ref{pf thm2}. 

\vspace{5pt}

\noindent{\bf Conventions.} 
For two quantities $a_n$ and $b_n$ depending on $n$, the notation $a_n = \OO(b_n)$ means that $|a_n| \le C|b_n|$ for some constant $C>0$, and $a_n=\oo(b_n)$ means that $|a_n| \le c_n |b_n|$ for a positive sequence $c_n\downarrow 0$ as $n\to \infty$. We use the notation $a_n \lesssim b_n$ if $a_n = \OO(b_n)$ and the notation $a_n \sim b_n$ if $a_n = \OO(b_n)$ and $b_n = \OO(a_n)$. Given a matrix $A$, we use $\|A\|:=\|A\|_{l^2 \to l^2}$ to denote the operator norm,  $\|A\|_F $ to denote the Frobenius norm, and $\|A\|_{\max}:=\max_{i,j}|A_{ij}|$ to denote the maximum norm. Given a vector $\mathbf v=(v_i)_{i=1}^n$, $\|\mathbf v\|\equiv \|\mathbf v\|_2$ stands for the Euclidean norm. In this paper, we often write an identity matrix as $I$ or $1$ without causing any confusion. 

\vspace{5pt}

\noindent{\bf Acknowledgements.} I want to thank Zongming Ma for bringing this problem to my attention and for valuable suggestions. I also want to thank Edgar Dobriban, David Hong and Yue Sheng for fruitful discussions. I am grateful to the editor, the associated editor  and an anonymous referee for their helpful comments, which have resulted in a significant improvement. 

\section{The model and main results}\label{main_result}

\subsection{The model}\label{sec_the_model}

In this paper, we consider the model \eqref{assm data}. Here $X$ and $Y$ are two independent real matrices of dimensions $p\times n$ and $q\times n$, respectively, where the entries $X_{ij}$, $1\le i \le p$, $1\le j \le n$, and $Y_{ij}$, $1\le i \le q$, $1\le j \le n$, are i.i.d. random variables satisfying that
\begin{equation}\label{assm1}
	\mathbb{E} X_{11}=\mathbb{E} Y_{11} =0, \ \quad \ \mathbb{E} \vert X_{11} \vert^2=\mathbb{E} \vert Y_{11} \vert^2  =n^{-1}.
\end{equation}
$Z$ is an $r\times n$ random matrix that is independent of $X,Y$ and has i.i.d. entries $Z_{ij}$, $1 \le i\le r$, $1\le j \le n$, satisfying that
\begin{equation}\label{assmZ}
	\mathbb{E} Z_{11} =0, \ \quad \ \mathbb{E} \vert Z_{11} \vert^2 =n^{-1}.
\end{equation}
$A$ and $B$ are $p\times r$ and $q\times r$ deterministic matrices 
with singular value decompositions (SVD) 
\be\label{assm AB}
A = \bU_a \Sigma_a \bV_a^\top = \sum_{i=1}^r a_i \bu_i^a (\bv_i^a)^{\top} , \quad B=\bU_b \Sigma_b \bV_b^\top= \sum_{i=1}^r b_i \bu_i^b (\bv_i^b)^{\top},
\ee
where $\{a_i\}$ and $\{b_i\}$ are the singular values, $\{\bu_i^a\}$ and $\{\bu_i^b\}$ are the left singular vectors, $\{\bv_i^a\}$ and $\{\bv_i^b\}$ are the right singular vectors, and we have used the matrix notations 
\be\label{sigmaab}
\Sigma_a:=\diag \left( a_1, \cdots, a_r\right), \quad \Sigma_b:=\diag \left(b_1, \cdots, b_r\right), 
\ee
\be\label{OabUab} 
\bU_a:=\begin{pmatrix}\mathbf u_1^a, \cdots, \mathbf u_r^a \end{pmatrix}, \   \bV_a: = \begin{pmatrix} \bv_1^a , \cdots ,  \bv_r^a \end{pmatrix},\   \bU_b:= \begin{pmatrix}\mathbf u_1^b, \cdots, \mathbf u_r^b \end{pmatrix}, \   \bV_b: = \begin{pmatrix} \bv_1^b , \cdots ,  \bv_r^b \end{pmatrix} .
\ee
Recall that the PCC matrix $\wt{\bm\Sigma}$ is given by \eqref{def_PCC}. We assume that for some constant $C>0$,
\be\label{assm evalue}
0\le a_r \le \cdots \le a_2\le a_1 \le C, \quad 0\le b_r\le \cdots \le b_2\le b_1\le C.
\ee
In this paper, we focus on the high-dimensional setting, that is, there exist constants $\wt c_1$ and $\wt c_2$ such that as $n\to \infty$,
\begin{equation}
	c_1(n) := \frac{p}{n} \to \wt c_1  , \quad  c_2(n) := \frac{q}{n} \to  \wt c_2  , \quad \text{with} \quad  \wt c_1  +   \wt c_2 \in (0,1).  \label{assm2}
\end{equation}
For simplicity of notations, we will always abbreviate $c_1(n)\equiv c_1$ and $c_2(n)\equiv c_2$ in this paper. Without loss of generality, we assume that $c_1\ge c_2$. We now summarize the above assumptions for future reference. We will also assume a high moment condition on the entries of $X$, $Y$ and $Z$. 


\begin{assumption}\label{main_assm}
	Fix a small constant $\tau>0$ and a large constant $C>0$.
	\begin{itemize}
		\item[(i)] $X=(X_{ij})$ and $Y=(Y_{ij})$ are independent $p\times n$ and $q\times n$ random matrices, whose entries are real i.i.d. random variables satisfying \eqref{assm1} and the following high moment condition: for any fixed $k\in \mathbb N$, there is a constant $\mu_k>0$ such that
		\begin{equation}\label{eq_highmoment} 
			\left(\mathbb E|X_{11}|^k\right)^{1/k} \le \mu_k n^{-1/2},\quad   \left(\mathbb E|Y_{11}|^k\right)^{1/k} \le \mu_k n^{-1/2}. 
		\end{equation}
		
		
		\item[(ii)] $Z=(Z_{ij})$ is an $r\times n$ random matrix independent of $X$ and $Y$, and its entries are real i.i.d. random variables satisfying \eqref{assmZ} and \eqref{eq_highmoment}.
		
		\item[(iii)] We assume that $r\le C$ and $c_1= {p}/{n}$, $c_2 = {q}/{n}$ satisfy that
		\begin{equation}
		\tau \le c_2 \le  c_1, \quad c_1+c_2\le 1-\tau.  \label{assm20}
		\end{equation}
		
		\item[(iv)] We consider the model in \eqref{assm data}, where $A$ and $B$ satisfy \eqref{assm AB} and \eqref{assm evalue}.
		
	\end{itemize}
\end{assumption}

In this paper, we will use the SCC matrix 
\be\label{eq_SCC}\cal C_{{\cal X}{\cal Y}}:= \left({\cal X}{\cal X}^{\top}\right)^{-1/2} \left({\cal X}{\cal Y}^{\top}\right)\left({\cal Y}{\cal Y}^{\top}\right)^{-1}\left({\cal Y}{\cal X}^{\top}\right)  \left({\cal X}{\cal X}^{\top}\right)^{-1/2},\ee
and the null SCC matrix 
\be\label{eq_nullSCC}\cal C_{XY}:= S_{xx}^{-1/2} S_{xy} S_{yy}^{-1}S_{yx}S_{xx}^{-1/2} ,\ee
with
\be\label{def Sxy}S_{xx} := {X}{ X}^{\top}, \quad S_{yy} := {Y}{ Y}^{\top}, \quad S_{xy} = S^{\top}_{yx}:=XY^{\top}.\ee
We will also use the following SCC and null SCC matrices: 
$$\cal C_{{\cal Y}{\cal X}}:= \left({\cal Y}{\cal Y}^{\top}\right)^{-1/2} \left({\cal Y}{\cal X}^{\top}\right)\left({\cal X}{\cal X}^{\top}\right)^{-1}\left({\cal X}{\cal Y}^{\top}\right)  \left({\cal Y}{\cal Y}^{\top}\right)^{-1/2},\quad \cal C_{YX}= S_{yy}^{-1/2} S_{yx} S_{xx}^{-1}S_{xy}S_{yy}^{-1/2}.$$

Our results can be easily extended to a more general model  
\be\label{assm data0}{\mathcal X}: =\mathbf C_1^{1/2}X+ AZ , \quad  {\mathcal Y}: =\mathbf C_2^{1/2}Y+ B Z, \ee
with non-identity population covariance matrices $\mathbf C_1$ and $\mathbf C_2$. In fact, it is easy to see that the eigenvalues of the SCC matrix $\cal C_{{\cal X}{\cal Y}}$ are unchanged under the non-singular transformations  $ {\mathcal X} \to   \mathbf C_1^{-1/2} {\mathcal X} $ and ${\mathcal Y} \to   \mathbf C_2^{-1/2} {\mathcal Y}  ,$ which reduce \eqref{assm data0} to the model \eqref{assm data} with $A$ and $B$ replaced by $ \mathbf C_1^{-1/2} A$ and $  \mathbf C_2^{-1/2} B$.


\subsection{The main results}

We denote the eigenvalues of the null SCC matrix $\cal C_{YX}$ by $ \lambda_1 \ge \lambda_2 \ge \cdots \ge \lambda_q\ge 0$. It is easy to see that $\cal C_{ X Y}$ shares the same eigenvalues with $\cal C_{ Y X}$, besides the $p-q$ more trivial zero eigenvalues $ \lambda_ {q+1} = \cdots = \lambda_{p}=0$. We denote the ESD of $\cal C_{YX}$ by 
$$ F_n(x):= \frac1q\sum_{i=1}^q \mathbf 1_{\lambda_i \le x}.$$
It has been proved in \cite{Wachter,CCA_ESD} that, almost surely, $F_n$ converges weakly to a deterministic probability distribution $F(x)$ with density 
\be\label{LSD}
f(x)= \frac{1}{2\pi c_2} \frac{\sqrt{(\lambda_+ - x)(x-\lambda_-)}}{x(1-x)}, \quad \lambda_- \le x \le \lambda_+,
\ee
where the left edge $\lambda_-$ and the right edge $\lambda_+$ of the density are defined as
\be\label{lambdapm}\lambda_\pm:= \left( \sqrt{c_1(1-c_2)} \pm \sqrt{c_2(1-c_1)}\right)^2.\ee

Under the setting of \eqref{assm data}, we denote the eigenvalues of $\cal C_{\cal X\cal Y}$ by $\wt \lambda_1  \ge \cdots \ge \wt\lambda_q \ge \wt\lambda_{q+1} =\cdots =\wt \lambda_p= 0$, 
and the eigenvalues of the PCC matrix 
$\wt{\bm\Sigma}$ by $t_1 \ge t_2 \ge \cdots \ge t_r \ge t_{r+1} = \cdots = t_p=0.$
Recall the threshold $t_c$ for BBP transition defined in \eqref{tc}. 
Assuming the entries of $X$ and $Y$ are i.i.d.\;Gaussian, it was proved in \cite{CCA} that 
for any $1\le i \le r$, if $t_i\le t_c$, then \smash{$\wt\lambda_i - \lambda_+\to 0$} almost surely, while if $t_i>t_c$, then \smash{$\wt\lambda_i - \theta_i \to 0$} almost surely, where 
\be\label{gammai}
\theta_i : = t_i\left( 1-c_1+c_1t_i^{-1}\right) \left( 1-c_2+c_2t_i^{-1}\right) .
\ee
Moreover, the limiting distributions were also identified in \cite{CCA}: if $t_i<t_c$, \smash{$n^{2/3}(\wt\lambda_i - \lambda_+)$} converges to the Tracy-Widom law; if $t_i>t_c$, \smash{$\sqrt{n}(\wt\lambda_i - \theta_i)$} converges to a centered normal distribution. The main purpose of this paper is to extend the CLT of the outliers to the setting in Section \ref{sec_the_model}, assuming only the moment conditions in \eqref{eq_highmoment} (or the weaker ones in \eqref{eq_8moment} below).


In \cite{CCA}, it was assumed that the population CCCs are either well-separate or exactly degenerate. In this paper, however, we consider the general setting which allows for near-degenerate outliers. For this purpose, we first introduce some new notations following \cite{KY_AOP}. For any $r\times r$ matrix $\cal A=(A_{ij})$ and a subset of indices $\pi \subset \{1,\cdots, r\}$, we define the $|\pi| \times |\pi|$ submatrix 
\be\label{Api}\cal A_{\llbracket \pi \rrbracket}:=(A_{ij})_{i,j\in \pi}.\ee
We arrange the eigenvalues of $\cal A_{\llbracket \pi \rrbracket}$ in descending order as 
\be\label{ev minor}\mu_1\left(\cal A_{\llbracket \pi \rrbracket}\right)\ge \cdots \ge \mu_{|\pi|}\left(\cal A_{\llbracket \pi \rrbracket}\right).\ee
We will group the near-degenerate $t_i$-s according to the following definition.

\begin{definition}\label{Def gammal}
	Fix two small constants $\delta_l,\delta>0$. For $l\in \{1,\cdots,r\}$ satisfying 
	\be\label{tlc} t_c + \delta_l \le t_l \le 1- \delta_l, \ee
	we define the subset $\gamma(l) \ni l$ as the \emph{smallest} subset of $\{1,\cdots, r\}$ such that the following property holds: if $i,j\in \{1,\cdots, r\}$ satisfy $t_i>t_c$ and $ |t_i - t_j| \le n^{-1/2+\delta},$
	then either $i,j \in \gamma(l)$ or $i,j \notin \gamma(l)$. 
\end{definition}
The set $\gamma(l)$ in this definition can be constructed by successively choosing $i\in \{1,\cdots, r\}$ such that $t_i$ is away from the set $\{t_j: j\in \gamma(l)\}$ by a distance $\le n^{-1/2+\delta}$, and then adding $i$ to $\gamma(l)$. Since the number of such indices is at most $r$, we have that $|t_i-t_l|\le rn^{-1/2+\delta}$ for any $i\in \gamma(l)$.
Now, we are ready to state the first main result, which describes the joint limiting distribution of a group of near-degenerate outliers indexed by indices in $\gamma(l)$.  

\begin{theorem}\label{main_thm1}
Fix any $1\le l \le r$. Suppose Assumption \ref{main_assm} holds, and there exists a constant $\delta_l>0$ such that \eqref{tlc} holds. Define the vector of rescaled eigenvalues $\bm \zeta=(\zeta_i)_{i\in \gamma(l)}$, where $\zeta_i := n^{1/2} (\wt \lambda_i - \theta_l)$ for $\theta_l$ defined in \eqref{gammai}. Let $\bm \xi=(\xi_i)_{i\in \gamma(l)}$ be the vector of the eigenvalues (in descending order) of the random $|\gamma(l)|\times |\gamma(l)|$ matrix 
\be\label{Upsilonl}
a(t_l)\left\{n^{1/2}\left[ \diag (t_1, \cdots, t_r) - t_l\right]_{\llbracket\gamma(l)\rrbracket} + \Upsilon_l \right\},
\ee
where $a(t_l)$ is a function of $t_l$ defined as
\be\label{cgtl} a(t_l):= \frac{(1-c_1)(1-c_2)}{t_l^2} (t_l^2 - t_c^2),\ee
$\left[ \cdot\right]_{\llbracket\gamma(l)\rrbracket}$ is defined in \eqref{Api} with $\pi=\gamma(l)$,
and $\Upsilon_l$ is a $|\gamma(l)|\times |\gamma(l)|$ symmetric Gaussian random matrix, whose entries have zero mean and covariance function
\be\label{covariancey}
\mathbb E (\Upsilon_l)_{ij}(\Upsilon_l)_{i'j'}= C_{ij,i'j'}(t_l),\quad \text{for}\quad (i,j), (i', j')\in \gamma(l)\times \gamma(l). \ee
The function $C_{ij,i'j'}(t_l)$ will be defined in equation \eqref{Cijij} below. Then, for any bounded continuous function $f: \R^{|\gamma(l)|}\to \R$, we have that
\be\label{limf} \lim_n \left[ \E f(\bm\zeta)-\E f(\bm\xi)\right]=0. \ee
\end{theorem}



Roughly speaking, the above theorem means that the eigenvalues around $\wt\lambda_l$ converge in distribution to the eigenvalues of a  symmetric Gaussian random matrix. The mean of this Gaussian matrix is a diagonal matrix depending on the rescaled gaps $ n^{1/2} (t_i-t_l)$, $i\in \gamma(l)$. We now give the explicit expressions of the covariance function. 
Using the SVD \eqref{assm AB}, we can rewrite the PCC matrix $\wt{\bm\Sigma}$ in \eqref{def_PCC} as
\begin{align*}
\wt{\bm\Sigma}=\bU_a \left[\frac{\Sigma_a}{(I_r+\Sigma_a^2)^{1/2}}\bV_a^{\top} \bV_b \frac{\Sigma_b^2}{I_r+\Sigma_b^2}\bV_b^{\top} \bV_a  \frac{\Sigma_a}{(I_r+\Sigma_a^2)^{1/2}} \right]\bU_a^\top .
\end{align*}
Hence, the matrix inside brackets has eigenvalues $t_1\ge \cdots \ge t_r$. Now, suppose we have the following SVD
\be\label{Mrab}
\frac{\Sigma_a}{(I_r+\Sigma_a^2)^{1/2}}\bV_a^{\top} \bV_b \frac{\Sigma_b}{(I_r+\Sigma_b^2)^{1/2}} = \cal O \diag (\sqrt{t_1}, \cdots, \sqrt{t_r}) \wO^{\top},
\ee
for two $r\times r$ orthogonal matrices $\cal O$ and $\wt{\cal O}$. 
Then, for $k\in \{1, \cdots n\}$ and $i,j\in \{1,\cdots, r\}$, we define 
\begin{align}
	\cal W_{k,ij}:=&\, t_l \left(W_a\right)_{ki}\left(W_a\right)_{kj} +  t_l  \left(W_b\right)_{ki} \left(W_b\right)_{kj} \nonumber\\
	& -  \sqrt{t_l} \left(W_a\right)_{ki} \left(W_b\right)_{kj} -  \sqrt{t_l}\left(W_b\right)_{ki} \left(W_a\right)_{kj}   ,\label{Wij}
\end{align}
where $W_a$ and $W_b$ are two $n\times r$ matrices defined by 
$$W_a:= \bV_a \frac{\Sigma_a}{(I_r+\Sigma_a^2)^{1/2}}\cal O,\quad W_b:=\bV_b \frac{\Sigma_b}{(I_r+\Sigma_b^2)^{1/2}} \wO .$$
Moreover, we define the $p\times r$ and $q\times r$ matrices
\be\label{defUV}
\begin{split}
	& \cal U :=  \bU_a { (I_{r} + \Sigma_a^2)^{-1/2}} \cal O  , \quad  \cal V:=  \bU_b {(I_r +\Sigma_b^2)^{-1/2}}\wO  .
\end{split}
\ee
Then, the covariance function $C_{ij,i'j'}(t_l)$ for $(i,j), (i', j')\in \gamma(l)\times \gamma(l)$ is defined as 
\begin{align}
 &	C_{ij,i'j'}(t_l) := \frac{(1-t_l)^2 t_l^2}{t_l^2 - t_c^2}\left(2 t_l+ \frac{c_1}{1-c_1}+\frac{c_2}{1-c_2}  \right)\left(\delta_{ii'} \delta_{jj'} +\delta_{ij'} \delta_{ji'} \right) \nonumber\\
& +t_l^2 \kappa_x^{(4)}\sum_{k} \cal U_{ki}\cal U_{ki'}\cal U_{kj}\cal U_{kj'}+ t_l^2 \kappa_y^{(4)} \sum_{k} \cal V_{ki}\cal V_{ki'}\cal V_{kj}\cal V_{kj'}  + \kappa_z^{(4)}  \sum_{k} \cal W_{k,ij}\cal W_{k,i'j'}  , \label{Cijij}
\end{align}
where we have introduced the notations 
\be\label{mux4}
\kappa_x^{(4)}:= n^{2}\mathbb EX_{11}^4 - 3, \quad \kappa_y^{(4)}:= n^{2}\mathbb EY_{11}^4 -3,\quad \kappa_z^{(4)}:= n^{2}\mathbb EZ_{11}^4 -3,\ee
which are the fourth cumulants of $\sqrt{n}X_{11}$, $\sqrt{n}Y_{11}$, and $\sqrt{n}Z_{11}$.


We apply our result to the special case where the entries of $X$, $Y$ and $Z$ are i.i.d.\;Gaussian random variables, and $t_i=t_l$ for all $i\in \gamma(l)$. In this case, the last three terms in \eqref{Cijij} vanish and $\left[ \diag (t_1, \cdots, t_r) - t_l\right]_{\llbracket\gamma(l)\rrbracket}=0$. 
Hence, by Theorem \ref{main_thm1}, $\bm \zeta$ converges weakly to the ordered eigenvalues of a GOE (Gaussian orthogonal ensemble) $\mathbf G=(g_{ij})$, with independent Gaussian entries 
\be\label{Gauss_var} g_{ij}= g_{ji} \sim \cal N(0, (1+\delta_{ij} )\sigma^2(t_l)), \ee 
where
\be\label{defn_cg} \sigma^2(t_l):=  \frac{(1-c_1)^2(1-c_2)^2(1-t_l)^2(t_l^2-t_c^2)}{t_l^2} \left( 2t_l + \frac{c_1}{1-c_1} + \frac{c_2}{1-c_2}\right)  .\ee 
This is in accordance with \cite[Theorem 1.9]{CCA}.

The next theorem shows that if we assume that the population CCCs are either well-separated or exactly degenerate (cf. condition \eqref{well-sep}), then the CLT of the outlier eigenvalues in Theorem \ref{main_thm1} also holds under the relaxed moment assumption \eqref{eq_8moment}.

\begin{theorem}\label{main_thm2}
	Fix any $1\le l \le r$. Suppose Assumption \ref{main_assm} holds except that \eqref{eq_highmoment} is replaced with the following moment assumption: there exist constants $c_0, C_0>0$ such that 
	\begin{equation}\label{eq_8moment} 
		\mathbb E |\sqrt{n}X_{11}|^{8+c_0} \le C_0,\quad   \mathbb E |\sqrt{n}Y_{11}|^{8+c_0} \le C_0,\quad \mathbb E |\sqrt{n}Z_{11}|^{4+c_0} \le C_0. 
	\end{equation}
	Suppose there exists a constant $\delta_l>0$ such that \eqref{tlc} holds, and 
	\be\label{well-sep}
	\text{$t_i = t_l$ \ \ for \ \ $i\in \gamma(l)$},\quad \text{ and }\quad \text{$|t_i - t_l| \ge \delta_l$\ \ for \ \ $i\notin \gamma(l)$.} 
	\ee
	Then, \eqref{limf} holds for $\bm\zeta$ and $\bm\xi$ defined in Theorem \ref{main_thm1}. 
\end{theorem}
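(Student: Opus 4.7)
The strategy is to deduce Theorem~\ref{main_thm2} from Theorem~\ref{main_thm1} via a truncation argument. Under the weaker moment hypothesis~\eqref{eq_8moment} I would replace the entries of $X$, $Y$, and $Z$ by truncated and recentered versions to which Theorem~\ref{main_thm1} applies, and then show that this replacement alters each outlier sample CCC by $o(n^{-1/2})$ in probability, so that the truncated and original models produce the same limit. The well-separation or exact-degeneracy condition~\eqref{well-sep} is what allows the outlier perturbation to be controlled individually at this scale.

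For the truncation step, I would fix small constants $\alpha > 2/(8+c_0)$ and $\beta > 1/(4+c_0)$ and set
\begin{equation*}
X_{ij}^T := X_{ij}\mathbf{1}_{|X_{ij}|\le n^{-1/2+\alpha}}, \quad Y_{ij}^T := Y_{ij}\mathbf{1}_{|Y_{ij}|\le n^{-1/2+\alpha}}, \quad Z_{ij}^T := Z_{ij}\mathbf{1}_{|Z_{ij}|\le n^{-1/2+\beta}}.
\end{equation*}
Markov's inequality applied to~\eqref{eq_8moment}, together with a union bound over the $pn$, $qn$, and $rn$ entries, yields $\P(X=X^T,\,Y=Y^T,\,Z=Z^T)=1-o(1)$. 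After recentering and rescaling each entry so that the normalization~\eqref{assm1}--\eqref{assmZ} is preserved, one obtains matrices $\widehat X,\widehat Y,\widehat Z$ whose fourth moments differ from those of $X,Y,Z$ by $o(1)$ in the scaling of \eqref{mux4} (because the removed tails contribute negligibly to the fourth moment under \eqref{eq_8moment}). Direct estimates on the variance-correction factor $\sigma_T-1$ and the mean correction $\E X_{ij}^T$ then yield $\|\mathcal X - \widehat{\mathcal X}\|_{\mathrm{op}} + \|\mathcal Y - \widehat{\mathcal Y}\|_{\mathrm{op}} = o(n^{-1/2})$ in probability for $\alpha,\beta$ chosen slightly above their respective lower bounds, where $\widehat{\mathcal X}:= \widehat X + A\widehat Z$ and $\widehat{\mathcal Y}:= \widehat Y + B\widehat Z$.

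Next I would apply Theorem~\ref{main_thm1} to $(\widehat X,\widehat Y,\widehat Z)$. The truncated entries are uniformly bounded by $n^{-1/2+\alpha}$ and $n^{-1/2+\beta}$, so they satisfy a uniform variant of~\eqref{eq_highmoment}. An inspection of the main estimates in the proof of Theorem~\ref{main_thm1}---namely, the local law and rigidity from~\cite{PartIII}, the Gaussian comparison in Section~\ref{sec almostGauss}, and the Green-function comparison in Section~\ref{secpfmain1}---shows that they go through under this uniform bound, at the cost of slightly inflating the $n^{\epsilon}$ factors in the error estimates. Hence Theorem~\ref{main_thm1} applies to $(\widehat{\mathcal X}, \widehat{\mathcal Y})$ and, since the fourth cumulants of the truncated ensembles converge to those of the original ones, the resulting limit law for the outlier eigenvalues has the same covariance \eqref{covariancey} as claimed in Theorem~\ref{main_thm2}.

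The main obstacle is the final stability step: transferring the $o(n^{-1/2})$ perturbation of $(\mathcal X,\mathcal Y)$ to an $o(n^{-1/2})$ perturbation of each individual outlier $\widetilde\lambda_i$ with $i\in\gamma(l)$. A standard calculation gives $\|\cal C_{\widehat{\mathcal X}\widehat{\mathcal Y}}-\cal C_{\mathcal X\mathcal Y}\|_{\mathrm{op}}=o(n^{-1/2})$ in probability, so Weyl's inequality delivers $o(n^{-1/2})$ control on the ordered eigenvalues. The well-separation assumption~\eqref{well-sep} is essential here: it guarantees that $(\widetilde\lambda_i)_{i\in\gamma(l)}$ forms a cluster around $\theta_l$ separated from the other $\theta_j$ by a macroscopic distance $\delta_l$, so that after the $o(n^{-1/2})$ perturbation the labeling within $\gamma(l)$ is preserved up to a permutation---which leaves the symmetric reference law~\eqref{Upsilonl} invariant. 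In the general near-degenerate regime of Theorem~\ref{main_thm1}, where neighboring $t_i$'s may differ by as little as $n^{-1/2+\delta}$, this indexing control fails; this is precisely why the statement of Theorem~\ref{main_thm2} requires~\eqref{well-sep}.
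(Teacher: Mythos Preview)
Your truncation step is correct and matches the paper's Section~\ref{pf thm2}: truncate at level $\phi_n=n^{-1/4-c_\phi}$, recenter and rescale, and observe that on the high-probability event the perturbation to the SCC matrix is $O(n^{-1-\epsilon_0})$ in operator norm (Lemma~\ref{claim compcirc}), so Weyl's inequality transfers the outlier laws at no cost.

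The gap is in your second paragraph. The truncated entries are bounded by $n^{-1/2+\alpha}$ with $\alpha$ a \emph{fixed} positive constant near $1/4$, so they do \emph{not} satisfy the all-moments hypothesis~\eqref{eq_highmoment}: for $k$ large, $\mathbb E|\widehat X_{11}|^k \lesssim \phi_n^{k-4}n^{-2}$, which is much larger than $\mu_k^k n^{-k/2}$. Consequently the proof of Theorem~\ref{main_thm1} does not ``go through with slightly inflated $n^\epsilon$ factors''---the anisotropic local law (Theorem~\ref{thm_localadd}) now carries an additive $\phi_n$ error rather than $n^{-1/2}$, and the Green-function comparison of Section~\ref{secpfmain1} relies on $|W_{i\mu}|\prec n^{-1/2}$, which fails. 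This is not a matter of inspection; it is the entire content of the proof. The paper resolves this by (i) invoking Lemma~\ref{lem_decrease} to construct a reference ensemble $(\wt X,\wt Y,\wt Z)$ with support $n^{-1/2}$ that matches the first \emph{four} moments of the truncated ensemble, (ii) applying Theorem~\ref{main_thm1} to $(\wt X,\wt Y,\wt Z)$ (which now legitimately satisfies~\eqref{eq_highmoment}), and (iii) proving a fresh comparison, Proposition~\ref{main_propadd}, between the $\phi_n$-supported and $n^{-1/2}$-supported ensembles via a continuous interpolation. Fourth-moment matching is essential in (iii): it pushes the Lindeberg error to fifth and higher moments, so that $\mathbb E|X_{i\mu}|^s\lesssim \phi_n^{s-4}n^{-2}$ for $s\ge 5$ is just small enough (see \eqref{eq_comp_est} and the three cases following it).

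You also misplace the role of~\eqref{well-sep}. The Weyl step is harmless regardless of separation because the truncation perturbs eigenvalues by $O(n^{-1-\epsilon_0})\ll n^{-1/2}$. The paper uses~\eqref{well-sep} in the reduction to the resolvent (equation~\eqref{redGadd}, the analogue of Proposition~\ref{redGthm}): under the degraded local law the determinant equation~\eqref{masterx4add} carries an $O_\prec(\phi_n^2)$ error, and isolating the $\gamma(l)$-block from the other $t_i$'s at the precision $n^{-1/2-\epsilon}$ requires those $t_i$'s to be macroscopically separated, not merely $n^{-1/2+\delta}$-separated as in Definition~\ref{Def gammal}. The paper remarks explicitly that this reduction is the only place the well-separation hypothesis is needed.
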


On the other hand, the limiting Tracy-Widom distribution of the extreme non-outlier eigenvalues has been proved under a fourth moment tail assumption in \cite{PartI}. 

\begin{theorem}[Theorem 2.14 of \cite{PartI}]\label{main_thm3}
	Suppose Assumption \ref{main_assm} (iii)-(iv) hold. Assume that $x_{ij}=n^{-1/2} \wh x_{ij}$, $y_{ij}= n^{-1/2} \wh y_{ij}$ and $z_{ij}= n^{-1/2} \wh z_{ij}$, where $\{\wh x_{ij}\}$, $\{\wh y_{ij}\}$ and $\{\wh z_{ij}\}$ are three independent families of real i.i.d. random variables of mean zero and variance one. Moreover, we assume the fourth moment tail condition  
	\begin{equation}
		\lim_{t \rightarrow \infty } t^4 \left[\mathbb{P}\left( \vert \wh x_{11} \vert \geq t\right)+ \mathbb{P}\left( \vert \wh y_{11} \vert \geq t\right)\right]=0 .\label{tail_cond}
	\end{equation}
	Assume that for a fixed $0\le r_+ \le r$, the eigenvalues of $\wt{\mathbf \Sigma}$ satisfy that  
	\be\label{well-sep1} \liminf_n t_{r_+} > t_c > \limsup_{n} t_{r_+ + 1}.\ee
	Then, we have that for any fixed $k\in \N$ and $(s_1 , s_2, \ldots, s_k) \in \mathbb R^k$, 
	\be\label{boundinTW}
	\begin{split}
		\lim_{n\to \infty}\mathbb{P}&\left[ \left(n^{{2}/{3}}\frac{\wt\lambda_{r_+ + i} - \lambda_+}{c_{TW}} \leq s_i\right)_{ i =1}^k \right] = \lim_{n\to \infty} \mathbb{P}^{GOE}\left[\left(n^{{2}/{3}}(\lambda_i - 2) \leq s_i\right)_{ i =1}^k \right], 
	\end{split}
	\ee
	where
	$$c_{TW}:= \left[ \frac{\lambda_+^2 (1-\lambda_+)^2}{\sqrt{c_1c_2(1-c_1)(1-c_2)}}\right]^{1/3},$$
	and $\mathbb P^{GOE}$ stands for the law of GOE, referring to an $n\times n$ symmetric matrix with independent Gaussian entries of mean zero and variance $n^{-1}$. 
\end{theorem}

The assumption \eqref{well-sep1} means that $t_i$, $1\le i \le r_+$, are supercritical spikes that lead to outlier eigenvalues, while $t_i$, $r_+ +1 \le i \le r$, are subcritical spikes. Hence, \smash{$\wt\lambda_{r_+ + i}$} is the $i$-th non-outlier eigenvalue of the SCC matrix, and \eqref{boundinTW} gives a complete description of the asymptotic joint distribution of the first $k$ non-outlier eigenvalues of $\cal C_{\cal X\cal Y}$ in terms of the extreme eigenvalues of GOE. 
Taking $k=1$ in \eqref{boundinTW} shows that the first (rescaled) non-outlier eigenvalue \smash{$n^{{2}/{3}}(\wt\lambda_{r_+ + 1} - \lambda_+)/{c_{TW}}$} converges weakly to the type-1 Tracy-Widom distribution \cite{TW1,TW}. For a general $k\in \N$, the joint distribution of the largest $k$ eigenvalues of GOE can be written in terms of the Airy kernel \cite{Forr}. 

Combining Theorems \ref{main_thm1}, \ref{main_thm2} and \ref{main_thm3}, we complete the story of BBP transition for high-dimensional CCA with finite rank correlations. 


\subsection{Simulations}
In this subsection, we verify Theorem \ref{main_thm1} with some numerical simulations. In particular, we will show that the last three terms in \eqref{Cijij}, which depend on the fourth cumulants $\kappa_x^{(4)}$, $\kappa_y^{(4)}$ and $\kappa_z^{(4)}$, are necessary to match the variance of the simulated sample CCC. For our simulations, we take the entries of $X$, $Y$ and $Z$ to be i.i.d.\;Rademacher random variables (with an extra scaling $n^{-1/2}$). In this setting, we have \smash{$\kappa_x^{(4)}=\kappa_y^{(4)}=\kappa_z^{(4)}=-2$}. Moreover, we take $n=2000$ and $c_1=c_2=0.2$, i.e. $p=q=400$, which gives $t_c=0.25$ by \eqref{tc}. We consider the rank-one case with $r=1$ and take the matrices $A$ and $B$ as $A= a_1 \mathbf u^a $ and $B= b_1 \mathbf u^b $ with $a_1=b_1=2$, which gives a supercritical spike $t_1=0.64$. We consider the following two scenarios for the unit vectors $\mathbf u^a$ and $\mathbf u^b$.

\vspace{3pt}
\noindent {\bf Scenario (a)}: $\mathbf u^a$ and $\mathbf u^b$ are standard unit vectors along the first coordinate axis in $\R^p$ and $\R^q$, respectively. In this case, the limiting variance of \smash{$\zeta_1=n^{1/2}(\wt\lambda_1-\theta_1)$} is given by $\sigma_a^2:= a^2(t_1)C_{11,11}(t_1)$, where $C_{11,11}(t_1)$ is defined in \eqref{Cijij}: 
\begin{align*}
	C_{11,11}(t_1) =&\ 2\frac{(1-t_1)^2 t_1^2}{t_1^2 - t_c^2}\left(2 t_1+ \frac{c_1}{1-c_1}+\frac{c_2}{1-c_2}  \right)-2t_1^2 \left[\frac{1}{(1+a_1^2)^2}  + \frac{1}{(1+b_1^2)^2} \right] \\
	&- 2\left[t_1 \frac{a_1^2}{1+a_1^2} + t_1 \frac{b_1^2}{1+b_1^2}-2\sqrt{t_1} \frac{a_1b_1}{(1+a_1^2)^{1/2}(1+b_1^2)^{1/2}}   \right]^2.  
\end{align*}

\vspace{3pt}
\noindent {\bf Scenario (b)}: $\mathbf u^a$ and $\mathbf u^b$ are random unit vectors on the unit spheres $\mathbb S^p$ and $\mathbb S^q$, respectively. Then we have $\|\mathbf u^a\|_\infty \le n^{-1/2+\e}$ and $\|\mathbf u^b\|_\infty \le n^{-1/2+\e}$ with probability $1-\oo(1)$ for any constant $\e>0$, with which we can easily check that the {$\kappa_x^{(4)}$ and $\kappa_y^{(4)}$} terms in \eqref{Cijij} are both of order $\OO(n^{-1+2\e})$ with probability $1-\oo(1)$. Hence the limiting variance of $\zeta_1 $ is given by $\sigma_b^2:= a^2(t_1)C_{11,11}(t_1)$, where 
\begin{align*}
	C_{11,11}(t_1)=&\ 2\frac{(1-t_1)^2 t_1^2}{t_1^2 - t_c^2}\left(2 t_1+ \frac{c_1}{1-c_1}+\frac{c_2}{1-c_2}  \right) \\
	&-2\left[t_1 \frac{a_1^2}{1+a_1^2} + t_1 \frac{b_1^2}{1+b_1^2}-2\sqrt{t_1} \frac{a_1b_1}{(1+a_1^2)^{1/2}(1+b_1^2)^{1/2}}   \right]^2 + \OO(n^{-1+2\e}),
\end{align*}
 with probability $1-\oo(1)$. 
 
\vspace{3pt}

 In Figure \ref{fig_simul_gauss}, we report the simulation results based on $10^5$ replications. We find that the histograms match our result in Theorem \ref{main_thm1} pretty well. Furthermore, it is not surprising that the {prediction} \eqref{Gauss_var} in the Gaussian setting deviates from the simulations, which shows that the last three terms in \eqref{Cijij} are necessary for non-Gaussian settings.

\begin{figure}[!ht]
	\includegraphics[width=\textwidth]{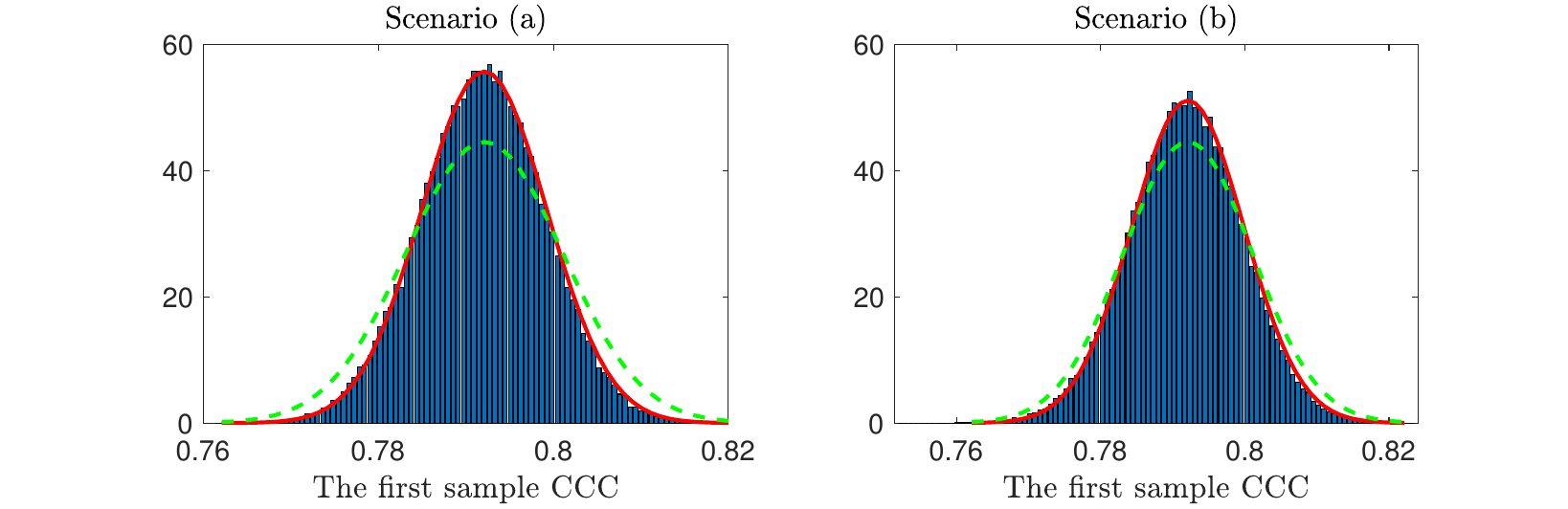}
	\caption{The histograms give the simulated first sample CCC based on {$10^5$ replications}. The red solid curves give the probability density functions (PDF) of the normal distributions $\cal N(\theta_1, \sigma_a^2/n)$ and $\cal N(\theta_1, \sigma_b^2/n)$ in scenarios (a) and (b), respectively. The green dashed curves represent the PDF of the normal distribution $\cal N(\theta_1, 2\sigma^2(t_1)/n)$, where $\sigma^2(t_1) $ is defined in \eqref{defn_cg}.}
	\label{fig_simul_gauss}
\end{figure}

\subsection{Relation with \cite{PartI} and \cite{PartIII}}
This paper is the third part of a series of papers with \cite{PartIII} and \cite{PartI} being the first two parts. The main goal of this series is to establish the BBP transition of sample CCCs in the setting of high-dimensional CCA with finite rank correlations and without Gaussian assumptions. 

In the first part \cite{PartIII}, we considered the null case with $r=0$ and developed a new linearization method for the study of sample CCCs. More precisely, we introduce a $(p+q+2n)\times (p+q+2n)$ linearized matrix $H(z)$ in terms of $X$, $Y$ and a spectral parameter $z\in \mathbb C$ (cf.\;equation \eqref{linearize_block}), so that the eigenvalues of the SCC matrix are exactly the solutions to the equation $\det H (z) =0$. In \cite{PartIII}, we studied this equation through its inverse $G(z):=H(z)^{-1}$, called the \emph{resolvent}. The main result of \cite{PartIII} is an optimal large deviation estimate, called the \emph{anisotropic local law}, on $G(z)$ (cf. Theorem \ref{thm_local} below).
As consequences of the anisotropic local law, we also proved a sharp eigenvalue rigidity estimate for the null SCC matrix $\cal C_{ X Y}$ (cf. Lemma \ref{lem null} below) and the Tracy-Widom law of the largest eigenvalue of $\cal C_{XY}$, which is a special case of Theorem \ref{main_thm3} with $r=0$. 

In the second part \cite{PartI}, we considered the model \eqref{assm data} with $r>0$. In particular, we showed that the eigenvalues \smash{$\wt\lambda_i$, $1\le i \le p\wedge q$}, of $\cal C_{{\cal X}{\cal Y}}$ are precisely the solutions to a determinant equation in terms of a linear functional of $G(z)$ and the matrices in the SVD \eqref{assm AB}, see equation \eqref{detereq temp2} below. Then, based on the anisotropic local law and the eigenvalue rigidity estimate obtained in \cite{PartIII}, we proved Theorem \ref{main_thm3} regarding the Tracy-Widom law of the extreme non-outlier eigenvalues. In addition, we also proved in \cite{PartI} that the outlier sample CCC \smash{$\wt\lambda_i$} corresponding to a supercritical spike $t_i>t_c$ converges to $\theta_i$ with a sharp convergence rate $\OO(n^{-1/2+\e})$ (cf. Lemma \ref{main_thm}).  

Finally, in this paper, we complete the theory of BBP transition for high-dimensional non-Gaussian CCA by showing the CLT of the outlier eigenvalues, that is, Theorem \ref{main_thm1} and Theorem \ref{main_thm2}. In the proof of these results, we first reduce the problem to proving the CLT for a linear functional of $G$ (cf. Proposition \ref{redGthm} and equation \eqref{wtMx}) by using the anisotropic local law, Theorem \ref{thm_local}, obtained in \cite{PartIII} and the convergence estimate of outlier eigenvalues, Lemma \ref{main_thm}, obtained in \cite{PartI}. Then, the main part of our proof is to show that the linear functional of $G$ converges weakly to a centered Gaussian random matrix. Again, the anisotropic local law, Theorem \ref{thm_local}, is the key tool for this proof. We refer the reader to Section \ref{sec_overview} for a brief overview of the proof and to Sections \ref{sec pfstart}--\ref{pf thm2} for complete details.

\section{Overview of the proof} \label{sec_overview}

In this section, we give a brief overview of the proof for Theorem \ref{main_thm1}. 
The starting point of our proof is the following self-adjoint linearization trick developed in \cite{PartI,PartIII}, that is, a $ \lambda \in (0,1)$ is an eigenvalue of $\cal C_{\cal X\cal Y}$ if and only if the following equation holds:
\be\label{deteq}\det \begin{bmatrix} 0 & \begin{pmatrix} \cal X & 0\\ 0 & \cal Y\end{pmatrix}\\ \begin{pmatrix} \cal X^{\top} & 0\\ 0 & \cal Y^{\top}\end{pmatrix}  & \begin{pmatrix}  \lambda  I_n & \lambda^{1/2}I_n\\ \lambda^{1/2} I_n &  \lambda I_n\end{pmatrix}^{-1}\end{bmatrix} = 0 . \ee
Inspired by this equation, we define the following $(p+q+2n)\times (p+q+2n)$ self-adjoint block matrix 
\begin{equation}\label{linearize_block}
	H(\lambda)\equiv H(X,Y,\lambda) : = \begin{bmatrix} 0 & \begin{pmatrix}  X & 0\\ 0 &  Y\end{pmatrix}\\ \begin{pmatrix} X^{\top} & 0\\ 0 &  Y^{\top}\end{pmatrix}  & \begin{pmatrix}  \lambda  I_n & \lambda^{1/2}I_n\\ \lambda^{1/2} I_n &  \lambda I_n\end{pmatrix}^{-1}\end{bmatrix} ,
\end{equation}
and call its inverse the \emph{resolvent}:
\be\label{def_resolvent}
G(\lambda)\equiv G(X,Y,\lambda) := \left[H(X,Y,\lambda)\right]^{-1}.
\ee
In this paper, we extend the argument $\lambda$ to $z\in \mathbb C_+:=\{z\in \mathbb C: \im z>0\}$ with $z^{1/2}$ being the branch with positive imaginary part. Similar to equation \eqref{deteq}, it is not hard to see that $\lambda$ is \emph{not} an eigenvalue of the null SCC matrix if and only if $\det \left[H(\lambda)\right] \ne 0$. Hence, for $\lambda \notin \text{Spec}(\cal C_{XY})$, using \eqref{assm data}, \eqref{assm AB}, \eqref{sigmaab} and \eqref{OabUab}, we can rewrite \eqref{deteq} as
\begin{align}
	0 & = \det \left[ 1+\begin{pmatrix} \bU & 0 \\ 0 & \bV\end{pmatrix}\begin{pmatrix} 0 & \cal D\\ \cal D  & 0\end{pmatrix}\begin{pmatrix} \bU^{\top} & 0 \\ 0 & \bV^{\top}\end{pmatrix} G(\lambda) \right] \nonumber\\
	&=\det \left[ 1+\begin{pmatrix} 0 & \cal D\\ \cal D  & 0\end{pmatrix}\begin{pmatrix} \bU^{\top} & 0 \\ 0 & \bV^{\top}\end{pmatrix} G(\lambda) \begin{pmatrix} \bU & 0 \\ 0 & \bV\end{pmatrix}\right],\label{detereq temp2}
\end{align} 
where we have used the identity $\det(1+M_1M_2)=\det(1+M_2M_1)$ for any two matrices $M_1$ and $M_2$ of conformable dimensions. Here, $\cal D$, $\bU$ and $\bV$ are $2r\times 2r$, $(p+q)\times 2r$ and $2n\times 2r$ matrices defined as
$$\cal D:=\begin{pmatrix} \Sigma_a & 0 \\ 0 & \Sigma_b\end{pmatrix}, \quad \bU := \begin{pmatrix} \bU_a  & 0 \\ 0 & \bU_b  \end{pmatrix}, \quad \bV:= \begin{pmatrix}  Z^{\top} \bV_a  & 0 \\ 0 & Z^{\top} \bV_b \end{pmatrix} .$$

By the anisotropic local law in Theorem \ref{thm_local}, $G(\lambda)$ in equation \eqref{detereq temp2} can be replaced by a deterministic matrix, denoted by $\Pi(\lambda)$, up to a small error:
\be\label{deteq_2} \det \left\{ 1+\begin{pmatrix} 0 & \cal D\\ \cal D  & 0\end{pmatrix}\left[\begin{pmatrix} \bU^{\top} & 0 \\ 0 & \bV^{\top}\end{pmatrix} \Pi(\lambda) \begin{pmatrix} \bU & 0 \\ 0 & \bV\end{pmatrix} + \cal E(\lambda)\right]\right\}=0, \ee
where 
$$ \cal E(\lambda):= \begin{pmatrix} \bU^{\top} & 0 \\ 0 & \bV^{\top}\end{pmatrix} \left[G(\lambda)-\Pi(\lambda)\right] \begin{pmatrix} \bU & 0 \\ 0 & \bV\end{pmatrix}.$$
Using the definition of $\Pi$ in equation \eqref{defn_pi} below, we can check that if we set $\cal E(\lambda)=0$ in \eqref{deteq_2}, then the resulting deterministic equation has a solution $\lambda=\theta_l$ if $t_l$ is supercritical. Moreover, Theorem \ref{thm_local} shows that $\|\cal E(\lambda)\| \le n^{-1/2+\e}$ with high probability (cf. Definition \ref{stoch_domination} (iv)) for any constant $\e>0$. With this fact, we proved in \cite{PartI} that \smash{$|\wt\lambda_l-\theta_l|\le n^{-1/2+\e}$} with high probability. Thus, performing a Taylor expansion of equation \eqref{deteq_2} around $\theta_l$, we obtain that with high probability,
\be\nonumber
\begin{split}
\det \left\{ 1+\begin{pmatrix} 0 & \cal D\\ \cal D  & 0\end{pmatrix}\left[\begin{pmatrix} \bU^{\top} & 0 \\ 0 & \bV^{\top}\end{pmatrix} \left(\Pi(\theta_l)  + (\wt\lambda_l-\theta_l) \Pi'(\theta_l) \right)\begin{pmatrix} \bU & 0 \\ 0 & \bV\end{pmatrix} + \cal E(\theta_l)\right]\right\}\\
=\OO(n^{-1+2\e}).
\end{split}
\ee
This equation suggests that the limiting distribution of \smash{$n^{1/2}(\wt\lambda_l-\theta_l)$} should be determined by that of $n^{1/2}\cal E(\theta_l)$. In fact, through calculations in Section \ref{sec mainthm}, we find that \smash{$n^{1/2}(\wt\lambda_l-\theta_l)$} is related to a more complicated linear function of $G(\theta_l)-\Pi(\theta_l)$ given in \eqref{master_er}. We refer the reader to Proposition \ref{redGthm} below for a precise statement. 

Now, roughly speaking, our problem has been reduced to showing the CLT for a linear function of $G(\theta_l)-\Pi(\theta_l)$. Through a direct calculation, we can further reduce the problem to showing the CLT of a matrix of the form (cf. equation \eqref{wtMx})  
\be\label{eq_upsilon0} 
\Upsilon_0:=n^{1/2} \bW^\top \left[G(\theta_l)-\Pi(\theta_l)\right]\bW,
\ee
where $\bW$ is a $4r\times(p+q+n)$ matrix independent of $X$ and $Y$. To illustrate the basic idea, we describe the strategy of the proof for the following quantity:
\be\label{eq_upsilon} 
\Upsilon:=n^{1/2} \bw^\top \left[G(\theta_l)-\Pi(\theta_l)\right]\bw,
\ee
where $\bw$ is a $(p+q+n)$-dimensional vector independent of $X$ and $Y$.
In general, to show that $\Upsilon_0$ in \eqref{eq_upsilon0} converges weakly to a Gaussian matrix, we can adopt the Cram{\'e}r-Wold device, that is, we will show that
$$ n^{1/2}\sum_{ 1\le i\le j \le 4r} \lambda_{ij} (\Upsilon_0)_{ij} $$ 
is asymptotically Gaussian for any fixed vector of parameters $(\lambda_{ij})_{1\le i\le j\le 4r}$. This can be proved using the same strategy as the proof of the CLT for $\Upsilon$, which we will discuss now.

In order to prove that $\Upsilon$ is asymptotically Gaussian, we will show that its moments match those of a Gaussian random variable as $n\to \infty$. It suffices to prove the zero mean condition $\mathbb E \Upsilon \to 0$ and the induction relation: for any fixed integer $k\ge 2$, 
\be\label{idea_induction}
\mathbb E\Upsilon^k = (k-1) \sigma^2 \mathbb E\Upsilon^{k-2} + \oo(1)
\ee
for some deterministic parameter $\sigma^2$, which determines the variance of the limiting Gaussian distribution. We will describe some basic ideas for the proof of \eqref{idea_induction}, while the mean condition can be regarded as a special case with $k=1$. 
Using the definition of $G$, we can write that $G-\Pi=\Pi \left( \Pi^{-1}  - H\right)G$, and hence 
$$\mathbb E\Upsilon^k  = n^{1/2}\mathbb E\Upsilon^{k-1}  \bw^\top \Pi(\theta_l) \left[ \Pi^{-1}(\theta_l)  - H(\theta_l)\right]G(\theta_l) \bw .$$
Using the definitions of $\Pi$ (cf. equation \eqref{useful}), we can write $\bw^\top \Pi \left( \Pi^{-1}  - H\right)G \bw$ into a sum of terms of three types (cf. equation \eqref{simpleid} below) 
$$ \text{\bf Type A}:\ \bw_1^\top G(\theta_l) \bw_2 ,\quad \text{\bf Type B}:\ \bw_3^\top J_1 H J_3 G(\theta_l) \bw_2 ,\quad \text{\bf Type C}:\ \bw_5^\top J_2 H J_4 G(\theta_l) \bw_6 , $$
where $\bw_k$, $1\le k \le 6$, are vectors that are independent of $G$ (and whose forms are irrelevant for our discussion below), and the matrices $J_\al$ are $(p+q+n)\times(p+q+n)$ block identity matrices defined as
\be\label{defJal}
J_{\al}:=\begin{pmatrix} \mathbf 1_{\al=1}I_p & 0 & 0 & 0 \\ 0 & \mathbf 1_{\al=2}I_q & 0 & 0 \\ 0 & 0 & \mathbf 1_{\al=3}I_n & 0 \\ 0 & 0 & 0 & \mathbf 1_{\al=4}I_n \end{pmatrix} ,\quad \al=1,2,3,4. 
\ee

We only consider type B terms, while type C terms can be handled in exactly the same way. We need to calculate terms of the form 
\be\label{eq_typeb} 
n^{1/2} \mathbb E \sum_{1\le \fa \le p+q+2n}\sum_{1\le i \le p, p+q+1\le \mu \le p+q+n}\bw_3(i)\bw_4(\fa) X_{i\mu} G_{\mu\fa}\Upsilon^{k-1} .
\ee
Assume for now that the entries of $X$ are Gaussian. Then, applying Gaussian integration by parts to $X_{i\mu}$, we obtain that 
\begin{align*} 
\eqref{eq_typeb}&=n^{1/2} \E \sum_{\fa }\sum_{i,\mu}\bw_3(i)\bw_4(\fa) \frac{\partial G_{\mu\fa}}{\partial X_{i\mu}}\Upsilon^{k-1} + (k-1) n^{1/2} \E \sum_{\fa }\sum_{i,\mu}\bw_3(i)\bw_4(\fa) G_{\mu\fa}\Upsilon^{k-2} \frac{\partial \Upsilon}{\partial X_{i\mu}}\\
&=:\text{I} + \text{II}.
\end{align*}
By the definition of $G$, its derivative with respect to $X_{i\mu}$ can be evaluated as
\be\nonumber
\frac{\partial G_{\fa\fb}}{\partial X_{i\mu}} = - G_{\fa i}G_{\mu \fb} - G_{\fa \mu}G_{i \fb}.
\ee
We can calculate the terms I and II using this identity. Then, the resulting expressions can be estimated using the anisotropic local law,  Theorem \ref{thm_local}, on $G$ and the anisotropic local laws on $GJ_\al G$, $\al=1,2,3,4$, which will be provided by Theorem \ref{lemmaGHF} below. Through our calculations, we find that the term I will cancel certain type A terms up to an $\oo(1)$ error, while the term II will contribute to the first term on the right-hand side of \eqref{idea_induction}. 

In general, when the entries of $X$ are not Gaussian, we can replace Gaussian integration by parts by a cumulant expansion formula in Lemma \ref{lemma_add_cumu}, with which we get an expansion of \eqref{eq_typeb} with higher order derivatives of $G_{\mu\fa}\Upsilon^{k-1}$. Then, we need to estimate them using anisotropic local laws on $G$ and $GJ_\al G$. However, due to the intricate form of $G$ as an inverse of a $4\times 4$ block matrix, the estimation of first order derivative terms is already quite complicated. The estimation of higher order derivative terms will be even more tedious. In particular, to get the fourth cumulant terms in \eqref{Cijij}, we need to study terms coming from the third order derivative of $G_{\mu\fa}\Upsilon^{k-1}$, which leads to a much lengthier calculation than that in the Gaussian case. To have a more tractable proof, we will adopt a strategy in \cite{KY,KY_AOP}: we first consider an \emph{almost Gaussian case} where most of the entries of $X$ and $Y$ are Gaussian, and then show that the general case is sufficiently close to the almost Gaussian case in the sense of the limiting CLT of $\Upsilon_0$ in \eqref{eq_upsilon0}. The merit of this strategy is that we can divide the proof into several parts that are relatively easier to handle, as we will explain now.

First, given the matrix $\bW$ appearing in $\Upsilon_0$, we will construct almost Gaussian matrices $X^g$ and $Y^g$ by changing most entries of $X^g$ and $Y^g$ to i.i.d.\;Gaussian random variables, while keeping the rest entries unchanged. The locations of Gaussian entries depend on the indices of ``small" entries in $\bW$ (see Proposition \ref{main_prop1} for more details). Then, we can define $H^g$, $G^g$ and $\Upsilon^g_0$ by replacing $X$ and $Y$ with $X^g$ and $Y^g_0$ in definitions \eqref{linearize_block}, \eqref{def_resolvent} and \eqref{eq_upsilon0}. Under this construction, we can show that $\Upsilon_0$ has the same asymptotic distribution as $\Upsilon_0^g$ through a resolvent comparison argument developed in \cite[Section 7]{KY}. Since this is a relatively standard argument in the random matrix theory literature, we will not discuss it here and refer the reader to Section \ref{secpfmain1} for more details.

Now, to conclude the proof, it remains to prove the CLT of $\Upsilon^g_0$. We first decompose each of $X^g$ and $Y^g$ into several different blocks---a large block consisting of Gaussian entries only and several small blocks that also contain non-Gaussian entries. Using the Schur complement formula and concentration estimates for large random vectors, after some calculations, we can rewrite $\Upsilon^g_0$ into two parts, where one part is of the form \eqref{eq_upsilon0} with a resolvent consisting of the large Gaussian blocks in $X^g$ and $Y^g$, and the other part is a quadratic form of the small blocks in $X^g$ and $Y^g$ (see equation \eqref{Mthetad2} below). We have discussed the proof for the former part using Gaussian integration by parts and local laws. On the other hand, the latter part can be handled directly using the classical CLT. This completes the proof for the almost Gaussian case in principle, but the calculations of the limiting covariance functions of the two parts (cf. Sections \ref{sec_var} and \ref{sec_var2}) are rather tedious. However, these calculations are straightforward algebraic calculations, and the reader can use a computer algebra system to check them. 

Our main result for the almost Gaussian case is summarized in Proposition \ref{main_prop1}, and its proof in Sections \ref{sec almostGauss}--\ref{appd GJG} constitutes the main theoretical contribution of this paper. More precisely, Section \ref{sec almostGauss}  constructs the almost Gaussian setting and calculates the limiting covariance function; Section \ref{sec Gauss} proves the CLT of \eqref{eq_upsilon0} in the Gaussian case; Section \ref{appd GJG} proves an sharp anisotropic local law on $GJ_\al G$.

Finally, Theorem \ref{main_thm2} follows from Theorem \ref{main_thm1} combined with a comparison argument. More precisely, suppose we have two ensembles of random matrices $(X, Y)$ and $(\wt X,\wt Y)$, where $X$ and $Y$ satisfy the moment assumption \eqref{eq_8moment} and \smash{$\wt X$ and $\wt Y$} satisfy \eqref{eq_highmoment}. Then, using the resolvent comparison method developed in \cite{Anisotropic}, we can show that the asymptotic distributions of $ \Upsilon_0(X,Y)$ and \smash{$ \Upsilon_0(\wt X,\wt Y)$} are the same as long as the \emph{first four moments} of the $X$ entries and $Y$ entries match those of the $\wt X$ entries and $\wt Y$ entries. In the proof of Theorem \ref{main_thm1}, we have shown the CLT of $ \Upsilon_0(\wt X,\wt Y)$. Together with the comparison result, it implies that $ \Upsilon_0(X,Y)$ satisfies the same CLT, and thus concludes Theorem \ref{main_thm2}. Both the construction of $(\wt X,\wt Y)$ according to the moment matching conditions and the resolvent comparison method have been well-understood in the random matrix theory literature. We refer the reader to Section \ref{pf thm2} for more details.

\section{Linearization method and resolvents}\label{sec pfstart}

In this section, we reduce the study of the limiting distribution of the outliers to proving the CLT for a matrix of the form \eqref{eq_upsilon0}. We first recall some (almost) sharp convergence estimates on the sample CCCs that have been proved in \cite{PartI,PartIII}. They will serve as important a priori estimates for our proof. 

\subsection{Convergence of sample CCCs}

To simplify notations, it is helpful to use the following notion of stochastic domination introduced in \cite{Average_fluc}. 
It greatly simplifies the presentation by systematizing statements of the form ``$\xi$ is bounded by $\zeta$ with high probability up to a small power of $n$".

\begin{definition}[Stochastic domination and high probability event]\label{stoch_domination}
	(i) Let
	\[\xi=\left(\xi^{(n)}(u):n\in\bbN, u\in U^{(n)}\right),\hskip 10pt \zeta=\left(\zeta^{(n)}(u):n\in\bbN, u\in U^{(n)}\right)\]
	be two families of nonnegative random variables, where $U^{(n)}$ is a possibly $n$-dependent parameter set. We say $\xi$ is stochastically dominated by $\zeta$, uniformly in $u$, if for any small constant $\epsilon>0$ and large constant $D>0$, we have that
	\[\sup_{u\in U^{(n)}}\bbP\left[\xi^{(n)}(u)>n^\epsilon\zeta^{(n)}(u)\right]\le n^{-D}\]
	for large enough $n\ge n_0(\epsilon, D)$, and we will use the notation $\xi\prec\zeta$ to denote it. 
	If a family of complex random variables $\xi$ satisfy $|\xi|\prec\zeta$, then we will also write $\xi \prec \zeta$ or $\xi=\OO_\prec(\zeta)$.
	
	\vspace{5pt}	
	\noindent (ii) We extend $\OO_\prec(\cdot)$ to matrices in the operator norm sense as follows. Let $A$ be a family of random matrices and $\zeta$ be a family of nonnegative random variables. Then $A=\OO_\prec(\zeta)$ means that $\|A\|\prec \zeta$.
	
	\vspace{5pt}	
	\noindent (iii) As a convention, for two \emph{deterministic} nonnegative quantities $\xi$ and $\zeta$, we write $\xi\prec\zeta$ if and only if $\xi\le n^\tau \zeta$ for any constant $\tau>0$. 
	
	\vspace{5pt}	
	\noindent (iv) We say an event $\Xi$ holds with high probability (w.h.p.) if for any constant $D>0$, $\mathbb P(\Xi)\ge 1- n^{-D}$ for large enough $n$. Moreover, we say $\Xi$ holds with high probability on an event $\Omega$ if for any constant $D>0$, $\mathbb P(\Omega\setminus \Xi)\le n^{-D}$ for large enough $n$.
\end{definition}

The following lemma collects some basic properties of stochastic domination $\prec$, which will be used tacitly in the proof.

\begin{lemma}[Lemma 3.2 in \cite{isotropic}]\label{lem_stodomin}
	Let $\xi$ and $\zeta$ be two families of nonnegative random variables, $U^{(n)}$ and $V^{(n)}$ be two parameter sets, and $C>0$ be a large constant.
	\begin{enumerate}
		\item[(i)] Suppose that $\xi (u,v)\prec \zeta(u,v)$ uniformly in $u\in U^{(n)}$ and $v\in V^{(n)}$. If $|V^{(n)}|\le n^C$, then $\sum_{v\in V^{(n)}} \xi(u,v) \prec \sum_{v\in V^{(n)}} \zeta(u,v)$ uniformly in $u \in U^{(n)}$.
		
		\item[(ii)] If $\xi_1 (u)\prec \zeta_1(u)$ and $\xi_2 (u)\prec \zeta_2(u)$ uniformly in $u\in U^{(n)}$, then $\xi_1(u)\xi_2(u) \prec \zeta_1(u)\zeta_2(u)$ uniformly in $u\in U^{(n)}$.
		
		\item[(iii)] Suppose that $\Psi(u)\ge n^{-C}$ is deterministic and $\xi(u)$ satisfies $\mathbb E|\xi(u)|^2 \le n^C$ for all $u\in U^{(n)}$. Then if $\xi(u)\prec \Psi(u)$ uniformly in $u\in U^{(n)}$, we have that $\mathbb E\xi(u) \prec \Psi(u)$ uniformly in $u\in U^{(n)}$.
	\end{enumerate}
\end{lemma}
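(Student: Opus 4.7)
The three parts are quite standard properties of the $\prec$ relation, each requiring a different elementary probabilistic tool, so I would treat them separately.

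For part (i), the plan is simply a union bound. Fix $\epsilon>0$ and $D>0$. Since $\xi(u,v)\prec\zeta(u,v)$ uniformly in $(u,v)$, for any $D'>0$ there exists $n_0(\epsilon,D')$ such that $\sup_{u,v}\mathbb P[\xi(u,v)>n^\epsilon\zeta(u,v)]\le n^{-D'}$. Then
\[
\mathbb P\Big[\sum_{v\in V}\xi(u,v)>n^\epsilon\sum_{v\in V}\zeta(u,v)\Big]\le \sum_{v\in V}\mathbb P[\xi(u,v)>n^\epsilon\zeta(u,v)]\le |V|\cdot n^{-D'}\le n^{C-D'},
\]
and choosing $D'=D+C$ yields the conclusion uniformly in $u$.

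For part (ii), I would split the exponent. Given $\epsilon,D>0$, write $\epsilon=\epsilon/2+\epsilon/2$ and use the inclusion $\{\xi_1\xi_2>n^\epsilon\zeta_1\zeta_2\}\subset\{\xi_1>n^{\epsilon/2}\zeta_1\}\cup\{\xi_2>n^{\epsilon/2}\zeta_2\}$ (taking the case $\zeta_i=0$ separately, where the hypothesis forces $\xi_i=0$ w.h.p.). The union bound then gives probability $\le 2n^{-D}$ after applying the defining estimate for each $\xi_i\prec\zeta_i$ with parameters $(\epsilon/2,D)$.

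Part (iii) is the substantive step and the one I expect to require the most care. The idea is a truncation argument combined with Cauchy--Schwarz. Fix $\epsilon>0$; I would decompose
\[
\mathbb E\,\xi(u)=\mathbb E\bigl[\xi(u)\mathbf 1_{\{\xi(u)\le n^\epsilon\Psi(u)\}}\bigr]+\mathbb E\bigl[\xi(u)\mathbf 1_{\{\xi(u)> n^\epsilon\Psi(u)\}}\bigr].
\]
The first term is pointwise bounded by $n^\epsilon\Psi(u)$. For the second, Cauchy--Schwarz gives
\[
\mathbb E\bigl[\xi(u)\mathbf 1_{\{\xi(u)>n^\epsilon\Psi(u)\}}\bigr]\le \sqrt{\mathbb E\,\xi(u)^2}\cdot\sqrt{\mathbb P[\xi(u)>n^\epsilon\Psi(u)]}\le n^{C/2}\cdot n^{-D/2},
\]
where $D$ is chosen freely via the definition of $\xi\prec\Psi$. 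Taking $D=2C+2$ makes this error bounded by $n^{-1}\le n^{-C}\le \Psi(u)$, which is in turn $\le n^\epsilon\Psi(u)$. Combining gives $\mathbb E\,\xi(u)\le 2n^\epsilon\Psi(u)$ uniformly in $u$, and since $\epsilon>0$ was arbitrary this is exactly $\mathbb E\,\xi(u)\prec\Psi(u)$.

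The main obstacle is keeping track of the order of quantifiers in part (iii): the freedom to choose $D$ large depending on $\epsilon$ and on the constants $C$ (both from the lower bound on $\Psi$ and from the second-moment bound on $\xi$) is exactly what makes the Cauchy--Schwarz tail term negligible against the deterministic floor $\Psi\ge n^{-C}$. The polynomial lower bound on $\Psi$ and polynomial upper bound on $\mathbb E\,\xi^2$ are both essential; without them the truncation tail could dominate and the conclusion could fail.
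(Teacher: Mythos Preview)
The paper does not give its own proof of this lemma; it is simply quoted from \cite{isotropic}. Your argument is the standard one and is essentially what one finds in the reference, so there is nothing to compare at the level of approach.

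There is, however, one arithmetic slip in your part~(iii). With $D=2C+2$ the tail term is $n^{C/2-D/2}=n^{-C/2-1}$, and your chain ``$n^{-1}\le n^{-C}\le\Psi(u)$'' fails at the first inequality whenever $C>1$ (and the lemma explicitly allows $C$ large). Since $\Psi(u)$ may be as small as $n^{-C}$, you need the Cauchy--Schwarz tail to be at most $n^{-C}$; this requires $C/2-D/2\le -C$, i.e.\ $D\ge 3C$. With that choice (or simply ``$D$ sufficiently large depending on $C$'') the argument is correct. This is purely a numerical adjustment; the structure of the proof is sound.
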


The following large deviation bounds on the outliers of $\cal C_{\cal X\cal Y}$ were proved in \cite{PartI}.

\begin{lemma}[Theorem 2.9 of \cite{PartI}]\label{main_thm}
	Suppose Assumption \ref{main_assm} holds. If $t_i \ge t_c + n^{-1/3}$, then we have that
	\be\label{boundout}
	|\wt\lambda_i -\theta_i | \prec  n^{-1/2}|t_i - t_c|^{1/2}.
	\ee
	On the other hand, for any $i=\OO(1)$ with $t_i < t_c + n^{-1/3}$, we have that
	\be\label{boundedge}
	|\wt\lambda_i - \lambda_+| \prec  n^{-2/3}.
	\ee
\end{lemma}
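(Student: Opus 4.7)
The plan is to work with the linearization machinery from \cite{PartIII} referenced just above the lemma. The eigenvalues of $\mathcal C_{\mathcal{XY}}$ can be realized as the (nonzero) singular numbers of an auxiliary $(p+q+2n)\times(p+q+2n)$ matrix $H$ which is linear in $X,Y$, so that the resolvent $G(z)=H(z)^{-1}$ carries spectral information while being amenable to entry-wise local-law estimates. Under the signal-plus-noise decomposition \eqref{data model1}, $\widetilde{\mathcal C}_{\mathcal{XY}}$ differs from the null matrix $\mathcal C_{XY}$ by a deterministic rank-$r$ perturbation encoded by $A,B$ via their SVDs \eqref{assm AB}. A Schur-complement / Woodbury calculation then shows that $\wt\lambda$ is an outlier eigenvalue iff a \textbf{master equation}
\[
\det\!\bigl(T_r - \mathcal G_r(\wt\lambda)\bigr)=0
\]
holds, where $T_r$ is a fixed $\OO(r)\times\OO(r)$ matrix built from $\Sigma_a,\Sigma_b$ and $\mathcal M_r$ (see \eqref{Mrab0}), and $\mathcal G_r(z)$ is an $\OO(r)\times\OO(r)$ matrix whose entries are $G(z)$ tested against the deterministic columns of $\bU_a,\bU_b,\bV_a,\bV_b$.

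The next step is to replace $\mathcal G_r(z)$ by its deterministic equivalent $\Pi_r(z)$. The anisotropic local law and rigidity estimates of \cite{PartIII} yield, with high probability,
\[
\bigl\|\mathcal G_r(z)-\Pi_r(z)\bigr\|\prec \frac{1}{\sqrt{n(\eta+\kappa)}}\qquad\text{for }z=E+\ii\eta,\ \kappa=|E-\lambda_+|.
\]
A direct calculation identifies the roots of the deterministic master equation $\det(T_r-\Pi_r(z))=0$ lying outside $[\lambda_-,\lambda_+]$ as precisely the $\theta_i$ from \eqref{gammai}, and the condition for such a root to exist is $t_i>t_c$; furthermore one finds $\theta_i-\lambda_+\sim (t_i-t_c)$.

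For the outlier bound \eqref{boundout}, I would run a stability (implicit function) argument around $\theta_i$. The derivative of the relevant eigenvalue of $T_r-\Pi_r(z)$ at $z=\theta_i$ is governed by $\partial_z \Pi_r(\theta_i)$, and a direct computation shows that this Jacobian scales like $(\theta_i-\lambda_+)^{1/2}\sim |t_i-t_c|^{1/2}$ because of the square-root behavior of the Stieltjes transform of \eqref{LSD} at $\lambda_+$. Inverting this Jacobian converts the random fluctuation $\|\mathcal G_r-\Pi_r\|\prec n^{-1/2}$ at $z=\theta_i$ into the displacement bound
\[
|\wt\lambda_i-\theta_i|\prec \frac{n^{-1/2}}{|t_i-t_c|^{1/2}}\cdot |t_i-t_c|\ =\ n^{-1/2}|t_i-t_c|^{1/2},
\]
which is \eqref{boundout}. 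For the non-outlier / near-critical case \eqref{boundedge}, the master equation no longer has a root separated from $\lambda_+$; instead I would couple $\wt\lambda_i$ to the null eigenvalues of $\mathcal C_{XY}$ by an interlacing/min-max argument applied to the rank-$r$ perturbation, and then invoke the edge rigidity $|\lambda_j-\lambda_+|\prec n^{-2/3}$ for $j=\OO(1)$ that is a standard consequence of the square-root edge local law in \cite{PartIII}.

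The main obstacle will be the stability analysis in a regime where several $t_i$ are simultaneously close to $t_c$ (so $\theta_i$ accumulate at $\lambda_+$) or to each other (so the relevant eigenvalues of $T_r-\Pi_r$ are near-degenerate). One must analyze the master equation in vector/matrix form rather than scalar form and carefully quantify how the singular behavior of $\partial_z\Pi_r$ near the edge interacts with the near-degeneracy of the spectrum of $T_r-\Pi_r(\theta_i)$; the factor $|t_i-t_c|^{1/2}$ in \eqref{boundout} is the precise output of balancing these two competing sources of degeneracy. Handling the transition zone $t_i\approx t_c+n^{-1/3}$, where \eqref{boundout} and \eqref{boundedge} must be compatible, will require invoking the edge local law on mesoscopic scales $\eta\gtrsim n^{-2/3}$ rather than the bulk version.
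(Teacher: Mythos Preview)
This lemma is not proved in the present paper: it is quoted verbatim as Theorem~2.8 of \cite{PartI}, and no argument is given here. So there is no ``paper's own proof'' to compare against. That said, Section~\ref{sec mainthm} of this paper sets up exactly the master-equation framework you describe (culminating in \eqref{masterx4}), and your overall plan --- linearization, Schur complements to a finite-dimensional determinant, anisotropic local law to estimate the random error, then a stability argument --- is the correct architecture and matches what \cite{PartI} does.

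Where your proposal goes wrong is in the quantitative bookkeeping, and these are not cosmetic slips: they are precisely the scalings that produce \eqref{boundout}. First, $\theta_i-\lambda_+\sim(t_i-t_c)^2$, not $(t_i-t_c)$; this is \eqref{eq_gcomplex0} in Lemma~\ref{lem_complexderivative}. Second, the anisotropic local law outside the spectrum (equation~\eqref{aniso_outstrong}) gives an error $\prec n^{-1/2}(\kappa+\eta)^{-1/4}$, not the $(\kappa+\eta)^{-1/2}$ you wrote. Third, the Jacobian $\partial_z f_c(\theta_i)$ \emph{blows up} like $(\theta_i-\lambda_+)^{-1/2}$ near the edge (equation~\eqref{eq_mcomplex0}); it does not vanish like $(\theta_i-\lambda_+)^{1/2}$ as you state. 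The correct chain is
\[
|\wt\lambda_i-\theta_i|\ \prec\ \frac{n^{-1/2}(\theta_i-\lambda_+)^{-1/4}}{|f_c'(\theta_i)|}\ \sim\ n^{-1/2}(\theta_i-\lambda_+)^{1/4}\ \sim\ n^{-1/2}|t_i-t_c|^{1/2},
\]
which lands on the right answer but for different reasons than the ones you gave. Your displayed computation $\frac{n^{-1/2}}{|t_i-t_c|^{1/2}}\cdot|t_i-t_c|$ reaches the correct expression but the intermediate factors do not correspond to the actual error and Jacobian; as written the argument would not survive scrutiny. The edge case \eqref{boundedge} is handled in \cite{PartI} by a counting/interlacing argument similar in spirit to what you sketch.
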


The quantiles of the density \eqref{LSD} correspond to the classical locations of the eigenvalues of $\cal C_{YX}$. 
\begin{definition} 
	The classical location $\gamma_j$ of the $j$-th eigenvalue of $\cal C_{YX}$ is defined as
	\begin{equation}\label{gammaj}
		\gamma_j:=\sup_{x}\left\{\int_{x}^{+\infty} f(t)\dd t > \frac{j-1}{q}\right\},
	\end{equation}
	where $f$ is defined in \eqref{LSD}. Note that we have $\gamma_1 = \lambda_+$ and $\lambda_+ - \gamma_j \sim (j/n)^{2/3}$ for $j>1$.
\end{definition}

In \cite{PartIII}, we have proved the following eigenvalue rigidity estimate for $\cal C_{YX}$.
 
\begin{lemma}[Theorem 2.5 of \cite{PartIII}]\label{lem null}
	Suppose Assumption \ref{main_assm} holds. The eigenvalues of the null SCC  matrix $\cal C_{YX}$ satisfy the following eigenvalue rigidity estimate: 
	\be\label{rigidity}
	|\lambda_i - \gamma_i | \prec i^{-1/3} n^{-2/3},\quad 1 \le i \le (1-\delta)q,
	\ee
	where $\delta>0$ is any small constant.
\end{lemma}

\subsection{Local laws}\label{sec_maintools}

In this section, we state some local laws on the resolvent that have been proved in \cite{PartI,PartIII}. These local laws will be important tools for our proof. We first introduce some new notations.

\begin{definition}[Index sets]\label{def_index}
	For simplicity of notations, we define the index sets
	\begin{align*}
		\cal I_1:=\{1,\cdots,p\} , \quad & \cal I_2:=\{p+1,\cdots,p+q\}, \\
		\cal I_3:=\{ p+q+1,\cdots,p+q+n\}, \quad & \cal I_4:=\{ p+q+n+1,\cdots,p+q+2n\}.
	\end{align*} 
	We will consistently use latin letters $i,j\in\sI_{1}\cup \sI_2$ and greek letters $\mu,\nu\in\sI_{3}\cup \sI_{4}$. Moreover, we will use the notations $\fa,\fb\in \cal I:=\cup_{i=1}^4 \cal I_i$. 
\end{definition}

Denote the averaged partial traces of the resolvent by
\be\label{defmal} 
m_\al(z) : = \frac{1}{n}\sum_{\fa \in \cal I_\al}  G_{\fa\fa}(z) ,\quad \al=1,2,3,4. \ee
In \cite{PartIII}, we have shown that they converge to the deterministic limits given by 
\begin{align}
	&m_{1c}(z) 
	= \frac{ - z +c_1+c_2+\sqrt{(z-\lambda_-)(z-\lambda_+)} }{2(1-c_1)z(1-z)} - \frac{c_1}{(1-c_1)z}, \label{m1c}\\
	&m_{2c}(z) = \frac{ -z +c_1 + c_2+ \sqrt{(z-\lambda_-)(z-\lambda_+)}}{2(1-c_2)z(1-z)} -\frac{c_2}{(1-c_2)z}, \label{m2c}\\
	&m_{3c}(z) 
	= \frac{1}{2}\left[ (1-2c_1) z + c_1 - c_2 + \sqrt{(z-\lambda_-)(z-\lambda_+)}\right] , \label{m3c}\\
	&m_{4c}(z)= \frac{1}{2}\left[ (1-2c_2) z +  c_2 - c_1 + \sqrt{(z-\lambda_-)(z-\lambda_+)}\right] ,\label{m4c}
\end{align}
where $\lambda_\pm$ are defined in \eqref{lambdapm}. In \cite{PartIII}, we also verified the following equations for $m_{\al c}$:
\begin{align}
	& m_{1c}= - \frac{c_1}{m_{3c}} , \quad m_{2c} = -\frac{ c_2}{m_{4c}}, \quad  m_{3c}(z) - m_{4c} (z)= (1-z)(c_1-c_2) ,\label{selfm12}\\
	& m_{3c}(z) 
	=\frac{ 1-(z-1)m_{2c}(z)}{z^{-1} - [m_{1c}(z)+m_{2c}(z)] + (z-1)m_{1c}(z)m_{2c}(z)}, \label{selfm3}  \\
	& m_{4c}(z) = \frac{ 1-(z-1)m_{1c}(z)}{z^{-1} - [m_{1c}(z)+m_{2c}(z)] + (z-1)m_{1c}(z)m_{2c}(z)} \label{selfm32} .
\end{align}
One can also check them through direct calculations with \eqref{m1c}--\eqref{m4c}. We also define the function 
\be
\begin{split}\label{hz}
	h(z):&=  \frac{z^{-1/2}m_{3c}(z)}{1+(1-z)m_{2c}(z)} =  \frac{z^{-1/2}m_{4c}(z)}{1+(1-z)m_{1c}(z)}\\
	&= \frac{z^{1/2}}{2} \left[ - z + (2-c_1-c_2) + \sqrt{(z-\lambda_-)(z-\lambda_+)}\right].
\end{split}
\ee
With the above definitions, we define the matrix limit of $G(z)$ as
\be \label{defn_pi}
\Pi(z) := \begin{bmatrix} \begin{pmatrix} c_1^{-1}m_{1c}(z)I_p & 0\\ 0 & c_2^{-1}m_{2c}(z)I_q\end{pmatrix} & 0 \\ 0  & \begin{pmatrix}  m_{3c}(z)I_n  & h(z)I_n\\  h(z)I_n &  m_{4c}(z)  I_n\end{pmatrix}\end{bmatrix} .\ee
Using \eqref{selfm12}--\eqref{hz}, one can check that 
\be \label{useful}
\Pi= \begin{bmatrix} \begin{pmatrix} -m_{3c}I_p & 0\\ 0 & -m_{4c}I_q\end{pmatrix} & 0 \\ 0  &  \begin{pmatrix} z  I_n & z^{1/2}I_n\\ z^{1/2} I_n &  z I_n\end{pmatrix}^{-1} -\begin{pmatrix} m_{1c}I_n& 0 \\ 0 & m_{2c}I_n\end{pmatrix}\end{bmatrix} ^{-1}.\ee

We define two different spectral domains of $z$ for the local laws. 
\begin{definition}
	Given a constant $\e >0$, we define a spectral domain around the bulk spectrum $[\lambda_-,\lambda_+]$ as
	\begin{equation}
		S(\e):= \left\{z=E+ \ii \eta: \e \leq E \leq 1-\e, n^{-1+\e} \leq \eta \leq \e^{-1} \right\}, \label{SSET1}
	\end{equation}
	and a spectral domain outside the bulk spectrum as 
	\begin{equation}\label{eq_paraout}
		S_{out}(\epsilon):=\left\{z= E+ \ii\eta: \lambda_+ + n^{-2/3+\e}\le E \le 1-\e,  0\le \eta \le \e^{-1}\right\}.
	\end{equation}
\end{definition} 

The following theorem gives the \emph{anisotropic local law} of $G(z)$ on the above two spectral domains. 
\begin{theorem} [Anisotropic local law]\label{thm_local}  
	Suppose Assumption \ref{main_assm} holds. For any fixed $\e>0$ and deterministic unit vectors $\mathbf u, \mathbf v \in \mathbb C^{\mathcal I}$, the following anisotropic local laws hold. 
	
	\begin{enumerate}
		\item  {\bf (Theorem 2.13 of \cite{PartIII})}. For any $z= E+\ii \eta\in S(\epsilon)$, we have that
		\begin{equation}\label{aniso_law}
			\left| \langle \mathbf u, G(z) \mathbf v\rangle - \langle \mathbf u, \Pi (z)\mathbf v\rangle \right| \prec  \sqrt {\frac{\Im \, m_{3c}(z)}{{n\eta }} } + \frac{1}{n\eta},
		\end{equation}
		where the inner product is defined as $\langle \mathbf v, \mathbf w\rangle:= \bv^* \bw$ with $\bv^*$ denoting the conjugate transpose. 
		
		\item{\bf (Theorem 3.9 of \cite{PartI})}. For any $z= E+\ii \eta\in S_{out}(\epsilon)$, we have that
		\begin{equation}\label{aniso_outstrong}
			\left| \langle \mathbf u, G(z) \mathbf v\rangle - \langle \mathbf u, \Pi (z)\mathbf v\rangle \right|  \prec   \frac{1}{ n^{1/2}(\vert E -\lambda_+\vert+\eta)^{1/4}}.
		\end{equation}
	\end{enumerate}
	The above estimates \eqref{aniso_law} and \eqref{aniso_outstrong} hold uniformly in the spectral parameter $z$. Moreover, for these estimates to hold, it is not necessary to assume that the entries of $X$, $Y$ and $Z$ are identically distributed---only independence and moment conditions are needed. 
\end{theorem}

The averaged partial traces in \eqref{defmal} satisfy stronger \emph{averaged local laws}.  
\begin{theorem}[Averaged local law, Theorem 2.14 of \cite{PartIII}]\label{thm_largerigidity}
	Suppose Assumption \ref{main_assm} holds. For any fixed $\epsilon>0$, we have that	
	\begin{equation}
	\max_{\al=1,2,3,4}	\vert m_\al(z)-m_{\al c}(z) \vert \prec (n \eta)^{-1}, \label{aver_in}
	\end{equation}
	uniformly in $z \in S(\epsilon )$. Moreover, outside of the spectrum we have the stronger estimate 
	\begin{equation}\label{aver_out0}
	\max_{\al=1,2,3,4}	\vert m_\al(z)-m_{\al c}(z) \vert \prec \frac{1}{n(\vert E -\lambda_+\vert +\eta)} + \frac{1}{(n\eta)^2\sqrt{\vert E -\lambda_+\vert +\eta}},
	\end{equation}
	uniformly in $z\in S(\e) \cap S_{out}(\epsilon)$. 
\end{theorem}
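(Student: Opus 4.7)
The plan is to upgrade the entrywise/anisotropic bound of Theorem \ref{thm_local} to the averaged bound by exploiting cancellations that arise from summing $G_{\fa\fa}-\Pi_{\fa\fa}$ over $\fa\in\cal I_\al$. The guiding observation is that the pointwise error has size $\Psi + (n\eta)^{-1}$ with $\Psi^2 = \Im m_{3c}/(n\eta)$, but this error is essentially a centered fluctuation, so averaging $n$ of them should save a factor of $\sqrt{n\eta}$ and produce the deterministic gain to $(n\eta)^{-1}$.

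First I would derive a system of approximate self-consistent equations for $(m_1,m_2,m_3,m_4)$. Applying Schur complement to $H(z)$ in \eqref{linearize_block} with respect to a single index $\fa\in \cal I_1$ (resp.\ $\cal I_2$), one expresses $1/G_{\fa\fa}$ in terms of partial traces of the minor resolvent $G^{(\fa)}$ plus a quadratic form in the $\fa$-th row of $X$ (resp.\ $Y$). Applying the same procedure to $\mu\in\cal I_{3,4}$ yields analogous equations involving rows of $X$ and $Y$. Replacing the partial traces of $G^{(\fa)}$ by those of $G$ (up to a negligible $1/n$ correction) gives equations
\[
F_\al(m_1,m_2,m_3,m_4;z) = \mathsf E_\al(z),\qquad \al=1,2,3,4,
\]
where $F_\al$ is the deterministic system solved by $(m_{1c},\ldots,m_{4c})$ (consistent with \eqref{selfm12}--\eqref{selfm32}), and $\mathsf E_\al$ is an average over $\fa\in\cal I_\al$ of random error terms $\epsilon_\fa$. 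Each $\epsilon_\fa$ is either an off-diagonal resolvent entry or a centered quadratic form $\sum_{\mu\nu} X_{\fa\mu}X_{\fa\nu}G^{(\fa)}_{\mu\nu}$ (or its $Y$-analogue). Hanson--Wright/standard large deviation estimates bound each $|\epsilon_\fa|\prec \Psi$.

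The main work is the \emph{fluctuation averaging lemma}: show
\[
\left|\frac{1}{n}\sum_{\fa\in \cal I_\al}\epsilon_\fa\right| \prec (n\eta)^{-1},\qquad \al=1,2,3,4.
\]
This is proved by a high-moment estimate $\mathbb E\bigl|n^{-1}\sum_\fa \epsilon_\fa\bigr|^{2p}$ via cumulant expansion in the entries of $X,Y$ (using Assumption \ref{main_assm}(i),(ii)); each decoupling step either pairs indices and produces a $\Psi^2$ factor (yielding the gain) or opens a resolvent entry off the diagonal which is itself controlled by \eqref{aniso_law}. The combinatorics follow the blueprint of Erd\H os--Knowles--Yau--Yin type arguments, but must be tracked across the four index blocks $\cal I_1,\ldots,\cal I_4$; the structure of $\Pi(z)$ in \eqref{defn_pi} ensures that only pairings consistent with the block structure survive. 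Combined with a stability analysis of the linearized system $F_\al$ around $(m_{1c},\ldots,m_{4c})$---the Jacobian is invertible with norm $\lesssim 1$ for $z\in S(\e)$---this yields \eqref{aver_in}.

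For the outside estimate \eqref{aver_out0}, I would use the sharper anisotropic bound \eqref{aniso_outstrong}, together with the improved stability outside the spectrum: in that regime the square-root behaviour of $m_{3c}$ at $\lambda_+$ gives an inverse Jacobian of size $(\kappa+\eta)^{-1/2}$. The first term $[n(\kappa+\eta)]^{-1}$ comes from the linear (Gaussian) contribution to the fluctuation average after stability, while the second term $[(n\eta)^2\sqrt{\kappa+\eta}]^{-1}$ captures the leading sub-leading fluctuation where one additional off-diagonal resolvent factor of size $(n\eta)^{-1}$ must be retained before square-root inversion. The hard part will be the fluctuation averaging lemma in the two-block, non-Hermitian-style linearized setting: one must carefully verify that the cancellations enjoyed by symmetric Wigner/sample-covariance matrices still occur here, which hinges on the specific off-diagonal block structure of $H(z)$ and on the identities \eqref{selfm12}--\eqref{hz} that make $\Pi(z)$ consistent with all four Schur equations simultaneously.
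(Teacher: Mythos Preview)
This theorem is not proved in the present paper: it is quoted verbatim as ``Theorem 2.12 of \cite{PartIII}'' and used as a black-box input (e.g.\ in the proof of \eqref{claimmal}). There is therefore no proof in this paper against which to compare your proposal.

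That said, your outline is the standard route for results of this type and is almost certainly what \cite{PartIII} does: derive approximate self-consistent equations for $(m_1,\dots,m_4)$ via Schur complement, prove a fluctuation-averaging lemma to bound $n^{-1}\sum_\fa \epsilon_\fa$ by $(n\eta)^{-1}$ rather than the entrywise $\Psi$, and feed this into the stability of the system \eqref{selfm12}--\eqref{selfm32}. Your description of the refined outside-the-spectrum bound \eqref{aver_out0} as arising from the square-root stability factor $(\kappa+\eta)^{-1/2}$ together with a secondary fluctuation term is also the right heuristic. The one place to be careful is the $2\times2$ block structure for indices in $\cal I_3\cup\cal I_4$ (as in \eqref{Aij_group}--\eqref{eq_res3}): the Schur step for $\mu\in\cal I_3$ naturally couples $\mu$ and $\bar\mu=\mu+n$, so the fluctuation averaging must be carried out at the level of these $2\times2$ blocks rather than scalar diagonal entries.
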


\subsection{Reduction to the law of resolvent}\label{sec mainthm}

In this subsection, we relate the limiting law of $\bm\zeta$ in Theorem \ref{main_thm1} to that of a matrix taking the form \eqref{eq_upsilon0}. Without loss of generality, we assume a slightly stronger condition than \eqref{assm evalue} so that $A$ and $B$ are both of rank $r$:
\be\label{assm evalue2}
0< a_r \le \cdots \le a_2\le a_1 \le C, \quad 0< b_r\le \cdots \le b_2\le b_1\le C.
\ee
This can be achieved by adding a small $0<\e_n<e^{-n}$ to each zero $a_i$ or $b_i$. Since the proof does not depend on the lower bounds of $a_r$ and $b_r$, we can easily extend it to the case with zero $a_i$'s or $b_i$'s by taking $\e_n\to 0$.

Recall that if $\lambda\in (0,1)$ is not in the spectrum of $\cal C_{XY}$, then it is an eigenvalue of $\cal C_{\cal X\cal Y}$ if and only if \eqref{detereq temp2} holds. Throughout the following discussion, we always assume that $\lambda\in S_{out}(\e)$ and $\lambda\ge \lambda_+ +\e$ for a small constant $\e>0$. We write \eqref{detereq temp2} as
\be\label{masterx2}
\begin{split}
	0&=\det \left[ \begin{pmatrix} 0 & \cal D^{-1}\\ \cal D^{-1}  & 0\end{pmatrix}+ \Pi_{4r}(\lambda) + \cal E_{4r} \right]  = \det \left[ \begin{pmatrix} \Pi_{2r}^{(1)} & \cal D^{-1}  \\ \cal D^{-1}  & \Pi_{2r}^{(2)}  \end{pmatrix} + \cal E_{4r}  \right],
\end{split}
\ee
where $\Pi^{(1)}_{2r}$ and $\Pi^{(2)}_{2r}$ are $2r\times 2r$ deterministic matrices defined as 
$$\Pi^{(1)}_{2r}(\lambda):= \begin{pmatrix} c_1^{-1}m_{1c}(\lambda) I_r & 0 \\ 0 & c_2^{-1}m_{2c}(\lambda) I_r\end{pmatrix}, \quad \Pi^{(2)}_{2r}(\lambda):= \begin{pmatrix}  m_{3c}(\lambda) I_r & h(\lambda)  \bV_a^{\top} \bV_b \\ h(\lambda) \bV_b^{\top} \bV_a &  m_{4c}(\lambda) I_r\end{pmatrix},$$
$\Pi_{4r}$ is a $4r\times 4r$ deterministic matrix defined as
\be\label{defpi} \Pi_{4r}(\lambda):= \begin{pmatrix} \Pi^{(1)}_{2r}(\lambda) & 0\\0 & \Pi^{(2)}_{2r}(\lambda) \end{pmatrix},   \ee
and  $\cal E_{4r}$ is a $4r\times 4r$ random matrix defined as
\be\label{e4r} 
\begin{split}
	\cal E_{4r}&\equiv \begin{pmatrix} \cal E_{2r}^{(1)} & \cal E_{2r}^{(3)} \\ \cal E_{2r}^{(4)} & \cal E_{2r}^{(2)} \end{pmatrix} := \begin{pmatrix} \bU^{\top} & 0 \\ 0 & \bV^{\top}\end{pmatrix} \left(G -\Pi \right) \begin{pmatrix} \bU & 0 \\ 0 & \bV\end{pmatrix} + \begin{pmatrix} 0 &0 \\ 0  & \bV^{\top}\Pi^{(2)} \bV  - \Pi_{2r}^{(2)}\end{pmatrix}.
\end{split}
\ee
Here, $\cal E_{2r}^{(1)}$, $\cal E_{2r}^{(2)}$, $\cal E_{2r}^{(3)}$ and $\cal E_{2r}^{(4)}$ are the upper-left, lower-right, upper-right, and lower-left $2r\times 2r$ blocks of $\cal E_{4r}$, and 
$$\Pi^{(2)}(\lambda):=  \begin{pmatrix}  m_{3c}(\lambda) I_n & h(\lambda) I_n \\ h(\lambda) I_n &  m_{4c}(\lambda) I_n\end{pmatrix} $$
is the lower-right $2n\times 2n$ block of $\Pi$. Note $\Pi_{2r}^{(2)}$ is defined such that $\Pi_{2r}^{(2)}= \E (\bV^{\top}\Pi^{(2)} \bV)$. 

Using the large deviation bounds in Lemma \ref{largedeviation} below, we can obtain the following approximate isotropic conditions for $Z$:
\begin{equation}
	\| ZZ^{\top} - I_{r}\| \prec n^{-1/2}, \quad \text{and} \quad \|Z\mathbf v \|_{2} \prec n^{-1/2}\|\mathbf v\|_2, \label{eq_iso}
\end{equation}
for any deterministic vector $\bv \in \mathbb C^n$. Using Theorem \ref{thm_local} and equation \eqref{eq_iso}, we can bound $\cal E_{4r}$ as 
\be\label{e4r2}
\left\|\cal E_{4r}\right\| \prec n^{-1/2} . 
\ee 
Now, using the Schur complement formula, we find that \eqref{masterx2} is equivalent to 
\be\nonumber 
\begin{split}
	\det\left[\Pi_{2r}^{(2)} + \cal E_{2r}^{(2)}  -  \left(\cal D^{-1}+\cal E_{2r}^{(4)}\right)\left(\Pi_{2r}^{(1)} + \cal E_{2r}^{(1)} \right)^{-1} \left(\cal D^{-1}+\cal E_{2r}^{(3)}\right) \right] = 0 .
\end{split}
\ee
Using \eqref{e4r2} and the first two equations in \eqref{selfm12}, we can reduce this equation to 
\be\label{masterx2.5}
\begin{split}
	& \det \left[\begin{pmatrix} m_{3c}(\lambda) (I_r+\Sigma_a^{2})  & h(\lambda) \Sigma_a\bV_a^{\top} \bV_b\Sigma_b \\ h(\lambda) \Sigma_b \cal \bV_b^{\top} \bV_a \Sigma_a & m_{4c}(\lambda) (I_r+\Sigma_b^2)  \end{pmatrix} +  \cal E_{2r}+\OO_\prec (n^{-1} ) \right]=0,
\end{split}
\ee
where $\cal E_{2r}$ is a $2r\times 2r$ random matrix defined as
\begin{align*}
	\cal E_{2r} & =  \cal D\cal E_{2r}^{(2)}\cal D +   (\Pi_{2r}^{(1)})^{-1} \cal E_{2r}^{(1)} (\Pi_{2r}^{(1)})^{-1}   - (\Pi_{2r}^{(1)})^{-1} \cal E_{2r}^{(3)} \cal D - \cal D \cal E_{2r}^{(4)}(\Pi_{2r}^{(1)})^{-1} \\
	&= \begin{pmatrix}  m_{3c}\cal E^{(1)}_r  & h  \cal E^{(3)}_r  \\ h  \cal E^{(4)}_r &  m_{4c} \cal E^{(2)}_r  \end{pmatrix}  ,
\end{align*}
with $\cal E^{(\al)}_r$, $\al=1,2,3,4$, being four $r\times r$ random matrices defined as
\begin{align*}
	\cal E^{(1)}_r &=  m_{3c}^{-1}\Sigma_a\bV_a^{\top} Z\left( \cal G_{(33)} - m_{3c}\right)Z \bV_a\Sigma_a +  \Sigma_a \bV_a^{\top} \left(Z Z^\top -I_r\right)\bV_a \Sigma_a \\
	& + m_{3c}  \bU_a^{\top} (\cal G_{(11)} - c_1^{-1}m_{1c} )\bU_a + \left[ \bU_a^{\top} \cal G_{(13)} Z^\top \bV_a \Sigma_a+  \Sigma_a \bV_a^{\top} Z\cal G_{(31)} \bU_a \right] , \\
	\cal E^{(2)}_r & = m_{4c}^{-1}\Sigma_b  \bV_b^{\top} Z\left(\cal G_{(44)} - m_{4c}\right) Z^\top\bV_b\Sigma_b +  \Sigma_b \bV_b^{\top}\left( ZZ^\top-I_r\right) \bV_b \Sigma_b  \\
	&+ m_{4c}   \bU_b^{\top} (\cal G_{(22)} - c_2^{-1}m_{2c} )\bU_b +  \left[  \bU_b^{\top} \cal G_{(24)}Z^\top \bV_b\Sigma_b+ \Sigma_b \bV_b^{\top} Z\cal G_{(42)} \bU_b \right] \\
	\cal E^{(3)}_r &=  (\cal E^{(4)}_r)^{\top} = h^{-1}\Sigma_a\bV_a^{\top}Z \left(\cal G_{(34)} - h\right) Z^\top \bV_b\Sigma_b +  \Sigma_a\bV_a^{\top}\left( ZZ^\top  - I_r\right)\bV_b\Sigma_b  \\
	&+ \frac{m_{3c}m_{4c}}{h}  \bU_a^{\top} \cal G_{(12)} \bU_b +\frac{m_{3c}}{h} \bU_a^{\top}\cal G_{(14)}Z^\top \bV_b \Sigma_b + \frac{m_{4c}}{h}\Sigma_a \bV_a^{\top}Z \cal G_{(32)} \bU_b.
\end{align*}
In the above expressions, we abbreviated the $\cal I_\al \times \cal I_\beta$ block of $ G$ by $ \cal G_{(\al\beta)}$ for $\al,\beta=1,2,3,4$. 
Applying the Schur complement formula once again, we obtain that \eqref{masterx2.5} is equivalent to 
\begin{align*}
	\det & \left[ f_{c}(\lambda) \left(I_r+\Sigma_a^{2}\right)  + f_{c}(\lambda) \cal E_r^{(1)} \right.\\
	& \left. - \left(  \Sigma_a \bV_a^{\top} \bV_b \Sigma_b+ \cal E_r^{(3)}\right) \frac{1}{  I_r+\Sigma_b^{2} + \cal E_r^{(2)}} \left( \Sigma_b\bV_b^{\top} \bV_a\Sigma_a+ \cal E_r^{(4)}\right) +\OO_\prec (n^{-1})\right]=0,
\end{align*}
where the function $f_c$ is defined by
\be\label{fcz} f_c(z):=\frac{m_{3c}(z)m_{4c}(z)}{h^2(z)}=\frac{z- (c_1+c_2-2c_1c_2) + \sqrt{(z-\lambda_-)(z-\lambda_+)}}{2(1-c_1)(1-c_2)}.\ee 
Using \eqref{e4r2}, we can check that $\|\cal E_r^{(\al)}(\lambda)\|\prec n^{-1/2}$, $\al=1,2,3,4$, 
with which we can further reduce the above equation to
\be\label{masterx3}
\begin{split}
	&  \det \left[ f_c(\lambda) I_r -  \wh\Sigma_a\bV_a^{\top} \bV_b\wh\Sigma_b^2 \bV_b^{\top} \bV_a\wh\Sigma_a+ \cal E_r (\lambda)+\OO_\prec (n^{-1}) \right] =0,
\end{split}
\ee
where we have abbreviated that
\be\label{Mrab0}\wh\Sigma_a:=\frac{\Sigma_a}{(I_r+\Sigma_a^2)^{1/2}},\quad \wh\Sigma_b:=\frac{\Sigma_b}{(I_r+\Sigma_b^2)^{1/2}},\ee
and $\cal E_{r}$ is a $r\times r$ random matrix defined by
\be \label{master_er}
\begin{split}
	\cal E_r: = & f_c \frac{1}{ (I_{r} + \Sigma_a^2)^{1/2}}  \cal E^{(1)}_r \frac{1}{ (I_{r} + \Sigma_a^2)^{1/2}}   \\
	&+ \wh\Sigma_a \bV_a^{\top} \bV_b \wh\Sigma_b \frac{1}{(I_r+\Sigma_b^2)^{1/2}} \cal E_r^{(2)}  \frac{1}{(I_r+\Sigma_b^2)^{1/2}}\wh\Sigma_b \bV_b^{\top} \bV_a \wh\Sigma_a \\
	& - \frac{ 1}{ (I_{r} + \Sigma_a^2)^{1/2}}  \cal E^{(3)}_r  \frac{1}{(I_r+\Sigma_b^2)^{1/2}}\wh\Sigma_b \bV_b^{\top} \bV_a\wh\Sigma_a \\
	&  -  \wh\Sigma_a \bV_a^{\top} \bV_b \wh\Sigma_b \frac{1}{(I_r+\Sigma_b^2)^{1/2}} \cal E^{(4)}_r  \frac{1}{ (I_{r} + \Sigma_a^2)^{1/2}}   .
\end{split}
\ee
Finally, with the SVD \eqref{Mrab}, we can rewrite the equation \eqref{masterx3} as
\be\label{masterx4}
\begin{split}
	&  \det \left[ f_c(\lambda) I_r -   \diag (t_1, \cdots, t_r) + \cal O^{\top} \cal E_r (\lambda) \cal O+\OO_\prec (n^{-1}) \right] =0.
\end{split}
\ee

One can easily check that the following function is the inverse of $f_c$ in \eqref{fcz} when $ z\notin [\lambda_-,\lambda_+]$:
$$g_c(\xi) := \xi \left( 1- c_1 + c_1\xi^{-1}\right)\left( 1- c_2 + c_2\xi^{-1}\right).$$
Moreover, it is easy to check that $f_c(\lambda_+)=t_c$ (recall \eqref{tc}). Since $f_c(\lambda)$ is monotonically increasing when $\lambda>\lambda_+$, the function $f_c(\lambda) - t_i=0$ has a solution in $(\lambda_+,1)$ if and only if 
\be\label{t>tc} 
t_c=f_c(\lambda_+)< t_i . 
\ee
If \eqref{t>tc} holds, then $t_i$ gives rise to an outlier lying around $\theta_i = g_c(t_i)$, which explains \eqref{gammai}. With a direct calculation, we can verify the following deterministic estimates on $f_c$ and $g_c$. 

\begin{lemma}[Lemma 4.1 of \cite{PartI}] \label{lem_complexderivative}
	Fix a  large constant $C>0$. For any $z \in \mathbb D:=\{z\in \mathbb C: \lambda_+ < \re z < C\}$ and $\xi \in f_c( \mathbb D)$,
	the following estimates hold:
	\begin{align}
		|f_{c}(z) - f_{c}(\lambda_+)| \sim |z-\lambda_+|^{1/2}, \quad & |f_{c}'(z) | \sim |z-\lambda_+|^{-1/2}, \label{eq_mcomplex0}\\
		|g_{c}(\xi) - \lambda_+| \sim |\xi-t_c|^2, \quad  & |g_{c}'(\xi) | \sim |\xi-t_c| .\label{eq_gcomplex0}
	\end{align}
\end{lemma}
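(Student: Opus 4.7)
The plan is to prove all four estimates by direct computation from the explicit formulas for $f_c$ and $g_c$, exploiting that the branch-point structure of $\sqrt{(z-\lambda_-)(z-\lambda_+)}$ at $z=\lambda_+$ drives everything. I will first verify (by plugging $z=\lambda_+$ into \eqref{fcz} and using $\lambda_+ =(\sqrt{c_1(1-c_2)}+\sqrt{c_2(1-c_1)})^2$) the identity $f_c(\lambda_+)=t_c$. Since $\lambda_-<\lambda_+$ are separated by a positive distance that is bounded away from zero on the range of parameters allowed by Assumption \ref{main_assm} (iii), the factor $\sqrt{z-\lambda_-}$ is comparable to $1$ uniformly on $\mathbb{D}$.

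For the first pair of estimates, I would write
\[
f_c(z)-f_c(\lambda_+)=\frac{(z-\lambda_+)+\sqrt{z-\lambda_-}\,\sqrt{z-\lambda_+}}{2(1-c_1)(1-c_2)},
\]
and split into the regime $|z-\lambda_+|\le \delta$ for a small constant $\delta$ and its complement in $\mathbb{D}$. In the small regime the square-root term dominates the linear term, so $|f_c(z)-t_c|\sim |z-\lambda_+|^{1/2}$ uniformly (using that $\sqrt{z-\lambda_-}\sim 1$). In the complementary compact set both sides are bounded above and below by positive constants, so the relation $\sim$ holds trivially. The derivative estimate is analogous: differentiating \eqref{fcz} yields $f_c'(z)=[2(1-c_1)(1-c_2)]^{-1}\big(1+(2z-\lambda_+-\lambda_-)/[2\sqrt{(z-\lambda_-)(z-\lambda_+)}]\big)$, and again the square-root term dominates near $\lambda_+$ while being of order one away from it, giving $|f_c'(z)|\sim |z-\lambda_+|^{-1/2}$.

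For the bounds on $g_c$, I would invoke the inverse function relation. Since $f_c$ is monotone on $(\lambda_+,\infty)$ with $f_c(\lambda_+)=t_c$, and since $f_c$ extends analytically to a neighborhood of $\mathbb{D}$ with $f_c'(z)\ne 0$ (by the derivative estimate above), $g_c=f_c^{-1}$ is well defined on $f_c(\mathbb{D})$ and satisfies $g_c'(\xi)=1/f_c'(g_c(\xi))$. Applying the second half of \eqref{eq_mcomplex0} at $z=g_c(\xi)$ gives $|g_c'(\xi)|\sim |g_c(\xi)-\lambda_+|^{1/2}$, and then the first half of \eqref{eq_mcomplex0} at the same point gives $|g_c(\xi)-\lambda_+|^{1/2}\sim |\xi-t_c|$, so $|g_c'(\xi)|\sim |\xi-t_c|$. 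Finally, $|g_c(\xi)-\lambda_+|\sim |\xi-t_c|^2$ follows either by integrating the derivative estimate from $t_c$ to $\xi$ along a straight path in $f_c(\mathbb{D})$, or directly from $|g_c(\xi)-\lambda_+|^{1/2}\sim |\xi-t_c|$ derived above.

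The only mild obstacle is the careful choice of branch of the square root so that all the above estimates are simultaneously valid on the open set $\mathbb{D}$ (which includes $z$ with nonzero imaginary part), and the verification that the implicit constants in $\sim$ depend only on the constants $C,\tau$ in Assumption \ref{main_assm}. Both are routine once one notes that $\lambda_+-\lambda_-$ is bounded below by a positive constant depending only on $\tau$, so $\sqrt{z-\lambda_-}$ stays in a fixed compact subset of $\C\setminus\{0\}$ throughout $\mathbb{D}$.
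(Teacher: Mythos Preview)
Your proposal is correct and follows precisely the route the paper indicates: the paper does not give a proof but simply states that the estimates ``can be verified through direct calculations'' and cites the result as Lemma~4.1 of \cite{PartI}. Your explicit computation of $f_c(z)-t_c$ from \eqref{fcz}, the near/far splitting around $\lambda_+$, and the deduction of the $g_c$ estimates from the inverse-function relation are exactly the kind of direct calculation intended, so there is nothing to add.
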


Now, with equation \eqref{masterx4}, we can prove the following proposition, which shows that the limiting law of $\bm\zeta$ in Theorem \ref{main_thm1} is determined by the limiting law of $n^{1/2}\cal O^{\top} \cal E_r (\theta_l) \cal O$. {Let $\al:\{1,\cdots, \gamma(l)\}\to \{1,\cdots, r\}$ be a labeling function so that $\wt\lambda_{\al(i)}$ is the $i$-th largest value in the set $\{\wt\lambda_i: i\in \gamma(l)\}$.}

\begin{proposition}[Reduction to the law of $G$]\label{redGthm}
	Under the assumptions of Theorem \ref{main_thm1}, 
	there exists a constant $\e>0$ depending on $\delta$ only such that for $1\le i \le |\gamma(l)|$,
	\be\label{redG}
	\left| \left(\wt\lambda_{\al(i)} - \theta_l \right)- \mu_i \left\{ a(t_l) \left[  \diag (t_1, \cdots, t_r) - t_l - \cal O^{\top} \cal E_r (\theta_l)\cal O\right]_{\llbracket\gamma(l)\rrbracket}  \right\} \right| \prec   n^{-1/2-\e} ,
	\ee
	where $\mu_i$ is the $i$-th eigenvalue of the $|\gamma(l)|\times |\gamma(l)|$ matrix 
	$$ a(t_l)\left[ \diag (t_1, \cdots, t_r) - t_l - \cal O^{\top} \cal E_r (\theta_l)\cal O\right]_{\llbracket\gamma(l)\rrbracket} $$
	in the sense of \eqref{ev minor}. 
\end{proposition}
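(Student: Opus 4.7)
The plan is to start from the reduced determinantal equation \eqref{masterx4} evaluated at $\lambda=\wt\lambda_{\al(i)}$, linearize $f_c$ around $\theta_l$, freeze the random matrix $\cal E_r$ at $\theta_l$, and then extract the outlier shifts by a Schur-complement argument that isolates the $\gamma(l)$-block. As an a priori input, Lemma \ref{main_thm} together with the Lipschitz bound $|\theta_{\al(i)}-\theta_l|\lesssim |t_{\al(i)}-t_l|\le rn^{-1/2+\delta}$ (from smoothness of $g_c$ on $(t_c,1)$) gives $|\wt\lambda_{\al(i)}-\theta_l|\prec n^{-1/2+\delta}$ for each $i\in\{1,\dots,|\gamma(l)|\}$.

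Differentiating the explicit formula $g_c(\xi)=(1-c_1)(1-c_2)\xi+[c_1(1-c_2)+c_2(1-c_1)]+c_1c_2/\xi$ and using the identity $t_c^2=c_1c_2/[(1-c_1)(1-c_2)]$, one finds $g_c'(t_l)=(1-c_1)(1-c_2)(t_l^2-t_c^2)/t_l^2=a(t_l)$, so $f_c'(\theta_l)=1/a(t_l)$ and a Taylor expansion gives
\begin{equation*}
f_c(\wt\lambda_{\al(i)})-t_l=\frac{\wt\lambda_{\al(i)}-\theta_l}{a(t_l)}+O_\prec(n^{-1+2\delta}).
\end{equation*}
For the random block, \eqref{aniso_outstrong} yields $\|\cal E_r(\lambda)\|\prec n^{-1/2}$ uniformly on a complex disk around $\theta_l$ of radius $\sim\delta_l$ (contained in $S_{out}(\e)$ since $\theta_l\ge\lambda_++\delta_l^2$ by \eqref{eq_gcomplex0}), and the analyticity of $\cal E_r$ on that disk together with Cauchy's integral formula upgrades this to $\|\cal E_r'(\lambda)\|\prec n^{-1/2}$, so $\|\cal E_r(\wt\lambda_{\al(i)})-\cal E_r(\theta_l)\|\prec n^{-1+\delta}$. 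Substituting both expansions into \eqref{masterx4} and multiplying the matrix inside the determinant by the order-one positive constant $a(t_l)$ reduces the master equation to
\begin{equation*}
\det\!\left[(\wt\lambda_{\al(i)}-\theta_l)I_r-D+O_\prec(n^{-1+\delta})\right]=0,\qquad D:=a(t_l)\bigl[\diag(t_1,\dots,t_r)-t_l-\cal O^{\top}\cal E_r(\theta_l)\cal O\bigr].
\end{equation*}

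I then block-decompose $D$ according to $\{1,\dots,r\}=\gamma(l)\sqcup\gamma(l)^c$, with diagonal block $D_{11}$ exactly the restriction in \eqref{redG}. A short argument, using that $\gamma(l)$ is an equivalence class for the transitive closure of $i\sim j \iff |t_i-t_j|\le n^{-1/2+\delta}$ on $\{i:t_i>t_c\}$, shows that $\al(1)<\dots<\al(|\gamma(l)|)$ form a block of consecutive integers, so the $\wt\lambda_{\al(i)}$ are the consecutive sample outliers near $\theta_l$. By Definition \ref{Def gammal}, for $j\in\gamma(l)^c$ the diagonal entries $|t_j-t_l|$ are bounded below by $n^{-1/2+\delta}$ when $t_j>t_c$ and by a positive constant otherwise; combined with $\|\cal O^{\top}\cal E_r(\theta_l)\cal O\|\prec n^{-1/2}$, this makes $D_{22}$ invertible with $\|D_{22}^{-1}\|\lesssim n^{1/2-\delta}$, while $\|D_{12}\|,\|D_{21}\|\prec n^{-1/2}$. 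For $|\mu|\le n^{-1/2+\delta}$, the Schur identity $\det(D-\mu I)=\det(D_{22}-\mu I)\det\!\bigl(D_{11}-\mu I-D_{12}(D_{22}-\mu I)^{-1}D_{21}\bigr)$ together with $\|D_{12}(D_{22}-\mu I)^{-1}D_{21}\|\prec n^{-1/2-\delta}$ and Weyl's inequality shows that the $|\gamma(l)|$ eigenvalues of $D$ of size $O_\prec(n^{-1/2+\delta})$ coincide with the eigenvalues of $D_{11}$ up to $O_\prec(n^{-1/2-\delta})$. Matching them with $\wt\lambda_{\al(i)}-\theta_l$ in decreasing order and absorbing the $O_\prec(n^{-1+\delta})$ remainder by Weyl yields \eqref{redG} with any fixed $\e\in(0,\delta)$.

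The main obstacle is the tight error budget in this last step: the $\gamma(l)^c$-gap $a(t_l)n^{-1/2+\delta}$ only narrowly dominates the perturbation $\prec n^{-1/2}$, so the polynomial improvement $n^{-\e}$ over $n^{-1/2}$ is a genuine gain that must be tracked carefully through $\|D_{22}^{-1}\|$; the admissible $\e$ is forced to be strictly smaller than $\delta$. A secondary technical point is verifying the consecutive-indices property of $\gamma(l)$ so that the specific outliers $\wt\lambda_{\al(i)}$ can be paired, in order, with the $\mu_i$ of $D_{11}$ rather than with stray eigenvalues from the $\gamma(l)^c$-block.
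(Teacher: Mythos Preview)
Your proposal is correct and follows essentially the same approach as the paper. The paper's proof simply records that $1/f_c'(\theta_l)=g_c'(t_l)=a(t_l)$, invokes the a priori localization from Lemma~\ref{main_thm} and the estimates in Lemma~\ref{lem_complexderivative}, and then defers the remaining perturbation argument to Proposition~4.5 of \cite{KY_AOP}; your write-up unpacks precisely that referenced argument (Taylor expansion of $f_c$ at $\theta_l$, freezing $\cal E_r$ at $\theta_l$ via a Cauchy bound on $\cal E_r'$, and the Schur-complement reduction to the $\gamma(l)$-block followed by Weyl's inequality). One small refinement worth making explicit: when you bound $\|(D_{22}-\mu I)^{-1}\|$, the needed lower bound on the diagonal entries comes from $|t_j-t_{\al(i)}|>n^{-1/2+\delta}$ for \emph{every} $i\in\gamma(l)$ (not only $i=l$), combined with $|\mu-a(t_l)(t_{\al(i)}-t_l)|\prec n^{-1/2}$ from Lemma~\ref{main_thm}; this is what guarantees $|a(t_l)(t_j-t_l)-\mu|\gtrsim n^{-1/2+\delta}$ uniformly over the relevant $\mu$, and hence the claimed $n^{-1/2-\delta}$ gain.
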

\begin{proof}
	By Lemma \ref{main_thm} and the condition \eqref{tlc}, we have that for $i\in \gamma(l)$, $\wt\lambda_i \in S_{out}(\e)$ and $\wt\lambda_i\ge \lambda_+ + \e$ with high probability for a sufficiently small constant $\e>0$. Thus the above discussion starting at \eqref{masterx2} will finally lead to the equation \eqref{masterx4}. Armed with \eqref{boundout}, equation \eqref{masterx4} and the estimates in Lemma \ref{lem_complexderivative}, we can conclude the proof using the same argument as the one for \cite[Proposition 4.5]{KY_AOP}. We omit the details. In fact, one can easily see why \eqref{redG} holds by performing a Taylor expansion of {$f_c(\wt\lambda_{\al(i)})$} around $\theta_l$ in \eqref{masterx4}, and noticing that $1/{f_c'(\theta_l)} = g_c'(t_l)=a(t_l)$.
\end{proof}


By Proposition \ref{redGthm}, to prove Theorem \ref{main_thm1}, it suffices to study the CLT of $n^{1/2}\cal O^{\top} \cal E_r (\theta_l)\cal O$. With a straightforward algebraic calculation, we get that 
\begin{align}\label{defPPP+QQQ}
	& \cal E_r(\theta_l)   =\cal E_r^{(z)}(\theta_l) + \cal E_r^{(g)}(\theta_l),
\end{align}
where 
\be\label{defPPP}
\begin{split}
	\cal E_r^{(z)}(\theta_l)   :=&\ f_c(\theta_l)  \wh\Sigma_a \bV_a^{\top}  \left(ZZ^{\top} - I_r\right) \bV_a\wh\Sigma_a  \\
	& + \wh\Sigma_a  \bV_a^{\top} \bV_b \wh\Sigma_b^2\bV_b^{\top} \left(ZZ^{\top} - I_r\right)\bV_b \wh\Sigma_b^2 \bV_b^{\top} \bV_a \wh\Sigma_a  \\
	& -   \wh\Sigma_a  \bV_a^{\top}  \left(ZZ^{\top} - I_r\right) \bV_b \wh\Sigma_b^2 \bV_b^{\top} \bV_a \wh\Sigma_a \\
	& -  \wh\Sigma_a  \bV_a^{\top} \bV_b \wh\Sigma_b^2 \bV_b^{\top} \left(ZZ^{\top} - I_r\right)\bV_a\wh\Sigma_a  ,
\end{split}
\ee
and
\be\label{defQQQ}
\begin{split}
	\cal E_r^{(g)}(\theta_l)  :=&\,   f_c(\theta_l) m_{3c}(\theta_l) \mathfrak W^\top(\theta_l)   \begin{pmatrix} \bU_a^{\top} & 0 & 0 & 0 \\ 0 & \bU_b^{\top} & 0 & 0 \\ 0 & 0 & Z & 0 \\ 0 & 0 & 0 & Z \end{pmatrix} [G(\theta_l)  -\Pi(\theta_l)  ]  \\
	& \times \begin{pmatrix} \bU_a & 0 & 0 & 0 \\ 0 & \bU_b & 0 & 0 \\ 0 & 0 & Z^{\top}& 0 \\ 0 & 0 & 0 & Z^{\top}  \end{pmatrix}  \mathfrak W(\theta_l)  ,
\end{split}
\ee
with $\mathfrak W$ being a $4r\times r$ matrix defined by
$$ \mathfrak W(\theta_l)  :=\begin{bmatrix}   (I_{r} + \Sigma_a^2)^{-1/2}    \\   -  h(\theta_l) {m^{-1}_{3c}(\theta_l) }(1+\Sigma_b^2)^{-1/2}\wh\Sigma_b \bV_b^{\top} \bV_a \wh\Sigma_a   \\   m_{3c}^{-1}(\theta_l)\bV_a\wh\Sigma_a \\   - h(\theta_l){m^{-1}_{3c}(\theta_l) m^{-1}_{4c}(\theta_l)}\bV_b \wh\Sigma_b^2 \bV_b^{\top} \bV_a \wh\Sigma_a  \end{bmatrix}.$$
{Here, the superscripts $(z)$ and $(g)$ indicate that we will make use of the CLT of $ZZ^\top -I_r$ and $G-\Pi$, respectively, when dealing with these two terms \eqref{defPPP} and \eqref{defQQQ}. }

By classical CLT, we know that
\be\label{CLTe1}\sqrt{n} \left(ZZ^{\top} - I_r\right) \Rightarrow \mathbf{G},\ee
where $ \mathbf{G}$ is an $r\times r$ symmetric Gaussian matrix whose entries are independent up to symmetry and have mean zero and variances (recall \eqref{mux4})
$$\mathbb E \mathbf{G}_{ij}^2 = 1, \quad i\ne j, \quad \text{and} \quad \mathbb E \mathbf{G}_{ii}^2 = \kappa_z^{(4)} + 2.$$
With this result, we immediately derive the CLT for $n^{1/2}\cal O^{\top}\cal E_r^{(z)}\cal O $. Therefore, to conclude Theorem \ref{main_thm1}, it remains to prove the CLT for the matrix 
\be\label{wtMx}{\cal M}_0(\theta_l)  :=\sqrt{n}\begin{pmatrix} \bU_a^{\top} & 0 & 0 & 0 \\ 0 & \bU_b^{\top} & 0 & 0 \\ 0 & 0 & Z & 0 \\ 0 & 0 & 0 &Z \end{pmatrix} [G(\theta_l)  -\Pi(\theta_l)  ]\begin{pmatrix} \bU_a & 0 & 0 & 0 \\ 0 & \bU_b & 0 & 0 \\ 0 & 0 & Z^{\top}& 0 \\ 0 & 0 & 0 & Z^{\top}  \end{pmatrix} .\ee
As discussed in Section \ref{sec_overview}, we first prove the CLT for ${\cal M}_0(\theta_l) $ in an almost Gaussian case, where most of the $X$ and $Y$ entries are Gaussian. Then, in Section \ref{secpfmain1}, we show that the general case in the setting of Theorem \ref{main_thm1} is sufficiently close to the almost Gaussian case, thereby completing the proof of Theorem \ref{main_thm1}.

\section{The almost Gaussian case}\label{sec almostGauss}


In this section, we calculate the limiting distribution of $\cal M_0(\theta_l)$ in the almost Gaussian case. The extension to the general setting in Theorem \ref{main_thm1} will be postponed to Section \ref{secpfmain1}. We fix a small constant $\tau_0>0$ in this section, and use $n^{-\tau_0}$ as a cutoff scale in the entries of $\bU_a$ and $\bU_b$, below which the corresponding entries of $X$ and $Y$ are Gaussian. Our goal is to prove the following proposition.

\begin{proposition}\label{main_prop1}
Fix any $1\le l \le r$ and a sufficiently small constant $\tau_0>0$. Suppose Assumption \ref{main_assm} and \eqref{tlc} hold. Suppose $X$ and $Y$ satisfy that for $k\in \cal I_1$,
	\be\label{almost GuassianX}
	\max_{1\le i \le r} |\bu_i^a(k)| \le n^{-\tau_0}\ \Rightarrow \ X_{k\mu} \text{ is Gaussian,} \ \ \mu\in \cal I_3, 
	\ee
	and for $k\in \cal I_2$,
	\be\label{almost GuassianY}
	\max_{1\le i \le r} |\bu_i^b(k)| \le n^{-\tau_0}\ \Rightarrow \ Y_{k\mu} \text{ is Gaussian,} \ \ \mu\in \cal I_4. 
	\ee
	Then, for any bounded continuous function $f:\R^{|\gamma(l)|\times |\gamma(l)|}\to \R$, we have that 
	\be\label{OEO}
	\lim_{n}\left[\mathbb E f\left( \left( \sqrt{n} \cal O^{\top} \cal E_r (\theta_l)\cal O\right)_{\llbracket \gamma(l)\rrbracket} \right) - \E f(\Upsilon_l)\right] =0,
	\ee
	where $\Upsilon_l$ is the Gaussian random matrix defined in Theorem \ref{main_thm1}. 
\end{proposition}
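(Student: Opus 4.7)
The plan is to prove the CLT for $\sqrt{n}\,\cal O^{\top}\cal E_r(\theta_l)\cal O$ restricted to the $\gamma(l)$-block via a characteristic function argument, treating the two pieces of the decomposition $\cal E_r(\theta_l)=\cal E_r^{(1)}(\theta_l)+\cal E_r^{(2)}(\theta_l)$ separately and combining their contributions at the end. The first piece $\cal E_r^{(1)}$, defined in \eqref{defPPP}, depends only on $Z$ through $ZZ^{\top}-I_r$ conjugated by rows of $\bV_a, \bV_b$ and deterministic diagonal weights built from $\Sigma_a,\Sigma_b$. By \eqref{CLTe1} and the classical multivariate CLT, $\sqrt{n}\,\cal E_r^{(1)}$ converges jointly to a Gaussian matrix whose covariance, after conjugation by $\cal O$, produces exactly the $\cal W_{k,ij}$ tensor and the $(\mu_z^{(4)}-3)$ term in \eqref{Cijij}. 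Conditioning on $Z$ (so that $\|ZZ^\top-I_r\|\prec n^{-1/2}$ from \eqref{eq_iso} holds with high probability), the remaining randomness from $X$ and $Y$ is concentrated in $\cal E_r^{(2)}$, and the two blocks will be shown to be asymptotically jointly Gaussian with the covariance structure of $\Upsilon_l$.

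For $\cal E_r^{(2)}$, which by \eqref{defQQQ} is a linear functional of $G(\theta_l)-\Pi(\theta_l)$ tested against rows of $\bU_a, \bU_b$ and $Z$, I would study the characteristic function
\begin{equation*}
\Phi(M):=\mathbb E\exp\!\bigl(\ii\,\tr[M\cdot \sqrt{n}\,\cal M_0(\theta_l)]\bigr)
\end{equation*}
for a Hermitian test matrix $M$ supported on the relevant block. Differentiating in the entries of $M$ reduces matters to computing joint moments of the entries of $\cal M_0(\theta_l)$. These moments are evaluated by applying Gaussian integration by parts on every $X_{k\mu}$ with $\max_i|\bu_i^a(k)|\leq n^{-\tau_0}$ (and similarly for $Y$, by \eqref{almost GuassianX}--\eqref{almost GuassianY}), and a cumulant expansion up to fourth order on the remaining non-Gaussian entries. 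The standard resolvent derivative identity converts each resulting term into a product of resolvent entries, which can be controlled by the anisotropic local law \eqref{aniso_outstrong} at $z=\theta_l\in S_{out}(\e)$ and the averaged local law \eqref{aver_out0}. Higher-order cumulant terms (of order $\geq 5$) are absorbed into $\oo(1)$ errors using the high-moment bounds together with the fact that at most $O(n^{2\tau_0})$ indices $k$ per row of $\bU_a$ or $\bU_b$ can be non-Gaussian.

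The identification of the limit proceeds by explicit computation. The contribution from Gaussian integration by parts, after using the deterministic identities \eqref{selfm12}--\eqref{hz}, the relation $f_c(\theta_l)=t_l$, and $a(t_l)=1/f_c'(\theta_l)$ from Lemma \ref{lem_complexderivative}, reduces to
\begin{equation*}
\frac{(1-t_l)^2 t_l^2}{t_l^2-t_c^2}\left(2t_l+\frac{c_1}{1-c_1}+\frac{c_2}{1-c_2}\right)\bigl(\delta_{ii'}\delta_{jj'}+\delta_{ij'}\delta_{ji'}\bigr),
\end{equation*}
matching the first line of \eqref{Cijij}. The fourth-cumulant terms from non-Gaussian $X$-entries produce $t_l^2(\mu_x^{(4)}-3)\sum_k \cal U_{ki}\cal U_{ki'}\cal U_{kj}\cal U_{kj'}$, with $\cal U$ of \eqref{defUV} emerging from the contraction of $\bU_a$ against the constant vector $\mathfrak A(\theta_l)$; by symmetry the $Y$-entries contribute the corresponding $\cal V$-term. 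Combining these with the $\cal W_k$-contribution from $\cal E_r^{(1)}$ reproduces \eqref{Cijij}, and continuity of spectra yields \eqref{OEO} in conjunction with the reduction in Proposition \ref{redGthm}.

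The main obstacle is the bookkeeping in the cumulant expansion. Many Wick-type contractions must be enumerated, each a product of resolvent entries that need to be replaced by entries of $\Pi(\theta_l)$ up to errors controlled by the local laws, and showing that all cross contractions between different index pairs $(i,j)$ and $(i',j')$ recombine to yield exactly $C_{ij,i'j'}(t_l)$ requires careful combinatorial accounting. The cutoff $\tau_0$ plays a dual role: it guarantees that the non-Gaussian entries are sparse enough for their fourth-cumulant sums $\sum_k|\bu_i^a(k)|^4$ to be $O(1)$ (tight by Parseval), while simultaneously forcing the third and all higher odd cumulant contributions to be negligible, so that the truncation remainder in the cumulant expansion can be absorbed into the $\oo(1)$ error appearing in \eqref{OEO}.
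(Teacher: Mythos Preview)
Your approach is genuinely different from the paper's, and the difference is worth spelling out.

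The paper does \emph{not} run a cumulant expansion on the full resolvent $G(\theta_l)$. Instead, it exploits the almost-Gaussian hypothesis \eqref{almost GuassianX}--\eqref{almost GuassianY} structurally, via rotational invariance. After permuting rows so that the at most $\rho\le rn^{2\tau_0}$ non-Gaussian rows sit at the top, the Gaussian block of $X$ (and $Y$) is rotated by an orthogonal matrix, and a Schur complement over the index set $\mathbb T$ (the non-Gaussian rows plus the $r$ signal directions) writes $\cal M$ in distribution as $\sqrt n\,\bO^\top \cal B^{-1}\bO$ with
\[
\cal B=\wt\Pi_{\wt r,r}^{-1}+H_1+(1-\E_F)(F^\top\Pi^{(\mathbb T)}F)-F^\top(G^{(\mathbb T)}-\Pi^{(\mathbb T)})F+\text{small}.
\]
Here $G^{(\mathbb T)}$ is the resolvent of a \emph{fully Gaussian} minor, and $H_1$ and $F$ depend only on a finite-rank piece of the data. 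The CLT then decouples into two independent ingredients: (i) a Gaussian-integration-by-parts moment recursion for $F^\top(G^{(\mathbb T)}-\Pi^{(\mathbb T)})F$ (Lemma~\ref{Gauss lemma}), which needs an auxiliary anisotropic local law for $GJ_\alpha G$; and (ii) a classical CLT for explicit quadratic forms $\Theta_1,\dots,\Theta_4$ in the non-Gaussian entries (Lemma~\ref{asymp_Gauss}), where the $(\mu_x^{(4)}-3)$ and $(\mu_y^{(4)}-3)$ terms appear directly as fourth-moment contributions in $\Theta_3',\Theta_4'$. The final covariance is obtained by adding these pieces and simplifying.

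Your direct route---cumulant expansion on each entry of $G$, Gaussian IBP on most rows, fourth-order cumulant expansion on the sparse non-Gaussian rows---can in principle yield the same answer, but it forgoes the decoupling that is the very point of the almost-Gaussian assumption here. In the paper's scheme the non-Gaussian fourth-moment terms live in simple quadratic forms like $\sqrt n\,\IE(\bO_1^\top X_1X_1^\top\bO_1)$, so the $\cal U$-tensor is read off immediately; in your scheme you must extract the same tensor from fourth derivatives of resolvent entries and verify that all cross-contractions with the Gaussian bulk vanish to leading order, which is a considerably heavier combinatorial task. One specific point to watch: the paper kills the third-moment contributions not by sparsity alone but by the structural fact that $\|V^\top\mathbf e\|_\infty\prec n^{-1/2}$ (so the column sums of $V$ are small); in your expansion you would need an analogous cancellation mechanism rather than a naive count of non-Gaussian entries.
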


For simplicity, in the proof below we often drop the spectral parameter $z=\theta_l$ from our notations.  
Using \eqref{eq_iso} and the SVD of $Z$, we can find an $r\times n$ partial orthogonal matrix $\wt Z$ such that
\be\label{V-F} \wt Z \wt Z^{\top} = I_r, \quad  \|\wt Z - Z \|_F\prec n^{-1/2} .\ee
From \eqref{eq_iso} and \eqref{V-F}, we also obtain the following estimate: 
\be\label{Vmax}\|\wt Z\|_{\max} \le \|Z^\top\|_{\max} + n^{-1/2 +\e/2}  \le n^{-1/2+\e},\ee
with high probability for any fixed $\e>0$. Now, using \eqref{V-F} and \eqref{aniso_outstrong}, we get that
\be\label{M-M0}\|{\cal M}(\theta_l) - {\cal M}_0(\theta_l)\|\prec n^{-1/2},\ee
where $\cal M$ is a $4r\times 4r$ random matrix defined by
\be\label{defM} {\cal M}(\theta_l):=\sqrt{n}\begin{pmatrix} \bU_a^{\top} & 0 & 0 & 0 \\ 0 & \bU_b^{\top} & 0 & 0 \\ 0 & 0 & \wt Z & 0 \\ 0 & 0 & 0 & \wt Z \end{pmatrix} [G(\theta_l)-\Pi(\theta_l)] \begin{pmatrix} \bU_a & 0 & 0 & 0 \\ 0 & \bU_b & 0 & 0 \\ 0 & 0 & \wt Z^{\top} & 0 \\ 0 & 0 & 0 & \wt Z^{\top}  \end{pmatrix} .\ee
Hence, to obtain the CLT of $\cal M_0(\theta_l)$, it suffices to study $ {\cal M}(\theta_l)$.  For this purpose, we first introduce the concept of \emph{minors} of $H$ and $G$.  

\begin{definition}[Minors] \label{defminor}
Let $\cal J$ and $\mathbb T\subset \cal J$ be some index sets. Given any $ \cal J \times \cal J$ matrix $\cal A$, we define the minor $\cal A^{(\mathbb T)}:=(\cal A_{ab}:a,b \in \mathcal J\setminus \mathbb T)$ as the $ (\cal J\setminus \mathbb T)\times (\cal J\setminus \mathbb T)$ matrix obtained by removing all rows and columns indexed by $\mathbb T$. Note that we keep the names of indices when defining $\cal A^{(\mathbb T)}$, i.e. $(\cal A^{(\mathbb{T})})_{ab}= \cal A_{ab}$ for $a,b \notin \mathbb{{T}}$. Correspondingly, we define the resolvent minor as \smash{$	G^{(\mathbb T)}(z):=[  H^{(\mathbb T)}(z) ]^{-1}.$}
For convenience, we will adopt the convention that $\cal A^{(T)}_{\fa\fb} = 0$ when $\fa \in \mathbb T$ or $\fb \in \mathbb T$. We will abbreviate that $(\{\fa\})\equiv (\fa)$ and $(\{\fa, \fb\})\equiv (\fa\fb)$. 
\end{definition}

The following large deviation bounds for linear and quadratic forms of independent random variables were proved in proved in \cite{Delocal}.  

\begin{lemma}[Theorem B.1 of \cite{Delocal}]\label{largedeviation}
	Let $(x_i)$, $(y_j)$ be independent families of centered independent random variables, and $(\cal A_i)$, $(\cal B_{ij})$ be families of deterministic complex numbers. Suppose the entries $x_i$, $y_j$ have variances at most $n^{-1}$ and satisfy  (\ref{eq_highmoment}). Then, the following large deviation bounds hold:
	\be\nonumber
	\begin{split}
	\Big\vert \sum_i \cal A_i x_i \Big\vert \prec   \frac{1}{\sqrt{n}} &\Big(\sum_i |\cal A_i|^2 \Big)^{1/2} ,  \quad
	\Big\vert \sum_{i,j} x_i \cal B_{ij} y_j \Big\vert \prec  \frac{1}{n}\Big(\sum_{i, j} |\cal B_{ij}|^2\Big)^{{1}/{2}} , \\
	& \Big\vert \sum_{i\ne j}  x_i \cal B_{ij} x_j \Big\vert  \prec  \frac{1}{n}\Big(\sum_{i\ne j} |\cal B_{ij}|^2\Big)^{{1}/{2}} .
	\end{split}
	\ee
\end{lemma} 

For convenience, we introduce the following shorthand for the equivalence relation between two random vectors of fixed size in the sense of asymptotic distributions.
\begin{definition}\label{def_simD}
	Given two sequences of random vectors $\cal A_n$ and $\cal B_n$ in $\R^k$, where $k\in \N$ is a fixed integer,  we write $\cal A_n \stackrel{d}{\sim} \cal B_n$ if
	$$\lim_{n\to \infty}\left[ \E f(\cal A_n)-\E f(\cal B_n)\right] =0$$
	for any bounded continuous function $f$.
\end{definition}  

In the proof, we will frequently use the following simple fact, which can be proved using characteristic functions. Given two sequences of random vectors $\cal A_n$ and $\cal B_n$, suppose that conditioning on $\cal A_n$, we have $\cal B_n \stackrel{d}{\sim} \cal D_n$, where $\cal D_n$ has an asymptotic distribution that does not depend on $\cal A_n$. Then, we have that
\be\label{simple} \cal A_n + \cal B_n \stackrel{d}{\sim}\cal A_n + \cal D_n ,\ee
where on the right-hand side $\cal D_n$ is independent of $\cal A_n$. One immediate use of this fact is to decouple the randomness of $\cal M (\theta_l)$ from that of $Z$ (and hence $\wt Z$) as long as we can show that conditioning on $Z$, the limiting distribution of $\cal M (\theta_l)$ does not depend on $Z$. 

\subsection{Step 1: Rewriting $\cal M(x)$}

We start with some linear algebra to write $\cal M(x)$ into a form that is more amenable to our analysis. Our main tool is the rotational invariance of multivariate Gaussian distributions. 

First, notice that since $\|\mathbf u_i^a\|_2=1$ and $\|\mathbf u_i^b\|_2=1$ for $1\le i \le r$, we have 
\be\label{sets_large_entries}\Big|\Big\{k: \max_{1\le i \le r}|u_i^a(k)| > n^{-\tau_0}\Big\}\Big| \le r n^{2\tau_0}, \quad \Big|\Big\{k: \max_{1\le i \le r} |u_i^b(k)| > n^{-\tau_0}\Big\}\Big| \le rn^{2\tau_0}.\ee
We permute the rows of $\bU_a$, $\bU_b$, $X$ and $Y$ using $p\times p$ and $q\times q$ permutation matrices $P_1$ and $P_2$:
\begin{align*} {\cal M}(\theta_l)=&\ \sqrt{n}\begin{pmatrix} \bU_a^{\top}P_1^{\top} & 0 & 0 & 0 \\ 0 & \bU_b^{\top} P_2^{\top}& 0 & 0 \\ 0 & 0 & \wt Z & 0 \\ 0 & 0 & 0 & \wt Z \end{pmatrix}  \begin{pmatrix} P_1 & 0 & 0 & 0 \\ 0 &  P_2& 0 & 0 \\ 0 & 0 & I_n & 0 \\ 0 & 0 & 0 & I_n \end{pmatrix} \\
	&\times [G(\theta_l)-\Pi(\theta_l)] \begin{pmatrix} P_1^{\top} & 0 & 0 & 0 \\ 0 &  P_2^{\top} & 0 & 0 \\ 0 & 0 & I_n & 0 \\ 0 & 0 & 0 & I_n \end{pmatrix} \begin{pmatrix} P_1\bU_a & 0 & 0 & 0 \\ 0 & P_2\bU_b & 0 & 0 \\ 0 & 0 & \wt Z^\top & 0 \\ 0 & 0 & 0 & \wt Z^\top  \end{pmatrix} .\end{align*}
We can choose $P_1$ and $P_2$ such that all the ``large" entries of $\bU_a$ and $\bU_b$ in the two sets of \eqref{sets_large_entries} are now in the first $\rho$ rows of $ P_1\bU_a$ and $ P_2\bU_a$ for some integer $\rho\le r n^{2\tau_0}$. Without loss of generality, we rename $ P_1\bU_a$ and $ P_2\bU_a$ as $\bU_a$ and $\bU_b$. Then, 
we can assume that $\bU_a$ and $\bU_b$ take the forms
\be\label{U1U2} 
\bU_a =\begin{pmatrix} \mathbf O_1 \\ \bO'_1\end{pmatrix}, \quad \bU_b =\begin{pmatrix} \mathbf O_2 \\ \bO'_2\end{pmatrix},
\ee
where $\mathbf O_1$, $\mathbf O_2$ are $\rho\times r$ matrices, $\bO'_1$ is a $(p-\rho)\times r$ matrix, $\bO'_2$ is a $(q-\rho)\times r$ matrix, and $\|\bO'_1\|_{\max} \le n^{-\tau_0}$, $\|\bO'_2\|_{\max} \le n^{-\tau_0}$.
On the other hand, we have 
\begin{align*}  
	& \begin{pmatrix} P_1 & 0 & 0 & 0 \\ 0 &  P_2& 0 & 0 \\ 0 & 0 & I_n & 0 \\ 0 & 0 & 0 & I_n \end{pmatrix}   [G(\theta_l)-\Pi(\theta_l)] \begin{pmatrix} P_1^{\top} & 0 & 0 & 0 \\ 0 &  P_2^{\top} & 0 & 0 \\ 0 & 0 & I_n & 0 \\ 0 & 0 & 0 & I_n \end{pmatrix} \\
	& =\begin{bmatrix} 0 & \begin{pmatrix} P_1 X & 0\\ 0 & P_2 Y\end{pmatrix}\\ \begin{pmatrix} X^{\top}P_1^\top & 0\\ 0 &  Y^{\top}P_2^\top\end{pmatrix}  & \begin{pmatrix}  \theta_l  I_n & \theta_l^{1/2}I_n\\ \theta_l^{1/2} I_n &  \theta_l I_n\end{pmatrix}^{-1}\end{bmatrix}^{-1} -\Pi(\theta_l)	 .
\end{align*}
Again, without loss of generality, we rename the permuted matrices $P_1 X$ and $P_2 Y$ as $X$ and $Y$. Then, because of \eqref{almost GuassianX} and \eqref{almost GuassianY}, $X$ and $Y$ take the forms
$$X= \begin{pmatrix} X_1 \\ X_2\end{pmatrix}, \quad Y= \begin{pmatrix} Y_1 \\ Y_2\end{pmatrix},$$
where $X_1$, $Y_1$ are $\rho \times n$ matrices, $X_2$ is a $(p-\rho)\times n$ Gaussian matrix and $Y_2$ is a $(q-\rho)\times n$ Gaussian matrix. Next, we rotate $\bO'_1$ and $\bO'_2$ using orthogonal $(p-\rho)\times (p-\rho)$ and $(q-\rho)\times (q-\rho)$ matrices $\wt S_{1}$ and $\wt S_2$ so that 
$$ \wt S_1^\top \bO'_1 =\begin{pmatrix} \wt{\mathbf O}'_1\\ 0\end{pmatrix}, \quad  \wt S_2^\top {\mathbf O}'_2 =\begin{pmatrix} \wt{\mathbf O}'_2\\ 0\end{pmatrix},$$ 
where $\wt{\mathbf O}'_{1}$ and $\wt{\mathbf O}'_{2}$ are $r\times r$ matrices satisfying that 
\be\label{OOT}
\mathbf O_\al^{\top}\mathbf O_\al+  (\bO'_\al)^{\top} \bO'_\al= \mathbf O_\al^{\top}\mathbf O_\al+  (\wt{\mathbf O}'_\al)^{\top} \wt{\mathbf O}'_\al = I_r,\quad \al=1,2.
\ee
Similarly, we rotate $\wt Z^\top$ using an orthogonal $ n \times n$ matrix $\wt S=(\wt Z^\top , S)$, where $S$ is an $n\times (n-r)$ matrix satisfying $S^\top S=I_{n-r}$ and $ S^\top \wt Z^\top =0$.  

With the above notations, we can rewrite $\cal M$ in \eqref{defM} as 
\begin{align}
	{\cal M}
	  {=}&\ \sqrt{n}\begin{pmatrix} \wt\bU_a^{\top} & 0 & 0 & 0 \\ 0 & \wt\bU_b^{\top} & 0 & 0 \\ 0 & 0 & \mathbf I^{\top} & 0 \\ 0 & 0 & 0 & \mathbf I^{\top} \end{pmatrix} \begin{bmatrix} 0 & \begin{pmatrix} \wt X & 0\\ 0 &  \wt Y\end{pmatrix}\\ \begin{pmatrix} \wt X^{\top} & 0\\ 0 &  \wt Y^{\top}\end{pmatrix}  & \begin{pmatrix}  \theta_l  I_n & \theta_l^{1/2}I_n\\ \theta_l^{1/2} I_n &  \theta_l I_n\end{pmatrix}^{-1}\end{bmatrix}^{-1} \begin{pmatrix} \wt\bU_a & 0 & 0 & 0 \\ 0 & \wt\bU_b & 0 & 0 \\ 0 & 0 & \mathbf I & 0 \\ 0 & 0 & 0 & \mathbf I  \end{pmatrix} \nonumber\\
	&- \sqrt{n}\Pi_{2r,2r}(\theta_l),\label{M_another}
\end{align}
where $\Pi_{2r,2r}$ is a $4r\times 4r$ matrix defined as 
\be\label{defwtpi2} \Pi_{2r,2r} := \begin{bmatrix}  \begin{pmatrix} c_1^{-1}m_{1c} I_{r} & 0 \\ 0 & c_2^{-1}m_{2c} I_{r}\end{pmatrix} & 0\\0 & \begin{pmatrix}  m_{3c} I_r & h I_r \\ h I_r &  m_{4c} I_r\end{pmatrix} \end{bmatrix},\ee
and we have abbreviated that
$$\wt \bU_a:= \begin{pmatrix} \mathbf O_1 \\ \wt{\mathbf O}'_1 \\ 0\end{pmatrix},\ \ \wt \bU_b:= \begin{pmatrix} \mathbf O_2 \\ \wt{\mathbf O}'_2 \\ 0\end{pmatrix}, \ \ \mathbf I:= \begin{pmatrix} I_r  \\ 0\end{pmatrix}, \ \ \wt X:=\begin{pmatrix} I_\rho & 0 \\ 0 & \wt S^{\top}_1\end{pmatrix} X \wt S,\ \  \wt Y:=\begin{pmatrix} I_\rho & 0 \\ 0 & \wt S^{\top}_2\end{pmatrix} Y \wt S. $$
Using the rotational invariance of $X_2$, we can write $\wt X$ as
$$ \wt X \stackrel{d}{=}\begin{pmatrix} X_1 \wt Z^\top , X_1 S \\ X_2\end{pmatrix}\equiv \begin{pmatrix} X_1 \wt Z^\top & X_1 S  \\ X^{(1)}_{L} & X^{(1)}_{R}\\ X^{(2)}_{L} & X^{(2)}_{R}\end{pmatrix} ,$$
where ``$\stackrel{d}{=}$" means ``equal in distribution", and $X_{L}^{(1)}$, $X_{L}^{(2)}$, $X_R^{(1)}$ and $X_R^{(2)}$ are respectively $r\times r$, $(p-\rho-r)\times r$, $r\times (n-r)$ and $(p-\rho-r)\times (n-r)$ Gaussian matrices.  We have a similar decomposition for $Y$:
$$ \wt Y \stackrel{d}{=}\begin{pmatrix} Y_1 \wt Z^\top , Y_1 S \\ Y_2\end{pmatrix}\equiv \begin{pmatrix} Y_1 \wt Z^\top & Y_1 S  \\ Y^{(1)}_{L} & Y^{(1)}_{R}\\ Y^{(2)}_{L} & Y^{(2)}_{R}\end{pmatrix}.$$

For simplicity, we introduce the notations $\wt r= r+ \rho$ and 
\begin{align*}
	\mathbb T :=& \left\{1,\cdots, \wt r\right\} \cup \left\{p+1,\cdots, p+\wt r\right\}  \cup \left\{ p+q+1, \cdots, p+q+ r\right\} \\
	&\cup \left\{ p+q+n+1,\cdots, p+q+n+ r\right\}.
\end{align*} 
Then, applying the Schur complement formula to \eqref{M_another}, we obtain that
\be\label{Mthetad}\cal M\stackrel{d}{=} \sqrt{n}\left[\bO^\top  \cal H_{2\wt r, 2r}^{-1} \bO- \Pi_{2r,2r}(\theta_l)\right],\ee
where 
$\bO$ and $\cal H_{2\wt r, 2r}$ are $(2\wt r+ 2r)\times 4r$ and $(2\wt r + 2r)\times (2\wt r + 2r)$ matrices defined as (recall Definition \ref{defminor})
\begin{align*} 
& \bO:= \begin{bmatrix} \begin{pmatrix}\bO_1 \\ \wt{\mathbf O}'_1 \end{pmatrix} & 0 & 0 & 0 \\ 0 &  \begin{pmatrix}\bO_2\\ \wt{\mathbf O}'_2 \end{pmatrix} & 0 & 0 \\ 0 & 0 & I_r & 0 \\ 0 & 0 & 0 & I_r \end{bmatrix},\\
&	\cal H_{2\wt r, 2r}: = \begin{bmatrix} 0 \cdot I_{2\wt r} & 0 \\  0 & \begin{pmatrix}\theta_l I_{r} & \theta_l^{1/2}I_{r} \\ \theta_l^{1/2}I_{r} & \theta_l I_{r} \end{pmatrix}^{-1}\end{bmatrix} +  H_1 - F^{\top}G^{(\mathbb T)}(\theta_l) F ,
\end{align*}
and $H_1$ and $F$ are $(2\wt r+ 2r)\times (2\wt r+ 2r)$ and $(p+q+2n-2\wt r - 2r)\times (2\wt r + 2r)$ matrices defined as 
\begin{align*}
&H_1:=  \begin{bmatrix} 0  \cdot I_{2\wt r} & \begin{pmatrix} \begin{pmatrix} X_1\wt Z^\top   \\ X_{L}^{(1)} \end{pmatrix}  & 0 \\ 0 &  \begin{pmatrix} Y_1 \wt Z^\top   \\ Y_{L}^{(1)} \end{pmatrix} \end{pmatrix} \\  0 & 0 \cdot I_{2r}\end{bmatrix} + c.t. , \\ 
&F:=\begin{bmatrix} 0 & 0 & X^{(2)}_{L}  & 0 \\ 0 &  0 & 0 &   Y^{(2)}_{L} \\  \begin{pmatrix}S^{\top} X_1^{\top}, (X_R^{(1)})^{\top}\end{pmatrix}  & 0 & 0 & 0 \\ 0 &  \begin{pmatrix}S^{\top} Y_{1}^{\top}, (Y_R^{(1)})^{\top}\end{pmatrix} &0 &0 \end{bmatrix}.
\end{align*}
Here, ``$c.t.$" means the (conjugate) transpose of the preceding term. 
Using \eqref{useful}, we can rewrite $\cal H_{2\wt r, 2r}$ as 
\begin{align} 
\cal H_{2\wt r, 2r}: =&\, \Pi_{2\wt r,2r}^{-1} +  H_1 + \begin{pmatrix}  m_{3c}I_{\wt r}  & 0 & 0 & 0 \\ 0 & m_{4c}I_{\wt r}  & 0 & 0  \\  0 & 0 & m_{1c}I_{r}  &  0 \\ 0 & 0 & 0 & m_{2c}I_{r} \end{pmatrix}  \nonumber\\
& - F^{\top} \Pi^{(\mathbb T)} F - F^{\top}(G^{(\mathbb T)}-\Pi^{(\mathbb T)}) F,\label{BrH}
\end{align}
where $\Pi^{(\mathbb T)}$ is the minor of $\Pi$ as defined in Definition \ref{defminor} and $\Pi_{2\wt r,2r} $ is defined in a similar way as \eqref{defwtpi2}: 
\be\label{defwtpi} \Pi_{2\wt r,2r} := \begin{bmatrix}  \begin{pmatrix} c_1^{-1}m_{1c} I_{\wt r} & 0 \\ 0 & c_2^{-1}m_{2c} I_{\wt r}\end{pmatrix} & 0\\0 & \begin{pmatrix}  m_{3c} I_r & h I_r \\ h I_r &  m_{4c} I_r\end{pmatrix} \end{bmatrix}.\ee

\subsection{Step 2: Concentration estimates}
In this step, we establish some (almost) sharp concentration estimates on the terms in \eqref{BrH}. More precisely, we claim that
\be\label{claimF1}
F^{\top} F -\begin{pmatrix}   I_{\wt r}  & 0 & 0 & 0 \\ 0 &  I_{\wt r}  & 0 & 0  \\  0 & 0 & c_1I_{r}  &  0 \\ 0 & 0 & 0 & c_2I_{r} \end{pmatrix}= \OO_{\prec} (n^{-1/2+2\tau_0}), 
\ee
and 
\be\label{claimF2} 
F^{\top} \Pi^{(\mathbb T)}F - \mathbb E_F ( F^{\top} \Pi^{(\mathbb T)}F ) =\OO_{\prec} (n^{-1/2+2\tau_0}), 
\ee
where $\mathbb E_F$ denotes the partial expectation over the randomness in $F$ and conditioning on $Z$. (To avoid confusion, we emphasize that the matrix $S$ is deterministic conditioning on $Z$.) Using the facts $S^\top S=I_{n-r}$ and $\wt r =\OO( n^{2\tau_0})$, we get that
\be\label{claimF3}\mathbb E_F\left( F^{\top} \Pi^{(\mathbb T)}F\right)=\begin{pmatrix}  m_{3c} I_{\wt r}  & 0 & 0 & 0 \\ 0 & m_{4c} I_{\wt r}  & 0 & 0  \\  0 & 0 & m_{1c} I_{r}  &  0 \\ 0 & 0 & 0 & m_{2c} I_{r} \end{pmatrix} + \OO \left(n^{-1+2\tau_0}\right).\ee

Both the estimates \eqref{claimF1} and \eqref{claimF2} follow from Lemma \ref{largedeviation}. We consider the terms $(X_{L}^{(2)})^{\top} X_{L}^{(2)}$, $X_1  S S^{\top} X_1^{\top}$ and $X_1  S (X_{R}^{(1)})^{\top}$ as examples, {where recall that $X_{L}^{(2)}$, $X_1$ and $X_{R}^{(1)}$ are $(p-\wt r)\times r$, $\rho\times n$ and $r\times (n-r)$ random matrices with i.i.d. entries of mean zero and variance $n^{-1}$.} For $p+q+1\le \mu , \nu\le p+q+r$,  we have that
\begin{align*}
	\Big|\left[(X_{L}^{(2)})^{\top} X_{L}^{(2)}\right]_{\mu\nu} - \frac{p-\wt r}{n}\delta_{\mu\nu}\Big| &= \Big| \sum_{ \wt r+1 \le i \le p} \left(X_{i\mu}X_{i\nu}-  n^{-1}\delta_{\mu\nu}\right)\Big|\prec \OO( n^{-1/2}).
\end{align*}
For $1\le i \le \rho$, we have that
\begin{align*}
& \left|\left(X_1  S S^{\top} X_1^{\top}\right)_{ii} -  {n}^{-1}\tr \left(SS^{\top}\right) \right|   = \Big| \sum_{  \mu\ne \nu  \in \cal I_3}  X_{i\mu}X_{i\nu} (SS^{\top})_{\mu\nu}\Big| + \Big| \sum_{  \mu  \in \cal I_3}  (X_{i\mu}^2 - n^{-1})(SS^{\top})_{\mu\mu}\Big| \\
& \prec \frac{1}{n}  \Big( \sum_{  \mu\ne \nu  \in \cal I_3}  [(SS^{\top})_{\mu\nu}]^2 \Big)^{1/2} + \frac{1}{n}  \Big( \sum_{  \mu  \in \cal I_3}  [(SS^{\top})_{\mu\mu}]^2 \Big)^{1/2}  \le \frac{2}{n}  \left\{ \tr \left[ (SS^{\top})^2\right] \right\}^{1/2}  = \OO(n^{-1/2}),
\end{align*}
while for $1\le i < j\le \rho$, we have that
\begin{align*}
	(X_1  S S^{\top} X_1^{\top})_{ij} &=  \sum_{  \mu,\nu  \in \cal I_3}  X_{i\mu}X_{j\nu} (SS^{\top})_{\mu\nu} \prec \frac{1}{n}  \Big( \sum_{  \mu,\nu  \in \cal I_3}  [(SS^{\top})_{\mu\nu}]^2 \Big)^{1/2}  \\
	&=\frac{1}{n}  \left\{ \tr \left[ (SS^{\top})^2\right] \right\}^{1/2} = \OO(n^{-1/2}).
\end{align*}
Using the fact $\tr (SS^{\top}) = n - r$, the above two estimates actually give the estimate
$$\left|(X_1  S S^{\top} X_1^{\top})_{ij}-\delta_{ij} \right| \prec n^{-1/2},\quad 1\le i ,j \le \rho.$$
Finally, for $1\le i \le \rho$ and $\rho+1\le j  \le \rho+r$, we have that
\begin{align*}
	\left[X_1  S (X_{R}^{(1)})^{\top}\right]_{ij} &=  \sum_{  \mu, \nu  \in \cal I_3}  X_{i\mu}X_{j\nu} S_{\mu\nu} \prec \frac{1}{n}  \Big( \sum_{  \mu, \nu  \in \cal I_3}  S_{\mu\nu}^2 \Big)^{1/2} = \frac{1}{n}  \left[ \tr  (SS^{\top}) \right]^{1/2}  = \OO(n^{-1/2}).
\end{align*}
With similar arguments as above, using Lemma \ref{largedeviation}, we can obtain the following concentration estimates: for any constant $\e>0$, with high probability, 
\be\label{FFT}
\begin{split}
	 \left\|(X_{L}^{(2)})^{\top} X_{L}^{(2)} - c_1 I_{ r}\right\|_{\max}\le  n^{-1/2+\e}, \quad  & \left\|(Y_{L}^{(2)})^{\top} Y_{L}^{(2)} - c_2 I_{ r}\right\|_{\max}\le  n^{-1/2+\e}, \\ 
	 \left\|X_1  S S^{\top} X_1^{\top} -  I_{\rho}\right\|_{\max}\le  n^{-1/2+ \e}, \quad &\left\|Y_1  S S^{\top} Y_1^{\top} -  I_{\rho}\right\|_{\max} \le  n^{-1/2+ \e}, \\\
	  \left\|X_1  S S^{\top} Y_1\right\|_{\max}\le  n^{-1/2+ \e}, \quad    &\left\| X_R^{(1)}(X_R^{(1)})^{\top} - I_r\right\|_{\max}\le n^{-1/2+\e},\\ 
	 \left\| Y_R^{(1)}(Y_R^{(1)})^{\top} - I_r\right\|_{\max}\le n^{-1/2+\e} ,\quad &\left\| X_R^{(1)}(Y_R^{(1)})^{\top}\right\|_{\max}\le n^{-1/2+\e},\\
	  \left\|X_1  S (X_{R}^{(1)})^{\top}\right\|_{\max} \le  n^{-1/2+ \e}, \quad  &\left\|X_1  S (Y_R^{(1)})^{\top}\right\|_{\max}\le  n^{-1/2 +\e}, \\
	\left\|Y_1  S (X_{R}^{(1)})^{\top}\right\|_{\max} \le  n^{-1/2 +\e} , \quad &\left\|Y_1  S (Y_{R}^{(1)})^{\top}\right\|_{\max} \le  n^{-1/2+\e}.
\end{split}
\ee
 These estimates immediately imply \eqref{claimF1} and \eqref{claimF2} by bounding the operator norms of error matrices by their Frobenius norms.


By \eqref{claimF1}, we have that $\|F\|=\OO(1)$ with high probability. Then, using the local law \eqref{aniso_outstrong} and the fact that $F$ is independent of $G^{(\mathbb T)}$, we get that
$$\left\| F^{\top}(G^{(\mathbb T)}-\Pi^{(\mathbb T)}) F\right\|  \le (2\wt r + 2r) \left\| F^{\top}(G^{(\mathbb T)}-\Pi^{(\mathbb T)}) F\right\|_{\max}\prec n^{-1/2+2\tau_0}. $$
Under the moment assumption \eqref{eq_highmoment}, every entry of $H_1$ is of order $\OO_\prec(n^{-1/2})$ by Markov's inequality, so we have that
$$\|H_1\|  \le  (2\wt r + 2r) \|H_1\|_{\max}\prec n^{-1/2+2\tau_0}.$$ 
Finally, by \eqref{claimF2} and \eqref{claimF3}, we have that
$$\left\| \begin{pmatrix}  m_{3c}I_{\wt r}  & 0 & 0 & 0 \\ 0 & m_{4c}I_{\wt r}  & 0 & 0  \\  0 & 0 & m_{1c}I_{r}  &  0 \\ 0 & 0 & 0 & m_{2c}I_{r} \end{pmatrix}- F^{\top} \Pi^{(\mathbb T)} F\right\| \prec n^{-1/2+2\tau_0} .$$
Hence, for $\cal M$ in \eqref{Mthetad}, taking the inverse of \eqref{BrH} and performing a simple Taylor expansion, we get that 
\be\label{Mthetad2}
\begin{split}
	\cal M\stackrel{d}{=} & \sqrt{n}\bO^{\top}   \Pi_{2\wt r,2r} \left[ - H_1  + (1-\E_F)  (F^{\top} \Pi^{(\mathbb T)} F ) + F^{\top}(G^{(\mathbb T)}-\Pi^{(\mathbb T)}) F \right] \Pi_{2\wt r,2r} \bO  \\
	&+ \OO_\prec(n^{-1/2+4\tau_0}),
\end{split}
\ee
where we used \eqref{claimF3} and $\bO^{\top}   \Pi_{2\wt r,2r} \bO=\Pi_{2r, 2r} $. Since $\tau_0$ can be taken as small as possible, it suffices to study the CLT of the first term in \eqref{Mthetad2}.

\subsection{Step 3: CLT of the resolvent}
In this step, we establish the CLT of the resolvent term $\sqrt{n}  F^{\top} (G^{(\mathbb T)}-\Pi^{(\mathbb T)}) F $ in \eqref{Mthetad2}. 
Conditioning on $F$, we have the following lemma, whose proof will be given in Section \ref{sec Gauss}.

\begin{lemma}\label{Gauss lemma}
	Fix any $F$ such that the estimates in \eqref{FFT} hold for a small enough constant $\e>0$. Then, we have that (recall Definition \ref{def_simD}) 
	\begin{align}\label{eq_FGF}
		&\sqrt{n} \bO^{\top}  F^{\top} (G^{(\mathbb T)}-\Pi^{(\mathbb T)}) F \bO \stackrel{d}{\sim}  \begin{pmatrix} a_{11}g_{11} & a_{12}g_{12} & a_{13}g_{13} & a_{14}g_{14} \\  
			a_{21}g_{21} & a_{22}g_{22}  & a_{23}g_{23} & a_{24}g_{24} \\  
			a_{31}g_{31} & a_{32}g_{32} & a_{33}g_{33} & a_{34}g_{34} \\  
			a_{41}g_{41} & a_{42}g_{42} & a_{43}g_{43} & a_{44}g_{44} \\  
		\end{pmatrix}  ,
	\end{align}
	where $g_{\al\beta}$, $1\le \al \le \beta \le 4$, are independent Gaussian matrices satisfying the following properties: $g_{\al\beta}=g_{ \beta\al}^\top$, $1\le \al< \beta \le 4$, are $r\times r$ random matrices with i.i.d. Gaussian entries $(g_{\al \beta})_{ij}\sim \cal N(0,1)$; 
	$g_{\al\al}$, $1\le \al \le 4$, are $r\times r$ symmetric GOE (Gaussian orthogonal ensemble) with entries $(g_{\al\al})_{ij}\sim \cal N(0,1+\delta_{ij})$. Moreover, the coefficients are given by 
	\be\label{defQa}
	\begin{split}
		& a_{11}:= m_{3c}\sqrt{ \frac{a_c^2 + c_1}{1-c_1} + \frac{a_c^2}{c_1}} ,\quad  a_{12}=a_{21}:=h \sqrt{  \frac{a_c^2 }{c_2} t_l^2+ \frac{a_c^2 + c_2}{1-c_2} } ,\\
		& a_{13}=a_{31}:=\sqrt{\frac{a_c^2 + c_1}{ 1-c_1}}, \quad  a_{14}=a_{41}:= \frac{a_c}{\sqrt{c_1}}\frac{m_{3c}}{h},\quad  a_{22}:= m_{4c}\sqrt{ \frac{a_c^2 + c_2}{1-c_2} + \frac{a_c^2}{c_2}} ,\\
		& a_{23}=a_{32}:=\frac{a_c}{\sqrt{c_2}}\frac{m_{4c}}{h},\quad a_{24}=a_{42}:=\sqrt{ \frac{ a_c^2 + c_2}{  1-c_2} }, \quad a_{33}:=m_{3c}^{-1}\sqrt{c_1\frac{a_c^2 + c_1}{ 1-c_1 }}, \\ 
		& a_{34}=a_{43}:=\frac{ a_c} { h},\quad a_{44}:=m^{-1}_{4c}\sqrt{ c_2\frac{ a_c^2 + c_2}{ 1-c_2} },
	\end{split} \ee
	where we have introduced the notation 
	\be\label{ac2}
	a_c^2:=\frac{ t_c^2 }{ t_l^2 - t_c^2}.\ee
\end{lemma}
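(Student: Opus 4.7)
The plan is to condition on $F$ (legitimate since, in the almost Gaussian setting \eqref{almost GuassianX}--\eqref{almost GuassianY}, the minor $G^{(\mathbb T)}$ depends only on the submatrices $X_R^{(2)}, Y_R^{(2)}$ together with the deterministic matrices $V, S$, whose entries are i.i.d.\ centered Gaussians with variance $n^{-1}$, and these are independent of $F$). Thus, conditionally on $F$, the quadratic form $\sqrt{n}\bO^{\top} F^{\top} (G^{(\mathbb T)}-\Pi^{(\mathbb T)}) F \bO$ is built from a resolvent with purely Gaussian randomness, to which Gaussian integration-by-parts methods apply cleanly. The hypothesis on $F$, combined with the concentration estimates \eqref{FFT}, allows me to treat the contractions $F^\top F$ and $F^\top \Pi^{(\mathbb T)} F$ as deterministic up to $\OO_\prec(n^{-1/2+\epsilon})$ errors.

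First I would verify that $G^{(\mathbb T)}$ obeys the same anisotropic local law as $G$: removing $2\wt r + 2r = \OO(n^{2\tau_0})$ rows and columns does not alter the limiting spectral quantities $m_{\alpha c}$, $h$, or $\Pi$, so \eqref{aniso_outstrong} still holds for $G^{(\mathbb T)}$ with limit $\Pi^{(\mathbb T)}$. This gives the a priori bound $\bO^\top F^\top (G^{(\mathbb T)}-\Pi^{(\mathbb T)}) F \bO = \OO_\prec(n^{-1/2})$, and together with a standard tightness argument reduces the problem to computing the limiting covariance and showing that higher cumulants vanish.

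The covariance computation proceeds by Gaussian integration by parts: for each pair of columns $(\bu,\bv)$ and $(\bu',\bv')$ of $F\bO$, Stein's identity applied to the Gaussian entries of $X_R^{(2)}, Y_R^{(2)}$ yields
\begin{equation*}
n\,\E\bigl[\langle \bu, (G^{(\mathbb T)}-\Pi^{(\mathbb T)})\bv\rangle \langle \bu', (G^{(\mathbb T)}-\Pi^{(\mathbb T)})\bv'\rangle\bigr] \;=\; \Phi(\bu,\bv,\bu',\bv') + \oo(1),
\end{equation*}
where $\Phi$ is bilinear in the pairs of inner products $(\bu^\top \bu')(\bv^\top \bv')$ and $(\bu^\top \bv')(\bv^\top \bu')$, with coefficients built from the entries of $\Pi^{(\mathbb T)}$. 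Inserting the identities \eqref{selfm12}--\eqref{hz} and the limiting form of $F^\top F \approx \mathrm{diag}(I_{\wt r}, I_{\wt r}, c_1 I_r, c_2 I_r)$ from \eqref{claimF1} will, after an algebraic reduction, produce exactly the coefficients $a_{\alpha\beta}$ in \eqref{defQa}, together with the independence structure of the blocks $g_{\alpha\beta}$ and the transpose symmetry $g_{\alpha\beta}=g_{\beta\alpha}^\top$.

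The main obstacle I anticipate is the bookkeeping for the sixteen block covariances: I must verify not only the four diagonal variances (matching $a_{\alpha\alpha}^2$) but also the vanishing of all cross-block covariances except those encoding the transpose symmetry. These cancellations should follow from the block-diagonal structure of $\Pi^{(\mathbb T)}$ (with the single exception of the $(1,2)$ and $(3,4)$ off-diagonal entries carrying $h$, which produce precisely the symmetry $g_{12}=g_{21}^\top$ and $g_{34}=g_{43}^\top$ rather than extra correlations) together with the diagonal form of the limiting $F^\top F$. For the joint asymptotic Gaussianity, I will iterate Gaussian integration by parts, using the local law $G^{(\mathbb T)}=\Pi^{(\mathbb T)} + \OO_\prec(n^{-1/2})$ to see that each additional resolvent factor costs $n^{-1/2}$, so that the $k$-th joint cumulant of the rescaled quadratic form is $\OO(n^{1-k/2})$ and hence negligible for $k\ge 3$, yielding \eqref{eq_FGF}.
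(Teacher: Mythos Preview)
Your framework—condition on $F$, use Gaussian integration by parts on the purely Gaussian minor $G^{(\mathbb T)}$, and close via a moment recursion—is the same as the paper's. But there is a real gap in the covariance step. After one integration by parts on $\mathbb E\,Q_\Lambda^k$, the derivative $\partial_{X_{i\mu}}G_{\bv_{a'}\bv_{b'}}$ produces, upon summing over $i\in\cal I_1,\mu\in\cal I_3$, factors of the type $(GJ_\al G)_{\bv_b\bv_{b'}}=\sum_{\fa\in\cal I_\al}G_{\bv_b\fa}G_{\fa\bv_{b'}}$. These are $\OO(1)$ quantities, and their deterministic limits are \emph{not} $(\Pi J_\al\Pi)_{\bv_b\bv_{b'}}$, so you cannot simply replace $G$ by $\Pi$ and read off ``coefficients built from the entries of $\Pi^{(\mathbb T)}$''. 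The paper needs a separate two-resolvent anisotropic local law, Theorem~\ref{lemmaGHF}, asserting $(GJ_\al G)_{\bu\bv}=\Gamma^{(\al)}_{\bu\bv}+\OO_\prec(n^{-1/2})$ with $\Gamma^{(\al)}$ given by the functions $\gamma^{(\al)}_\beta$ in \eqref{defgamma}; these arise by differentiating the self-consistent equations with respect to an auxiliary spectral parameter (via $GJ_\al G=\partial_{w_\al}\cal R(\bw)|_{\bw=0}$), and it is precisely these $\gamma$'s that combine to give the $a_{\al\beta}$ in \eqref{defQa}. Plugging in $\Pi J_\al\Pi$ instead yields wrong variances.

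The same point undercuts your cumulant heuristic ``each additional resolvent factor costs $n^{-1/2}$'': since $(GJ_\al G)_{\bu\bv}=\OO(1)$, the naive power counting does not give $\OO(n^{1-k/2})$ for the $k$-th cumulant. The paper instead establishes the Gaussian moment recursion $\mathbb E Q_\Lambda^k=(k-1)s_\Lambda^2\,\mathbb E Q_\Lambda^{k-2}+\oo(1)$ directly: the IBP terms where the derivative hits a $Q_\Lambda$ factor give the recursion (using Theorem~\ref{lemmaGHF} to evaluate $(GJ_\al G)$ and the one-resolvent law for the remaining $\langle\bv_a,\Pi J_\beta G\bv_{a'}\rangle$ factor), while the terms where it hits $G_{\mu\bv_b}$ itself are $\OO_\prec(n^{-1/6})$ via \eqref{claimmal} and \eqref{claimGal}. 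Without the two-resolvent law neither the variances nor the higher moments close.
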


With Lemma \ref{Gauss lemma}, we get the weak convergence
\be\label{CLTEr2(1)}
\begin{split}
	 &\sqrt{n}\bO^{\top}   \Pi_{2\wt r,2r}  F^{\top}(G^{(\mathbb T)}-\Pi^{(\mathbb T)}) F \Pi_{2\wt r,2r} \bO \\ 
	 & \Rightarrow  \ \ \Pi_{2r,2r} \begin{pmatrix} a_{11}g_{11} & a_{12}g_{12} & a_{13}g_{13} & a_{14}g_{14} \\  
		a_{21}g_{21} & a_{22}g_{22}  & a_{23}g_{23} & a_{24}g_{24} \\  
		a_{31}g_{31} & a_{32}g_{32} & a_{33}g_{33} & a_{34}g_{34} \\  
		a_{41}g_{41} & a_{42}g_{42} & a_{43}g_{43} & a_{44}g_{44} \\  
	\end{pmatrix}  \Pi_{2r,2r}, 
\end{split}
\ee
using the simple identity $\Pi_{2\wt r,2r} \bO=\bO\Pi_{2r, 2r}$, with $\Pi_{2r, 2r}$ defined in \eqref{defwtpi2}.

\subsection{Step 4: Calculating the limiting covariances} \label{sec_var}

In this step, we expand \eqref{Mthetad2} and show a CLT for each term. The main technical work is to calculate the limiting covariance functions. Lemma \ref{Gauss lemma} already gives the CLT for $\sqrt{n}\bO^{\top}   \Pi_{2\wt r,2r}  F^{\top}(G^{(\mathbb T)}-\Pi^{(\mathbb T)}) F \Pi_{2\wt r,2r} \bO$. 
We still need to study the term 
\begin{align*}
	&  \sqrt{n}\bO^{\top}   \Pi_{2\wt r,2r} \left[ - H_1  + (1-\E_F) \left(F^{\top} \Pi^{(\mathbb T)} F\right) \right] \Pi_{2\wt r,2r} \bO \\
&= \Pi_{2r,2r} Q_{4r}\Pi_{2r,2r} = \Pi_{2r,2r}\begin{pmatrix}Q_1 & Q_{3}\\ Q_{4} & Q_2\end{pmatrix} \Pi_{2r,2r},
\end{align*}
where $Q_{4r}$ is a $4r\times 4r$ symmetric matrix, with $Q_1$, $ Q_{2}$, $Q_{3}$ and $Q_4$ being the $2r\times 2r$ blocks defined by 
\begin{align*}
	&Q_1:= \begin{pmatrix} Q_1^{(1)} & Q_1^{(3)} \\ Q_1^{(4)} & Q_1^{(2)} \end{pmatrix},\quad  Q_2:=\sqrt{n}\begin{pmatrix} -m^{-1}_{3c}\IE(X_{L}^{(2)})^{\top}X_{L}^{(2)} & 0 \\ 0 & -m^{-1}_{4c}\IE(Y_{L}^{(2)})^{\top} Y_{L}^{(2)} \end{pmatrix},\\
	& Q_{3}=Q_{4}^{\top}:=\sqrt{n}\begin{pmatrix}  - \bO_1^{\top} X_1\wt Z^\top  - (\wt{\mathbf O}'_1)^{\top} X_{L}^{(1)} &  0 \\ 0 & - \bO_2^{\top} Y_1 \wt Z^\top   - (\wt{\mathbf O}'_2)^{\top} Y_{L}^{(1)} \end{pmatrix}.
\end{align*}
Here, we have abbreviated $\IE:= 1- \E_F$, and the four $r\times r$ blocks of $Q_L$ are defined as
\begin{align*}
	&Q_1^{(1)}:=\sqrt{n}\IE \left[m_{3c}\left( \bO_1^{\top} X_1S + (\wt{\mathbf O}'_1)^{\top} X_R^{(1)}\right)\left(S^{\top}X_1^{\top} \bO_1  + (X_R^{(1)})^{\top}\wt{\mathbf O}'_1 \right)\right],\\
	&Q_1^{(2)}:= \sqrt{n}\IE\left[m_{4c} \left( \bO_2^{\top} Y_1S + (\wt{\mathbf O}'_2)^{\top} Y_R^{(1)}\right) \left(S^{\top}Y_{1}^{\top}\bO_2 +  (Y_R^{(1)})^{\top}\wt {\mathbf O}'_2\right) \right],\\
	&Q_1^{(3)}=(Q_1^{(4)})^\top:= \sqrt{n}\IE\left[ h  \left( \bO_1^{\top} X_1S + (\wt{\mathbf O}'_1)^{\top} X_R^{(1)}\right) \left(S^{\top}Y_{1}^{\top}\bO_2 +  (Y_R^{(1)})^{\top}\wt {\mathbf O}'_2\right)\right].
\end{align*}

Now, using \eqref{defQQQ}, \eqref{M-M0}, \eqref{Mthetad2}, \eqref{CLTEr2(1)} and the simple fact \eqref{simple}, we obtain that
\begin{align}
	\sqrt{n}\cal O^{\top} \cal E^{(g)}_r \cal O \stackrel{d}{\sim} &  t_l m_{3c} \bW^{\top}  \begin{pmatrix} a_{11}g_{11} & a_{12}g_{12} & a_{13}g_{13} & a_{14}g_{14} \\  
		a_{21}g_{21} & a_{22}g_{22}  & a_{23}g_{23} & a_{24}g_{24} \\  
		a_{31}g_{31} & a_{32}g_{32} & a_{33}g_{33} & a_{34}g_{34} \\  
		a_{41}g_{41} & a_{42}g_{42} & a_{43}g_{43} & a_{44}g_{44} \\  
	\end{pmatrix} \bW + t_l m_{3c}  \bW^{\top} Q_{4r}\bW .\label{midEr2(1)}
\end{align}
Here, the $4r\times r$ matrix $\bW$ is defined as
\begin{align*}
	\bW:= \Pi_{2r,2r}\mathfrak W\cal O=  \begin{pmatrix} -m_{3c}^{-1}\bW_1   \\     {h}{m_{3c}^{-1} m_{4c}^{-1}}\mathbf W_2  \\  t_l^{-1}\mathbf W_3 \\   {h}{m_{3c}^{-1}}\mathbf W_4 \end{pmatrix},
\end{align*}
where we have abbreviated that
\be\label{defn_AB}
\begin{split}
		\bW_1:= \left(I_{r} + \Sigma_a^2\right)^{-1/2} \cal O ,\quad & \bW_2:= \left(1+\Sigma_b^2\right)^{-1/2}\wh\Sigma_b \bV_b^\top \bV_a \wh\Sigma_a \cal O, \\
	\mathbf W_3:= t_l \bV_a\wh \Sigma_a \cal O -\bV_b \wh\Sigma_b^2  \bV_b^\top \bV_a \wh \Sigma_a \cal O,\quad & \mathbf W_4:= \bV_a\wh \Sigma_a \cal O- \bV_b \wh\Sigma_b^2  \bV_b^\top \bV_a \wh \Sigma_a \cal O.
\end{split}
\ee
In the derivation, we also used that $f_c(\theta_l)= {m_{3c}(\theta_l)m_{4c}(\theta_l)}/{h^2(\theta_l)} = t_l $.
Expanding \eqref{midEr2(1)}, we get that
\begin{align}
	&\sqrt{n}\cal O^{\top} \cal E^{(g)}_r \cal O \nonumber\\
	&\stackrel{d}{\sim}  t_l \bW_1^{\top}\left[  \frac{a_{11}}{m_{3c}}g_{11} +\sqrt{n} \IE\left( \bO_1^{\top} X_1S + (\wt{\mathbf O}'_1)^{\top} X_R^{(1)}\right)\left(S^{\top}X_1^{\top} \bO_1  + (X_R^{(1)})^{\top}\wt{\mathbf O}'_1 \right)\right]\bW_1 \nonumber\\ 
	& - \left\{ \bW_1^{\top}  \left[  \frac{a_{12}}{h}g_{12} + \sqrt{n} \left( \bO_1^{\top} X_1S + (\wt{\mathbf O}'_1)^{\top} X_R^{(1)}\right) \left(S^{\top}Y_{1}^{\top}\bO_2 +  (Y_R^{(1)})^{\top}\wt{\mathbf O}'_2\right)\right]\bW_2 + c.t.\right\}  \nonumber\\
	& + \bW_2^{\top}  \left[ \frac{a_{22}}{m_{4c}}g_{22} + \sqrt{n} \IE\left( \bO_2^{\top} Y_1S + (\wt{\mathbf O}'_2)^{\top} Y_R^{(1)}\right) \left(S^{\top}Y_{1}^{\top}\bO_2 +  (Y_R^{(1)})^{\top}\wt{\mathbf O}'_2\right)\right] \bW_2 \nonumber\\
	& -  \left[\bW_1^{\top} \left(a_{13}g_{13} - \sqrt{n} \bO_1^{\top} X_1\wt Z^\top  - \sqrt{n} (\wt{\mathbf O}'_1)^{\top} X_{L}^{(1)} \right)  \mathbf W_3 + c.t.\right]  \nonumber\\
	&- \left[  \bW_1^{\top} \left(\frac{m_{4c}}{h} a_{14}g_{14}\right)\mathbf W_4 +c.t.\right] +\left[ \bW_2^{\top} \left( \frac{h}{m_{4c}}  a_{23}g_{23}  \right) \mathbf W_3 + c.t. \right]\nonumber\\
	& + \left[\bW_2^{\top} \left(a_{24} g_{24}-\sqrt{n}\bO_2^{\top} Y_1 \wt Z^\top   -\sqrt{n} (\wt{\mathbf O}'_2)^{\top} Y_{L}^{(1)}\right) \mathbf W_4+c.t. \right] \nonumber\\
	& +t_l^{-1}\bW_3^{\top}  \left( m_{3c}a_{33}g_{33} -  \sqrt{n} \IE (X_{L}^{(2)})^{\top}X_{L}^{(2)} \right)\bW_3   \nonumber\\
	& +  \bW_4^{\top}\left(m_{4c}a_{44}g_{44} - \sqrt{n} \IE(Y_{L}^{(2)})^{\top} Y_{L}^{(2)} \right)\bW_4+\left[ \bW_3^{\top}  \left(h  a_{34}g_{34}  \right) \bW_4 +c.t. \right] ,\label{OEO1}
\end{align}
where recall that ``$c.t.$" denotes the (conjugate) transpose of the preceding term. Note $\sqrt{n}X_{L}^{(1)}$ and $\sqrt{n}Y_{L}^{(1)}$ are $r\times r$ matrices with i.i.d.\;Gaussian entries of mean 0 and variance $1$, and they are independent of all the other terms. So we rename them as two new Gaussian matrices 
\be\label{OEO2}
\wt g_{13}:=-\sqrt{n}X_{L}^{(1)}, \quad \wt g_{24}:=-\sqrt{n}Y_{L}^{(1)}.
\ee
Moreover, the matrices $ \sqrt{n} \IE (X_{L}^{(2)})^{\top}X_{L}^{(2)} $ and $ \sqrt{n} \IE (Y_{L}^{(2)})^{\top}Y_{L}^{(2)} $ are also independent of all the other terms. With classical CLT, we obtain that
\be\label{OEO4}
-  \sqrt{n} \IE (X_{L}^{(2)})^{\top}X_{L}^{(2)}  \stackrel{d}{\sim} \sqrt{c_1}\wt g_{33}, \quad -  \sqrt{n} \IE (Y_{L}^{(2)})^{\top}Y_{L}^{(2)}  \stackrel{d}{\sim}  \sqrt{c_2}\wt g_{44},
\ee
where $\wt g_{33}$ and $\wt g_{44}$ are $r\times r$ symmetric GOE with entries $(\wt g_{33})_{ij}\sim \cal N(0,1+\delta_{ij})$ and $(\wt g_{44})_{ij}\sim \cal N(0,1+\delta_{ij})$. 

Now, to conclude the CLT for \eqref{OEO1}, it remains to show the CLT for the matrix
\be\label{OEO50}
\begin{split}
	\Theta:=&\ t_l \bW_1^{\top}\left[  \sqrt{n} \IE\left( \bO_1^{\top} X_1S + \wt \bW_1^{\top} X_R^{(1)}\right)\left(S^{\top}X_1^{\top} \bO_1  + (X_R^{(1)})^{\top}\wt \bW_1 \right)\right]\bW_1\\
	& - \left\{\bW_1^\top \left[ \sqrt{n} \left( \bO_1^{\top} X_1S + \wt \bW_1^{\top} X_R^{(1)}\right) \left(S^{\top}Y_{1}^{\top}\bO_2 +  (Y_R^{(1)})^{\top}\wt \bW_2\right)\right]\bW_2  + c.t. \right\} \\
	& + \bW_2^{\top}   \left[  \sqrt{n} \IE\left( \bO_2^{\top} Y_1S + \wt \bW_2^{\top} Y_R^{(1)}\right) \left(S^{\top}Y_{1}^{\top}\bO_2 +  (Y_R^{(1)})^{\top}\wt \bW_2\right)\right] \bW_2 \\
	&+\left[\bW_1^{\top} \left( \sqrt{n} \bO_1^{\top} X_1\wt Z^\top  \right) \bW_3 + c.t.\right] - \left[\bW_2^{\top} \left( \sqrt{n}\bO_2^{\top} Y_1 \wt Z^\top   \right) \bW_4 + c.t.\right]  .
\end{split}
\ee
We decompose $\Theta$ into the sum of four matrices, $\Theta:= \Theta_1 + \Theta_2 + \Theta_3 + \Theta_4,$ as follows. We first group all terms depending on \smash{$Y_R^{(1)}$} into $\Theta_1$,
\begin{align*}
	\Theta_1:=&\ \bW_2^{\top} \left[\sqrt{n}\IE (\wt{\mathbf O}'_2)^{\top} Y_R^{(1)}(Y_R^{(1)})^{\top}\wt{\mathbf O}'_2 +\sqrt{n} \left(\bO_2^{\top} Y_1S (Y_R^{(1)})^{\top}\wt{\mathbf O}'_2 + c.t.\right)\right] \bW_2  \\
	& -\left[\bW_1^{\top}\left( \sqrt{n} \left( \bO_1^{\top} X_1S + (\wt{\mathbf O}'_1)^{\top} X_R^{(1)}\right) (Y_R^{(1)})^{\top}\wt{\mathbf O}'_2 \right)\bW_2  + c.t.\right]  , 
\end{align*}
all the remaining terms depending on \smash{$X_R^{(1)}$} into $\Theta_2$,
\begin{align*}
	\Theta_2:= &\ t_l  \bW_1^{\top}\left[  \sqrt{n} \IE (\wt{\mathbf O}'_1)^{\top} X_R^{(1)}(X_R^{(1)})^{\top}\wt{\mathbf O}'_1  +  \sqrt{n}\left(\bO_1^{\top} X_1S (X_R^{(1)})^{\top}\wt{\mathbf O}'_1 + c.t.\right)\right]\bW_1 \\
	&- \left[\bW_2^{\top} \left( \sqrt{n}   \bO_2^{\top}Y_{1}S (X_R^{(1)})^{\top} \wt{\mathbf O}'_1\right)\bW_1 + c.t. \right] ,
\end{align*}
all the remaining terms depending on $X_1$ into $\Theta_3$,
\begin{align*}
	\Theta_3:=& \left[\bW_1^{\top} \left( \sqrt{n} \bO_1^{\top} X_1\wt Z^\top  \right)  \bW_3 + c.t.\right]  -\left[\bW_1^{\top}\left( \sqrt{n}  \bO_1^{\top} X_1 SS^\top Y_{1}^{\top}\bO_2  \right)\bW_2  + c.t.\right]  \\
	& + t_l\bW_1^{\top}\left[  \sqrt{n} \IE( \bO_1^{\top} X_1 SS^\top  X_1^{\top} \bO_1    )\right]\bW_1,
\end{align*}
and finally all the remaining terms depending on $Y_1$ into $\Theta_4$,
\begin{align*}
	\Theta_4:=   - \left[ \bW_2^{\top} \left( \sqrt{n}\bO_2^{\top} Y_1 \wt Z^\top   \right)  \bW_4+ c.t. \right]  +  \bW_2^{\top}  \left[  \sqrt{n} \IE\left( \bO_2^{\top} Y_1 SS^\top Y_{1}^{\top}\bO_2 \right)\right] \bW_2.
\end{align*}
Using \eqref{eq_highmoment} and Lemma \ref{largedeviation}, we can obtain the following large deviation estimates as in \eqref{FFT}: for any small constant $\e>0$, with high probability,
\begin{align}
	&   \| X_1 \wt Z^\top \|_{\max} +\|X_1\|_{\max} \le n^{-1/2+\e},  \quad \|X_1 X_1^{\top} -  I_{\rho}\|_{\max}\le  n^{-1/2 +\e}, \label{FFT3} \\
	& \|  Y_1 \wt Z^\top \|_{\max} +\|Y_1\|_{\max} \le n^{-1/2+\e}, \quad \|Y_1  Y_1^{\top} -  I_{\rho}\|_{\max} \le  n^{-1/2 +\e}.\label{FFT2} 
\end{align}
Combining \eqref{FFT3} and \eqref{FFT2} with the facts $SS^\top = I_n-VV^\top$ and $\rho =\OO(n^{2\tau_0})$, we can simplify $\Theta_3$ and $\Theta_4$ as
$$\Theta_\al=\Theta_\al' + \OO_\prec(n^{-1/2+4\tau_0}),\quad \al =3,4,$$
where
\begin{align*}
	\Theta_3'&:=  \left[\bW_1^{\top} \left( \sqrt{n} \bO_1^{\top} X_1\wt Z^\top  \right)  \bW_3 + c.t.\right]  -\left[\bW_1^{\top}\left( \sqrt{n}  \bO_1^{\top} X_1  Y_{1}^{\top}\bO_2  \right)\bW_2  + c.t.\right]  \\
	& \quad \ + t_l\bW_1^{\top}\left[  \sqrt{n} \IE\left( \bO_1^{\top} X_1  X_1^{\top} \bO_1    \right)\right]\bW_1,\\
	\Theta_4'&:=   - \left[ \bW_2^{\top} \left( \sqrt{n}\bO_2^{\top} Y_1 \wt Z^\top   \right)  \bW_4+ c.t. \right]  +  \bW_2^{\top}  \left[  \sqrt{n} \IE\left( \bO_2^{\top} Y_1   Y_{1}^{\top}\bO_2 \right)\right] \bW_2.
\end{align*}

The next lemma shows that $\Theta_1$, $\Theta_2$, $\Theta_3'$, and $\Theta_4'$ are all asymptotically Gaussian. It has several different proofs using some classical techniques for CLT. For the reader's convenience, we give a proof based on Stein's method in Appendix \ref{appd asymG}. 

\begin{lemma}\label{asymp_Gauss}
	We have the following results conditioning on $\wt Z$ satisfying \eqref{Vmax}:
	\begin{itemize}
		\item[(i)] conditioning on $X_1$, $Y_1$ and $X_R^{(1)}$ satisfying \eqref{FFT}, $\Theta_1$ is asymptotically Gaussian with zero mean;  
		
		\item[(ii)] conditioning on $X_1$ and $Y_1$ satisfying \eqref{FFT}, $\Theta_2$ is asymptotically Gaussian with zero mean;  
		
		\item[(iii)] conditioning on $Y_1$ satisfying \eqref{FFT2}, $\Theta_3'$ is asymptotically Gaussian with zero mean;  
		
		\item[(iv)] $\Theta_4'$ is asymptotically Gaussian with zero mean.
	\end{itemize}
\end{lemma}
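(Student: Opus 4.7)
The plan is to realize each $\Theta_i$ as a sum of independent matrix-valued summands indexed by the columns of the remaining free random matrix, and then apply a multivariate Lyapunov central limit theorem via the Cram\'er--Wold device. In each part, after the stated conditioning, $\Theta_i$ is a polynomial of degree at most two in the entries of a single random matrix whose columns are i.i.d., so the decomposition $\Theta_i = \sum_\mu Z_\mu$ (summing over those columns) immediately produces independent summands.

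For part (i), conditioning on $X_1, Y_1, X_R^{(1)}$ leaves only the $(n-r)$ columns $y_1,\ldots,y_{n-r}$ of $Y_R^{(1)}$ random, and these are i.i.d.\ Gaussian of covariance $n^{-1}I_r$ by assumption \eqref{almost GuassianY}. Each $Z_\mu^{(1)}$ gathers the rank-one centered quadratic contribution $\sqrt{n}\,\mathbf{B}^\top \wt\bW_2^\top (y_\mu y_\mu^\top - \E y_\mu y_\mu^\top)\wt\bW_2\mathbf{B}$ together with the linear-in-$y_\mu$ terms coming from the $\mu$-th columns of $\bO_2^\top Y_1 S$, $\bO_1^\top X_1 S$ and $\wt\bW_1^\top X_R^{(1)}$. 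Part (ii) is structurally identical, with $X_R^{(1)}$ playing the role of $Y_R^{(1)}$. For parts (iii) and (iv), I freeze $V$ (and also $Y_1$ in (iii)) and decompose along the columns of $X_1$ (respectively $Y_1$); the entries are no longer Gaussian, but the column independence still holds and the high moment bound \eqref{eq_highmoment} suffices to run the Lyapunov CLT in the non-Gaussian setting.

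To apply the CLT I will verify (a) vanishing conditional mean, (b) convergence of the conditional covariance, and (c) a Lyapunov moment condition. The conditional mean of each $Z_\mu$ vanishes since the linear-in-column contributions are centered and the quadratic parts are centered explicitly by $\IE = 1 - \E_F$. The second-moment totals $\sum_\mu \E(Z_\mu \otimes Z_\mu)$ are uniformly bounded via the delocalization estimates $\|V\|_{\max}\le n^{-1/2+\e}$ and $\|\wt\bW_{1,2}\|_{\max}\le n^{-\tau_0}$, the count $\rho \le r n^{2\tau_0}$, the isometries $\wt\bW_\al^\top \wt\bW_\al + \mathbf O_\al^\top \mathbf O_\al = I_r$ and $S^\top S = I_{n-r}$, together with the large-deviation bounds \eqref{FFT}--\eqref{FFT2} on $X_1, Y_1, X_R^{(1)}$. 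An entrywise application of \eqref{eq_highmoment} and Lemma~\ref{largedeviation} yields $\|Z_\mu\| \prec n^{-1/2}$, whence $\sum_\mu \E\|Z_\mu\|^{2+\delta} \prec n^{-\delta/2} \to 0$ for any fixed $\delta>0$, which is Lyapunov's condition.

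The main obstacle is careful bookkeeping: each $\Theta_i$ contains several quadratic and bilinear blocks, and one must correctly assemble every term involving a given column of the free random matrix into a single $Z_\mu$ before invoking independence across $\mu$. Once this grouping is in place, the remainder of the argument is a mechanical multivariate CLT, since the lemma only asserts a Gaussian limit with vanishing mean and does not require the explicit form of the covariance; the latter will be computed separately when the four $\Theta_i$ are recombined with the Gaussian blocks from Lemma~\ref{Gauss lemma} to produce the covariance structure in \eqref{covariancey}.
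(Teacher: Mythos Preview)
Your approach is correct but takes a genuinely different route from the paper. You decompose each $\Theta_i$ as a sum over the columns of the remaining free random matrix and invoke the multivariate Lyapunov CLT via Cram\'er--Wold; this is the natural ``triangular array'' argument, and the bounds you list (delocalization of $V$, the isometry relations for $\wt\bW_\al$ and $\bO_\al$, the concentration estimates \eqref{FFT}--\eqref{FFT2}, and the high-moment assumption \eqref{eq_highmoment}) are exactly what is needed to push the Lyapunov ratio to zero. One small point worth making explicit: to obtain a genuine Gaussian \emph{limit} (as opposed to mere tightness with Gaussian subsequential limits) you need the conditional covariance $\sum_\mu \E(Z_\mu\otimes Z_\mu)$ to converge, not just to be bounded; this convergence is precisely what the Step~4 covariance calculations establish, so the argument closes.

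The paper instead reduces all four parts to a single generic statement about
\[
Q=\sqrt{n}\,U^\top YV+\sqrt{n}\,V^\top Y^\top U+\sqrt{n}\,O^\top(1-\E)(YY^\top)O,
\]
with $Y$ a $\rho\times n$ matrix of i.i.d.\ entries and $\|V\|_{\max}\le n^{-c}$, and then proves asymptotic Gaussianity of $Q_\Lambda$ by Stein's method: it shows $\E Q_\Lambda f(Q_\Lambda)=s_\Lambda^2\,\E f'(Q_\Lambda)+o(1)$ for every $f\in C_c^\infty$, using a finite-order cumulant expansion in each entry $X_{i\mu}$ (the formula \eqref{cumuexp}). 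The cumulant-expansion route is idiomatic in the surrounding random matrix arguments and naturally exposes the third- and fourth-cumulant corrections that feed into the later covariance formula, but it requires more machinery. Your Lyapunov argument is more elementary and arguably more transparent here, since the column-wise independence is immediate and no integration-by-parts bookkeeping is needed; the trade-off is that it gives slightly less direct control over the error terms involving $\mu_x^{(3)}$ and $\mu_x^{(4)}$, which in the paper's approach drop out of the cumulant hierarchy automatically.
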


With Lemma \ref{asymp_Gauss}, we obtain that $\Theta$ converges in distribution to a centered Gaussian matrix. It remains to determine the covariance of this matrix. First, we calculate the covariance for $\Theta_1$. Conditioning on $X_1$, $Y_1$ and \smash{$X_R^{(1)}$} satisfying \eqref{FFT} and using $\wt r =\OO(n^{2\tau_0})$, we have that 
$$(\bW_2^{\top} \bO_2^{\top} Y_1S S^{\top} Y_1^{\top}\bO_2 \bW_2)_{ij} =( \bW_2^{\top} \bO_2^{\top}  \bO_2 \bW_2)_{ij} + \OO (n^{-1/2+2\tau_0+\e}),$$
and
\begin{align*}
&	\left[\bW_1^{\top} \left( \bO_1^{\top} X_1S + (\wt{\mathbf O}'_1)^{\top} X_R^{(1)}\right) \left( S^{\top}X_1^{\top}\bO_1  + (X_R^{(1)})^{\top}\wt{\mathbf O}'_1 \right) \bW_1\right]_{ij}\\
& = (\bW_1^{\top}  \bW_1)_{ij}+ \OO (n^{-1/2+2\tau_0+\e}).
\end{align*}
With these two identities, we can calculate that  
\begin{align*}
 &\E_{Y_R^{(1)}} (\Theta_1)_{ij}(\Theta_1)_{i'j'} \\
 &=  \left(\bW_2^{\top}  \bW_2 \right) _{ii'} (\bW_2^{\top} (\wt{\mathbf O}'_2)^{\top} \wt{\mathbf O}'_2\bW_2 )_{jj'}+   (\bW_2^{\top} (\wt{\mathbf O}'_2)^{\top} \wt{\mathbf O}'_2\bW_2)_{ii'}(\bW_2^{\top} \bO_2^{\top} \bO_2\bW_2)_{jj'}\\
	&+(\bW_1^{\top}\bW_1)_{ii'}  (\bW_2^{\top} (\wt{\mathbf O}'_2)^{\top} \wt{\mathbf O}'_2\bW_2 )_{jj'} +(\bW_2^{\top} (\wt{\mathbf O}'_2)^{\top} \wt{\mathbf O}'_2\bW_2 )_{ii'} (\bW_1^{\top}\bW_1)_{jj'} \\
	&+ (i' \leftrightarrow j') +  \OO (n^{-1/2+2\tau_0+\e}),
\end{align*}
where {$\E_{Y_R^{(1)}}$} denotes the partial expectation over $Y_R^{(1)}$ and $(i' \leftrightarrow j')$ means an expression obtained by exchanging $i' $ and $j'$ in \emph{all the preceding terms} (i.e., the first four terms on the right-hand side).
Similarly, conditioning on $X_1$ and $Y_1$ satisfying \eqref{FFT}, we can calculate that
\begin{align*}
	&\E_{X_R^{(1)}} (\Theta_2)_{ij}(\Theta_2)_{i'j'} \\
	&=  t_l^2 (\bW_1^{\top}\bW_1)_{ii'}  (\bW_1^{\top} (\wt{\mathbf O}'_1)^{\top} \wt{\mathbf O}'_1\bW_1 )_{jj'} +  t_l^2 (\bW_1^{\top} (\wt{\mathbf O}'_1)^{\top} \wt{\mathbf O}'_1\bW_1 )_{ii'} (\bW_1^{\top}\bO_1^{\top}\bO_1\bW_1)_{jj'}\\
	&+ (\bW_1^{\top} (\wt{\mathbf O}'_1)^{\top} \wt{\mathbf O}'_1\bW_1 )_{ii'}(\bW_2^{\top} \bO_2^{\top} \bO_2\bW_2)_{jj'} + (\bW_2^{\top} \bO_2^{\top} \bO_2\bW_2 )_{ii'} (\bW_1^{\top} (\wt{\mathbf O}'_1)^{\top} \wt{\mathbf O}'_1\bW_1)_{jj'}  \\
	&+ (i' \leftrightarrow j') +  \OO (n^{-1/2+2\tau_0+\e}).
\end{align*}
For $\Theta_3'$ and $\Theta_4'$, the entries of $X_1$ and $Y_1$ are not Gaussian anymore. Hence, the covariances of $\Theta_3'$ and $\Theta_4'$ will depend on the third and fourth moments of $X_1$ and $Y_1$. First, we can calculate the covariance for $\Theta_3'$: 
\be
\begin{split}\label{theta33}
	 &\E_{X_1} (\Theta_3')_{ij}(\Theta_3')_{i'j'} \\
	 &=\,  (\bW_1^{\top}\bO_1^{\top}\bO_1\bW_1)_{ii'} \left[( \bW_3^{\top} \wt Z - \bW_2^{\top} \bO_2^{\top} Y_1)(\wt Z ^\top\bW_3 - Y_1^{\top}\bO_2\bW_2 )\right]_{jj'} \\
	&+  \left[( \bW_3^{\top} \wt Z - \bW_2^{\top} \bO_2^{\top} Y_1)(\wt Z^\top \bW_3 - Y_1^{\top}\bO_2\bW_2 )\right]_{ii'} (\bW_1^{\top}\bO_1^{\top}\bO_1\bW_1)_{jj'}  \\
	&+ t_l^2 (\bW_1^{\top}\bO_1^{\top}\bO_1\bW_1)_{ii'}(\bW_1^{\top}\bO_1^{\top}\bO_1\bW_1)_{jj'}+ (i' \leftrightarrow j') +K_3+K_4,
\end{split}
\ee
where $K_3$ is a third moment term defined as
\begin{align*}
	K_3:=	&\left(n^{3/2}\mathbb EX_{11}^3\right)\cdot   \frac{t_l}{\sqrt{n}}\Big[\sum_{1\le k \le \rho, \mu \in \cal I_3} \left(\bO_1\bW_1\right)_{ki}\left(\bO_1\bW_1\right)_{ki'}\left(\bO_1\bW_1\right)_{kj'} \\
	&\qquad \qquad \qquad \qquad \qquad \qquad \qquad \times (\wt Z^\top \bW_3 - Y_1^{\top}\bO_2\bW_2 )_{\mu j}  +(i\leftrightarrow j)\Big] \\
	+ & \left(n^{3/2}\mathbb EX_{11}^3\right)\cdot \frac{t_l}{\sqrt{n}} \Big[ \sum_{1\le k \le \rho, \mu \in \cal I_3} \left(\bO_1\bW_1\right)_{ki}\left(\bO_1\bW_1\right)_{kj} \left(\bO_1\bW_1\right)_{ki'}\\
	&\qquad \qquad \qquad \qquad \qquad \qquad \qquad \times  (\wt Z^\top  \bW_3 - Y_1^{\top}\bO_2\bW_2 )_{\mu j'} + (i' \leftrightarrow j')\Big],
\end{align*}
and $K_4$ is a fourth cumulant term defined as (recall \eqref{mux4})
$$K_4:=t_l^2 \kappa_x^{(4)}\sum_{1\le k\le\rho} \left(\bO_1 \bW_1\right)_{ki}\left(\bO_1 \bW_1\right)_{ki'}\left(\bO_1 \bW_1\right)_{kj}\left(\bO_1 \bW_1\right)_{kj'} .$$

Using Lemma \ref{largedeviation}, we can check that 
$$\|Y_1\mathbf e\|_{\max} \prec n^{-1/2}, \quad \text{for}\quad \mathbf e:= n^{-1/2}(1,1,\cdots, 1)^{\top} \in \R^n.$$
Applying this estiamate and \eqref{FFT2}, we obtain that
\begin{align*} 
&( \bW_3^{\top} \wt Z - \bW_2^{\top} \bO_2^{\top} Y_1 ) (\wt Z^\top \bW_3 - Y_1^{\top}\bO_2\bW_2  ) \\
& = \bW_3^{\top} \bW_3 +  \bW_2^{\top} \bO_2^{\top}\bO_2\bW_2 +\OO_\prec(n^{-1/2+2\tau_0}),
\end{align*}
and for any $1\le i\le r$,
$$\frac1{\sqrt{n}} \sum_{ \mu \in \cal I_3} \left( Y_1^{\top}\bO_2\bW_2 \right)_{\mu i} = ( \mathbf e^{\top}Y_1^{\top}\bO_2\bW_2 )_{i} =\OO_\prec(n^{-1/2+2\tau_0}).$$
On the other hand, using 
\eqref{eq_iso} and \eqref{V-F}, we obtain that
\be\label{rowsum_V} \|\wt Z\mathbf e\|_{\max} \le \|Z\mathbf e\|_{\max} + \|(\wt Z-Z)\mathbf e\|_{\max} \prec n^{-1/2},\ee
which implies that for any $1\le i\le r$,
$$\frac1{\sqrt{n}} \sum_{ \mu \in \cal I_3} ( \wt Z^\top \bW_3)_{\mu i} = (\mathbf e^\top \wt Z^\top \bW_3)_i = \OO_\prec(n^{-1/2}).$$
The above calculations show that $K_3$ is negligible. For $K_4$, by the assumption of Proposition \ref{main_prop1}, we have that $\|{\mathbf O}'_1\|_{\max}\le n^{-\tau_0}$, which gives  
$ ({\mathbf O}'_1\bW_1)_{ki} \lesssim n^{-\tau_0}$
for any $k$. With this fact, we obtain that 
\begin{align*}
	& \sum_{\rho+1\le k \le p} ({\mathbf O}'_1 \bW_1)_{ki}({\mathbf O}'_1 \bW_1)_{ki'}({\mathbf O}'_1 \bW_1)_{kj}({\mathbf O}'_1 \bW_1)_{kj'}  \\
	&\lesssim n^{-2\tau_0}\sum_{\rho+1\le k \le p} ({\mathbf O}'_1 \bW_1)_{ki}({\mathbf O}'_1 \bW_1)_{ki'} \lesssim n^{-2\tau_0},
\end{align*} 
where ${\mathbf O}'_1$ is defined in \eqref{U1U2}. 
Thus, we can replace $\bO_1 \bW_1$ with $\bU_a \bW_1$ in $K_4$ up to a negligible error. Collecting the above estimates, we can simplify \eqref{theta33} as
\begin{align*}
	  & \E_{X_1} (\Theta_3')_{ij}(\Theta_3')_{i'j'}   \\
	  &=\,   t_l^2 (\mathbf W_1^{\top}\mathbf O_1^{\top}\mathbf O_1\mathbf W_1)_{ii'}(\mathbf W_1^{\top}\mathbf O_1^{\top}\mathbf O_1\mathbf W_1)_{jj'}+ (\mathbf W_1^{\top}\mathbf O_1^{\top}\mathbf O_1\mathbf W_1)_{ii'} ( \mathbf W_3^{\top} \mathbf W_3 +  \mathbf W_2^{\top} \mathbf O_2^{\top}\mathbf O_2\mathbf W_2 )_{jj'} \\
	  &  +( \mathbf W_3^{\top}\mathbf W_3 + \mathbf W_2^{\top}\mathbf O_2^{\top}\mathbf O_2\mathbf W_2 )_{ii'}(\mathbf W_1^{\top}\mathbf O_1^{\top}\mathbf O_1\mathbf W_1)_{jj'} +  (i' \leftrightarrow j')  \\
	  & +t_l^2 \kappa_x^{(4)} \sum_{k\in \cal I_1} \cal U_{ki}\cal U_{ki'}\cal U_{kj}\cal U_{kj'} + \OO(n^{- 2\tau_0 })
\end{align*}
with high probability, where we recall the notations in \eqref{defUV} and \eqref{defn_AB}. With similar calculations, we can obtain the covariance for $\Theta_4'$: with high probability,
\begin{align*}
	\E_{Y_1} (\Theta_4')_{ij}(\Theta_4')_{i'j'}  = & \ (  \bW_2^{\top} \bO_2^{\top}\bO_2\bW_2)_{ii'} ( \bW_4^{\top} \bW_4 )_{jj'} +  (\bW_4^{\top} \bW_4 )_{ii'} ( \bW_2^{\top} \bO_2^{\top}\bO_2\bW_2)_{jj'}\\
	& + (  \bW_2^{\top} \bO_2^{\top}\bO_2\bW_2)_{ii'}(  \bW_2^{\top} \bO_2^{\top}\bO_2\bW_2)_{jj'} +  (i' \leftrightarrow j')  \\
	& + \kappa_y^{(4)} \sum_{k\in \cal I_2} \left(\bU_b \bW_2\right)_{ki}\left(\bU_b \bW_2\right)_{ki'}\left(\bU_b \bW_2\right)_{kj}\left(\bU_b \bW_2\right)_{kj'}+ \OO(n^{- 2\tau_0 }).
\end{align*}


Combining all the above calculations, 
we have shown that $\Theta= \Theta_1 + \Theta_2 + \Theta_3 + \Theta_4$ converges weakly to a centered Gaussian random matrix, denoted by $g_\Theta$, with covariance 
\begin{align}
	&\E (g_\Theta)_{ij} (g_\Theta)_{i'j'}  \label{OEO5}\\
	& =  t_l^2 (\bW_1^{\top}\bW_1)_{ii'}  (\bW_1^{\top} \bW_1 )_{jj'} +  (\bW_2^{\top}  \bW_2 ) _{ii'} (\bW_2^{\top} \bW_2 )_{jj'} \nonumber\\
	&+(\bW_1^{\top}\bW_1)_{ii'}  (\bW_2^{\top} \bW_2 )_{jj'}   +  (\bW_2^{\top} \bW_2 )_{ii'} (\bW_1^{\top}\bW_1)_{jj'}  \nonumber\\
	&+(\bW_1^{\top}\bO_1^{\top}\bO_1\bW_1)_{ii'} \left( \bW_3^{\top} \bW_3 \right)_{jj'}  +  \left( \bW_3^{\top} \bW_3   \right)_{ii'} (\bW_1^{\top}\bO_1^{\top}\bO_1\bW_1)_{jj'} \nonumber\\
	&+(  \bW_2^{\top} \bO_2^{\top}\bO_2\bW_2)_{ii'}( \bW_4^{\top} \bW_4 )_{jj'} + (\bW_4^{\top} \bW_4 )_{ii'}( \bW_2^{\top} \bO_2^{\top}\bO_2\bW_2)_{jj'}+  (i' \leftrightarrow j') \nonumber \\
	&+t_l^2 \kappa_x^{(4)} \sum_{k\in \cal I_1} \cal U_{ki}\cal U_{ki'}\cal U_{kj}\cal U_{kj'} + \kappa_y^{(4)} \sum_{k\in \cal I_2} \left(\bU_b \bW_2\right)_{ki}\left(\bU_b \bW_2\right)_{ki'}\left(\bU_b \bW_2\right)_{kj}\left(\bU_b \bW_2\right)_{kj'}.\nonumber
\end{align}
Notice that for any $i\in \gamma(l)$, we have that (recall \eqref{defUV} and \eqref{defn_AB}),
\be\label{SVD_use}
(\bU_b \bW_2)_{ki}=\left[ \sqrt{t_l} + \OO(n^{-1/2+\delta})\right]\cal V_{ki} ,
\ee
where we used the SVD \eqref{Mrab} and the fact  $t_i=t_l +\OO(n^{-1/2+\delta})$ for any $i \in \gamma(l)$ by Definition \ref{Def gammal}. Hence, up to a negligible error, the last term in \eqref{OEO5} can be replaced by 
$$\kappa_y^{(4)} \sum_{k\in \cal I_2} \cal V_{ki}\cal V_{ki'}\cal V_{kj}\cal V_{kj'},\quad \text{for \ \ $i,j,i',j'\in \gamma(l)$}.$$

\subsection{Step 5: Concluding the proof}\label{sec_var2}
Finally, combing \eqref{OEO1}, \eqref{OEO2}, \eqref{OEO4} and \eqref{OEO5}, after a straightforward algebraic calculation (where a computer algebra system may help), we obtain that \smash{$(\sqrt{n}\cal O^{\top} \cal E^{(g)}_r \cal O)_{\llbracket\gamma(l)\rrbracket}$} converges weakly to an $r\times r$ centered Gaussian matrix \smash{$\Upsilon_l^{(g)}$} with
\begin{align*}
	&\E (\Upsilon^{(g)}_l)_{ij}(\Upsilon^{(g)}_l)_{i'j'}  \\
	&=  t^2_l \left( \frac{a_c^2 + c_1}{1-c_1}  + \frac{a_c^2}{c_1}+1\right) (1-\cal A)_{ii'} (1-\cal A)_{jj'}  +  \left(  \frac{a_c^2 + c_2}{1-c_2} + \frac{a_c^2}{c_2}+1\right)  \cal B_{ii'} \cal B_{jj'}  \\
	&+ \left( \frac{a_c^2}{c_2} t_l^2 + \frac{a_c^2 + c_2}{1-c_2}+1\right)\left[ (1-\cal A)_{ii'} \cal B_{jj'} +\cal B_{ii'} (1-\cal A)_{jj'} \right] \\
	&+  \left( \frac{a_c^2+c_1}{1-c_1} +1\right) \left[ (1-\cal A)_{ii'} (\cal C_1)_{jj'} +(\cal C_1)_{ii'}(1-\cal A)_{jj'}  \right] \\
	&+  \frac{a^2_c }{c_1} t_l^2   \left[ (1-\cal A)_{ii'} (\cal C_2)_{jj'} +(\cal C_2)_{ii'}(1-\cal A)_{jj'}  \right] \\
	& +  \frac{ a^2_c}{c_2}   \left[ \cal B_{ii'} (\cal C_1)_{jj'} +(\cal C_1)_{ii'}\cal B_{jj'}\right]  + \left( \frac{a_c^2+c_2}{1-c_2} +1\right)  \left[ \cal B_{ii'} (\cal C_2)_{jj'} +(\cal C_2)_{ii'}\cal B_{jj'} \right] \\
	& +t_l^{-2}\left(c_1\frac{a_c^2 + c_1}{ 1-c_1 } + c_1\right) (\cal C_1)_{ii'} (\cal C_1)_{jj'} + \left(c_2\frac{ a_c^2 + c_2}{ 1-c_2} + c_2\right)  (\cal C_2)_{ii'} (\cal C_2)_{jj'}  \\
	& +  {a_c^2} \left[ (\cal C_1)_{ii'} (\cal C_2)_{jj'} +(\cal C_2)_{ii'}(\cal C_1)_{jj'} \right] \\
	&+ (i' \leftrightarrow j') +t_l^2 \kappa_x^{(4)} \sum_k \cal U_{ki}\cal U_{ki'}\cal U_{kj}\cal U_{kj'}+ \kappa_y^{(4)} \sum_k \cal V_{ki}\cal V_{ki'}\cal V_{kj}\cal V_{kj'} ,
\end{align*}
where we recall that $(i' \leftrightarrow j')$ means an expression obtained by exchanging $i'$ and $j'$ in all the preceding terms (i.e., the terms in the first seven lines), and we have introduced the following notations:
\begin{align} 
	& \cal A:=1- \bW_1^{\top}\bW_1=\cal O^{\top}\wh\Sigma_a^2 \cal O, \quad   \cal B:=\bW_2^{\top}\bW_2= \cal O^{\top}\wh\Sigma_a  \bV_a^\top \bV_b \wh\Sigma_b \left( 1+\Sigma_b^2 \right)^{-1}\wh\Sigma_b \bV_b^\top \bV_a\wh\Sigma_a\cal O, \nonumber \\
	&	\cal C_1 : = \bW_3^\top \bW_3=t_l^2 \cal A + (1- 2t_l) \cal C  - \cal B , \quad
	\cal C_2 :  =  \bW_4^\top \bW_4=  \cal A -   \cal C   - \cal B, \label{eq_F1F2}
\end{align}
with $\cal C$ defined as
$$  \cal C:=\cal O^{\top}\wh \Sigma_a \bV_a^\top \bV_b \wh\Sigma_b^2  \bV_b^\top \bV_a\wh \Sigma_a \cal O=\diag(t_1, \cdots, t_r) .$$
Then, we plug \eqref{eq_F1F2} into $\E (\Upsilon^{(g)}_l)_{ij}(\Upsilon^{(g)}_l)_{i'j'} $ and simplify the resulting expression. After a straightforward algebraic calculation, we can show that 
\begin{align*}
	&\E (\Upsilon^{(g)}_l)_{ij}(\Upsilon^{(g)}_l)_{i'j'} =  \delta_{ii'} \left[ t_l^2  \frac{ a_c^2 + c_1}{ c_1(1-c_1)}   +\left(  \frac{a_c^2+1}{1-c_1} (1- 2t_l) - \frac{t_l^2 a^2_c}{c_1}\right) \cal C   \right]_{jj'} \\
	& +  \cal C_{ii'} \left[  \left(\frac{a_c^2+1}{1-c_1}(1- 2t_l)  - \frac{t_l^2 a^2_c}{c_1}\right)  + \left(\frac{(1-c_2)(1- 2t_l)^2}{c_2} + \frac{(1-c_1)t_l^2}{c_1}- 2(1- 2t_l)   \right)a_c^2\cal C \right]_{jj'} \\
	&- (1- 2t_l) \left(\cal A_{ii'}  \cal C_{jj'} + \cal C_{ii'}\cal A_{jj'}\right)  - \left(\cal B_{ii'} \cal C_{jj'}+ \cal C_{ii'}\cal B_{jj'}\right) -t_l^2 \cal A _{ii'} \cal A_{jj'}- \cal B_{ii'}\cal B_{jj'}\\
	&+ \left(\cal A _{ii'} \cal B_{jj'}+ \cal B_{ii'} \cal A_{jj'} \right) + (i' \leftrightarrow j') +t_l^2 \kappa_x^{(4)} \sum_{k} \cal U_{ki}\cal U_{ki'}\cal U_{kj}\cal U_{kj'}+ t_l^2 \kappa_y^{(4)} \sum_{k} \cal V_{ki} \cal V_{ki'} \cal V_{kj} \cal V_{kj'} .
\end{align*}

\vspace{-5pt}
On the other hand, using \eqref{defPPP} and \eqref{CLTe1}, we can check that $(\sqrt{n}\cal O^{\top} \cal E^{(z)}_r \cal O)_{\llbracket\gamma(l)\rrbracket}$ converges weakly to an $r\times r$ centered Gaussian matrix \smash{$\Upsilon_l^{(z)}$} with (recall \eqref{Wij})
\begin{align*}
	\E (\Upsilon^{(z)}_l)_{ij}(\Upsilon^{(z)}_l)_{i'j'} =   (2t_l-1)\cal C_{ii'}  \cal C_{jj'}  + t_l^2 \cal A_{ii'} \cal A_{jj'} + \cal B_{ii'}\cal B_{jj'} + (1-2t_l) \left(\cal A_{ii'}  \cal C_{jj'} + \cal C_{ii'}\cal A_{jj'}\right) \\
	- \left(\cal A _{ii'} \cal B_{jj'}+ \cal B_{ii'} \cal A_{jj'} \right)+ \left(\cal B_{ii'} \cal C_{jj'}+ \cal C_{ii'}\cal B_{jj'}\right) + (i' \leftrightarrow j') + \kappa_z^{(4)}\sum_{k} \cal W_{k,ij}\cal W_{k,i'j'}.  
\end{align*}
Then, by \eqref{simple}, we know that 
$$\left(\sqrt{n} \cal O^{\top} \cal E_r (\theta_l)\cal O\right)_{\llbracket\gamma(l)\rrbracket} = \big(\sqrt{n}\cal O^{\top} \cal E^{(z)}_r \cal O\big)_{\llbracket\gamma(l)\rrbracket} + \big(\sqrt{n}\cal O^{\top} \cal E^{(g)}_r \cal O\big)_{\llbracket\gamma(l)\rrbracket}$$ 
converges weakly to a centered Gaussian matrix $\wt\Upsilon_l$ with covariance  
\be\nonumber \E (\wt\Upsilon_l)_{ij}(\wt\Upsilon_l)_{i'j'}=\E (\Upsilon^{(z)}_l)_{ij}(\Upsilon^{(z)}_l)_{i'j'}  + \E (\Upsilon^{(g)}_l)_{ij}(\Upsilon^{(g)}_l)_{i'j'} .\ee
Finally, using $\cal C_{jj'}=t_l\delta_{jj'} + \OO(n^{-1/2+\delta})$ for $j,j'\in \gamma(l)$, we can check that the covariance functions of $\wt\Upsilon_l$ are asymptotically equal to \eqref{Cijij}. This concludes Proposition \ref{main_prop1}, which gives Theorem \ref{main_thm1} in the almost Gaussian case by Proposition \ref{redGthm}. 

\section{Proof of Lemma \ref{Gauss lemma}}\label{sec Gauss}

In this section, we give the proof of Lemma \ref{Gauss lemma}, which, as we have seen, is a key step in the proof of Proposition \ref{main_prop1}. Under the setting of Lemma \ref{Gauss lemma}, we need to study the CLT of the matrix
	\begin{align*} 
		\cal Q_0(\theta_l):&=\sqrt{n} \mathscr V_0^{\top} \left( G^{(\mathbb T)}(\theta_l) - \Pi^{(\mathbb T)}(\theta_l) \right)  \mathscr V_0,\quad \text{where}\quad \mathscr V_0\equiv \begin{pmatrix} 0 & 0 & \bV_1 & 0 \\ 0 & 0 & 0 & \bV_2 \\ \bV_3 & 0 & 0 & 0 \\ 0 & \bV_4 & 0 & 0 \end{pmatrix} := F\bO.
	\end{align*}
	It is easy to check that the matrices $\bV_{1}$, $\bV_2$, $\bV_3$ and $\bV_4$ are respectively $(p-\wt r)\times r$, $(q-\wt r)\times r$, $(n-r)\times r$ and $(n-r)\times r$ random matrices independent of $G^{(\mathbb T)}$, and they satisfy that 
	with high probability,
	\be\label{V12}\bV_1^{\top} \bV_1= c_1 I_{r} + \OO_\prec (n^{-1/2 }), \  \bV_2^{\top} \bV_2 = c_2I_{r} +\OO _\prec(n^{-1/2 }),\ee
	\be\label{V34}\begin{split}
		\bV_3^{\top} \bV_3 =  I_{ r}  +\OO_\prec & (n^{-\frac12+2\tau_0 }),\   \bV_4^{\top} \bV_4 =  I_{ r}  +\OO_\prec (n^{-\frac12+2\tau_0 }),  \  \bV_3^{\top}\bV_4   =\OO_\prec (n^{-\frac12+2\tau_0  }).
	\end{split}
	\ee
	These conditions all follow from \eqref{FFT} and \eqref{OOT}. For simplicity of notations, we permute the columns of $\mathscr V_0$ and study the CLT of 
	\be\label{permu}\begin{pmatrix} 0 & I_{2r} \\ I_{2r} & 0\end{pmatrix}\sqrt{n}  \mathscr V_0^{\top} (G^{(\mathbb T)}-\Pi^{(\mathbb T)})  \mathscr V_0\begin{pmatrix} 0 & I_{2r} \\ I_{2r} & 0\end{pmatrix}.\ee
Moreover, with a slight abuse of notation, we rename $(X^{(\mathbb T)},Y^{(\mathbb T)},G^{(\mathbb T)})$ as $(X,Y,G)$ and study the CLT of the following matrix under the conditions \eqref{V12} and \eqref{V34}:
	\begin{align} \label{defn_4rQ}
		\cal Q(\theta_l):&=\sqrt{n} \mathscr V^{\top} \left[ G(X,Y,\theta_l) - \Pi (\theta_l)\right]  \mathscr V,
	\end{align}
	where
	$$\ \ \mathscr V:= \mathscr V_0 \begin{pmatrix} 0 & I_{2r} \\ I_{2r} & 0\end{pmatrix}=\begin{pmatrix} \bV_1 & 0 & 0 & 0 \\ 0 & \bV_2 & 0 & 0 \\ 0 & 0 & \bV_3 & 0 \\ 0 & 0 & 0 & \bV_4 \end{pmatrix}.$$
Since $|\mathbb T| \lesssim n^{2\tau_0}$, we have $(n-|\mathbb T|)/n = 1+ \OO(n^{-1+2\tau_0}),$ where $\OO(n^{-1+2\tau_0})$ is a negligible error. Hence, without loss of generality, we still assume that the dimensions of $X$ and $Y$ are $p\times n $ and $q\times n$ in order to simplify notations.

	In our proof, in order to avoid singular behaviors of $G$ on exceptional low-probability events, we will use a regularized resolvent \smash{$\wh G(z)$} defined as follows. 
			
	\begin{definition}[Regularized resolvent]\label{resol_not2}
	For $z = E+ \ii \eta \in \mathbb C_+,$ we define the regularized resolvent $\wh G(z)$ as
	$$ \wh G(z) := \left[H(z)- z n^{-10} \begin{pmatrix} I_{p+q} & 0 \\ 0 & 0 \end{pmatrix} \right]^{-1} .$$
	\end{definition}
		
	The main reason for introducing the regularized resolvent is that it satisfies the deterministic bound: 
	\be\label{op G}
	\| \wh G(z)\|  \lesssim  n^{10} \eta^{-1},\quad \text{for}\quad \eta=\im z. 
	\ee 
	This estimate has been proved in Lemma 3.6 of \cite{PartIII}. In particular, if we choose  $\eta \ge n^{-C}$ for a constant $C>0$, then \eqref{op G} justfies the assumption of Lemma \ref{lem_stodomin} (iii), which will be used in the proof when we bound expectations of polynomials of regularized resolvent entries. With a standard perturbation argument, we can easily control the difference between $\wh G(z)$ and $G(z)$. 
	
	\begin{claim}\label{removehat}
	Suppose there exists a high probability event $\Xi$ on which $ \| G(z)\|_{\max}=\OO(1)$ for $z$ belonging to some subset. Then, we have that
	\be\label{g-whg} \|G(z) - \wh G(z)\|_{\max} \le n^{-8}\quad \text{ on } \quad \Xi.\ee
	\end{claim}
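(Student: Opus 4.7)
The claim is essentially a deterministic perturbation statement once one conditions on the event $\Xi$. The plan is to use the identity $\wh H(z) - H(z) = -z n^{-10}P$, where $P := \begin{pmatrix} I_{p+q} & 0 \\ 0 & 0 \end{pmatrix}$ is a bounded rank-$(p+q)$ projection, and to convert the difference $\wh G - G$ into a geometric series in $z n^{-10} P G$.

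First I would apply the resolvent identity
\[
\wh G - G = \wh G\,(H - \wh H)\,G = z n^{-10}\,\wh G\, P\, G,
\]
and then iterate it to obtain the expansion
\[
\wh G = G \sum_{k=0}^{\infty} (z n^{-10} P G)^{k},
\]
provided the series converges in operator norm. To check convergence, I would use the bound $\|G(z)\| \le (p+q+2n)\|G(z)\|_{\max} \lesssim n$ which holds on $\Xi$ by hypothesis (assuming $z$ is restricted to a compact subset so that $|z| \lesssim 1$). This gives $\|z n^{-10} P G\| \lesssim n^{-9}$, so the series converges absolutely and
\[
\wh G - G = z n^{-10}\,G P G + \OO_{\|\cdot\|}(n^{-18}).
\]

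Next I would estimate $\|GPG\|_{\max}$ entrywise. Since $P$ projects onto the indices $\cal I_1 \cup \cal I_2$, the $(\fa,\fb)$ entry of $GPG$ is $\sum_{k \in \cal I_1 \cup \cal I_2} G_{\fa k}G_{k\fb}$. Using $\|G(z)\|_{\max} = \OO(1)$ on $\Xi$, this sum is bounded by $C(p+q) \lesssim n$, giving
\[
\|\wh G - G\|_{\max} \le |z|\, n^{-10}\,\|GPG\|_{\max} + \OO(n^{-18}) \lesssim n^{-9},
\]
which is certainly $\le n^{-8}$ for $n$ large. The argument is entirely deterministic once one is on $\Xi$, and there is no real obstacle — the only mildly careful point is ensuring the series converges, which follows from the crude operator-norm bound $\|G\| \lesssim n$ that itself is a direct consequence of the $\OO(1)$ max-norm bound on $\Xi$.
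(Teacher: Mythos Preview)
Your proof is correct. Both your argument and the paper's reach the same conclusion by controlling the perturbation $zn^{-10}P$, but the mechanics differ. The paper interpolates continuously, setting $G_t := [H - tzn^{-10}P]^{-1}$, differentiating to get $\partial_t G_t = zn^{-10}G_tPG_t$, and applying Gronwall's inequality to the scalar quantity $\|G_t\|_{\max}$ to propagate the $\OO(1)$ bound from $t=0$ to $t=1$; the final estimate then follows by integrating $\partial_t G_t$. Your approach instead expands $\wh G$ directly as a Neumann series in $zn^{-10}PG$, using the crude operator-norm bound $\|G\|\lesssim n\|G\|_{\max}\lesssim n$ to guarantee convergence, and then reads off the max-norm bound term by term. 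Your route is arguably more self-contained (the existence of $\wh G$ on $\Xi$ falls out of the series convergence rather than requiring an implicit continuation argument), while the paper's Gronwall approach avoids ever invoking an operator-norm bound and works purely at the level of $\|\cdot\|_{\max}$. Either way the analysis is deterministic on $\Xi$ and the numerology is comfortable, so there is no substantive gap.
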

	\begin{proof}
	For $t\in [0,1]$, we define 
    $$ G_t(z) := \left[H(z)- t z n^{-10} \begin{pmatrix} I_{p+q} & 0 \\ 0 & 0 \end{pmatrix} \right]^{-1}, \quad \text{with}\quad G_0(z)= G(z), \quad G_1(z)=\wh  G(z) .$$
	Taking the derivative with respect to $t$, we immediately obtain that
	\be\label{partialtG}\partial_t G_t(z) = zn^{-10} G_t(z) \begin{pmatrix} I_{p+q} & 0 \\ 0 & 0 \end{pmatrix} G_t(z) . \ee
	Thus, applying Gronwall's inequality to 
	$$ \|G_t(z)\|_{\max}\le \| G(z)\|_{\max} + C n^{-9} \int^t_0 \|G_s(z)\|_{\max}^2  \dd s, $$
	we get that $\max_{0\le t \le 1}\|G_t(z)\|_{\max}=\OO(1)$ on $\Xi.$ Then, using \eqref{partialtG} again, we get \eqref{g-whg}.
	\end{proof}
	Note that the bound \eqref{g-whg} is purely deterministic on $\Xi$, so we do not lose any probability in this claim. Moreover, such a small error $n^{-8}$ is negligible for our proof.  
		
    In the following proof, we will use the regularized resolvent \smash{$\wh G(z)$} with $z= \theta_l +\ii n^{-4}$, and prove the CLT for \smash{$\wh{\cal Q}(z)$} with $G(\theta_l)$ replaced by \smash{$\wh G(z)$}. The argument in the proof of Claim \ref{removehat} then allows us to show that $ \cal Q(\theta_l)$ satisfies the same asymptotic distribution. In the proof, it is helpful to keep in mind that the bound (\ref{op G}) always holds with $\eta=n^{-4}$, and hence Lemma \ref{lem_stodomin} (iii) can be applied without worry. To ease the notation, we also introduce the following notion of generalized entries.
    
    \begin{definition}[Generalized entries]
    	For $\mathbf v,\mathbf w \in \mathbb C^{\mathcal I}$, $\fa\in \mathcal I$ and an $\mathcal I\times \mathcal I$ matrix $\cal A$, we shall denote
    	\begin{equation}
    		\cal A_{\mathbf{vw}}:=\langle \mathbf v,\cal A\mathbf w\rangle, \quad  \cal A_{\mathbf{v}\fa}:=\langle \mathbf v,\cal A\mathbf e_\fa\rangle, \quad \cal A_{\fa\mathbf{w}}:=\langle \mathbf e_\fa,\cal A\mathbf w\rangle,
    	\end{equation}
    	where $\mathbf e_\fa$ is the standard unit vector along the $\fa$-th coordinate axis. 
    \end{definition}

For $1\le a \le 4r$, we denote the $a$-th column vector of $\mathscr V$ by $\bv_{a}$. 
With the Cram{\'e}r-Wold device, it suffices to prove that
$$\wh Q_\Lambda:=\sqrt{n}\sum_{ 1\le a\le b \le 4r} \lambda_{ab} \wh Q_{ab} = \sqrt{n} \sum_{a\le b} \lambda_{ab} (\wh G- \Pi)_{\bv_a \bv_b} $$ 
is asymptotically Gaussian for any fixed vector of parameters denoted by $\Lambda:=(\lambda_{ab})_{a\le b}$. By \eqref{aniso_outstrong}, we have the rough bound $|\wh Q_\Lambda| \prec 1$. For our purpose, it suffices to show that the moments of $\wh Q_{\Lambda}$ match those of a centered Gaussian random variable asymptotically. This follows immediately from the following claims: (i)  the mean of $\wh Q_{\Lambda}$ satisfies 
\be\label{1Qlambda}
\mathbb E\wh Q_{\Lambda}(z) =\oo(1), \quad \text{with}\quad z= \theta_l +\ii n^{-4},
\ee
and (ii) for any fixed integer $k\ge 2$, we have that
\be\label{kQlambda}
\mathbb E\wh Q_{\Lambda}^k(z) = (k-1)s_{\Lambda}^2 \mathbb E\wh Q_{\Lambda}^{k-2}(z) + \oo(1),\quad \text{with}\quad z= \theta_l +\ii n^{-4},
\ee
for a deterministic parameter $s_{\Lambda}^2$ as a function of $\Lambda$. Moreover, the covariance of $\wh {\cal Q}$ is also determined by $s_{\Lambda}^2$. 

	As described in Section \ref{sec_overview}, our main tool for the proof of \eqref{1Qlambda} and \eqref{kQlambda} is Gaussian integration by parts. Using the identity $\wh H\wh G=I$ and equation \eqref{useful}, we get that
	\be\label{simpleid} \begin{split} \wh G-\Pi &= \Pi \left( \Pi^{-1}  - \wh H\right)\wh G  \\
	&= \Pi \begin{bmatrix} - (m_{3c}+zn^{-10}) I_p & 0 &-X &0 \\ 0 & - (m_{4c}+zn^{-10})I_q & 0 & -Y \\ -X^{\top} & 0 & -m_{1c}I_n & 0 \\ 0 & -Y^{\top} & 0 & -m_{2c}I_n \end{bmatrix}\wh G .
	\end{split}\ee
	We first prove \eqref{1Qlambda}. With \eqref{simpleid}, we can write that
	\begin{align}
		&\mathbb E \wh Q_\Lambda:=\sqrt{n} \sum_{ a\le b} \lambda_{ab} \E \wh Q_{ab} \nonumber\\
		 =&\ \sqrt{n} \sum_{  a\le b}  \lambda_{ab}\E \left[ \begin{pmatrix} - m_{3c} I_p & 0 &0 &0 \\ 0 & - m_{4c}I_q & 0 & 0 \\ 0 & 0 & -m_{1c}I_n & 0 \\ 0 & 0 & 0 & -m_{2c}I_n \end{pmatrix}\wh G\right]_{\bw_a \bv_b} \nonumber\\
		&  -\sqrt{n} \sum_{  a\le b}  \lambda_{ab}\E \left[ \begin{pmatrix} 0 & \begin{pmatrix}  X & 0\\ 0 &  Y\end{pmatrix}\\ \begin{pmatrix} X^{\top} & 0\\ 0 &  Y^{\top}\end{pmatrix}  &  0\end{pmatrix} \wh  G \right]_{\bw_a \bv_b} +\OO(n^{-9}),\label{HGW}
	\end{align}
	where we have abbreviated $\bw_a:= \Pi\bv_a$. For the sum in line \eqref{HGW}, we expand it as
	\begin{align}
		& \E \left[ \begin{pmatrix} 0 & \begin{pmatrix}  X & 0\\ 0 &  Y\end{pmatrix}\\ \begin{pmatrix} X^{\top} & 0\\ 0 &  Y^{\top}\end{pmatrix}  &  0\end{pmatrix}  \wh G \right]_{\bw_a \bv_b} \nonumber\\
		&= - \sqrt{n}\mathbb E \sum_{i\in \cal I_1,\mu \in \cal I_3 }X_{i\mu}\left[ \bw_a(i) \wh G_{\mu \bv_b }+ \bw_a(\mu) \wh G_{i  \bv_b} \right]   \nonumber \\
		&\quad -  \sqrt{n}\mathbb E \sum_{j\in \cal I_2,\nu \in \cal I_4 }Y_{j\nu}\left[\bw_a(j) \wh G_{\nu \bv_b}+\bw_a(\nu) \wh  G_{j  \bv_b}\right] \nonumber \\
		&= n^{-1/2}\mathbb E \sum_{i\in \cal I_1,\mu \in \cal I_3 }\bw_a(i) \left[ \wh G_{\mu\mu} \wh G_{i\bv_b} + \wh G_{\mu i} \wh G_{\mu\bv_b} \right]  \nonumber\\
		&\quad + n^{-1/2}\mathbb E \sum_{ i \in \cal I_1 ,\mu\in \cal I_3}\bw_a(\mu)  \left[ \wh  G_{ii}\wh  G_{\mu \bv_b} + \wh G_{i\mu }\wh  G_{i \bv_b} \right]  \nonumber \\
		& \quad + n^{-1/2}\mathbb E \sum_{j\in \cal I_2,\nu \in \cal I_4 }\bw_a(j) \left[\wh  G_{\nu\nu} \wh G_{j\bv_b} +\wh  G_{\nu j}\wh  G_{\nu\bv_b} \right]  \nonumber\\
		&\quad + n^{-1/2}\mathbb E \sum_{j\in \cal I_2 ,\nu \in \cal I_4}\bw_a(\nu) \left[\wh  G_{jj}\wh  G_{\nu \bv_b} +\wh  G_{ j \nu}\wh  G_{j \bv_b} \right] ,\label{EQ1}
	\end{align}
	where in the second step we used Gaussian integration by parts with respect to $X_{i\mu}$ and $Y_{j\nu}$, 
	$$\mathbb EX_{i\mu}f(X_{i\mu})=n^{-1}\mathbb E f'(X_{i\mu}),\quad \mathbb EY_{j\nu}f(Y_{j\nu})=n^{-1}\mathbb E f'(Y_{j\nu}),$$
	and the identities 
	\be\label{partialG}\frac{\partial \wh G_{\bu \bv}}{\partial X_{i\mu}} = - \wh G_{\bu i}\wh G_{\mu \bv} - \wh G_{\bu \mu}\wh G_{i \bv}, \quad \frac{\partial \wh G_{\bu \bv}}{\partial Y_{j\nu}} = - \wh G_{\bu j}\wh G_{\nu \bv} - \wh G_{\bu \nu}\wh G_{j \bv},\ee
	for any vectors $\bu, \bv\in \C^{\cal I}$. With the notations in \eqref{defmal}, we can rewrite \eqref{EQ1} as
	\begin{align}
		  \eqref{EQ1} =&\ \sqrt{n}\E\left[ \begin{pmatrix} \wh m_3 I_p & 0 &0 &0 \\ 0 & \wh m_4 I_q & 0 & 0 \\ 0 & 0 & \wh m_1 I_n & 0 \\ 0 & 0 & 0 & \wh m_2 I_n \end{pmatrix}\wh G \right]_{\bw_a \bv_b} \nonumber\\
		  &+n^{-1/2}\mathbb E\left[  \langle \bw_a ,  J_1\wh G J_3\wh  G \bv_b\rangle+\langle \bw_a ,J_3 \wh  GJ_1 \wh G \bv_b\rangle\right] \nonumber\\
		&  + n^{-1/2}\mathbb E\left[\langle \bw_a,J_2 \wh  GJ_4 \wh G \bv_b\rangle+ \langle \bw_a,J_4\wh  GJ_2\wh G \bv_b\rangle\right] ,\label{EQ12}
	\end{align}
	where recall that $J_\al$ is defined in \eqref{defJal}. 
	We claim that 
	\be\label{claimmal}
	\max_{\al=1}^4|\wh m_\al (z)- m_{\al c}(z)|\prec n^{-2/3} ,
	\ee
	whose proof will be postponed until we complete the proof of Lemma \ref{Gauss lemma}. Moreover, $\wh GJ_\al\wh  G$, $\al=1,2,3,4$, satisfy the anisotropic local laws in Theorem \ref{lemmaGHF} below, which implies that for any deterministic unit vectors $\bu,\bv\in \C^{\cal I}$,
	\be\label{claimGal}
	\left| \langle \bu, \wh  G J_\al \wh G \bv \rangle\right|=\OO_\prec(1),\quad  \al = 1,2,3,4.
	\ee
	Now, plugging \eqref{EQ12} into \eqref{HGW} and using \eqref{claimmal} and \eqref{claimGal}, we obtain that 
	\be\label{k=1case} \mathbb E \wh Q_\Lambda=\OO_\prec(n^{-1/6 }),\ee
	which implies \eqref{1Qlambda}.
	
	It remains to prove \eqref{kQlambda}. With \eqref{simpleid}, we expand $\mathbb E\wh Q_{\Lambda}^k$ as 
	\begin{align}
		&\mathbb E\wh Q_{\Lambda}^k \nonumber\\
		&=  \mathbb E  \sqrt{n} \sum_{ a\le b}  \lambda_{ab}\E\left[\Pi   \begin{pmatrix} - m_{3c} I_p & 0 & - X &0 \\ 0 & - m_{4c}I_q  & 0 & -Y \\ -X^{\top} & 0 & -m_{1c}I_n & 0 \\ 0 & -Y^{\top} & 0 & -m_{2c}I_n \end{pmatrix}\wh G \right]_{\bv_a \bv_b} \wh Q_\Lambda^{k-1}  \nonumber\\
		& = \sqrt{n} \sum_{  a\le b}  \lambda_{ab}\E \left[ \begin{pmatrix} - m_{3c} I_p & 0 &0 &0 \\ 0 & - m_{4c}I_q & 0 & 0 \\ 0 & 0 & -m_{1c}I_n & 0 \\ 0 & 0 & 0 & -m_{2c}I_n \end{pmatrix}\wh G\right]_{\bw_a \bv_b} \wh Q_\Lambda^{k-1}    \label{IBPQk0}\\
		& - \sqrt{n}\mathbb E \sum_{  a\le b}  \lambda_{ab}\sum_{i\in \cal I_1,\mu \in \cal I_3 }\bw_a(i) X_{i\mu}\wh G_{\mu \bv_b }\wh Q_\Lambda^{k-1}  \label{IBPQk1.0}\\
		&-  \sqrt{n}\mathbb E \sum_{  a\le b}  \lambda_{ab}\sum_{j\in \cal I_2,\nu \in \cal I_4 }\bw_a(j) Y_{j\nu}\wh G_{\nu \bv_b}\wh Q_\Lambda^{k-1} \label{IBPQk1}\\
		&  -  \sqrt{n}\mathbb E \sum_{  a\le b}  \lambda_{ab}\sum_{i \in \cal I_1 ,\mu\in \cal I_3}\bw_a(\mu) X_{i\mu}\wh G_{i  \bv_b}\wh Q_\Lambda^{k-1}  \label{IBPQk2.0}\\
		& -  \sqrt{n}\mathbb E\sum_{  a\le b}  \lambda_{ab} \sum_{j\in \cal I_2,\nu \in \cal I_4 }\bw_a(\nu) Y_{j\nu}\wh G_{j  \bv_b} \wh Q_\Lambda^{k-1} +\OO(n^{-9}). \label{IBPQk2} 
	\end{align}
	Again, we apply Gaussian integration by parts to the terms in \eqref{IBPQk1.0}--\eqref{IBPQk2}. First, as we have seen in the $k=1$ case, the terms containing $\partial_{X_{i\mu}}\wh G_{\mu \bv_b }$, $\partial_{X_{i\mu}}\wh G_{i \bv_b }$, $\partial_{Y_{j\nu}}\wh G_{\nu \bv_b}$ and $\partial_{Y_{j\nu}}\wh G_{j \bv_b}$ will cancel the first term in \eqref{IBPQk0}, leaving an error of order $\OO_\prec(n^{-1/6 })$ as in \eqref{k=1case}. Thus, we get that
	\begin{align}
		&\mathbb E\wh Q_{\Lambda}^k   \nonumber \\
		&= - n^{-1/2} \sum_{a\le b} \lambda_{ab} \mathbb E\sum_{i\in \cal I_1,\mu \in \cal I_3, }\bw_a(i) \wh G_{\mu  \bv_b}\frac{\partial \wh Q_\Lambda^{k-1}}{\partial X_{i\mu}}   - n^{-1/2} \sum_{a\le b} \lambda_{ab} \mathbb E\sum_{j\in \cal I_2,\nu \in \cal I_4}\bw_a(j) \wh G_{\nu \bv_b} \frac{\partial \wh Q_\Lambda^{k-1}}{\partial Y_{j\nu}} \nonumber \\
		&  - n^{-1/2} \sum_{a\le b} \lambda_{ab} \mathbb E \sum_{ i \in \cal I_1 ,\mu\in \cal I_3}\bw_a(\mu)  \wh G_{i \bv_b}\frac{\partial \wh Q_\Lambda^{k-1}}{\partial X_{i\mu}}    - n^{-1/2} \sum_{a\le b} \lambda_{ab} \mathbb E \sum_{j\in \cal I_2 ,\nu \in \cal I_4}\bw_a(\nu)  \wh G_{j \bv_b}\frac{\partial \wh Q_\Lambda^{k-1}}{\partial Y_{j\nu}}  \nonumber\\
		&+  \OO_\prec(n^{-1/6 }) \nonumber\\
		&=   -(k-1)  \sum_{a\le b,a'\le b'} \lambda_{ab}  \lambda_{a'b'}  \mathbb E\sum_{i\in \cal I_1,\mu \in \cal I_3 }\bw_a(i) \wh G_{\mu  \bv_b} \frac{\partial \wh G_{\bv_{a'} \bv_{b'}}}{\partial X_{i\mu}}\wh Q_\Lambda^{k-2} \label{Q1line}  \\
		&- (k-1)  \sum_{a\le b,a'\le b'} \lambda_{ab}  \lambda_{a'b'}  \mathbb E\sum_{j\in \cal I_2,\nu \in \cal I_4 }\bw_a(j) \wh G_{\nu \bv_b} \frac{\partial \wh G_{\bv_{a'} \bv_{b'}}}{\partial Y_{j\nu}}\wh Q_\Lambda^{k-2} \label{Q2line}\\
		&  - (k-1)  \sum_{a\le b,a'\le b'} \lambda_{ab}  \lambda_{a'b'} \mathbb E \sum_{ i \in \cal I_1,\mu\in \cal I_3 }\bw_a(\mu)\wh   G_{i  \bv_b}  \frac{\partial \wh G_{\bv_{a'} \bv_{b'}}}{\partial X_{i\mu}}\wh Q_\Lambda^{k-2} \label{Q3line}\\
		&  - (k-1)  \sum_{a\le b,a'\le b'} \lambda_{ab}  \lambda_{a'b'}  \mathbb E \sum_{j\in \cal I_2,\nu \in \cal I_4 }\bw_a(\nu) \wh  G_{j  \bv_b} \frac{\partial \wh G_{\bv_{a'} \bv_{b'}}}{\partial Y_{j\nu}}\wh Q_\Lambda^{k-2} + \OO_\prec(n^{-1/6 }).\label{Q4line}
	\end{align}

	To calculate the terms \eqref{Q1line}--\eqref{Q4line}, we need to use the anisotropic local law of $GJ_\al G$, $\al=1,2,3,4$. We first define the deterministic matrix limits of $GJ_\al G$: 
	\be \label{defn_pi2}
	\Gamma^{(\al)}(z) := \begin{bmatrix} \begin{pmatrix} \gamma_1^{(\al)}(z)I_p & 0\\ 0 & \gamma_2^{(\al)}(z) I_q\end{pmatrix} & 0 \\ 0  & \begin{pmatrix}  \gamma_3^{(\al)}(z) I_n  & h_\al (z)I_n\\  h_\al (z)I_n &  \gamma_4^{(\al)}(z)  I_n\end{pmatrix}\end{bmatrix} ,\quad \al=1,2,3,4,\ee
	where the $\gamma$ functions are defined by 
	\begin{align}
		& \gamma_1^{(1)} :=\frac{ (1-c_1)^{-1} f_c^2 }{m_{3c}^{2}(f_c^{2} - t_c^{2})}  , \quad \gamma_2^{(1)} :=\frac{c_2^{-1} t_c^2}{h^2 (f_c^2 - t_c^2)} ,\quad \gamma_3^{(1)} :=\frac{(1-c_1)^{-1}  f_c^2 }{f_c^{2} - t_c^{2}} -1 , \nonumber \\
		&  \gamma_4^{(1)}: = \frac{c_2^{-1}m_{4c}^2 t_c^2 }{h^2 (f_c^2 - t_c^2)}, \quad  \gamma_1^{(2)} :=  \frac{c_1^{-1} t_c^2}{h^2 (f_c^2 - t_c^2)} , \quad \gamma_2^{(2)} :=\frac{ (1-c_2)^{-1} f_c^2 } {m_{4c}^{2} (f_c^{2} - t_c^{2})}  , \nonumber \\
		& \gamma_3^{(2)} := \frac{c_1^{-1}m_{3c}^2t_c^2  }{h^2 (f_c^2 - t_c^2)} , \quad \gamma_4^{(2)}: = \frac{(1-c_2)^{-1} f_c^2 }{f_c^{2} - t_c^{2}}  -1 ,\quad \gamma_1^{(3)} := c_1^{-1}\gamma_3^{(1)}  , \nonumber\\
		& \gamma_2^{(3)} := c_2^{-1}\gamma_3^{(2)} ,\quad \gamma_3^{(3)}:  = c_1^{-1}m_{3c}^2 \gamma_3^{(1)} , \quad\gamma_4^{(3)} :=\frac{c_1^{-1}c_2^{-1}h^2 t_c^2 f_c^2 }{ f_c^2 - t_c^2},\nonumber\\
		& \gamma_1^{(4)} := c_1^{-1}\gamma_4^{(1)}  ,\quad  \gamma_2^{(4)} := c_2^{-1}\gamma_4^{(2)} , \quad \gamma_3^{(4)} := \gamma_4^{(3)},\quad \gamma_4^{(4)}:=c_2^{-1}m_{4c}^2 \gamma_4^{(2)}. \label{defgamma} 
	\end{align}
	On the other hand, the functions $h_\al$ are defined by
	$$	  h_\al(z):= z^{1/2}h^2(z)\left\{ c_1\gamma_1^{(\al)}(z) \left[ 1+ (1-z)m_{2c}(z)\right]+ c_2\gamma_2^{(\al)}(z)\left[ 1+ (1-z) m_{1c}(z)\right]  \right\}.$$
	Here, we recall that $t_c$ is defined in \eqref{tc}, $m_{\al c}$, $\al=1,2,3,4,$ are defined in \eqref{m1c}--\eqref{m4c}, $h$ is defined in \eqref{hz}, and $f_c$ is defined in \eqref{fcz}. 
	
	\begin{theorem}\label{lemmaGHF}
	Suppose Assumption \ref{main_assm} holds. For any deterministic unit vectors $\mathbf u, \mathbf v \in \mathbb C^{\mathcal I}$, we have that
	\be\label{derivG}
	\langle \bu, G(\theta_l)J_\al G(\theta_l)\bv\rangle -  \langle\bu ,\Gamma^{(\al)} (\theta_l)\bv\rangle \prec n^{-1/2} . 
	\ee
	\end{theorem}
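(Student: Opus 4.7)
The plan is to derive Theorem \ref{lemmaGHF} from the anisotropic local law for $G$ itself (Theorem \ref{thm_local}(2)) via a resolvent perturbation and contour integration argument. For each $\alpha \in \{1,2,3,4\}$, I would introduce the $\epsilon$-deformed matrix and its resolvent
\begin{equation}
H_\epsilon^{(\alpha)}(z) := H(z) + \epsilon J_\alpha, \qquad G_\epsilon^{(\alpha)}(z) := \bigl(H_\epsilon^{(\alpha)}(z)\bigr)^{-1},
\end{equation}
so that $\partial_\epsilon G_\epsilon^{(\alpha)}(z)\big|_{\epsilon=0} = -G(z) J_\alpha G(z)$ as an exact matrix identity. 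Since $z=\theta_l$ satisfies $|\theta_l - \lambda_+|\sim 1$ by \eqref{tlc}, we stay bounded away from the edge, so a constant-size deformation in $\epsilon$ is affordable.

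The first step is to extend the anisotropic local law to $G_\epsilon^{(\alpha)}$ uniformly for $\epsilon$ in a disk $\{|\epsilon|\le r_0\}$ with $r_0$ a small constant. Adding $\epsilon J_\alpha$ to $H$ only shifts the diagonal entries indexed by $\cal I_\alpha$, so the proof in \cite{PartIII,PartI} carries over almost verbatim; what changes is the self-consistent system \eqref{selfm12}--\eqref{selfm32}, which gets perturbed by $-\epsilon$ in the corresponding entry of $\Pi(z)^{-1}$. The implicit function theorem produces a unique analytic family of solutions $m_{\beta c,\epsilon}^{(\alpha)}(z)$ for small $|\epsilon|$, and the perturbed deterministic limit $\Pi_\epsilon^{(\alpha)}(z)$ is defined by substituting these into \eqref{defn_pi}. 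This would yield
\begin{equation}
\bigl|\langle \bu, [G_\epsilon^{(\alpha)}(z) - \Pi_\epsilon^{(\alpha)}(z)]\bv\rangle\bigr| \prec n^{-1/2}
\end{equation}
uniformly in $|\epsilon|\le r_0$ and in the unit vectors $\bu,\bv$.

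The second step extracts $GJ_\alpha G$ from this analytic family via Cauchy's integral formula,
\begin{equation}
\langle \bu, G(z) J_\alpha G(z) \bv\rangle = -\frac{1}{2\pi \ii}\oint_{|\epsilon|=r_0}\frac{\langle \bu, G_\epsilon^{(\alpha)}(z)\bv\rangle}{\epsilon^2}\dd\epsilon,
\end{equation}
with the analogous formula holding for $\Pi_\epsilon^{(\alpha)}$. Subtracting and using the pointwise control on the contour (where $r_0$ is constant), I would obtain
\begin{equation}
\langle \bu, G J_\alpha G \bv\rangle + \partial_\epsilon \langle \bu, \Pi_\epsilon^{(\alpha)}(z)\bv\rangle\big|_{\epsilon=0} = \OO_\prec(n^{-1/2}),
\end{equation}
which reduces \eqref{derivG} to identifying $\Gamma^{(\alpha)}(z)$ with $-\partial_\epsilon \Pi_\epsilon^{(\alpha)}(z)|_{\epsilon=0}$.

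The main obstacle, which will be mostly algebraic rather than conceptual, will be precisely this last identification. Differentiating the $\epsilon$-perturbed version of \eqref{selfm12}--\eqref{selfm32} at $\epsilon=0$ gives a linear $4\times 4$ system in the unknowns $\partial_\epsilon m_{\beta c,\epsilon}^{(\alpha)}|_{\epsilon=0}$, and I expect solving this system to produce the rational expressions in $m_{\beta c}$, $h$, $f_c$, and $t_c$ appearing in \eqref{defgamma}. The factor $(f_c^2 - t_c^2)^{-1}$ that recurs throughout \eqref{defgamma} is the determinant of the linearized system, vanishing exactly at the BBP threshold $f_c(\lambda_+)=t_c$, so its appearance is structurally natural; nevertheless, the bookkeeping that assembles the diagonal coefficients $\gamma_j^{(\alpha)}$ and the off-diagonal $h_\alpha$ from differentiating the $h$-entries of $\Pi_\epsilon^{(\alpha)}$ will require the most care and is the place where I anticipate having to be careful.
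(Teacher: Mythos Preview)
Your proposal is correct and follows essentially the same route as the paper: the paper introduces the deformed resolvent $\cal R(\bw)=(H(\theta_l)-\sum_\alpha w_\alpha J_\alpha)^{-1}$, proves an anisotropic local law for it (Theorem \ref{thm_localw}) by rerunning the self-consistent equation argument with the perturbed system \eqref{selfomega}, and then recovers $GJ_\alpha G$ via Cauchy's formula exactly as you describe. Your identification of $(f_c^2-t_c^2)^{-1}$ as the inverse of the linearized determinant and of the final step as an implicit-differentiation computation is also what the paper does; the only cosmetic difference is the sign convention ($-w_\alpha$ versus your $+\epsilon$).
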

	We will prove Theorem \ref{lemmaGHF} in Section \ref{appd GJG}.  Again, by the argument in the proof of Claim \ref{removehat}, \eqref{derivG} also holds for $\wh G(z)J_\al \wh G(z)$ with $z= \theta_l + \ii n^{-4}$. Now, we use this estimate to calculate \eqref{Q1line}--\eqref{Q4line} term by term. First, for \eqref{Q1line}, using \eqref{partialG} we get that 
	\be
	\begin{split}
		-\mathbb E\sum_{i\in \cal I_1,\mu \in \cal I_3 }\bw_a(i)\wh  G_{\mu  \bv_b} \frac{\partial \wh G_{\bv_{a'} \bv_{b'}}}{\partial X_{i\mu}}\wh Q_\Lambda^{k-2} &= \mathbb E (\wh GJ_3\wh G)_{ \bv_{b'} \bv_b}\langle \bv_a , \Pi J_1 \wh G \bv_{a'}\rangle   \wh Q_\Lambda^{k-2} \label{Q1line2}\\
		& + \mathbb E (\wh GJ_3\wh G)_{ \bv_{a'} \bv_b}\langle \bv_a , \Pi J_1 \wh G \bv_{b'}\rangle   \wh Q_\Lambda^{k-2}.
	\end{split}
	\ee
	Now, using the local law \eqref{aniso_outstrong}, \eqref{V12} and the first equation in \eqref{selfm12}, we get that
	\be\label{Gausslocal1}\begin{split}\langle \bv_a , \Pi J_1 \wh G \bv_{a'}\rangle &=c_1\left(c_1^{-1}m_{1c}\right)^2 \delta_{aa'} \mathbf 1_{1\le a \le r}+\OO_\prec(n^{-1/2}) \\
	& = c_1m_{3c}^{-2} \delta_{aa'}\mathbf 1_{1\le a \le r}+\OO_\prec(n^{-1/2})  . \end{split}\ee
	Moreover, using \eqref{V12}, \eqref{V34} and the local law for $\wh GJ_3\wh G$ in Theorem \ref{lemmaGHF}, we get that
	\be\label{Gausslocal2}(\wh GJ_3\wh G)_{ \bv_{b'} \bv_b} = c_{\al(b)}\gamma^{(3)}_{\al(b)}\delta_{bb'}+\OO_\prec (n^{-1/2+2\tau_0 })  , \ee
	where we used the notation
	$$\al(b):= k   \ \ \ \text{ if }\ (k-1)r+1 \le b \le k r , \quad k=1,2,3,4,$$
	and let $c_{k}\equiv 1$ for $k=3,4$. Plugging \eqref{Gausslocal1} and \eqref{Gausslocal2} into \eqref{Q1line2}, we get that 
	\be
	\eqref{Q1line}=(k-1)  \sum_{1\le a \le r, a\le b} c_1c_{\al(b)}\frac{\lambda_{ab}^2}{m_{3c}^{2} } \gamma^{(3)}_{\al(b)} (1+\delta_{ab}) \mathbb E \wh Q_\Lambda^{k-2} +\OO_\prec(n^{-1/2+2\tau_0}).\label{Q1line3}
	\ee
	Similarly, we can get that
	\be
	\eqref{Q2line}=(k-1)  \sum_{r+1\le a \le 2r, a\le b} c_2c_{\al(b)}\frac{\lambda_{ab}^2}{m_{4c}^{2} } \gamma^{(4)}_{\al(b)} (1+\delta_{ab}) \mathbb E \wh Q_\Lambda^{k-2} +\OO_\prec(n^{-1/2+2\tau_0}).\label{Q2line3}
	\ee
	For \eqref{Q3line}, we have that
	\be
	\begin{split}
		&-\mathbb E \sum_{ i \in \cal I_1,\mu\in \cal I_3 }\bw_a(\mu) \wh  G_{i  \bv_b}  \frac{\partial \wh G_{\bv_{a'} \bv_{b'}}}{\partial X_{i\mu}}\wh Q_\Lambda^{k-2} 	\\
		&= \mathbb E\sum_{i\in \cal I_1,\mu \in \cal I_3 }(\wh GJ_1\wh G)_{ \bv_{b'} \bv_b}\langle \bv_a , \Pi J_3 \wh G \bv_{a'}\rangle  \wh  Q_\Lambda^{k-2} \label{Q3line2}\\
		& + \mathbb E\sum_{i\in \cal I_1,\mu \in \cal I_3 }(\wh GJ_1\wh G)_{ \bv_{a'} \bv_b}\langle \bv_a , \Pi J_3 \wh G \bv_{b'}\rangle   Q_\Lambda^{k-2}. 
	\end{split}
	\ee
	Using \eqref{aniso_outstrong} and \eqref{V34}, we get that
	\be\label{Gausslocal3}\langle \bv_a , \Pi J_3 \wh G \bv_{a'}\rangle = m_{3c}^2 \delta_{aa'} \mathbf 1_{2r+1\le a \le 3r}+ h^2 \delta_{aa'} \mathbf 1_{3r+1\le a \le 4r}+\OO_\prec(n^{-1/2+2\tau_0}) .\ee
	Using the local law for $\wh GJ_1\wh G$ in Theorem \ref{lemmaGHF} and \eqref{V34}, we get that
	\be\label{Gausslocal4}
	(\wh GJ_1\wh G)_{ \bv_{b'} \bv_b} = \gamma^{(1)}_{\al(b)}\delta_{bb'}+\OO_\prec(n^{-1/2+2\tau_0}), \quad \text{for }\ \ \al(b)=3,4.
	\ee
	Plugging \eqref{Gausslocal3} and \eqref{Gausslocal4} into \eqref{Q3line2} gives that
	\be
	\begin{split}
		\eqref{Q3line}&=(k-1)  \sum_{2r+1\le a \le 3r, a\le b}  {\lambda_{ab}^2}{m_{3c}^{2} } \gamma^{(1)}_{\al(b)} (1+\delta_{ab}) \mathbb E \wh Q_\Lambda^{k-2}  \label{Q3line3} \\
		&+(k-1)  \sum_{3r+1\le a \le 4r, a\le b}  {\lambda_{ab}^2}{h^{2} } \gamma^{(1)}_{\al(b)} (1+\delta_{ab}) \mathbb E\wh  Q_\Lambda^{k-2} +\OO_\prec(n^{-1/2+2\tau_0}).
	\end{split}
	\ee
	Similarly, we can get that
	\be
	\begin{split}
		\eqref{Q4line}&=(k-1)  \sum_{2r+1\le a \le 3r, a\le b}  {\lambda_{ab}^2}{h^2} \gamma^{(2)}_{\al(b)} (1+\delta_{ab}) \mathbb E \wh Q_\Lambda^{k-2} \label{Q4line3} \\
		&+(k-1)  \sum_{3r+1\le a \le 4r, a\le b}  {\lambda_{ab}^2}{m_{4c}^{2} } \gamma^{(2)}_{\al(b)} (1+\delta_{ab}) \mathbb E\wh  Q_\Lambda^{k-2} +\OO_\prec(n^{-1/2+\tau_0}).
	\end{split}
	\ee
	
	Combining \eqref{Q1line3}, \eqref{Q2line3}, \eqref{Q3line3} and \eqref{Q4line3}, we obtain that
	\begin{align*}
		&  \mathbb E\wh Q_{\Lambda}^k  = (k-1)s_{\Lambda}^2 \mathbb E\wh Q_{\Lambda}^{k-2} + \OO_\prec(n^{-1/6}),
	\end{align*}
	where $s_{\Lambda}^2$ is a function of $\Lambda$ defined by
	\begin{align*}
		s_{\Lambda}^2  :=&\ \sum_{1\le a \le r, a\le b} c_1c_{\al(b)}\frac{\lambda_{ab}^2}{m_{3c}^{2} } \gamma^{(3)}_{\al(b)} (1+\delta_{ab})  +  \sum_{r+1\le a \le 2r, a\le b} c_2c_{\al(b)}\frac{\lambda_{ab}^2}{m_{4c}^{2} } \gamma^{(4)}_{\al(b)} (1+\delta_{ab}) \\
		&+  \sum_{2r+1\le a \le 3r, a\le b}  {\lambda_{ab}^2} \left({m_{3c}^{2} }\gamma^{(1)}_{\al(b)}+{h^{2} }\gamma^{(2)}_{\al(b)}\right) (1+\delta_{ab}) \\
		&+  \sum_{3r+1\le a \le 4r, a\le b}  {\lambda_{ab}^2} \left(h^2\gamma^{(1)}_{\al(b)}+{m_{4c}^{2} }\gamma^{(2)}_{\al(b)}\right) (1+\delta_{ab}).
	\end{align*}
	This concludes \eqref{kQlambda}. Combining \eqref{1Qlambda} and \eqref{kQlambda}, we have shown that $\wh {\cal Q}_\Lambda(z)$ is asymptotically Gaussian with zero mean, which indicates that $\wh {\cal Q}(z)$ converges weakly to a centered Gaussian matrix by the Cram{\'e}r-Wold device. Then, the argument in the proof of Claim \ref{removehat} shows that $\cal Q(\theta_l)$ converges to the same limit. Using the definitions of $\gamma_\beta^{(\al)}$, $\al,\beta=1,2,3,4$, in \eqref{defgamma}, we obtain from $s_{\Lambda}^2$ that  
		\begin{align}\label{Q5line}
			\sqrt{n}\cal Q  \to \begin{pmatrix} b_{11}g_{11} & b_{12}g_{12} & b_{13}g_{13} & b_{14}g_{14} \\  
				b_{21}g_{21} & b_{22}g_{22} & b_{23}g_{23} & b_{24}g_{24} \\  
				b_{31}g_{31} & b_{32}g_{32} & b_{33}g_{33} & b_{34}g_{34} \\  
				b_{41}g_{41} & b_{42}g_{42} & b_{43}g_{43} & b_{44}g_{44} \\  
			\end{pmatrix},
		\end{align}
		where $g_{\al\beta}$ are Gaussian matrices as defined in Lemma \ref{Gauss lemma}, and through direct calculations, we can check that $b_{\al\beta}$ are given by
		\be\label{defQb}
		\begin{split}
			& b_{11}=a_{33}, \quad b_{12}=b_{21}= a_{34}, \quad b_{13}=b_{31}= a_{13}, \quad b_{14}=b_{41}=a_{23},\quad b_{22}=a_{44},\\
			&  b_{23}=b_{32}= a_{14}, \quad  b_{24}=b_{42}=a_{24}, \quad  b_{33}= a_{11} ,\quad  b_{34}=b_{43}=a_{12}  ,\quad b_{44}=a_{22}.
		\end{split}
		\ee
		In the above calculation, we also used that for $z= \theta_l + \ii n^{-4}$,
		$$ f_c(z)= \frac{m_{3c}(z)m_{4c}(z)}{h^2(z)} = t_l +\OO(n^{-4}) .$$
		Finally, combining \eqref{Q5line} with \eqref{permu}, we can obtain the asymptotic distribution in \eqref{eq_FGF}, upon renaming the matrices $g_{\al\beta}$ and the coefficients $b_{\al\beta}$. This concludes Lemma \ref{Gauss lemma}. 
		
		\vspace{5pt}
		
		Before the end of this section, we give the proof of \eqref{claimmal}. 
		
		\begin{proof}[Proof of \eqref{claimmal}]
		By the proof of Claim \ref{removehat}, it suffices to prove the estimate for $|m_\al(z) -m_{\al c}(z) |$ for $z=\theta_l + \ii n^{-4}$. In the following proof, we denote $z_0:=\theta_l + \ii \eta_0$ with $\eta_0=n^{-2/3}$. By the averaged local law \eqref{aver_out0}, we have 
			\begin{equation}\label{aniso_outstrong2}
				\left| m_{\al}(z_0) - m_{\al c}(z_0) \right|  \prec n^{-2/3}, \quad \al=1,2,3,4,
			\end{equation}
			where we also used that $\kappa =|\theta_l -\lambda_+| \sim 1$ due to \eqref{tlc}. Thus, to show \eqref{claimmal}, it suffices to prove that
			\begin{align}
			\left|m_{\al c} (z)-m_{\al c}(z_0)\right| &\prec n^{-2/3} , \label{prof_m} \\ 
			\left|m_{\al } (z)-m_{\al }(z_0)\right| &\prec n^{-2/3}. \label{prof_G}
			\end{align}
			The estimate \eqref{prof_m} follows directly from the definitions in \eqref{m1c}--\eqref{m4c}. 			
			We still need to prove \eqref{prof_G}. It follows from the spectral decomposition of the resolvent, which we introduce next.

			First, recalling the notations in \eqref{def Sxy}, we define  
			\be\label{def calH}\cal H:=S_{xx}^{-1/2}S_{xy}S_{yy}^{-1/2},\ee 
			and the resolvent
			$$ R(z):=\begin{pmatrix} R_1  & - z^{-1/2} R_1\cal H  \\ - z^{-1/2} \cal H^{\top} R_1  &  R_2 \end{pmatrix},$$
			where the two blocks $R_1$ and $R_2$ are defined as
			\be\label{Rxy}
			\begin{split}
				& R_1(z):=\left(\cal C_{XY}-z\right)^{-1}=\left(\cal H\cal H^{\top}-z\right)^{-1}, \quad R_2(z):=\left(\cal C_{YX}-z\right)^{-1}=\left(\cal H^{\top}\cal H-z\right)^{-1}. 
			\end{split}
			\ee
			By Theorem 2.10 of \cite{isotropic}, we have the following bounds on the extreme eigenvalues of $S_{xx}$ and $S_{yy}$: 
			\be\label{op rough1add} (1-\sqrt{c_1})^2 -  \e \le  \lambda_p(S_{xx})   \le  \lambda_1(S_{xx}) \le (1+\sqrt{c_1})^2 + \e ,
			\ee
			\be\label{op rough2add} (1-\sqrt{c_2})^2 -  \e \le  \lambda_q(S_{yy})  \le  \lambda_1(S_{yy}) \le (1+\sqrt{c_2})^2 + \e .
			\ee
			
			Next, consider a singular value decomposition of $\cal H$,
			\be\label{SVDH}\cal H = \sum_{k = 1}^{q} \sqrt {\lambda_k} \xi_k \zeta _{k}^{\top}, \ee
			where $\lambda_k$'s are the eigenvalues of the null SCC matrix $\cal C_{XY}$, and $\xi_k$'s and $\zeta_k$'s are respectively the left and right singular vectors. Then, the singular value decomposition  $R(z)$ is given by 
			\begin{equation}
				\begin{split}
					R\left( z \right) & = \sum\limits_{k = 1}^q \frac{1}{\lambda_k-z}\left( {\begin{array}{*{20}c}
							{{\xi _k \xi _k^{\top}  }} & {-z^{-1/2}\sqrt {\lambda _k } \xi _k \zeta _{ k}^{\top}}  \\
							{-z^{-1/2} \sqrt {\lambda _k } \zeta _{k} \xi _k^{\top}  } & {\zeta _k\zeta _k^{\top} }  \\
					\end{array}} \right) \\
					& - \frac1z \left( {\begin{array}{*{20}c}
							{\sum_{k=q+1}^p{\xi _k \xi _k^{\top}  }} & 0  \\
							{0 } & {0}  \\
					\end{array}} \right). \label{spectral1}
				\end{split}
			\end{equation}
			We denote the $(\cal I_1\cup \cal I_2)\times (\cal I_1\cup \cal I_2)$ block of $ G(z)$ by $ \cal G_L(z)$, the $(\cal I_1\cup \cal I_2)\times (\cal I_3\cup \cal I_4)$ block by $ \cal G_{LR}(z)$, the $(\cal I_3\cup \cal I_4)\times (\cal I_1\cup \cal I_2)$ block by $ \cal G_{RL}(z)$, and the $(\cal I_3\cup \cal I_4)\times (\cal I_3\cup \cal I_4)$ block by $ \cal G_R(z)$. 
			Using the Schur complement formula, we can check that
			\be\label{GL1}
			\begin{split}
				\cal G_L =\begin{pmatrix} S_{xx}^{-1/2} & 0 \\ 0 & S_{yy}^{-1/2} \end{pmatrix}R(z) \begin{pmatrix} S_{xx}^{-1/2} & 0 \\ 0 & S_{yy}^{-1/2} \end{pmatrix}, \quad
			\end{split}
			\ee
			\be\label{GR1}
			\begin{split}
				\cal G_R &=   \begin{pmatrix}  z  I_n & z^{1/2}I_n\\ z^{1/2}I_n &  z  I_n\end{pmatrix} \\
				&+   \begin{pmatrix}  z  I_n & z^{1/2}I_n\\ z^{1/2}I_n &  z  I_n\end{pmatrix}  \begin{pmatrix} X^{\top} & 0 \\ 0 & Y^{\top} \end{pmatrix} \cal G_L \begin{pmatrix} X & 0 \\ 0 &  Y \end{pmatrix} \begin{pmatrix}  z  I_n & z^{1/2}I_n\\ z^{1/2}I_n &  z  I_n\end{pmatrix}  ,
			\end{split}
			\ee
			\be\label{GLR1}
			\begin{split}
				& {\cal G}_{LR}(z)= -\cal G_L(z) \begin{pmatrix} X & 0 \\ 0 &  Y \end{pmatrix} \begin{pmatrix}  z  I_n & z^{1/2}I_n\\ z^{1/2}I_n &  z  I_n\end{pmatrix}  , \\
				&{\cal G}_{RL}(z)= -  \begin{pmatrix}  z  I_n & z^{1/2}I_n\\ z^{1/2}I_n &  z  I_n\end{pmatrix}  \begin{pmatrix} X^{\top} & 0 \\ 0 & Y^{\top} \end{pmatrix} {\cal G}_L(z).
			\end{split}
			\ee
			
			Now, we are ready to prove \eqref{prof_G}. We only give the proof for $\al=1$, and all the other cases can be proved in exactly the same way. 
			Using the rigidity estimate \eqref{rigidity}, we get that with high probability,
			\be\label{eigen_gap0}\min_{1\le k \le q}|\lambda_k -z|\gtrsim 1,\quad z=\theta_l + \ii n^{-4}.\ee
			Then, using \eqref{defmal}, \eqref{spectral1}, \eqref{GL1}, \eqref{eigen_gap0}, and \eqref{op rough1add},	we obtain that
			\begin{align} \nonumber
				\left|m_1(z) - m_1(z_0) \right| \prec \frac{\eta_0 }n\sum_{i=1}^p \sum_{k = 1}^{p}  { \left|\left\langle \mathbf e_i,S_{xx}^{-1/2}{\xi}_k\right\rangle\right|^2}  = \frac{\eta_0}{n}\tr(S_{xx}^{-1}) \prec  \eta_0 = n^{-2/3},
			\end{align}
			where $\mathbf e_i$ is the standard unit vector along the $i$-th direction. 
		\end{proof}

\section{Proof of Theorem \ref{lemmaGHF}}\label{appd GJG}

In this section, we give the proof of Theorem \ref{lemmaGHF}.  We first record the following simple estimate, which can be verified through direct calculations using \eqref{m1c}--\eqref{m4c}. 

\begin{lemma}[Lemma 3.2 of \cite{PartIII}]\label{lem_mbehavior}
	Fix any constants $c, C>0$. If \eqref{assm20} holds, then for $z\in \C_+ \cap \{z: c\le |z| \le C\}$ and $\al=1,2,3,4$, the following estimates hold:
	\begin{equation}\label{absmc}
		\vert m_{\al c}(z) \vert \sim 1,  \quad \left|z^{-1} - (m_{1c}(z)+m_{2c}(z)) + (z-1)m_{1c}(z)m_{2c}(z) \right|\sim 1.
	\end{equation} 
\end{lemma}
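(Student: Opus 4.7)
The statement has two parts: $|m_{\al c}(z)|\sim 1$ for $\al=1,2,3,4$, and a two-sided bound on the ``self-consistent denominator'' appearing in \eqref{selfm3}. My plan is to reduce both bounds to scalar non-vanishing statements handled by two explicit algebraic factorizations plus a compactness argument. First, the upper bounds $|m_{3c}(z)|,|m_{4c}(z)|\lesssim 1$ are immediate from \eqref{m3c}, \eqref{m4c} once one notes $|s(z)|:=|\sqrt{(z-\lambda_-)(z-\lambda_+)}|\lesssim 1$ on the region (since $\lambda_\pm\in[0,1]$). The identities $m_{1c}=-c_1/m_{3c}$, $m_{2c}=-c_2/m_{4c}$ from \eqref{selfm12} then transfer the upper bounds on $|m_{1c}|,|m_{2c}|$ to lower bounds on $|m_{3c}|,|m_{4c}|$ and vice versa, so the entire first estimate reduces to showing $|m_{3c}(z)|,|m_{4c}(z)|\gtrsim 1$ uniformly.

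The key algebraic input for the lower bound is the factorization
\[
m_{3c}(z)\cdot\tfrac12\bigl[(1-2c_1)z+(c_1-c_2)-s(z)\bigr] \;=\; c_1(1-c_1)\,z(1-z),
\]
obtained by computing $(p+s/2)(p-s/2)=p^2-s^2/4$ with $p=\tfrac12[(1-2c_1)z+(c_1-c_2)]$, using $s^2=z^2-2(c_1+c_2-2c_1c_2)z+(c_1-c_2)^2$ (which follows from $\lambda_++\lambda_-=2(c_1+c_2-2c_1c_2)$ and $\lambda_+\lambda_-=(c_1-c_2)^2$); an analogous identity holds for $m_{4c}$. If $m_{3c}(z_0)=0$ for some $z_0$ in the region, the right-hand side must vanish, forcing $z_0=1$ (since $|z_0|\ge c$ and $c_1(1-c_1)\ge \tau^2>0$). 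But at $z=1$, the elementary identity $(1-\lambda_-)(1-\lambda_+)=(1-c_1-c_2)^2$ yields $s(1)=1-c_1-c_2$ and hence $m_{3c}(1)=1-c_1-c_2\ge\tau$, a contradiction. Thus $m_{3c}$ (and by the same argument $m_{4c}$) is nowhere zero. Together with the continuous extension of $m_{3c}$ to the closed set $\{\,c\le|z|\le C,\ \Im z\ge 0\,\}$ (the branch of $s$ is continuous up to the cut $[\lambda_-,\lambda_+]$ from above) and joint continuity in the parameter $(c_1,c_2)$ over the compact set $\{\tau\le c_2\le c_1,\ c_1+c_2\le 1-\tau\}$, compactness delivers a uniform lower bound $|m_{3c}(z)|\ge c'(\tau,c,C)>0$.

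For the second estimate, I would combine \eqref{selfm3} with $m_{2c}=-c_2/m_{4c}$ and substitute \eqref{m4c} directly to obtain the identity
\[
z^{-1}-(m_{1c}+m_{2c})+(z-1)m_{1c}m_{2c} \;=\; \frac{1-(z-1)m_{2c}}{m_{3c}} \;=\; \frac{z-c_1-c_2+s(z)}{2\,m_{3c}(z)\,m_{4c}(z)}.
\]
In view of the first estimate already proved, this reduces the claim to $|z-c_1-c_2+s(z)|\sim 1$. The upper bound is immediate, and the lower bound follows from the parallel factorization
\[
\bigl[z-c_1-c_2+s(z)\bigr]\cdot\bigl[z-c_1-c_2-s(z)\bigr] \;=\; 4c_1c_2(1-z),
\]
run through the same non-vanishing/compactness scheme: a zero of the first factor would force $z=1$, where a direct evaluation gives $z-c_1-c_2+s(1)=2(1-c_1-c_2)\ge 2\tau$.

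The only real subtlety is that both factorizations degenerate precisely at $z=1$ (and at $z=0$, which is excluded by $|z|\ge c$); one has to patch them with the direct evaluations at $z=1$. The compactness over both the spectral and the parameter regions absorbs the boundary configurations cleanly, in particular the degenerate slice $c_1=c_2$ (allowed by \eqref{assm20} and causing $\lambda_-=0$, harmless because $|z|\ge c>0$) and the spectral endpoints $\lambda_\pm$, which would otherwise require annoying case analysis.
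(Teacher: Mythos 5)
Your argument is correct, and it is worth noting that the paper itself offers no proof of this lemma: it is imported as Lemma 3.2 of \cite{PartIII} and, where it resurfaces in the appendix, is dismissed with ``can be verified through direct calculations using \eqref{m1c}--\eqref{m4c}''. What you have written is a clean, self-contained way of organizing exactly that direct calculation. I checked the two factorizations: with $s^2=z^2-2(c_1+c_2-2c_1c_2)z+(c_1-c_2)^2$ one indeed gets $m_{3c}(z)\cdot\tfrac12[(1-2c_1)z+(c_1-c_2)-s(z)]=c_1(1-c_1)z(1-z)$ (and this is consistent with $m_{1c}=-c_1/m_{3c}$ from \eqref{selfm12}, since dividing the factorization through by $m_{3c}$ reproduces \eqref{m1c}), and $(z-c_1-c_2+s)(z-c_1-c_2-s)=4c_1c_2(1-z)$; the reduction of the second quantity to $(z-c_1-c_2+s)/(2m_{3c}m_{4c})$ via $m_{4c}+(z-1)c_2=\tfrac12(z-c_1-c_2+s)$ also checks out. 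The one point that genuinely requires care, and which you handle correctly, is the branch of $s$: at $z=1$ one has $s(1)=\pm(1-c_1-c_2)$, and with the wrong sign it would be $m_{3c}(1)$ rather than the conjugate factor that vanishes; the standard convention $s(z)\sim z$ at infinity (equivalently $s>0$ on $(\lambda_+,\infty)$, and $1>\lambda_+$ here) gives $s(1)=1-c_1-c_2>0$ and $m_{3c}(1)=1-c_1-c_2\ge\tau$, as you state. Your compactness step is also the right way to make the bounds uniform in $n$, since $c_1(n),c_2(n)$ vary over the compact set prescribed by \eqref{assm20}. There is no circularity in using \eqref{selfm3}: read as the polynomial identity $m_{3c}\cdot D=1-(z-1)m_{2c}$ between explicit functions, it holds identically, and dividing by the already-established nonvanishing $m_{3c}$ is legitimate.
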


\subsection{Resolvents and limiting laws}

We begin the proof by introducing some new resolvents. With $H(\theta_l)$ in \eqref{linearize_block}, we define the following \emph{generalized resolvent}
\begin{equation}\label{linearize_blockw0}
	\cal R(\bw) := \left[H (X,Y, \theta_l)-   \begin{pmatrix}  w_1 I_p & 0 & 0 & 0 \\ 0 & w_2 I_q & 0 & 0 \\ 0 & 0 & w_3 I_n & 0 \\ 0 & 0 & 0 & w_4 I_n  \end{pmatrix} \right]^{-1} ,
\end{equation}
where $\bw=(w_1,w_2, w_3,w_4)\in \C_+^4$ is a new vector of spectral parameters. 
Then we have the simple identity
\be\label{derivJal}GJ_\al G=\left. \frac{\partial \cal R(\bw) }{\partial w_\al}\right|_{\bw = 0} .\ee
Hence, to obtain the local laws on $G(\theta_l)J_\al G(\theta_l)$, it suffices to study the local law $\cal R(\bw)$ for the spectral parameters $\bw$ around the origin. 

In the following proof, we only prove the local law for $GJ_1 G$, while the proofs for $GJ_\al G$ with $\al=2,3,4$ are similar. For this purpose, it suffices to use spectral parameters $\bw$ with $w_2=w_3=w_4=0$. With a slight abuse of notation, we shall prove a local law for the resolvent 
\begin{equation}\label{linearize_blockw}
	\cal R(z, z') :=  \left[H (X,Y, \theta_l)-   \begin{pmatrix}  z I_p & 0 & 0 & 0 \\ 0 & z' I_q & 0 & 0 \\ 0 & 0 & z' I_n & 0 \\ 0 & 0 & 0 & z' I_n  \end{pmatrix} \right]^{-1},\quad z,z'\in \C_+ . 
\end{equation}
Similar to \eqref{defmal}, we introduce the averaged partial traces
\be\label{defomegaal} \omega_\al(z,z') 
:= \frac{1}{n}\sum_{\fa \in \cal I_\al}  \cal R_{\fa\fa}(z,z') ,\quad \al=1,2,3,4. \ee
Since $H$ is symmetric and has real eigenvalues, we immediately obtain the following deterministic bound 
\be\label{op R}
\left\|\cal R(z,z')\right\| \le \frac{C }{\min(\im z,\im z')}. 
\ee
Most of the time we will choose $z'=0$. But, to avoid the singular behaviours of $\cal R$ on exceptional low-probability events, we sometimes will choose, say $z'=\ii n^{-4}$, so that $\left\|\cal R(z,z')\right\|=\OO(n^4)$ by \eqref{op R} and hence Lemma \ref{lem_stodomin} (iii) can be applied.

We now describe the deterministic limit of $\cal R(z,0)$. We first define the deterministic limit $(\omega_{\al c}(z))_{\al=1}^4$ of $(\omega_{\al }(z,0))_{\al=1}^4$, as the unique solution to the following system of self-consistent equations
\begin{equation}\label{selfomega}
	\begin{split}
		\frac{c_1}{\omega_{1c} }=- z -  \omega_{3c} ,\quad & \omega_{3c} =(\theta_l -1) \frac{1 + (1-\theta_l )\omega_{2c}}{[1 + (1-\theta_l )\omega_{1c} ][1 + (1-\theta_l )\omega_{2c} ]-\theta_l ^{-1}},\\
		\frac{c_2}{\omega_{2c} }=  -  \omega_{4c} , \quad  & \omega_{4c} =(\theta_l -1) \frac{1 + (1-\theta_l )\omega_{1c}}{[1 + (1-\theta_l )\omega_{1c} ][1 + (1-\theta_l )\omega_{2c} ]-\theta_l ^{-1}},
	\end{split}
\end{equation}
such that $\im \omega_{\al c}(z)>0$ whenever $z\in \C_+$. Moreover, we define the function
\begin{align}
	g_1(z):&= \frac{(\theta_l -1)\theta_l ^{-1/2}}{[1 + (1-\theta_l )\omega_{1c}(z) ][1 + (1-\theta_l )\omega_{2c}(z) ]-\theta_l ^{-1}}.
\end{align}
Then, the matrix limit of $\cal R(z,0)$ is defined by
\be \label{defn_piw}
\Gamma(z) := \begin{bmatrix} \begin{pmatrix} c_1^{-1}\omega_{1c}(z)I_p & 0\\ 0 & c_2^{-1}\omega_{2c}(z)I_q\end{pmatrix} & 0 \\ 0  & \begin{pmatrix}  \omega_{3c}(z)I_n  & g_1(z)I_n\\  g_1(z)I_n &  \omega_{4c}(z)  I_n\end{pmatrix}\end{bmatrix} .\ee
The following lemma gives the existence and uniqueness of the solution $(\omega_{\al c}(z))_{\al=1}^4$. We postpone its proof to Appendix \ref{appd sol}. 

\begin{lemma} \label{lem_mbehaviorw}
	There exist constants $c_0, C_0>0$ depending only on $c_1, c_2$ and $\delta_l$ in \eqref{tlc} such that the following statements hold. 
	If $|z|\le c_0$, then there exists a unique solution to \eqref{selfomega} under the condition
	\be\label{prior10}
	\max_{\al=1}^4 |\omega_{\al c}(z) - m_{\al c}(\theta_l)| \le c_0.
	\ee
	Moreover, the solution satisfies
	\be\label{Lipomega}
	\max_{\al=1}^4 |\omega_{\al c}(z) - m_{\al c}(\theta_l)| \le C_0 |z|.
	\ee
\end{lemma}

We also have the following stability estimate regarding the system of equations in \eqref{selfomega}, whose proof is postponed to Appendix \ref{appd sol}.

\begin{lemma} \label{lem_stabw}
	There exist constants $c_0, C_0>0$ depending only on $c_1, c_2$ and $\delta_l$ such that the self-consistent equations in \eqref{selfomega} are stable in the following sense. Suppose $|z|\le c_0$ and $\omega_{\al}: \C_+\mapsto \C_+$, $\al=1,2,3,4$, are analytic functions of $z$ such that 
	\be\label{prior1}
	\max_{\al=1}^4 |\omega_{\al}(z) - m_{\al c}(\theta_l)| \le c_0.
	\ee
	Suppose they satisfy the system of equations
	\begin{equation}\label{selfomegaerror}
		\begin{split}
			\frac{c_1}{\omega_{1} }+ z +  \omega_{3}= \mathcal E_1,\quad &\omega_{3} + (1-\theta_l ) \frac{1 + (1-\theta_l )\omega_{2}}{[1 + (1-\theta_l )\omega_{1} ][1 + (1-\theta_l )\omega_{2} ]-\theta_l ^{-1}}= \mathcal E_2,\\
			\frac{c_2}{\omega_{2} } + \omega_{4} = \mathcal E_3, \quad  &\omega_{4} +(1-\theta_l ) \frac{1 + (1-\theta_l )\omega_{1}}{[1 + (1-\theta_l )\omega_{1} ][1 + (1-\theta_l )\omega_{2} ]-\theta_l ^{-1}}= \mathcal E_4,
		\end{split}
	\end{equation}
	for some errors bounded as $ \max_{\al=1}^4 |\mathcal E_\al| \le \delta(z),$ where $\delta(z)$ is a deterministic function of $z$ satisfying that $\delta(z) \le (\log n)^{-1}.$ Then, we have 
	\begin{equation}
		\max_{\al=1}^4 \left|\omega_\al(z)-\omega_{\al c}(z)\right|\le C_0\delta(z).\label{Stability1}
	\end{equation}
\end{lemma}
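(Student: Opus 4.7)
The plan is to reduce the stability claim to the same contraction mapping set-up that was used in the proof of Lemma~\ref{lem_mbehaviorw}. Write $\omega_\al = m_{\al c}(\theta_l) + \epsilon_\al$ and $\omega_{\al c} = m_{\al c}(\theta_l) + \epsilon_\al^c$. By the a priori bound \eqref{prior1} and the assumption $\delta(z)\le (\log n)^{-1}$, both $\bm\epsilon := (\epsilon_1,\epsilon_2)$ and $\bm\epsilon^c := (\epsilon_1^c,\epsilon_2^c)$ lie in the small ball $B_{c_0}(l^\infty(\Z_2))$ on which the map $\mathbf f$ constructed in the proof of Lemma~\ref{lem_mbehaviorw} is a contraction.

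First I would use the first and third equations in \eqref{selfomegaerror} to solve for $\omega_3$ and $\omega_4$ in terms of $\omega_1,\omega_2$ up to errors $\mathcal E_1,\mathcal E_3$, substitute these into the second and fourth equations, and subtract the corresponding identities for $(\omega_{1c},\omega_{2c})$ obtained from \eqref{selfomega}. After Taylor expansion around $(m_{1c}(\theta_l),m_{2c}(\theta_l))$, exactly as in \eqref{selfomega3}--\eqref{selfomega4}, this gives a system of the form
\begin{equation*}
S(\bm\epsilon - \bm\epsilon^c) = \mathbf r(\bm\epsilon,\bm\epsilon^c) + \mathbf E,
\end{equation*}
where $S$ is the same $2\times 2$ matrix that appeared in the proof of Lemma~\ref{lem_mbehaviorw} (in particular $\|S^{-1}\|_{l^\infty\to l^\infty}\le \widetilde C$), the vector $\mathbf E$ is a linear combination of $\mathcal E_1,\mathcal E_2,\mathcal E_3,\mathcal E_4$ with bounded coefficients so that $\|\mathbf E\|_\infty \lesssim \delta(z)$, and the remainder $\mathbf r$ collects the quadratic differences in $\bm\epsilon, \bm\epsilon^c$ and obeys the bound
\begin{equation*}
\|\mathbf r(\bm\epsilon,\bm\epsilon^c)\|_\infty \le C\bigl(\|\bm\epsilon\|_\infty + \|\bm\epsilon^c\|_\infty\bigr)\|\bm\epsilon - \bm\epsilon^c\|_\infty,
\end{equation*}
which is the same estimate used to show the contraction property of $\mathbf f$ in the proof of Lemma~\ref{lem_mbehaviorw}.

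Second, I would invert $S$ to obtain
\begin{equation*}
\|\bm\epsilon - \bm\epsilon^c\|_\infty \le \widetilde C\|\mathbf E\|_\infty + \widetilde C C\bigl(\|\bm\epsilon\|_\infty + \|\bm\epsilon^c\|_\infty\bigr)\|\bm\epsilon - \bm\epsilon^c\|_\infty.
\end{equation*}
Shrinking $c_0$ if necessary so that $2\widetilde C C\, c_0 \le 1/2$, the second term on the right can be absorbed into the left, yielding $\|\bm\epsilon - \bm\epsilon^c\|_\infty \le 2\widetilde C\|\mathbf E\|_\infty \lesssim \delta(z)$. This proves \eqref{Stability1} for $\omega_1$ and $\omega_2$. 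For $\omega_3$ and $\omega_4$, I would plug the bound just obtained back into the first and third equations of \eqref{selfomegaerror}, together with the lower bound $|\omega_{1c}|, |\omega_{2c}| \sim 1$ coming from \eqref{absmc} and Lemma~\ref{lem_mbehaviorw}, to control the differences $\omega_3 - \omega_{3c}$ and $\omega_4 - \omega_{4c}$ by $\delta(z)$ as well.

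The only real obstacle is bookkeeping: one needs to verify that the Taylor remainders in $\bm\epsilon,\bm\epsilon^c$ are genuinely quadratic (so that smallness of $c_0$ lets us absorb them), and that the denominator $[1+(1-\theta_l)\omega_1][1+(1-\theta_l)\omega_2] - \theta_l^{-1}$ stays bounded away from zero on the ball $B_{c_0}$. Both facts were already checked in the proof of Lemma~\ref{lem_mbehaviorw} using \eqref{tlc} and the identity $\theta_l g(m_{1c})g(m_{2c}) = t_l$, so the argument goes through with no new ingredient.
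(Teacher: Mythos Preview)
Your proposal is correct and follows essentially the same route as the paper: the paper's proof simply says to subtract \eqref{selfomegaerror} from \eqref{selfomega} and rerun the contraction argument of Lemma~\ref{lem_mbehaviorw} for the differences $\epsilon_\al(z):=\omega_\al(z)-\omega_{\al c}(z)$, omitting details. Your version centers both $\omega_\al$ and $\omega_{\al c}$ at $m_{\al c}(\theta_l)$ before subtracting, which is a cosmetic difference; the invertibility of $S$ and the quadratic remainder bound are exactly the ingredients the paper invokes.
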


The following theorem gives the anisotropic local law for $\cal R(z,0)$.

\begin{theorem} \label{thm_localw} 
	Suppose Assumption \ref{main_assm} holds. Then, for any deterministic unit vectors $\mathbf u, \mathbf v \in \mathbb C^{\mathcal I}$, the following anisotropic local law holds uniformly in $z\in \mathbf D:=\{z\in \C_+: |z| \le (\log n)^{-1}\}$: 
	\begin{equation}\label{aniso_outstrongw}
		\left| \langle \mathbf u, \cal R(z,0) \mathbf v\rangle - \langle \mathbf u, \Gamma(z)\mathbf v\rangle \right|  \prec  n^{-1/2} ,
	\end{equation}
	where $\Gamma(z)$ is defined in \eqref{defn_piw}.
\end{theorem}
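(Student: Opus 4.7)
The plan is to establish Theorem \ref{thm_localw} by adapting the proof strategy for the original anisotropic local law \eqref{aniso_outstrong} from \cite{PartIII}, treating the spectral parameter $z$ as a small perturbation from $z=0$. At $z=0$ we have $\cal R(0,0)=G(\theta_l)$, and one can verify via the self-consistent relations \eqref{selfm12}--\eqref{selfm32} that $\omega_{\alpha c}(0)=m_{\alpha c}(\theta_l)$ for $\alpha=1,2,3,4$ and consequently $\Gamma(0)=\Pi(\theta_l)$, so the bound at $z=0$ already follows from Theorem \ref{thm_local}. The whole task is therefore to extend this control to a small complex disk.

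The proof will proceed in three steps. First, I would derive approximate self-consistent equations for the averaged partial traces $\omega_\alpha(z,0)$ defined in \eqref{defomegaal}. The starting point is the identity $[H(X,Y,\theta_l)-z P_1]\cal R(z,0)=I$, where $P_1$ denotes the projection onto $\cal I_1$. Applying the Schur complement formula to single indices in $\cal I_1$ and $\cal I_2$ produces representations of the diagonal entries $\cal R_{ii}(z,0)$ and $\cal R_{jj}(z,0)$ in terms of quadratic forms of rows of $X$, $Y$ against resolvent minors; the large deviation estimates in Lemma \ref{largedeviation}, combined with the already-known entrywise local law for the resolvent minors, then reproduce the first and third equations of \eqref{selfomega} with error $\OO_\prec(n^{-1/2})$. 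Schur complement applied to pairs of indices in $\cal I_3\cup\cal I_4$ similarly produces the second and fourth equations up to the same error, where the $(1-\theta_l)$ factors on the right-hand sides come from the constant $\theta_l I_n$ and $\theta_l^{1/2} I_n$ blocks in the definition \eqref{linearize_blockw} of $\cal R(z,0)$. Second, once these four approximate equations are established, Lemma \ref{lem_stabw} applied with $\delta(z)=n^{-1/2+\epsilon}$ yields the averaged local law $|\omega_\alpha(z,0)-\omega_{\alpha c}(z)|\prec n^{-1/2}$ uniformly in $z\in\mathbf D$. Third, substituting this averaged estimate back into the Schur complement identities for individual entries, followed by the standard polarization and large deviation arguments used in \cite{PartIII} to pass from entrywise to anisotropic control, produces \eqref{aniso_outstrongw}.

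The main obstacle will be ensuring the validity of the a priori bound $|\omega_\alpha(z,0)-m_{\alpha c}(\theta_l)|\le c_0$ required to invoke Lemma \ref{lem_stabw}, since the lemma only gives stability within a small ball around $m_{\alpha c}(\theta_l)$. This can be handled by a standard bootstrapping continuity argument in $z$: for $z=\ii\eta$ with $\eta$ of order one, the deterministic bound \eqref{op R} together with Lemma \ref{lem_mbehaviorw} provides the a priori input, and one then lowers $\eta$ in dyadic steps while propagating the stability estimate, using the fact that $\omega_\alpha(z,0)$ depends continuously (in fact analytically) on $z$ and that the target radius shrinks only at rate $\OO_\prec(n^{-1/2})$. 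A further minor technical point is the asymmetric role played by the parameter $z$, which perturbs only the $\cal I_1$ block and thus breaks some symmetries exploited in the $z=0$ analysis; however, since we are aiming only at the bound $n^{-1/2}$ rather than a sharp edge-type estimate, and since $|z|\le(\log n)^{-1}$ keeps us in a genuinely perturbative regime, these adaptations are routine modifications of the blueprint of \cite{PartIII}.
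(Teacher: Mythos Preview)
Your overall strategy---approximate self-consistent equations via Schur complement and large deviation, then stability (Lemma \ref{lem_stabw}), then the polynomialization of \cite{PartIII} to upgrade from entrywise to anisotropic---matches the paper's proof exactly. The paper carries this out as Lemma \ref{lemm_selfcons_weak} $\Rightarrow$ Proposition \ref{thm_localentry} $\Rightarrow$ Theorem \ref{thm_localw}.

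The one place where your proposal diverges, and where there is a gap, is the a priori input. You propose a bootstrap starting at $z=\ii\eta$ with $\eta\sim 1$, citing \eqref{op R} and Lemma \ref{lem_mbehaviorw}. But Lemma \ref{lem_mbehaviorw} (and the stability Lemma \ref{lem_stabw}) only apply for $|z|\le c_0$ with $c_0$ a small constant, so $\eta\sim 1$ is outside their domain; and within $\mathbf D$ the trivial bound \eqref{op R} only gives $\|\cal R\|\lesssim (\im z)^{-1}$, which is useless for small $\im z$. More importantly, the large deviation step for the $\cal Z$-variables requires bounding $n^{-1}\sum_{\mu,\nu}|\cal R^{(i)}_{\mu\nu}|^2$; in the absence of a Ward-type identity (which you do not have here, since $z$ perturbs only the $\cal I_1$ block), this needs an a priori operator-norm bound on $\cal R$, and your bootstrap provides no independent source for it.

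The paper sidesteps this entirely by a direct spectral-gap argument: writing out the Schur complement explicitly gives $\cal R_1=S_{xx}^{-1/2}(\cal H\cal H^\top-\theta_l-zS_{xx}^{-1})^{-1}S_{xx}^{-1/2}$, and since rigidity \eqref{rigidity} forces $\lambda_p(\theta_l-\cal H\cal H^\top)\ge \tfrac12(\theta_l-\lambda_+)\gtrsim 1$ with high probability, one gets $\|\cal R(z)\|\le C$ and $|\langle\bu,(\cal R(z)-G(\theta_l))\bv\rangle|\le C|z|$ uniformly in $z\in\mathbf D$ in one shot (equations \eqref{priorim}--\eqref{priordiff}). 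This immediately supplies both the a priori closeness \eqref{prior1} and the operator-norm control needed for the large deviation bounds, with no continuity argument required. Once you insert this step, the rest of your outline is correct.
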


The proof of this theorem will be given in Section \ref{sec GJG} below. Now, we use it to complete the proof of \eqref{derivG} when $\al=1$. 

\begin{proof}[Proof of \eqref{derivG} for $GJ_1 G$]
	Using \eqref{derivJal} and Cauchy's integral formula, we get that
	\begin{align}
		\langle \bu, G(\theta_l)J_\al G(\theta_l)\bv\rangle & = \frac{1}{2\pi \ii}\oint_{\cal C} \frac{\langle \bu, \cal R(w,0)\bv\rangle }{w^2}\dd w =  \frac{1}{2\pi \ii}\oint_{\cal C} \frac{\langle \bu, \Gamma(w)\bv\rangle }{w^2}\dd w +\OO_\prec(n^{-1/2}) \nonumber\\
		& = \langle \bu, \Gamma'(0)\bv\rangle+ \OO_\prec(n^{-1/2}),\label{apply derivlocal}
	\end{align}
	where $\cal C$ is the contour $\{w\in \C: |w| = (\log n)^{-1} \}$ and we used \eqref{aniso_outstrongw} in the second step. It remains to calculate $\Gamma'(0)$, which is reduced to calculating the derivatives $\dot m_{\al c}(\theta_l):=\omega'_\al(z=0)$, $\al=1,2,3,4$. 
	
	Using equation \eqref{selfomega} and implicit differentiation, we obtain that
	\begin{align*}
		&  c_1^{-1}\dot m_{1c}=   m_{3c}^{-2}  + \dot m_{1c} + \frac{\theta_l^{-1}}{[1 + (1-\theta_l)m_{2c} ]^2} \dot m_{2c},\quad  \dot m_{3c} = m_{3c}^2 \left(c_1^{-1}\dot m_{1c} - m_{3c}^{-2}\right), \\
		& c_2^{-1}\dot m_{2c} = \dot m_{2c} + \frac{\theta_l^{-1}}{[1 + (1-\theta_l)m_{1c}  ]^2} \dot m_{1c} , \quad \dot m_{4c} = c_2^{-1}\dot m_{2c} m_{4c}^2.
	\end{align*}
	Solving the above equations and using that (recall equation \eqref{hz})
	$$\frac{\theta_l^{-1}}{[1 + (1-\theta_l)m_{2c} ]^2}=\frac{h^2}{m^2_{3c}},\quad \frac{\theta_l^{-1}}{[1 + (1-\theta_l)m_{1c} ]^2}=\frac{h^2}{m_{4c}^2}, $$
	we get that $c_\al^{-1}\dot m_{\al c} = \gamma_{\al}^{(1)} $, $\al=1,2$, and $ \dot m_{\al c} = \gamma_{\al}^{(1)} $, $\al=3,4$, for $\gamma_{\al}^{(1)}$ defined in \eqref{defgamma}. Moreover, we can check that $g_1'(0)= h_1(z)$. Hence, we get $ \Gamma'(0)=\Gamma^{(1)}(\theta_l)$, which, together with \eqref{apply derivlocal}, concludes \eqref{derivG}.
\end{proof}

The proof of Theorem \ref{lemmaGHF} for $GJ_\al G$ with $\al=2,3,4$ is exactly the same, except that we need to use the following local law in Theorem \ref{thm_localwgen}. Recall the resolvent $\cal R(w_1,w_2, w_3,w_4)$ defined in \eqref{linearize_blockw0}. We define $(\omega_{\al c}(\bw))_{\al=1}^4$, as the unique solution to the following system of self-consistent equations
\begin{equation}\label{selft1}
	\begin{split}
		& \frac{c_1}{\omega_{1c} }=- w_1 -  \omega_{3c},\quad \frac{c_2}{\omega_{2c} }= - w_2 -  \omega_{4c}, \\
		& \omega_{3c} =(\theta_l-1) \frac{1 + (1-\theta_l)(\omega_{2c}  + w_4)}{[1 + (1-\theta_l)(\omega_{1c} +w_3)][1 + (1-\theta_l)(\omega_{2c} + w_4)]-\theta_l^{-1}},\\
		& \omega_{4c} =(\theta_l-1) \frac{1 + (1-\theta_l)(\omega_{1c}  + w_3)}{[1 + (1-\theta_l)(\omega_{1c} +w_3)][1 + (1-\theta_l)(\omega_{2c} + w_4)]-\theta_l^{-1}},
	\end{split}
\end{equation}
such that $\im \omega_{\al c}(\bw)>0$ whenever $\bw\in \C^4_+$. Define the matrix limit of $\cal R(\bw)$ as
\be \label{defn_piw^4}
\Gamma(\bw) := \begin{bmatrix} \begin{pmatrix} c_1^{-1}\omega_{1c}(\bw)I_p & 0\\ 0 & c_2^{-1}\omega_{2c}(\bw)I_q\end{pmatrix} & 0 \\ 0  & \begin{pmatrix}  \omega_{3c}(\bw)I_n  & \wt g(\bw)I_n\\  \wt g(\bw)I_n &  \omega_{4c}(\bw)  I_n\end{pmatrix}\end{bmatrix} ,\ee
where $\wt g(\bw)$ is defined by
\begin{align}\label{hzw}
	\wt g(\bw):&= \frac{(\theta_l -1)\theta_l ^{-1/2}}{[1 + (1-\theta_l )(\omega_{1c}+w_3) ][1 + (1-\theta_l )(\omega_{2c}+w_4) ]-\theta_l ^{-1}}.
\end{align}
Then, we have the following local law for $\cal R(\bw)$.

\begin{theorem} \label{thm_localwgen} 
	Suppose Assumption \ref{main_assm} holds. Fix any $ \al =1,2,3,4$. For any deterministic unit vectors $\mathbf u, \mathbf v \in \mathbb C^{\mathcal I}$, the following anisotropic local law holds uniformly in $w_\al \in  \{w_\al\in \C_+: |w_\al| \le (\log n)^{-1}\}$ if $w_\beta=0$ for $\beta\ne \al$: 
	\begin{equation}\label{aniso_outstrongwgen}
		\left| \langle \mathbf u, \cal R(\bw) \mathbf v\rangle - \langle \mathbf u, \Gamma(\bw)\mathbf v\rangle \right|  \prec  n^{-1/2}. 
	\end{equation}
\end{theorem}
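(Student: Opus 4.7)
The plan is to argue that Theorem \ref{thm_localwgen} is proved by essentially the same strategy as Theorem \ref{thm_localw}, with only the deterministic analysis of the self-consistent equations needing to be redone to accommodate the extra spectral parameter $w_\al$ in an arbitrary block. The probabilistic content — concentration of quadratic forms via Lemma \ref{largedeviation}, the Schur complement derivation of approximate self-consistent equations, and the upgrade from averaged to entrywise bounds — is insensitive to which block contains the perturbation, since $w_\al$ enters the matrix $H$ only in a single deterministic diagonal block of bounded operator norm.

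First I will establish the deterministic analogues of Lemmas \ref{lem_mbehaviorw} and \ref{lem_stabw} for the enlarged system \eqref{selft1}. Setting $\omega_{\al c} = m_{\al c}(\theta_l) + \e_\al$ and subtracting from \eqref{selft1} the $\bw = 0$ equations \eqref{selfomega2} (satisfied by $m_{\al c}(\theta_l)$) produces a fixed-point equation of the form $\bm\e = \mathbf f_\al(\bm\e; w_\al)$, where $\mathbf f_\al(\mathbf 0; w_\al)$ is linear in $w_\al$ and therefore of order $|w_\al|$, and the Jacobian at the origin is governed by the same matrix $S$ as in the proof of Lemma \ref{lem_mbehaviorw}, up to an $O(|w_\al|)$ perturbation. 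Both the invertibility of $S$ and the Lipschitz-quadratic structure of the nonlinear part depend only on the denominator $[1 + (1-\theta_l)(\omega_{1c}+w_3)][1 + (1-\theta_l)(\omega_{2c}+w_4)] - \theta_l^{-1} = (1-t_l)\theta_l^{-1} + O(|\bm\e|+|w_\al|)$ being bounded away from zero, which is guaranteed by the supercriticality condition \eqref{tlc}. Hence $\mathbf f_\al$ is a contraction on a small ball for $|w_\al|$ small, yielding existence and uniqueness of $(\omega_{\al c}(\bw))_{\al=1}^4$ together with the Lipschitz bound $\max_\al |\omega_{\al c}(\bw) - m_{\al c}(\theta_l)| \le C|w_\al|$. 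The stability statement parallel to Lemma \ref{lem_stabw} follows from the same contraction argument applied to the difference of any approximate solution with the true one.

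Second, I will derive an approximate self-consistent system for the normalized partial traces $\omega_\al(\bw) := n^{-1}\tr \cal R_\al(\bw)$. Starting from the identity $(H(X,Y,\theta_l) - \diag(w_1 I_p, w_2 I_q, w_3 I_n, w_4 I_n))\cal R(\bw) = I$ and applying the Schur complement with respect to the $(\cal I_1 \cup \cal I_2)$ vs.\ $(\cal I_3 \cup \cal I_4)$ splitting, one obtains four algebraic identities of the same form as \eqref{selft1} but with right-hand-side fluctuations controlled by sums such as $n^{-1}\sum X_{i\mu} (\cal R^{(\mathbb T)})_{\mu\nu} X_{i\nu}$, which are $\OO_\prec(n^{-1/2})$ by Lemma \ref{largedeviation}. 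This is the same calculation that drives the proof of Theorem \ref{thm_localw}; the only cosmetic difference is that $w_\al$ persists in one of the four equations. Applying the stability lemma then yields the averaged bound $|\omega_\al(\bw) - \omega_{\al c}(\bw)| \prec n^{-1/2}$, and the anisotropic estimate \eqref{aniso_outstrongwgen} follows by the standard polarization/resolvent-expansion argument used in the proof of \eqref{aniso_outstrong}.

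The only point requiring care is verifying that the linearized operator controlling the contraction remains uniformly invertible for each of the four choices of $\al$; this amounts to checking that inserting $w_\al$ in either the $(1,1)$, $(2,2)$, $(3,3)$, or $(4,4)$ block does not create a new near-degeneracy in $S$. Since $\det S$ was shown in the proof of Lemma \ref{lem_mbehaviorw} to be bounded below by a constant depending only on $c_1, c_2, \delta_l$ (a consequence of \eqref{tlc}), and since $w_\al$ enters $S$ only through a perturbation of order $|w_\al| \ll 1$, this invertibility persists uniformly on $\mathbf D$, and no fundamentally new obstacle arises. Thus the proof of Theorem \ref{thm_localwgen} reduces to checking the four parallel deterministic setups and invoking the same probabilistic machinery as for Theorem \ref{thm_localw}.
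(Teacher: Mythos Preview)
Your proposal is correct and follows precisely the approach the paper takes: the paper simply states that Theorem \ref{thm_localwgen} ``can be proved in exactly the same way as Theorem \ref{thm_localw}'' and omits the details. Your outline in fact supplies more of the argument than the paper does, identifying correctly that only the deterministic self-consistent analysis (the analogues of Lemmas \ref{lem_mbehaviorw} and \ref{lem_stabw}) needs to be redone for the shifted system \eqref{selft1}, while the probabilistic machinery carries over unchanged.
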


This theorem can be proved in exactly the same way as Theorem \ref{thm_localw}. Moreover, with Theorem \ref{thm_localwgen}, the proof of Theorem \ref{lemmaGHF} for $GJ_\al G$, $\al=2,3,4$, is also the same as the $\al=1$ case. So we omit the details for both proofs.

\subsection{Proof of Theorem \ref{thm_localw} }\label{sec GJG}

In this section, we prove Theorem \ref{thm_localw}. We first prove the following a priori estimates on $\cal R(z,0)$. In the following proof, we will abbreviate $\cal R(z)\equiv\cal R(z,0)$.  

\begin{lemma}
	There exists a constant $C>0$ such that the following estimates hold 
	with high probability: 
	\be\label{priorim}
	\sup_{z\in \mathbf D}\|\cal R(z)\| \le C,
	\ee
	\be\label{priordiff} 
	\sup_{z\in \mathbf D}\left\| \cal R  (z) - G  (\theta_l) \right\| \le C|z|.
	\ee
\end{lemma}
\begin{proof}
	We denote the $(\cal I_1\cup \cal I_2)\times (\cal I_1\cup \cal I_2)$ block of $ \cal R $ by $ \cal R_L $, the $(\cal I_1\cup \cal I_2)\times (\cal I_3\cup \cal I_4)$ block by $ \cal R_{LR} $, the $(\cal I_3\cup \cal I_4)\times (\cal I_1\cup \cal I_2)$ block by $ \cal R_{RL} $, and the $(\cal I_3\cup \cal I_4)\times (\cal I_3\cup \cal I_4)$ block by $ \cal R_R $. Using the Schur complement formula, we obtain that
	\be\label{GL1w}
	\begin{split}
		\cal R_L & = \begin{pmatrix}  \cal R_1 & - \theta_l^{-1/2}\cal R_1S_{xy}S_{yy}^{-1} \\ - \theta_l^{-1/2}S_{yy}^{-1}S_{yx}  \cal R_1 &  \cal R_2 \end{pmatrix}, 
	\end{split}
	\ee
	where 
	$$\cal R_1 :=  \left(S_{xy}S_{yy}^{-1}S_{yx} - \theta_l S_{xx} - z\right)^{-1}, \quad \cal R_2 := - \theta_l^{-1}S_{yy}^{-1} + \theta_l^{-1} S_{yy}^{-1}S_{yx} \cal R_1 S_{xy}S_{yy}^{-1} .$$
	The other three blocks are given by
	\be\label{GR1w}
	\begin{split}
		\cal R_R &=   \begin{pmatrix}  \theta_l  I_n & \theta_l^{1/2}I_n\\ \theta_l^{1/2}I_n &  \theta_l  I_n\end{pmatrix}  \\
		&+   \begin{pmatrix}  \theta_l  I_n & \theta_l^{1/2}I_n\\ \theta_l^{1/2}I_n &  \theta_l  I_n\end{pmatrix}  \begin{pmatrix} X^{\top} & 0 \\ 0 & Y^{\top} \end{pmatrix} \cal R_L \begin{pmatrix} X & 0 \\ 0 &  Y \end{pmatrix}  \begin{pmatrix}  \theta_l  I_n & \theta_l^{1/2}I_n\\ \theta_l^{1/2}I_n &  \theta_l  I_n\end{pmatrix}  ,
	\end{split}
	\ee
	and
	\be\label{GLR1w}
	\begin{split}
		& {\cal R}_{LR} = -\cal R_L \begin{pmatrix} X & 0 \\ 0 &  Y \end{pmatrix} \begin{pmatrix}  \theta_l  I_n & \theta_l^{1/2}I_n\\ \theta_l^{1/2}I_n &  \theta_l  I_n\end{pmatrix}  , \\
		& {\cal R}_{RL}= -  \begin{pmatrix}  \theta_l  I_n & \theta_l^{1/2}I_n\\ \theta_l^{1/2}I_n &  \theta_l  I_n\end{pmatrix}   \begin{pmatrix} X^{\top} & 0 \\ 0 & Y^{\top} \end{pmatrix} {\cal R}_L.
	\end{split}
	\ee
	One can compare the above expressions with \eqref{GL1}--\eqref{GLR1}. With the estimates \eqref{op rough1add} and \eqref{op rough2add}, we see that it suffices to prove the following estimates for $\cal R_1$: 
	\be\label{opR1first}
	\sup_{z\in \mathbf D}\|\cal R_1(z)\|\lesssim 1 \quad \text{with high probability},
	\ee
	\be\label{priordiff111} 
	\sup_{z\in \mathbf D}\left\|\cal R_1  (z) -  \cal G_{(11)}  (\theta_l) \right\| \lesssim |z|\quad \text{with high probability},
	\ee
	where $\cal G_{(11)}$ is the $\cal I_1\times \cal I_1$ block of $G$ (as defined in Section \ref{sec mainthm}). With $\cal H$ in \eqref{def calH}, we can write $\cal R_1 $ as
	$$\cal R_1= S_{xx}^{-1/2} \left(\cal H \cal H^\top - \theta_l  - zS_{xx}^{-1}\right)^{-1} S_{xx}^{-1/2}. $$
	By \eqref{rigidity}, we have that with high probability, $\theta_l-\cal H \cal H^\top$ is positive definite and its smallest eigenvalue satisfies 
	$$\lambda_{p} (\theta_l-\cal H\cal H^\top) \ge  (\theta_l - \lambda_+)/2 \gtrsim 1.$$
	Combining this estimate with \eqref{op rough1add}, we obtain that with high probability,
	\be\nonumber
	\sup_{z\in \mathbf D}\|\cal R_1(z)\| \lesssim \frac1{\theta_l - \lambda_+ -\OO((\log n)^{-1})}\lesssim 1.
	\ee
	This concludes \eqref{opR1first}. With \eqref{opR1first}, we can easily conclude \eqref{priordiff111}:
	\begin{align*}\left|\langle \bu,\cal R_1  (z)\bv\rangle - \langle \bu, \cal G_{(11)}  (\theta_l)\bv\rangle \right|&=\left|\langle \bu,\left[\cal R_1  (z)-\cal R_1  (0)\right] \bv\rangle \right|   = |z| \left|\langle \bu,\cal R_1  (z) \cal R_1(0)\bv\rangle \right| \lesssim |z|,\end{align*}
	with high probability. \end{proof}

Combining \eqref{priordiff} with the local law \eqref{aniso_outstrong}, we immediately obtain the rough bound
\be\label{roughinitial}
\max_{z\in \mathbf D} \max_{\fa , \fb\in \cal I}|\cal R_{\fa \fb} (z)- \Pi_{\fa\fb}(\theta_l)|\le C(\log n)^{-1} \quad \text{with high probability. }
\ee
Then, we record some useful resolvent identities in Lemma \ref{lemm_resolvent} and Lemma \ref{lemm_resolventgroup}, which can be proved easily using the Schur complement formula. For simplicity, we abbreviate 
\be\label{simple_W}
W:= \begin{pmatrix} X & 0 \\ 0 & Y\end{pmatrix}.
\ee

\begin{lemma} 
	We have the following resolvent identities.
	\begin{itemize}
		\item[(i)] For $i\in \mathcal I_1\cup \cal I_2$, we have that 
		\begin{equation}
			\frac{1}{{\cal R_{ii} }} = - z \mathbf 1_{i\in \cal I_1} - \left( {W\cal R^{\left( i \right)} W^{\top}} \right)_{ii} . \label{resolvent1} 
		\end{equation}
		
		\item[(ii)] For $i\in \mathcal I_1\cup \cal I_2$ and $\fa\in \cal I \setminus\{i\}$, we have that
		\begin{equation}\label{resolvent3}
			\cal R_{i\fa} =-\cal R_{ii} \left(W\cal R^{(i)}\right)_{i\fa} .
		\end{equation}
		
		\item[(iii)] For $\fa \in \mathcal I$ and $\fb, \fc \in \mathcal I \setminus \{\fa\}$, we have that
		\begin{equation}\label{resolvent8}
			\cal R_{\fb\fc} = \cal R_{\fb\fc}^{\left( \fa \right)}  + \frac{\cal R_{\fb\fa} \cal R_{\fa\fc}}{\cal R_{\fa\fa}}. 
		\end{equation}
		
		\item[(iv)] All of the above identities hold for $\cal R^{(\mathbb T)}$ instead of $\cal R$ for any index set $\mathbb T \subset \mathcal I$.
	\end{itemize}
	\label{lemm_resolvent}
\end{lemma}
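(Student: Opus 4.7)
The plan is to derive all four parts from a single observation about the block structure of the matrix being inverted, followed by standard Schur complement manipulations. Write $M(z) := H(X,Y,\theta_l) - \operatorname{diag}(zI_p,0,0,0)$ so that $\cal R(z) = M(z)^{-1}$ by \eqref{linearize_blockw} (with $z'=0$). The key structural fact is that, for any $i \in \cal I_1 \cup \cal I_2$, the $i$-th row of $M$ has the simple form
\[
M_{i\fb} \;=\; -z\,\mathbf 1_{i\in \cal I_1}\,\delta_{i\fb} \;+\; W_{i\fb}\,\mathbf 1_{\fb \in \cal I_3\cup \cal I_4},
\]
since the upper-left $(\cal I_1\cup \cal I_2)\times(\cal I_1\cup \cal I_2)$ block of $M$ is $\operatorname{diag}(-zI_p,0)$ and the $(\cal I_1\cup\cal I_2)\times(\cal I_3\cup\cal I_4)$ block of $H$ is exactly $W$.

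For (i), I would apply the Schur complement formula for the inverse of a block $2\times 2$ matrix with the singleton block $\{i\}$ and its complement. This yields
\[
\cal R_{ii}^{-1} \;=\; M_{ii} \;-\; \sum_{\fa,\fb \neq i} M_{i\fa}\,(M^{(i)})^{-1}_{\fa\fb}\, M_{\fb i},
\]
where $M^{(i)}$ is the principal minor with row/column $i$ removed. Plugging in the structural fact above gives $M_{ii} = -z\,\mathbf 1_{i\in \cal I_1}$, and the sum collapses to $(W \cal R^{(i)} W^\top)_{ii}$ since only the columns in $\cal I_3\cup\cal I_4$ contribute and $(M^{(i)})^{-1} = \cal R^{(i)}$ restricted to those indices. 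This is precisely \eqref{resolvent1}. For (ii), the same Schur complement expansion gives, for $\fa \neq i$,
\[
\cal R_{i\fa} \;=\; -\,\cal R_{ii}\sum_{\fb\neq i} M_{i\fb}\,(M^{(i)})^{-1}_{\fb\fa} \;=\; -\,\cal R_{ii}\,(W\cal R^{(i)})_{i\fa},
\]
again using that $M_{i\fb}$ vanishes unless $\fb \in \cal I_3 \cup \cal I_4$. This gives \eqref{resolvent3}.

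Identity (iii) is the classical minor-expansion (Wegner) identity for resolvents of a symmetric matrix and does not use the block structure at all. The cleanest derivation is via the block inverse formula applied to the $\{\fa\}$ vs. complement partition of $M$: writing
\[
M = \begin{pmatrix} M_{\fa\fa} & M_{\fa,\widehat{\fa}} \\ M_{\widehat{\fa},\fa} & M^{(\fa)} \end{pmatrix},\qquad \widehat{\fa} := \cal I\setminus\{\fa\},
\]
and inverting, the $(\fb,\fc)$ entry of the lower-right block of $M^{-1}$ is
\[
\cal R_{\fb\fc} \;=\; (M^{(\fa)})^{-1}_{\fb\fc} \;+\; (M^{(\fa)})^{-1}\!M_{\widehat{\fa},\fa}\,\bigl(M_{\fa\fa} - M_{\fa,\widehat{\fa}}(M^{(\fa)})^{-1}M_{\widehat{\fa},\fa}\bigr)^{-1} M_{\fa,\widehat{\fa}}(M^{(\fa)})^{-1}\bigr|_{\fb\fc}.
\]
The scalar Schur complement in the middle equals $1/\cal R_{\fa\fa}$, while $(M^{(\fa)})^{-1}M_{\widehat{\fa},\fa}$ and $M_{\fa,\widehat{\fa}}(M^{(\fa)})^{-1}$ are, by (ii) applied with $\fa$ playing the role of $i$ (or by symmetry of $M$), just $-\cal R_{\fb\fa}/\cal R_{\fa\fa}$ and $-\cal R_{\fa\fc}/\cal R_{\fa\fa}$ respectively. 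Collecting terms gives \eqref{resolvent8}.

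For (iv), I would note that for any $\mathbb T \subset \cal I$, the minor $\cal R^{(\mathbb T)}$ is the inverse of $M^{(\mathbb T)}$, and $M^{(\mathbb T)}$ preserves the same block-diagonal-plus-$W$ structure (with $W$ replaced by its restriction $W^{(\mathbb T)}$ to the surviving rows/columns). Therefore the same proofs of (i)--(iii) apply verbatim to $\cal R^{(\mathbb T)}$. The only care needed is to verify that each Schur complement being inverted is actually invertible; this is where the regularization trick from Definition \ref{resol_not2} (or, equivalently, working on the high-probability event on which the relevant minors are invertible) is used, but since the lemma is a purely algebraic identity wherever both sides are defined, no probabilistic estimate is required in its statement. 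The only real bookkeeping obstacle is tracking the $-z\,\mathbf 1_{i\in \cal I_1}$ term and confirming that no analogous $z$-shift appears for $i \in \cal I_2$; this is already visible from the definition \eqref{linearize_blockw}.
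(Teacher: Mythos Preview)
Your proposal is correct and follows exactly the approach the paper indicates: the paper does not write out a proof but simply states that these identities ``can be proved easily using Schur complement formula,'' which is precisely what you do. One small wording issue: in your derivation of (iii) you invoke (ii) ``with $\fa$ playing the role of $i$,'' but (ii) as stated only covers $i\in\cal I_1\cup\cal I_2$; your parenthetical ``or by symmetry of $M$'' (equivalently, reading off the off-diagonal blocks directly from the block-inverse formula) is the correct justification for general $\fa\in\cal I$, and your computation already does this correctly.
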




For $\mu,\nu \in \mathcal I_3$, we define the $2\times 2$ blocks 
\begin{equation}\label{Aij_group}
	\cal R_{[\mu\nu]}:=\left( {\begin{array}{*{20}c}
			{\cal R_{\mu\nu} } & {\cal R_{\mu \overline \nu} }  \\
			{\cal R_{ \overline\mu \nu} } & {\cal R_{ \overline \mu \overline \nu} }  \\
	\end{array}} \right),
\end{equation}
where we denote $\overline \mu : = \mu +n $ and $\overline \nu : = \nu +n $.
We call $\cal R_{[\mu\nu]}$ a diagonal block if $\mu=\nu$, and an off-diagonal block otherwise.
For $i\in \cal I_1$, $j \in \cal I_2$ and $\mu\in \cal I_3$, we define the vectors
\begin{equation}\label{Aij_group2}
			\cal R_{i,[\mu]}:=\left(  {\cal R_{i\mu} }, {\cal R_{i\overline \mu} } 
			\right), \quad \cal R_{[\mu],i}:=\left( {\begin{array}{*{20}c}
					{\cal R_{\mu i} }  \\
					{\cal R_{\overline \mu i} }\\
			\end{array}} \right).
		\end{equation}
		For $\mu\in \cal I_3$, we denote $H^{[\mu]}:=H^{(\mu\overline \mu)}$ and $\cal R^{[\mu]}:=\cal R^{(\mu\overline \mu)}$ in the sense of Definition \ref{defminor}. Then, we record the following resolvent identities, which again can be obtained directly from the Schur complement formula. 
		
		\begin{lemma}
			\label{lemm_resolventgroup}
			We have the following resolvent identities. 
			\begin{itemize}
				\item[(i)] For $\mu\in \mathcal I_3$, we have that
				\begin{equation}\label{eq_res11}
					\cal R_{[\mu\mu]}^{ - 1}  =\frac{1}{\theta_l-1}\begin{pmatrix}1 & -\theta_l^{-1/2} \\ -\theta_l^{-1/2} & 1 \end{pmatrix}  - \begin{bmatrix}( X^{\top} \cal R^{[\mu]} X)_{\mu\mu}& ( X^{\top} \cal R^{[\mu]}Y)_{\mu \overline \mu} \\  (Y^{\top}\cal R^{[\mu]}  X)_{\overline \mu  \mu} &  (Y^{\top} \cal R^{[\mu]}  Y)_{\overline \mu\overline\mu} \end{bmatrix}   .
				\end{equation}
				
				\item[(ii)]
				For $i \in \cal I_1\cup \cal I_2$ and $\mu\in \cal I_3$, we have that
				\begin{align}
					\cal R_{i, [\mu]} = \cal R_{[\mu],i}^\top = - \begin{bmatrix}( \cal R^{[\mu]} X)_{i\mu} ,  ( \cal R^{[\mu]} Y)_{i\overline \mu} \end{bmatrix}\cal R_{[\mu\mu]}. 
					\label{eq_res22}
				\end{align}    
				
				\item[(iii)] 
				For $\mu\ne \nu \in \cal I_3$, we have that
				\be\label{eq_res21}
				\begin{split}
					\cal R_{[\mu\nu]}  &= - \cal R_{[\mu\mu]} \begin{bmatrix}(X^{\top}\cal R^{[\mu]})_{\mu \nu} & (X^{\top}\cal R^{[\mu]})_{\mu\overline \nu} \\ (Y^{\top}\cal R^{[\mu]})_{\overline \mu \nu} & (Y^{\top}\cal R^{[\mu]})_{\overline\mu \overline \nu}\end{bmatrix}   \\
					& = -  \begin{bmatrix}(\cal R^{[\nu]}X)_{\mu  \nu  } & ( \cal R^{[\nu]} Y)_{\mu  \overline \nu}\\ (\cal R^{[\nu]}X)_{\overline \mu  \nu } & (\cal R^{[\nu]}Y)_{\overline \mu \overline\nu }\end{bmatrix} \cal R_{[\nu\nu]}. 
				\end{split}
				\ee

				\item[(iv)] For $\mu \in \mathcal I_3 $ and $\fa_1,\fa_2,\fb_1,\fb_2 \in \mathcal I\setminus \{\mu,\overline \mu\}$, we have that
				\begin{equation}\label{eq_res3}
				\begin{split}
					\begin{pmatrix}\cal R_{\fa_1\fb_1} & \cal R_{\fa_1\fb_2} \\ \cal R_{\fa_2\fb_1} & \cal R_{\fa_2\fb_2} \end{pmatrix} &= \begin{pmatrix}\cal R^{[\mu]}_{\fa_1\fb_1} & \cal R^{[\mu]}_{\fa_1\fb_2} \\ \cal R^{[\mu]}_{\fa_2\fb_1} & \cal R^{[\mu]}_{\fa_2\fb_2} \end{pmatrix}\\
					&+\begin{pmatrix}\cal R_{\fa_1\mu} & \cal R_{\fa_1\overline\mu} \\ \cal R_{\fa_2\mu} & \cal R_{\fa_2\overline\mu} \end{pmatrix}\cal R^{-1}_{[\mu\mu]}\begin{pmatrix}\cal R_{\mu\fb_1} & \cal R_{\mu\fb_2} \\ \cal R_{\overline \mu\fb_1} & \cal R_{\overline \mu\fb_2} \end{pmatrix}. 				\end{split}
				\end{equation}
			\end{itemize}
		\end{lemma}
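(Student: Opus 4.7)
All four identities in Lemma \ref{lemm_resolventgroup} are standard consequences of Schur's complement formula, specialized to the block structure of $H$ in \eqref{linearize_block}. The only nontrivial point is keeping track of the sparsity pattern of the off-diagonal blocks of $H$ so that the Schur sums collapse into the compact matrix-product form displayed in the statement.

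For identity (i), I would partition $\mathcal I$ as $\{\mu,\overline\mu\}\sqcup \{\mu,\overline\mu\}^c$ and apply the Schur formula $\cal R_{[\mu\mu]}^{-1}=H_{[\mu\mu]}-H_{[\mu\mu],\,\{\mu,\overline\mu\}^c}\,\bigl(H^{[\mu]}\bigr)^{-1}H_{\{\mu,\overline\mu\}^c,\,[\mu\mu]}$. The $2\times 2$ diagonal block $H_{[\mu\mu]}$ is read off from \eqref{linearize_block}: it is the inverse of $\begin{pmatrix}\theta_l & \theta_l^{1/2}\\ \theta_l^{1/2}&\theta_l\end{pmatrix}$, which equals $(\theta_l-1)^{-1}\begin{pmatrix}1&-\theta_l^{-1/2}\\ -\theta_l^{-1/2}&1\end{pmatrix}$. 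By the definition of $H$, row $\mu$ has only the entries $X_{i\mu}$ for $i\in\mathcal I_1$ (inside $\{\mu,\overline\mu\}^c$), and row $\overline\mu$ has only the entries $Y_{j\overline\mu}$ for $j\in\mathcal I_2$. Thus the Schur correction collapses to the four quadratic forms $(X^\top \cal R^{[\mu]} X)_{\mu\mu}$, $(X^\top\cal R^{[\mu]}Y)_{\mu\overline\mu}$, $(Y^\top\cal R^{[\mu]}X)_{\overline\mu\mu}$, $(Y^\top\cal R^{[\mu]}Y)_{\overline\mu\overline\mu}$, which yields \eqref{eq_res11}.

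For identity (ii), I would use the off-diagonal Schur relation $\cal R_{\{\mu,\overline\mu\}^c,[\mu\mu]}=-\cal R^{[\mu]}\,H_{\{\mu,\overline\mu\}^c,[\mu\mu]}\,\cal R_{[\mu\mu]}$ and read off its $i$-th row, again invoking sparsity of $H_{\{\mu,\overline\mu\}^c,[\mu\mu]}$ to obtain \eqref{eq_res22}. For identity (iii), I apply the same off-diagonal Schur relation with respect to one of the two blocks $\{\mu,\overline\mu\}$ or $\{\nu,\overline\nu\}$: removing the former first gives the left-hand expression in \eqref{eq_res21}, and removing the latter first gives the right-hand one. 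For identity (iv), I would appeal to the well-known minor identity relating a resolvent to its principal submatrix: writing the block inverse of $\cal R$ corresponding to $\{\mu,\overline\mu\}$ and its complement immediately yields $\cal R^{[\mu]}_{\fa\fb}=\cal R_{\fa\fb}-(\cal R_{\fa,[\mu]})\,\cal R_{[\mu\mu]}^{-1}\,(\cal R_{[\mu],\fb})$ for $\fa,\fb\in\{\mu,\overline\mu\}^c$, which is exactly \eqref{eq_res3} after rearrangement.

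There is no conceptual difficulty; the entire argument is bookkeeping. The only step that requires some care is recognizing that the notation $X^\top\cal R^{[\mu]}X$, $Y^\top\cal R^{[\mu]}X$, etc., silently uses the convention that $\cal R^{[\mu]}$ is extended by zero rows/columns at $\mu,\overline\mu$ (cf.\ Definition \ref{defminor}), so that the products effectively pick out only the $\mathcal I_1\times\mathcal I_1$, $\mathcal I_1\times\mathcal I_2$, and $\mathcal I_2\times\mathcal I_2$ sub-blocks of $\cal R^{[\mu]}$. Once this convention is fixed, each identity is a one-line specialization of the general Schur complement formula.
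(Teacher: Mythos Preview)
Your proposal is correct and matches the paper's approach exactly: the paper states only that these identities ``can be obtained from Schur complement formula'' without giving any further details, and your write-up simply fills in the bookkeeping behind that one-line remark.
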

		
		Using the above tools, we now prove the following entrywise version of Theorem \ref{thm_localw}.
		\begin{proposition} [Entrywise local law]\label{thm_localentry} 
			If Assumption \ref{main_assm} holds, then we have that
			\begin{equation}\label{entry_outstrongw}
				\max_{\fa,\fb\in \cal I}\left|  \cal R_{\fa\fb}(z,0)  - \Gamma_{\fa\fb}(z)  \right|  \prec  n^{-1/2} \quad \text{uniformly in $z\in \mathbf D$.}
			\end{equation}
		\end{proposition}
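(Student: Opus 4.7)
The plan is to derive approximate self-consistent equations for the partial traces $\omega_\alpha(z)$ defined in \eqref{defomegaal}, with error $\OO_\prec(n^{-1/2})$, and then invoke the stability Lemma~\ref{lem_stabw} together with the resolvent identities of Lemmas~\ref{lemm_resolvent} and~\ref{lemm_resolventgroup} to upgrade this into the entrywise statement. Throughout we work on the high-probability event where \eqref{priorim} and \eqref{roughinitial} hold; in particular $|\cal R_{\fa\fb}-\Pi_{\fa\fb}(\theta_l)|\le C(\log n)^{-1}$, so the argument is in the perturbative regime of Lemma~\ref{lem_stabw}. We abbreviate $\cal R\equiv \cal R(z,0)$ and use the minor conventions from Definition~\ref{defminor}.

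The first step is to extract self-consistent equations for the diagonal entries. For $i\in\cal I_1$, the identity \eqref{resolvent1} reads
\[
\frac{1}{\cal R_{ii}}=-z-\sum_{\mu,\nu\in\cal I_3}X_{i\mu}\cal R^{(i)}_{\mu\nu}X_{i\nu}.
\]
Applying Lemma~\ref{largedeviation} (or its regularised version) entry-by-entry to the quadratic form and using the a priori bound $\|\cal R^{(i)}\|=\OO(1)$, one obtains
\[
\sum_{\mu,\nu}X_{i\mu}\cal R^{(i)}_{\mu\nu}X_{i\nu}=\frac{1}{n}\sum_{\mu\in\cal I_3}\cal R^{(i)}_{\mu\mu}+\OO_\prec(n^{-1/2})=\omega_3(z)+\OO_\prec(n^{-1/2}),
\]
where in the last step we used \eqref{resolvent8} to replace $m_3^{(i)}$ by $m_3$. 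This yields $c_1/\omega_{1}=-z-\omega_3+\OO_\prec(n^{-1/2})$ after averaging over $i\in\cal I_1$. The identical argument on $\cal I_2$, using $Y$-entries, gives $c_2/\omega_2=-\omega_4+\OO_\prec(n^{-1/2})$.

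The second step is analogous for the blocks $\cal R_{[\mu\mu]}$, $\mu\in\cal I_3$. Inverting \eqref{eq_res11} and averaging over $\mu$, the $2\times 2$ random matrices $(X^\top\cal R^{[\mu]}X)_{\mu\mu}$, $(Y^\top\cal R^{[\mu]}Y)_{\overline\mu\overline\mu}$, $(X^\top\cal R^{[\mu]}Y)_{\mu\overline\mu}$ concentrate (by Lemma~\ref{largedeviation} applied to each entry) around their partial expectations, which are $n^{-1}\tr_{\cal I_1}\cal R^{[\mu]}=\omega_1+\OO_\prec(n^{-1})$, $n^{-1}\tr_{\cal I_2}\cal R^{[\mu]}=\omega_2+\OO_\prec(n^{-1})$, and $0+\OO_\prec(n^{-1/2})$ respectively (the off-diagonal average vanishes because $X$ and $Y$ are independent with mean zero). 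Inverting the resulting deterministic $2\times 2$ matrix and comparing with \eqref{selfomega} then gives the last two equations of the system in \eqref{selfomegaerror} with errors of size $\OO_\prec(n^{-1/2})$. Stability via Lemma~\ref{lem_stabw}, combined with the a priori smallness \eqref{roughinitial} needed to stay in the perturbative regime, then produces
\[
\max_{\alpha=1}^4|\omega_\alpha(z)-\omega_{\alpha c}(z)|\prec n^{-1/2},
\]
uniformly in $z\in\mathbf D$.

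The final step promotes this averaged estimate to the entrywise statement. From \eqref{resolvent1} one reads off $\cal R_{ii}-c_\alpha^{-1}\omega_{\alpha c}\prec n^{-1/2}$ for every individual $i\in\cal I_\alpha$, $\alpha=1,2$, because the same large-deviation argument now applies to the single quadratic form (not averaged) and the a priori bound shows $\cal R_{ii}\sim 1$. For the diagonal blocks at $\mu\in\cal I_3$, inverting the $2\times 2$ identity \eqref{eq_res11} gives both $\cal R_{\mu\mu}-\omega_{3c}\prec n^{-1/2}$, $\cal R_{\overline\mu\overline\mu}-\omega_{4c}\prec n^{-1/2}$ and $\cal R_{\mu\overline\mu}-g_1(z)\prec n^{-1/2}$ (the last one from the off-diagonal entry of the inverse, where $g_1(z)$ agrees with the formula in \eqref{hzw} after solving). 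For the off-diagonal entries, one uses \eqref{resolvent3}, \eqref{eq_res22}, \eqref{eq_res21} together with the large deviation bounds for $(W\cal R^{(i)})_{i\fa}$ and $(X^\top\cal R^{[\mu]})_{\mu\fa}$, $(Y^\top\cal R^{[\mu]})_{\overline\mu\fa}$: each such linear form is bounded by $n^{-1/2}$ in stochastic domination, and multiplied by the now-controlled $\cal R_{ii}$ or $\cal R_{[\mu\mu]}$ gives the desired off-diagonal bound. Since $\Gamma(z)$ is block-diagonal between $\cal I_1\cup\cal I_2$ and $\cal I_3\cup\cal I_4$ and diagonal within each $\cal I_\alpha$-block (apart from the $\cal R_{\mu\overline\mu}$ entries), this concludes \eqref{entry_outstrongw}.

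The main technical obstacle is the concentration of the off-diagonal $2\times 2$ term $(X^\top\cal R^{[\mu]}Y)_{\mu\overline\mu}$ in step two: because $X$ and $Y$ are independent, its fluctuation is of the optimal size $n^{-1/2}$, but one has to verify that its conditional expectation is indeed $\OO_\prec(n^{-1/2})$ rather than a deterministic constant, which is what makes the $2\times 2$ limit block-diagonal and consistent with \eqref{defn_piw} giving a nontrivial $g_1(z)$ only through the inversion, not through the random quadratic form. Once this is in place, the rest is a standard application of the stability lemma together with the identities in Lemmas~\ref{lemm_resolvent}--\ref{lemm_resolventgroup}.
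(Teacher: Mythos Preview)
Your proof is correct and follows essentially the same strategy as the paper: derive the approximate self-consistent equations \eqref{selfomegaerror} via the Schur-complement identities \eqref{resolvent1} and \eqref{eq_res11} together with the large-deviation estimates of Lemma~\ref{largedeviation} and the a priori bound \eqref{priorim}, apply the stability Lemma~\ref{lem_stabw} to obtain $\max_\alpha|\omega_\alpha-\omega_{\alpha c}|\prec n^{-1/2}$, and then feed this back into the diagonal identities and the off-diagonal identities \eqref{resolvent3}, \eqref{eq_res22}, \eqref{eq_res21} to get the entrywise bound. The paper organizes the argument by first isolating the $\cal Z$-variable and off-diagonal bounds into a separate claim (Claim~C.6) and then packaging the self-consistent equations into Lemma~\ref{lemm_selfcons_weak}, but the content is the same; your remark about the cross term $(X^\top\cal R^{[\mu]}Y)_{\mu\overline\mu}$ is also handled identically in the paper via the bilinear large-deviation bound and the independence of $X$ and $Y$.
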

		
		
		For the proof of Proposition \ref{thm_localentry}, we introduce the following $\cal Z$ variables
		\begin{equation*}
			\cal   Z_{\fa} :=(1-\mathbb E_{\fa})\big(\cal R_{\fa\fa}\big)^{-1}, 
		\end{equation*}
		where $\mathbb E_{\fa}[\cdot]:=\mathbb E[\cdot\mid H^{(\fa)}],$ i.e., it is the partial expectation over the $\fa$-th row and column of $H$. By (\ref{resolvent1}), we have that for $i\in \cal I_\al$, $\al=1,2$, 
		\begin{equation}
			\cal Z_i = (\mathbb E_{i} - 1) \left( {W\cal R^{\left( i \right)} W^{\top}} \right)_{ii} = \sum_{\mu ,\nu\in \mathcal I_{\al+2}} \cal R^{(i)}_{\mu\nu} \left(\frac{1}{n}\delta_{\mu\nu} - W_{i\mu}W_{i\nu}\right). \label{Zi}
		\end{equation}
		We also introduce the matrix-valued $\cal Z$ variables 
		\begin{equation}\label{Zmu0}
			\cal   Z_{[\mu]} :=\left(1-\bbE_{[\mu]}\right)\left(\cal R_{[\mu\mu]} \right)^{-1}, 
		\end{equation}
		where $\mathbb E_{[\mu]}[\cdot]:=\mathbb E[\cdot\mid H^{[\mu]}],$ i.e., it is the partial expectation over the $\mu$-th and $\overline \mu$-th rows and columns of $H$. By (\ref{eq_res11}), we have that
		\begin{equation}\label{Zmu}
			\cal Z_{[\mu]} = \begin{bmatrix}\sum_{i, j \in \cal I_1} \cal R^{[\mu]}_{ij} (n^{-1}\delta_{ij} -X_{i\mu  }X_{j\mu}) & \sum_{i \in \cal I_1, j\in \cal I_2} \cal R^{[\mu]}_{ij} X_{i\mu}Y_{j\overline \mu} \\ \sum_{i \in \cal I_1, j\in \cal I_2}\cal R^{[\mu]}_{ji} X_{i \mu} Y_{j\overline \mu } & \sum_{i, j \in \cal I_2} \cal R^{[\mu]}_{ij} (n^{-1}\delta_{ij} - Y_{i\overline\mu }Y_{j\overline\mu}) \end{bmatrix} .
		\end{equation}
		We also define the random error to control the off-diagonal entries,
		\begin{equation}  \label{eqn_randomerror}
			\begin{split}
				\Lambda _o : &=  \max_{i \ne j \in \mathcal I_1\cup \cal I_2} \left| {\cal R_{ij}  } \right| +  \max_{\mu\ne \nu \in \cal I_3} \|\cal R_{[\mu\nu]} \|+ \max_{i\in \cal I_1\cup \cal I_2,\mu\in \cal I_3}  \left\| \cal R_{i,[\mu]}  \right\|  .
			\end{split}
		\end{equation}
		Now, we claim the following large deviation estimate for the $\cal Z$ variables and off-diagonal entries.
		\begin{claim}
			Under the setting of Theorem \ref{thm_localw}, we have that
			\begin{align}
				& \Lambda_o+|Z_{i}| + \|Z_{[\mu]}\|  \prec n^{-1/2}. \label{Zestimate1}
			\end{align}
		\end{claim}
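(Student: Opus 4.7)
The plan is to reduce all three quantities to linear and quadratic forms of the independent entries of $X$ and $Y$ against deterministic (conditionally, given the appropriate resolvent minor) coefficient matrices, and then to apply the large deviation bounds of Lemma~\ref{largedeviation} using the a priori operator norm bound \eqref{priorim}. The key observation is that for each $\fa \in \sI_1 \cup \sI_2$, the minor $\cal R^{(\fa)}$ is independent of the $\fa$-th row of $W$, and similarly for $\mu \in \sI_3$ the minor $\cal R^{[\mu]}$ is independent of $X_{\cdot \mu}$ and $Y_{\cdot \overline\mu}$. Moreover, by the Schur complement formula (applied one or two rows/columns at a time) and the interlacing of singular values, the estimate $\|\cal R(z)\| \lesssim 1$ from \eqref{priorim} is inherited by $\cal R^{(\fa)}$ and $\cal R^{[\mu]}$ with high probability on the same spectral domain $\mathbf D$.

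For the $\cal Z$ variables, I would treat the two cases in parallel. Starting from the identity \eqref{Zi}, conditionally on $H^{(i)}$ the quantity $\cal Z_i$ is a centered quadratic form $\sum_{\mu \neq \nu} W_{i\mu}\cal R^{(i)}_{\mu\nu} W_{i\nu} + \sum_\mu(n^{-1}-W_{i\mu}^2)\cal R^{(i)}_{\mu\mu}$ in the independent variables $\{W_{i\mu}\}$. The large deviation bounds of Lemma~\ref{largedeviation}, together with the high moment assumption \eqref{eq_highmoment}, then give
\[
|\cal Z_i| \prec \frac{1}{n}\Big(\sum_{\mu\neq \nu}|\cal R^{(i)}_{\mu\nu}|^2\Big)^{1/2} + \frac{1}{n}\Big(\sum_{\mu}|\cal R^{(i)}_{\mu\mu}|^2\Big)^{1/2} \lesssim \frac{1}{n}\,\|\cal R^{(i)}\|_F \lesssim \frac{1}{n}\sqrt{n}\,\|\cal R^{(i)}\| \prec n^{-1/2}.
\]
The matrix-valued variable $\cal Z_{[\mu]}$ in \eqref{Zmu} is handled entry-by-entry exactly the same way, using the independence of $\cal R^{[\mu]}$ from $X_{\cdot \mu}, Y_{\cdot \overline\mu}$.

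For $\Lambda_o$, I would use the resolvent identities of Lemmas~\ref{lemm_resolvent} and~\ref{lemm_resolventgroup} to reduce everything to linear/quadratic forms again. For $i \neq j \in \sI_1 \cup \sI_2$, \eqref{resolvent3} gives $\cal R_{ij} = -\cal R_{ii}(W\cal R^{(i)})_{ij}$; conditioning on $H^{(i)}$, the factor $(W\cal R^{(i)})_{ij} = \sum_\mu W_{i\mu}\cal R^{(i)}_{\mu j}$ is a linear form in independent variables, so Lemma~\ref{largedeviation} yields
\[
|(W\cal R^{(i)})_{ij}| \prec \frac{1}{\sqrt{n}}\Big(\sum_\mu |\cal R^{(i)}_{\mu j}|^2\Big)^{1/2} \le \frac{1}{\sqrt n}\|\cal R^{(i)}\| \prec n^{-1/2},
\]
while $|\cal R_{ii}| = \OO(1)$ with high probability by \eqref{priorim}. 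The remaining terms in $\Lambda_o$, namely $\|\cal R_{i,[\mu]}\|$ and $\|\cal R_{[\mu\nu]}\|$ for $\mu \neq \nu$, are treated via \eqref{eq_res22} and \eqref{eq_res21}: each entry is a product of a bounded $2\times 2$ block factor ($\cal R_{[\mu\mu]}$ or $\cal R_{[\nu\nu]}$, whose inverse is bounded below since it equals a matrix of size $\OO(1)$ plus a $\OO(1)$ perturbation by the already-controlled diagonal identity \eqref{eq_res11}) and a linear form in $X_{\cdot\mu}, Y_{\cdot \overline\mu}$ against $\cal R^{[\mu]}$, yielding $n^{-1/2}$ again by Lemma~\ref{largedeviation}.

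The potential obstacle is establishing the boundedness of $\cal R_{[\mu\mu]}^{-1}$ and $\cal R_{ii}^{-1}$ which enter as prefactors. However, this is guaranteed by the rough bound \eqref{roughinitial}, since $\Pi_{ii}(\theta_l) = -m_{3c}(\theta_l)$ or $-m_{4c}(\theta_l)$ are bounded away from zero by Lemma~\ref{lem_mbehavior}, and similarly the $2\times 2$ limit block in $\cal R_{[\mu\mu]}^{-1}$ is non-singular with determinant $\OO(1)$. With the self-improving resolvent expansions thus under control, combining the three displayed bounds concludes \eqref{Zestimate1}.
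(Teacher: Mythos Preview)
Your proposal is correct and follows essentially the same approach as the paper: reduce each quantity via the resolvent identities \eqref{resolvent3}, \eqref{eq_res11}--\eqref{eq_res21} to linear or quadratic forms in the independent row/column variables, apply Lemma~\ref{largedeviation}, and control the resulting $\ell^2$ sums by the operator-norm bound \eqref{priorim} on the minor resolvents. The only cosmetic difference is that the paper justifies $\|\cal R^{(i)}\|,\|\cal R^{[\mu]}\|\lesssim 1$ by noting that these minors are resolvents of the same type as $\cal R$ (so the proof of \eqref{priorim} applies verbatim), whereas you invoke interlacing; both are valid.
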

		\begin{proof}
			For $i\in \cal I_{\al}$, $\al=1,2$, applying Lemma \ref{largedeviation} to $\cal Z_{i}$ in (\ref{Zi}), we get that 
			\begin{equation}\nonumber
				\begin{split}
					\left|\cal Z_{i}\right| 
					\prec  \frac{1}{n} \Big(  \sum_{\mu, \nu \in \cal I_{\al+2}}  {\big| \cal R_{\mu\nu}^{(i)}  \big|^2 } \Big)^{1/2} \le \frac{1}{\sqrt{n}} \Big[ \frac{1}{n}\sum_{\mu \in \cal I_{\al+2}}  {\left(\cal R^{(i)}   (\cal R^{(i)})^*\right)_{\mu\mu} }  \Big]^{1/2} \prec n^{-1/2},
				\end{split}
			\end{equation}
			where in the last step we applied \eqref{priorim} to $\cal R^{(i)}$ to get  $(\cal R^{(i)} (\cal R^{(i)})^* )_{\mu\mu} =\OO(1) $ with high probability (note $\cal R^{(i)}$ satisfies the same assumption as $\cal R$). Similarly, applying Lemma \ref{largedeviation} to $\cal Z_{[\mu]}$ in (\ref{Zmu}), we obtain that  
			\begin{equation}\label{estimate_Zmu} \|\cal Z_{[\mu]}\|\prec \frac{1}{n} \Big(  \sum_{i,j \in \cal I_{1}\cup \cal I_2}  {\big| \cal R_{ij}^{[\mu]}  \big|^2 }  \Big)^{1/2} =  \frac{1}{\sqrt{n}} \Big[  \frac{1}{n}\sum_{i \in \cal I_{1}\cup \cal I_2}  {\left(\cal R^{[\mu]} (\cal R^{[\mu]})^*\right)_{ii} } \Big]^{1/2} \prec n^{-1/2}.\end{equation}
			The proof of the off-diagonal estimate is similar. For $i\ne j \in \mathcal I_1\cup \cal I_2$, using \eqref{resolvent3}, Lemma \ref{largedeviation} and \eqref{priorim}, we obtain that 
			\begin{equation}\nonumber
				|\cal R_{ij}| \prec \frac{1}{\sqrt{n}} \Big( \sum_{\mu \in \cal I_3\cup \cal I_4}  {\big| \cal R_{\mu j}^{(i)} \big|^2 }  \Big)^{1/2} \prec n^{-1/2}. 
			\end{equation}
			For $\mu \ne \nu \in \mathcal I_3$, using \eqref{eq_res21}, Lemma \ref{largedeviation} and \eqref{priorim}, we obtain that 
			$$ \left\| \cal R_{[\mu\nu]} \right\| \prec    \frac{1}{n} \Big( \sum_{i \in \cal I_{1}\cup \cal I_2}  {\big| \cal R_{i\nu}^{[\mu]}  \big|^2 }  \Big)^{1/2}+\frac{1}{n} \Big(  \sum_{i \in \cal I_{1}\cup \cal I_2}  {\big| \cal R_{i\overline \nu}^{[\mu]}  \big|^2 }   \Big)^{1/2} \prec n^{-1/2}.$$
			Finally, for $i\in \cal I_1\cup \cal I_2$ and $\mu \in \mathcal I_3$, using \eqref{eq_res22}, Lemma \ref{largedeviation} and \eqref{priorim}, we obtain that  
			$$ \left\| \cal R_{i,[\mu]} \right\| \prec   \frac{1}{n} \Big( \sum_{j \in \cal I_{1}\cup \cal I_2 }  {\big| \cal R^{[\mu]}_{ij}  \big|^2 }  \Big)^{1/2}  \prec n^{-1/2}.$$
			Combining the above estimates, we conclude \eqref{Zestimate1}.
			\end{proof}

 A key component of the proof for Proposition \ref{thm_localentry} is to show that $\omega_{\al}$, $\al=1,2,3,4,$ satisfy the self-consistent equations in \eqref{selfomegaerror} up to some small errors $|\cal E_\al|\prec n^{-1/2}$.

	\begin{lemma}\label{lemm_selfcons_weak}
	Fix any constant $\e>0$. The following estimates hold uniformly in $z \in \mathbf D$: 
		\begin{align}
			& \left|\frac{c_1}{\omega_{1} }+ z +  \omega_{3}\right| \prec n^{-1/2}, \quad  \left|\frac{c_2}{\omega_{2} } + \omega_{4}\right| \prec n^{-1/2}, \label{selfcons_lemm}\\
			& \left| \omega_{3} +(1-\theta_l ) \frac{1 + (1-\theta_l )\omega_{2}}{[1 + (1-\theta_l )\omega_{1} ][1 + (1-\theta_l )\omega_{2} ]-\theta_l ^{-1}} \right|\prec n^{-1/2} ,\label{selfcons_lemm12} \\
			& \left|   \omega_{4} +(1-\theta_l) \frac{1 + (1-\theta_l )\omega_{1}}{[1 + (1-\theta_l )\omega_{1} ][1 + (1-\theta_l )\omega_{2} ]-\theta_l ^{-1}} \right|\prec n^{-1/2} .\label{selfcons_lemm13}
		\end{align}
	\end{lemma}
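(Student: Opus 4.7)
The plan is to follow the standard ``Schur--concentration'' strategy for establishing self-consistent equations in the entrywise setting, using the resolvent identities in Lemma \ref{lemm_resolvent} and Lemma \ref{lemm_resolventgroup}, the large-deviation bounds in \eqref{Zestimate1}, and the a priori estimates \eqref{priorim}--\eqref{roughinitial}. Throughout, I will use that $|\omega_\al(z)|\sim 1$ for $z\in \mathbf D$, which follows from \eqref{roughinitial} together with $|m_{\al c}(\theta_l)|\sim 1$ (Lemma \ref{lem_mbehavior}); this also gives $|\cal R_{\fa\fa}|\sim 1$ so inversions below make sense with high probability.

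First I will prove \eqref{selfcons_lemm}. For $i\in\cal I_1$, the identity \eqref{resolvent1} combined with \eqref{Zi} yields
$$\cal R_{ii}^{-1}=-z-\frac{1}{n}\sum_{\mu\in\cal I_3}\cal R^{(i)}_{\mu\mu}+\cal Z_i=-z-\omega_3^{(i)}+\cal Z_i.$$
The identity \eqref{resolvent8} together with $|\cal R_{i\mu}|\prec n^{-1/2}$ (from \eqref{Zestimate1}) and $|\cal R_{ii}|\sim 1$ gives $|\omega_3^{(i)}-\omega_3|\prec n^{-1}$, and by \eqref{Zestimate1} also $|\cal Z_i|\prec n^{-1/2}$. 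Hence
$$\cal R_{ii}^{-1}=-z-\omega_3+\OO_\prec(n^{-1/2}),\qquad i\in\cal I_1,$$
which upon inversion and averaging $n^{-1}\sum_{i\in\cal I_1}$ yields $\omega_1=c_1/(-z-\omega_3)+\OO_\prec(n^{-1/2})$, i.e.\ the first bound of \eqref{selfcons_lemm}. The $\cal I_2$ computation is identical with $z$ replaced by $0$ and $\omega_3$ replaced by $\omega_4$, giving the second bound of \eqref{selfcons_lemm}.

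Next I will handle \eqref{selfcons_lemm12}--\eqref{selfcons_lemm13} from the $2\times 2$ block identity \eqref{eq_res11}. Expanding the quadratic form and using Lemma \ref{largedeviation} with \eqref{priorim} for $\cal R^{[\mu]}$, together with \eqref{Zestimate1} for $\cal Z_{[\mu]}$, gives
$$\bigl(X^\top\cal R^{[\mu]}X\bigr)_{\mu\mu}=\omega_1^{[\mu]}+\OO_\prec(n^{-1/2}),\quad \bigl(Y^\top\cal R^{[\mu]}Y\bigr)_{\overline\mu\overline\mu}=\omega_2^{[\mu]}+\OO_\prec(n^{-1/2}),$$
while the cross term $(X^\top\cal R^{[\mu]}Y)_{\mu\overline\mu}$ has zero mean by independence of $X,Y$ and satisfies $\OO_\prec(n^{-1/2})$ by Lemma \ref{largedeviation} applied bilinearly. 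An analog of the argument of the previous paragraph gives $|\omega_\al^{[\mu]}-\omega_\al|\prec n^{-1}$ for $\al=1,2$. Substituting these into \eqref{eq_res11} yields
$$\cal R_{[\mu\mu]}^{-1}=\frac{1}{\theta_l-1}\begin{pmatrix}1&-\theta_l^{-1/2}\\-\theta_l^{-1/2}&1\end{pmatrix}-\begin{pmatrix}\omega_1 & 0\\ 0 & \omega_2\end{pmatrix}+\OO_\prec(n^{-1/2})=:M+\OO_\prec(n^{-1/2}).$$

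Finally I will invert the deterministic $2\times 2$ matrix $M$ and average. One computes
$$\det M=\frac{\bigl[1+(1-\theta_l)\omega_1\bigr]\bigl[1+(1-\theta_l)\omega_2\bigr]-\theta_l^{-1}}{(\theta_l-1)^2},$$
which is bounded away from zero on $\mathbf D$ by \eqref{roughinitial} and Lemma \ref{lem_mbehavior} (noting that the numerator at $\omega_\al=m_{\al c}(\theta_l)$ equals the nonvanishing quantity $[1+(1-\theta_l)m_{1c}][1+(1-\theta_l)m_{2c}]-\theta_l^{-1}$ appearing in \eqref{selfm3}). Hence $\cal R_{[\mu\mu]}=M^{-1}+\OO_\prec(n^{-1/2})$, and inspection of the diagonal of $M^{-1}$ gives, after averaging in $\mu$,
$$\omega_3=\frac{(\theta_l-1)\bigl[1+(1-\theta_l)\omega_2\bigr]}{\bigl[1+(1-\theta_l)\omega_1\bigr]\bigl[1+(1-\theta_l)\omega_2\bigr]-\theta_l^{-1}}+\OO_\prec(n^{-1/2}),$$
which is exactly \eqref{selfcons_lemm12}, and the symmetric formula gives \eqref{selfcons_lemm13}. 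The only mild technical point is the uniform lower bound on $|\det M|$, but this is handled via the a priori closeness of $(\omega_1,\omega_2)$ to $(m_{1c}(\theta_l),m_{2c}(\theta_l))$; no further obstacle is anticipated.
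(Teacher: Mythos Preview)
Your proposal is correct and follows essentially the same approach as the paper: derive \eqref{selfcons_lemm} from \eqref{resolvent1} and the $\cal Z_i$ bounds, and derive \eqref{selfcons_lemm12}--\eqref{selfcons_lemm13} from \eqref{eq_res11} and the $\cal Z_{[\mu]}$ bounds, in both cases controlling $\omega_\al - \omega_\al^{(i)}$ or $\omega_\al - \omega_\al^{[\mu]}$ via \eqref{resolvent8}/\eqref{eq_res3} and then inverting the $2\times2$ block using the a priori estimate \eqref{roughinitial}. The paper's proof differs only in packaging (it names the error terms $\epsilon_i,\epsilon_\mu$ and appeals to \eqref{absmc} for the lower bound on $|\det M|$), not in substance.
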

	
	\begin{proof}
		Similar to \eqref{defomegaal}, for $i\in \cal I_1\cup \cal I_2$ and $\mu \in \cal I_3$, we denote
		$$\omega_\al^{(i)}:=\frac1n\sum_{\fa \in \cal I_\al} \cal R^{(i)}_{\fa\fa},\quad   \omega_\al^{[\mu]}:=\frac1n\sum_{i \in \cal I_\al} \cal R^{[\mu]}_{\fa\fa},\quad \al=1,2,3,4.$$
		Using (\ref{resolvent1}) and (\ref{Zi}), we get that for $i\in \cal I_1$ and $j\in \cal I_2$,
		\begin{equation}\label{self_Gii}
			\frac{1}{ \cal R_{ii} }= - z - \omega_3 + \epsilon_i, \quad \frac{1}{\cal R_{jj}}= - \omega_4 + \epsilon_j,
		\end{equation}
		where 
		$$\epsilon_i :=\cal Z_i + \omega_3 - \omega_3^{(i)}, \quad \epsilon_j :=\cal Z_j + \omega_4 - \omega_4^{(j)}.$$
		On the other hand, using \eqref{eq_res11} and \eqref{Zmu0}, we get that for $\mu \in \cal I_3$,
		\begin{equation}\label{self_Gmu}
			\cal R_{[\mu\mu]}^{-1} = \frac{1}{\theta_l-1}\begin{pmatrix}1 & -\theta_l^{-1/2} \\ -\theta_l^{-1/2} & 1 \end{pmatrix}  - \left( {\begin{array}{*{20}c}
					{ \omega_{1}  } & {0}  \\
					{0} & { \omega_{2} } \end{array}} \right) + \epsilon_\mu,
		\end{equation}
		where 
		$$ \epsilon_\mu :=\cal Z_{\mu} + \left( {\begin{array}{*{20}c}
				{ \omega_{1}- \omega_{1}^{[\mu]}  } & {0}  \\
				{0} & { \omega_{2} -\omega_{2}^{[\mu]} } \end{array}} \right) .$$
		Now, using \eqref{resolvent8} and \eqref{Zestimate1}, we get that
		$$ \omega_3 - \omega_3^{(i)} = \frac{1}{n}\sum_{\mu \in \cal I_3}\frac{\cal R_{\mu i} \cal R_{i\mu}}{\cal R_{ii}} =\OO_\prec (n^{-1}),$$
		where in the second step we also used $|\cal R_{ii}|\gtrsim 1$ by \eqref{roughinitial} and \eqref{absmc}. We have similar estimates for $\omega_4 - \omega_4^{(j)}$,  $\omega_{1}-\omega_{1}^{[\mu]}$ and $\omega_{2}-\omega_{2}^{[\mu]}$. Together with \eqref{Zestimate1}, these estimates give that
		\begin{equation}\label{erri}
			\max_{i\in \cal I_1\cup \cal I_2} |\epsilon_i |  + \max_{\mu \in \cal I_3} \|\epsilon_\mu\|  \prec n^{-1/2}.
		\end{equation}
		
		Using the first equation in \eqref{self_Gii} and \eqref{erri}, we obtain that 
		\be\label{Gii0}
		\begin{split}
			&\omega_1= \frac1n\sum_{i\in \cal I_1} \cal R_{ii} = \frac1n\sum_{i\in \cal I_1}\frac{1}{- z - \omega_3 +\e_i} =  \frac{c_1}{- z - \omega_3} + \OO_\prec(n^{-1/2}), 
		\end{split}
		\ee
		where in the second step we used $| z + \omega_3| \gtrsim 1$ with high probability by \eqref{roughinitial}. This gives the first equation in \eqref{selfcons_lemm}. Similarly, using the second equation in \eqref{self_Gii}, we can obtain the second equation in \eqref{selfcons_lemm}. 	With \eqref{roughinitial} and \eqref{absmc}, we can check that
		\be\label{inverse_bound}
		\left\| \left[\frac{1}{\theta_l-1}\begin{pmatrix}1 & -\theta_l^{-1/2} \\ -\theta_l^{-1/2} & 1 \end{pmatrix}  - \left( {\begin{array}{*{20}c}
				{ \omega_{1}  } & {0}  \\
				{0} & { \omega_{2} } \end{array}} \right)\right]^{-1}\right\|\lesssim 1\quad \text{with high probability.}\ee
		Taking the matrix inverse of (\ref{self_Gmu}) and using \eqref{erri} and \eqref{inverse_bound}, we obtain that for $\mu \in \cal I_3$,
		\be\label{Gmumu0}
		\begin{split}
			\cal R_{[\mu\mu]} &=\frac{\theta_l-1}{[1+(1-\theta_l)\omega_1][1+(1-\theta_l)\omega_2]-\theta_l^{-1}}\begin{pmatrix}1+(1-\theta_l)\omega_2 & \theta_l^{-1/2} \\ \theta_l^{-1/2} & 1+(1-\theta_l)\omega_1 \end{pmatrix} \\
			&+ \OO_\prec(n^{-1/2} ) .
		\end{split}
		\ee
		After taking the average $n^{-1}\sum_{\mu\in \cal I_3}$ over the $(1,1)$-th and $(2,2)$-th entries of equation \eqref{Gmumu0}, we obtain the equations \eqref{selfcons_lemm12} and \eqref{selfcons_lemm13}.
	\end{proof}
	Combining Lemma \ref{lemm_selfcons_weak} with Lemma \ref{lem_stabw}, we conclude the proof of Proposition \ref{thm_localentry}.
	\begin{proof}[Proof of Proposition \ref{thm_localentry}]
		We apply Lemma \ref{lem_stabw}, where \eqref{prior1} is implied by \eqref{roughinitial}, and the equations in \eqref{selfomegaerror} follow from Lemma \ref{lemm_selfcons_weak}. Then, \eqref{Stability1} implies that
		\begin{equation}
			\max_{\al=1}^4 \left|\omega_\al(z)-\omega_{\al c}(z)\right|\prec n^{-1/2}.\label{Stabilityentry}
		\end{equation}
		Plugging \eqref{Stabilityentry} into \eqref{self_Gii} and \eqref{Gmumu0}, we then get the diagonal estimate
		$$\max_{i \in \mathcal I_1 } \left| {\cal R_{ii} - c_1^{-1}\omega_{1c} } \right| + \max_{j \in \mathcal I_2 } \left| {\cal R_{jj} - c_2^{-1}\omega_{2c} } \right| +  \max_{\mu \in \cal I_3 }\left\|\cal R_{[\mu\mu]}- \begin{pmatrix}  \omega_{3c}    & g_1  \\  g_1  &  \omega_{4c} \end{pmatrix}\right\|\prec n^{-1/2}.$$
		Combining it with the off-diagonal estimate in \eqref{Zestimate1}, we conclude \eqref{entry_outstrongw}.
	\end{proof}
	
	Finally, we can complete the proof of Theorem \ref{thm_localw} based on Proposition \ref{thm_localentry}. 
	\begin{proof}[Proof of Theorem \ref{thm_localw}]
		With the entrywise local law, Proposition \ref{thm_localentry}, the proof of \eqref{aniso_outstrongw} uses a polynomialization method developed in \cite{isotropic}. In fact, the argument is exactly the same as the one in Section 7 of \cite{PartIII}. Hence, we omit the details. However, we make one remark that in the proof, we need to bound the high moments 
		$$\mathbb E\left| \langle \mathbf u, \cal R(z,0) \mathbf v\rangle - \langle \mathbf u, \Gamma(z)\mathbf v\rangle \right|^{2a} $$
		for fixed large $a\in \N$. So for regularity reasons, we shall use the resolvent $\cal R(z+\ii n^{-4},z')$ with $z'=\ii n^{-4}$ in order to make use of the deterministic bound \eqref{op R} on exceptional low-probability events, which justifies the applicability of Lemma \ref{lem_stodomin} (iii). The structure of the proof is as follows. First, the argument in the proof of Claim \ref{removehat} allows us to extend the entrywise local law \eqref{entry_outstrongw} to $\cal R(z+\ii n^{-4},z')$. Then, we can prove the anisotropic local law \eqref{aniso_outstrongw} for $\cal R(z+\ii n^{-4},z')$ using the argument in Section 7 of \cite{PartIII}. After that, 
		applying the argument in the proof of Claim \ref{removehat} again allows us to extend the anisotropic local law to $\cal R(z,0)$.
	\end{proof}

 \section{Proof of Theorem \ref{main_thm1}}\label{secpfmain1}

With Proposition \ref{main_prop1} and Proposition \ref{redGthm}, we see that \eqref{limf} holds in the almost Gaussian case. Hence, to conclude Theorem \ref{main_thm1}, it suffices to show that the general case is sufficiently close to the almost Gaussian case regarding the outliers. In particular, by \eqref{redG}, \eqref{defPPP+QQQ} and \eqref{M-M0}, we only need to show that the asymptotic distribution of $\cal M(\theta_l)$ in \eqref{defM}
 for general $X$ and $Y$ is the same as that of $\cal M^g(\theta_l)$ defined for almost Gaussian $X\equiv X^g$ and $Y\equiv Y^g$. 
 Corresponding to \eqref{almost GuassianX} and \eqref{almost GuassianY}, we define the index set (``$s$" stands for ``small")
 $$\cal I_s:=\Big\{  k \in \cal I_1 : \max_{1\le i \le r}|\bu_i^a(k)| \le n^{-\tau_0} \Big\}\cup \Big\{ k \in \cal I_2: \max_{1\le i \le r}|\bu^b_i(k)| \le n^{-\tau_0} \Big\}.$$
 Corresponding to \eqref{linearize_block} and \eqref{def_resolvent}, we define a new self-adjoint block matrix $ H^g$ and its resolvent as
 \begin{equation}\nonumber
 	H^g(z) : = \begin{bmatrix} 0 & \begin{pmatrix}  X^g & 0\\ 0 &  Y^g \end{pmatrix}\\ \begin{pmatrix} (X^g)^{\top} & 0\\ 0 &  (Y^g)^{\top}\end{pmatrix}  & \begin{pmatrix} z I_n & z^{1/2}I_n\\ z^{1/2} I_n &  z I_n\end{pmatrix}^{-1}\end{bmatrix} , \quad G^g(z):= \left[ H^g(z)\right]^{-1} ,
 \end{equation}
 where $X^g$ and $Y^g$ are defined through 
 \be\label{defXYg}
 X^g_{i\mu}=\begin{cases} X_{i\mu}, \ &\text{if } i \notin \cal I_s \\ g^{(1)}_{i\mu}, \ &\text{if } i \in \cal I_s \end{cases}, \quad  Y^g_{i\mu}=\begin{cases} Y_{i\mu}, \ &\text{if } i \notin \cal I_s \\ g^{(2)}_{i\mu}, \ &\text{if } i \in \cal I_s \end{cases} .
 \ee
 Here, $g^{(1)}_{i\mu}$ and $g^{(2)}_{i\mu}$ are i.i.d.\;Gaussian random variables independent of $(X,Y)$ and with mean zero and variance $n^{-1}$. Note that $X^g$ and $Y^g$ satisfy the setting of Proposition \ref{main_prop1}.
 
 Define the set of pairs of indices
 $$\cal J_s:= \{ (i,\mu): i \in \cal I_1\cap  \cal I_s, \mu \in \cal I_3\}\cup  \{ (i,\mu): i \in  \cal I_2\cap  \cal I_s, \mu \in \cal I_4\}.$$ 
 We choose a bijective ordering map $\Phi$ on $\cal J_s$:
 \begin{equation*}
 	\Phi: \cal J_s \rightarrow \{1,\ldots,\gamma_{\max}\}, \quad \gamma_{\max}: =|\cal J_s|= |\cal I_s|\cdot n.
 \end{equation*} 
 Similar to \eqref{simple_W}, we introduce simplified notations
 \be\label{defnWadd} W:=\begin{pmatrix} X & 0\\ 0 &  Y\end{pmatrix},\quad W^g:=\begin{pmatrix}  X^g & 0\\ 0 &  Y^g\end{pmatrix}.\ee
 For any $1\le \gamma \le \gamma_{\max}$, we define the $(\cal I_1 \cup \cal I_2)\times (\cal I_3 \cup \cal I_4)$ matrix $W^{\{\gamma\}} $ such that 
 $$W_{i\mu}^{\{\gamma\}} =\begin{cases}W_{i\mu} , \ &\text{ if } \Phi(i,\mu)\leq \gamma \\   {W}^g_{i\mu}, \ & \text{ if } \Phi(i,\mu)> \gamma\end{cases},\quad \text{and}\quad  W_{i\mu}^{\{\gamma\}}=W_{i\mu}= {W}^g_{i\mu} \ \ \text{ for } \  (i,\mu)\notin \cal J_s.$$ 
 Correspondingly, we define 
 \begin{equation}\nonumber
 	H^{\{\gamma\}}(z) : = \begin{bmatrix} 0 & W ^{\{\gamma\}} \\ ( W^{\{\gamma\}})^{\top} & \begin{pmatrix}  z  I_n & z^{1/2}I_n\\ z^{1/2} I_n & z I_n\end{pmatrix}^{-1}\end{bmatrix} , \quad G^{\{\gamma\}}:= [ H^{\{\gamma\}}(z)]^{-1} .
 \end{equation}
Under the above definition, we have $G^{\{0\}}=G^g$ and $G^{\{\gamma_{\max}\}}=G$. For $\Phi(i,\mu)=\gamma$, we can write that
 \begin{equation}\label{R0} H^{\{\gamma\}} = Q^{\{\gamma\}} + W_{i\mu} E^{\{\gamma\}}, \quad H^{\{\gamma-1\}} = Q^{\{\gamma\}} +  W^g_{i\mu} E^{\{\gamma\}}, 
 \end{equation}
 where $E^{\{\gamma\}} $ is a matrix defined by 
 \be\label{defnEgamma_add}(E^{\{\gamma\}})_{ab}=\mathbf 1_{(a,b)=(i,\mu)}+\mathbf 1_{(a,b)=(\mu,i)},\ee 
 and $Q^{\{\gamma\}}$ is a random matrix with zero $(i,\mu)$-th and $(\mu,i)$-th entries. In particular, $\cal Q^{\{\gamma\}}$ is independent of $W_{i\mu}$ and $W^g_{i\mu}$.
 For simplicity of notations, for any $\gamma$ we denote that 
 \begin{equation}
 	T^{\{\gamma\}}:=G^{\{\gamma\}}, \quad  S^{\{\gamma\}}:=G^{\{\gamma-1\}}, \quad R^{\{\gamma\}}:=(Q^{\{\gamma\}} )^{-1}.  \label{R}
 \end{equation}
 Then, given any function $f$, we can write that
 \be\label{telescop}\E f\left(G\right)-\E f\left(G^g\right)= \sum_{\gamma=1}^{\gamma_{\max}}\left[\E f\left(T^{\{\gamma\}}\right)-\E f\left(S^{\{\gamma\}}\right)\right]. \ee
 We will estimate each term in the sum using resolvent expansions. More precisely, by \eqref{R0} we have that
 \begin{align*}
 	T^{\{\gamma\}}= \left(Q^{\{\gamma\}}  +  W_{i\mu} E^{\{\gamma\}}\right)^{-1}=\left(1+ W_{i\mu} R^{\{\gamma\}}E^{\{\gamma\}}\right)^{-1}R^{\{\gamma\}}.  
 \end{align*}
 For any fixed $k\in \N$, we can expand $T^{\{\gamma\}}$ till order $k$ as
 \begin{equation}
 	T^{\{\gamma\}}=\sum_{s=0}^k (-W_{i\mu} )^s \left(R^{\{\gamma\}}E^{\{\gamma\}}\right)^s R^{\{\gamma\}}+(-W_{i\mu} )^{k+1}\left(R^{\{\gamma\}}E^{\{\gamma\}}\right)^{k+1}T^{\{\gamma\}}. \label{RESOLVENTEXPANSION}
 \end{equation}
 We can also expand $R^{\{\gamma\}}$ in terms of $T^{\{\gamma\}}$ as
 \begin{equation}
 	\begin{split}
 		R^{\{\gamma\}}&=\left(1- W_{i\mu} T^{\{\gamma\}}E^{\{\gamma\}}\right)^{-1}T^{\{\gamma\}}  \\
		&=\sum_{s=0}^k   W_{i\mu}^s \left(T^{\{\gamma\}}E^{\{\gamma\}}\right)^s T^{\{\gamma\}}+  W_{i\mu}^{k+1}\left(T^{\{\gamma\}}E^{\{\gamma\}}\right)^{k+1}R^{\{\gamma\}}. \label{RESOLVENTEXPANSION2}
 	\end{split}
 \end{equation}
 We can get similar expansions for $S^{\{\gamma\}}$ and $R^{\{\gamma\}}$ by replacing $(T^{\{\gamma\}},W_{i\mu})$ with $(S^{\{\gamma\}},W^g_{i\mu})$.  We will combine these resolvent expansions with the Taylor expansion of $f$ to estimate the right-hand side of \eqref{telescop}. 

In the following proof, we use the regularized resolvent $\wh G(z)$ in Definition \ref{resol_not2} with $z = \theta_l +\ii n^{-4}$. We can also define $\wh G^g(z)$ and $\wh G^{\{\gamma\}}(z)$ in a similar way. By \eqref{op G},  $\wh S^{\{\gamma\}}$, $\wh T^{\{\gamma\}}$ and $\wh R^{\{\gamma\}}$ satisfy the deterministic bound 
 \begin{equation}
 	\max_{\gamma} \max \left\{ \| \wh S^{\{\gamma\}}(z) \|, \|\wh T^{\{\gamma\}}(z) \|, \|\wh R^{\{\gamma\}}(z) \| \right\} \lesssim n^{14}. \label{5BOUNDT}
 \end{equation}
 Again, because of this bound, Lemma \ref{lem_stodomin} (iii) can be used tacitly, and we will not emphasize this fact again in the following proof. 
 Using the expansion (\ref{RESOLVENTEXPANSION2}) for a sufficiently large $k$ (for example, $k=100$ will be enough), $|W_{i\mu}|\prec n^{-1/2}$, the anisotropic local law \eqref{aniso_outstrong} for $\wh T^{\{\gamma\}}$, and the bound (\ref{5BOUNDT}) for $\wh R^{\{\gamma\}}$, we can obtain that for any deterministic unit vectors $\mathbf u,\mathbf v \in \mathbb C^{\mathcal I}$, 
 \begin{equation}
 	\max_{\gamma} \left|\left\langle \bu,  \left[\wh R^{\{\gamma\}}(z)-\Pi(z) \right]\mathbf v\right\rangle\right| \prec n^{-1/2} .\label{5BOUND1}
 \end{equation}
 Moreover, using the same argument as in the proof of Claim \ref{removehat}, we can easily show that 
 \be\label{claim sameasymp}
 \text{$\cal M(\theta_l)$ has the same asymptotic distribution as } \wh{\cal M}(z),
 \ee
 where $\wh{\cal M}(z)$ is defined as (recall the notations in \eqref{defM})
 \be\label{def_whM}  \wh{\cal M}(z):=\sqrt{n}\mathscr U^{\top} \left[\wh G(z)-\Pi(z)\right] \mathscr U,   \quad  z = \theta_l + \ii n^{-4}, \quad \mathscr U:=\begin{pmatrix} \bU_a & 0 & 0 & 0 \\ 0 & \bU_b & 0 & 0 \\ 0 & 0 & \wt Z^\top & 0 \\ 0 & 0 & 0 & \wt Z^\top   \end{pmatrix}  .\ee
 By replacing $\wh G$ with $\wh G^g$ or $\wh G^{\{\gamma\}}$, we can also define $\wh {\cal M}^g$ or $\wh {\cal M}^{\{\gamma\}}$.
 Then, we will use the following comparison lemma to complete the proof of Theorem \ref{main_thm1}. 
 
 \begin{lemma}\label{xRalmost}
 Fix any $\gamma=\Phi(i,\mu)$ with $(i,\mu)\in \cal J_s$. We abbreviate 
 $$\cal M_R^{\{\gamma\}}: = \sqrt{n}\mathscr U^{\top} \left[\wh R^{\{\gamma\}}(z)-\Pi(z)\right] \mathscr U, \quad z = \theta_l + \ii n^{-4}.$$
 The matrices $\cal M^{\{\gamma\}}_S$ and $\cal M^{\{\gamma\}}_T$ are defined similarly by replacing $\wh R^{\{\gamma\}}$ with $\wh S^{\{\gamma\}}$ and $\wh T^{\{\gamma\}}$, respectively. Let $f\in C_b^{3}(\C^{4r\times 4r})$ be a function with bounded partial derivatives up to third order, and $a\equiv a_n$ be an arbitrary deterministic sequence of $4r\times 4r$ symmetric matrices. Then, we have that
 \be \label{xRTS}
 \begin{split}
 \mathbb E f\left(\cal M_T^{\{\gamma\}} + a\right)&=\mathbb Ef\left(\cal M_R^{\{\gamma\}} + a\right)+ \sum_{  k,l=1}^{4r} \cal Q_{kl}^{\{\gamma\}} \mathbb E\frac{\partial f}{\partial x_{kl}} \left(\cal M_R^{\{\gamma\}} + a\right) \\
 & + \mathscr A_\gamma + \OO_\prec(n^{-\tau_0} \cal E_{\gamma}), 
 \end{split}
 \ee
 \be \label{xRTS2}
 \mathbb Ef\left(\cal M_S^{\{\gamma\}} + a\right)= \mathbb Ef\left(\cal M_R^{\{\gamma\}} + a\right)+   \mathscr A_\gamma + \OO_\prec(n^{-\tau_0} \cal E_{\gamma}), 
 \ee
 where $\mathscr A_\gamma$ satisfies $\mathscr A_\gamma\prec n^{-\tau_0}$, and we denote
 $$ \cal Q_{kl}^{\{\gamma\}}:=\begin{cases} - n^{-1} \left( n^{3/2}\mathbb EX_{11}^3\right)\cdot\left(   \mathscr U_{\mu k}  \mathscr U_{il} +  \mathscr U_{i k}  \mathscr U_{\mu l}  \right), \ & \text{if } \ \mu\in \cal I_3   \\  - n^{-1} \left( n^{3/2}\mathbb EY_{11}^3\right)\cdot \left(   \mathscr U_{\mu k}  \mathscr U_{il} +  \mathscr U_{i k}  \mathscr U_{\mu l}  \right), \ & \text{if } \ \mu \in \cal I_4 \end{cases},$$
 and
 \be\label{add_def_Egamma}\cal E_{\gamma}:=\sum_{ k,l=1}^{ 4r} \sum_{\sigma_1,\sigma_2=0}^2 n^{-2+\sigma_1/2+\sigma_2/2} |\mathscr U_{ik}|^{\sigma_1}  |\mathscr U_{\mu l}|^{\sigma_2} .\ee
 \end{lemma}
 \begin{proof}
 The proof of this lemma is almost the same as the one for Lemma 7.13 of \cite{KY}, where the main inputs are the local laws \eqref{aniso_outstrong} and \eqref{5BOUND1}, the simple identity \eqref{telescop}, and the resolvent expansions \eqref{RESOLVENTEXPANSION} and \eqref{RESOLVENTEXPANSION2}. The cosmetic modifications are mainly due to the fact that our local law takes a different form than the one in Theorem 2.2 of \cite{KY}. So we ignore the details.
 \end{proof}

 Combining Proposition \ref{redGthm}, Proposition \ref{main_prop1} and Lemma \ref{xRalmost}, we can conclude the proof of Theorem \ref{main_thm1}.
 \begin{proof}[Proof of Theorem \ref{main_thm1}]
 	We fix any function $f\in C_c^\infty(\C^{4r\times 4r})$ and $\wt Z$ satisfying \eqref{V-F} and \eqref{Vmax}. Using \eqref{xRTS} and \eqref{xRTS2}, we get that
 	\be\label{xT-xR} 
 	\begin{split}\E_{X,Y}f\left(\cal M_T^{\{\gamma\}}+a\right) &=\E_{X,Y}f\left(\cal M_S^{\{\gamma\}}+a\right)  +  \sum_{ k,l=1}^{ 4r}  \cal Q_{kl}^{\{\gamma\}} \E_{X,Y}\frac{\partial f}{\partial x_{kl}} \left(\cal M_R^{\{\gamma\}} +a\right) \\
	&+ \OO_\prec(n^{-\tau_0} \cal E_{\gamma}),\end{split}\ee
 	where \smash{$\E_{X,Y}$} means the partial expectation with respect to $X$, $Y$, $X^g$ and $Y^g$ (for simplicity, we did not add $X^g$ and $Y^g$ to the subscript). Since $| \mathscr U_{\mu k} | \le n^{-1/2+\e}$ for $\mu \in \cal I_3\cup \cal I_4$ and $ |\mathscr U_{il} | \le n^{-\tau_0}$ for $i\in \cal I_s$, it is easy to check that
 	\be \nonumber
 	\| \cal Q^{\{\gamma\}}\|_{\max} \lesssim  \min \{n^{-3/2-\tau_0+\e}, \mathcal E_\gamma \} ,\quad \text{for} \quad 1\le \gamma \le \gamma_{\max},\ee
 	where $ \cal Q^{\{\gamma\}}$ is the $4r\times 4r$ matrix with entries  $ \cal Q_{kl}^{\{\gamma\}}$. Thus, for any fixed $1\le k,l\le 4r$ and $1\le \gamma \le \gamma_{\max}$, applying \eqref{xRTS} with $f$ replaced by $\partial_{x_{kl}}f$, we get that
 	$$ \E_{X,Y}\frac{\partial f}{\partial x_{kl}} \left(\cal M_R^{\{\gamma\}}+a \right) = \E_{X,Y}\frac{\partial f}{\partial x_{kl}} \left(\cal M_T^{\{\gamma\}}+a\right) + \OO_\prec( n^{-\tau_0} ).$$
 	Plugging it into \eqref{xT-xR}, we get that
 	\be\nonumber  \begin{split}
 		\E_{X,Y} f\left(\cal M_S^{\{\gamma\}}+a\right) & =  \E_{X,Y} f\left(\cal M_T^{\{\gamma\}}+a\right)  -  \sum_{k,l=1}^{ 4r}  \cal Q_{kl}^{\{\gamma\}} \E_{X,Y}\frac{\partial f}{\partial x_{kl}} \left(\cal M_T^{\{\gamma\}} +a\right) \\
		&+  \OO_\prec(n^{-\tau_0} \cal E_{\gamma}).
 	\end{split}\ee
 	On the other hand, we have the Taylor expansion
 	\be\nonumber  \begin{split} \E_{X,Y} f\left(\cal M_T^{\{\gamma\}} + a -   \cal Q^{\{\gamma\}}\right) & =  \E_{X,Y} f\left(\cal M_T^{\{\gamma\}}+a\right)-   \sum_{  k,l=1}^{ 4r}  \cal Q_{kl}^{\{\gamma\}} \E_{X,Y}\frac{\partial f}{\partial x_{kl}} \left(\cal M_T^{\{\gamma\}} +a\right) \\
	&+  \OO_\prec(n^{-\tau_0} \cal E_{\gamma}).
 	\end{split}\ee
 	Comparing the above two equations, we get that
 	\be\label{xT-xR2} \E_{X,Y} f\left(\cal M_T^{\{\gamma\}} + a -    \cal Q^{\{\gamma\}}\right) = \E_{X,Y} f\left(\cal M_S^{\{\gamma\}}+a\right) +  \OO_\prec(n^{-\tau_0} \cal E_{\gamma}).\ee
 	We iterate \eqref{xT-xR2} starting at $\gamma=1$ and $a=0$ and obtain that 
 	\be\label{xT-xR3} 
 	\E_{X,Y} f\left(\cal M_T^{(\gamma_{\max})}   - \sum_{\gamma=1}^{\gamma_{\max}}   \cal Q^{\{\gamma\}}\right) = \E_{X,Y} f\left(\cal M_T^{(0)}\right) +  \OO_\prec(n^{-\tau_0}),
 	\ee
 	where we also used the bound $ \sum_{\gamma}\mathcal E_\gamma =\OO(1)$, which can be verified directly using the definition \eqref{add_def_Egamma}. Now, using \eqref{rowsum_V}, we can bound that 
 	$$ \sum_{\gamma=1}^{\gamma_{\max}}  \cal Q^{\{\gamma\}} \prec n^{-1/2}.$$
  	Plugging it into \eqref{xT-xR3}, we obtain that
 	$$\E f\left(\cal M_T^{(\gamma_{\max})}\right) = \E f\left(\cal M_T^{(0)}\right) +  \OO_\prec(n^{-\tau_0}).$$
 	This shows that $\wh{\cal M}(z)$ has the same asymptotic distribution as $\wh{\cal M}^g(z)$ in the almost Gaussian case. Combining this fact with \eqref{claim sameasymp}, Proposition \ref{redGthm} and Proposition \ref{main_prop1}, we conclude \eqref{limf} when $f$ is smooth. Extension to any bounded continuous $f$ follows from a standard argument. 
 \end{proof}

\section{Proof of Theorem \ref{main_thm2}}\label{pf thm2}

In this section, we present the proof of Theorem \ref{main_thm2} based on a comparison with Theorem \ref{main_thm1}. We first truncate the entries of $X$, $Y$ and $Z$ using the moment condition \eqref{eq_8moment}. Choose a constant $c_\phi >0$ small enough such that $(n^{1/4-c_\phi})^{8+c_0} \ge n^{2+\e_0}$ and $(n^{1/4-c_\phi})^{4+c_0} \ge n^{1+\e_0}$ for a constant $\e_0>0$. Then, we introduce the following truncation on the entries of $ X , $ $  Y $ and $ Z$: 
$$ X'_{ij} = \mathbf 1_{|X_{ij}|\le n^{-1/4-c_\phi}}X_{ij},\quad Y'_{ij} = \mathbf 1_{|Y_{ij}|\le n^{-1/4-c_\phi}}Y_{ij},\quad Z'_{ij} = \mathbf 1_{|Z_{ij}|\le n^{-1/4-c_\phi}}Z_{ij}.$$
In other words, we restrict ourselves to the following event:
$$\Omega :=\left\{\max_{i,j} |X_{ij}|\le \phi_n,\max_{i,j} |Y_{ij}|\le \phi_n, \max_{i,j}|Z_{ij}|\le \phi_n\right\}, \quad \text{with}\ \ \ \phi_n:=n^{-1/4-c_\phi}.$$
Combining the condition (\ref{eq_8moment}) with Markov's inequality and using a simple union bound, we get that
\begin{equation}\label{XneX}
	\mathbb P(X' \ne X, Y' \ne Y, Z'\ne Z) =\OO ( n^{-\e_0}).
\end{equation}
Using (\ref{eq_8moment}) and integration by parts, it is easy to verify that  
\begin{align*}
	\mathbb E  \left|X_{ij}\right|1_{|X_{ij}|> \phi_n} =\OO(n^{-2-\e_0}), \quad \mathbb E \left|X_{ij}\right|^2 1_{|X_{ij}|> \phi_n} =\OO(n^{-2-\e_0}),
\end{align*}
which implies 
\be\label{IBP2}|\mathbb E  X'_{ij}| =\OO(n^{-2-\e_0}), \quad  \mathbb E |X'_{ij}|^2 = n^{-1} + \OO(n^{-2-\e_0}).\ee
Moreover, we trivially have that
$$\mathbb E  |X'_{ij}|^4 \le \mathbb E  |X_{ij}|^4 =\OO(n^{-2}).$$
Similar estimates also hold for the entries of $Y$ and $Z$. Now, we introduce the matrices 
$$\mathring X =\frac{X'-\E X'}{\text{Var}(X'_{11})}, \quad \mathring Y =\frac{Y'-\E Y'}{\text{Var}(Y'_{11})} ,\quad \mathring Z=\frac{Z'-\E Z'}{\text{Var}(Z'_{11})}.$$
Note that by \eqref{IBP2}, we have the estimates
\be\label{IBP3} 
\|\E X'\| =\OO(n^{-1-\e_0}) , \quad \text{Var}(X'_{11})= n^{-1} \left[ 1+\OO(n^{-1-\e_0})\right],
\ee
and similar estimates also hold for $\|\E Y'\|$, $\text{Var}(Y'_{11})$, $\|\E Z'\|$ and $\text{Var}(Z'_{11})$. Now, we define SCC matrices $\mathring {\cal C}_{ {\cal X} {\cal Y}}$ and $\mathring {\cal C}_{XY}$ by replacing $(X,Y,Z)$ with $(\mathring X,\mathring Y, \mathring Z)$ in \eqref{eq_SCC} and \eqref{eq_nullSCC}. 
With the estimate \eqref{IBP3}, we can readily bound the differences between the eigenvalues of $\mathring {\cal C}_{ {\cal X} {\cal Y}}$ and those of ${\cal C}_{ {\cal X} {\cal Y}}$ using  Weyl's inequality.



\begin{lemma}\label{claim compcirc}
	Under the above setting, we have that
	$$\mathbb P \left( \left\| {\cal C}_{ {\cal X} {\cal Y}} - \mathring {\cal C}_{ {\cal X} {\cal Y}}\right\| =\OO\left(n^{-1-\e_0}\right) \right) = 1-\OO\left(n^{-\e_0}\right).$$
\end{lemma} 
\begin{proof}
	This lemma is an easy consequence of \eqref{IBP3} and the singular value bounds in \eqref{op rough1add} and \eqref{op rough2add} (which hold by Theorem \ref{thm_localadd} (iv) below). Moreover, the probability bound is due to \eqref{XneX}.
\end{proof}

By the above lemma, it suffices to prove that Theorem \ref{main_thm2} holds under the following assumptions on $(X, Y,Z)$, which correspond to the above setting for $(\mathring X,\mathring Y, \mathring Z)$.

\begin{assumption}\label{main_assmadd}
	Assume that $X=(X_{ij})$, $Y=(Y_{ij})$ and $Z=(Z_{ij})$ are independent $p\times n$, $q\times n$ and $r\times n$ matrices, whose entries are real i.i.d. random variables satisfying \eqref{assm1}, \eqref{assmZ}, the bounded fourth moment condition
	\be\label{conditionA3} 
	\max\left\{ \mathbb{E} | X_{11} |^4 ,\mathbb{E} | Y_{11} |^4 ,\mathbb{E} | Z_{11} |^4 \right\} \lesssim n^{-2},
	\ee 
	and the following {\it{bounded support condition}} with $\phi_n=n^{-1/4-c_\phi}$:
	\begin{equation}
		\max\left\{ \max_{i,j}\vert X_{ij}\vert,\max_{i,j}\vert Y_{ij}\vert,\max_{i,j}\vert Z_{ij}\vert \right\}\le \phi_n.
		\label{eq_support}
	\end{equation}
	Moreover, we assume that Assumption \ref{main_assm} (iii)--(iv) hold.  
\end{assumption}

The local laws in Section \ref{sec_maintools} can be extended to the above setting. More precisely, we have proved the following theorem in \cite{PartI,PartIII}. 

\begin{theorem}\label{thm_localadd}
Suppose Assumption \ref{main_assmadd} holds. 
	\begin{itemize}
		\item[(i)] (Outliers: Theorem 2.9 of \cite{PartI}) If $t_i \ge t_c + n^{-1/3}+\phi_n$, then we have that
		\be\label{boundoutadd}
		|\wt\lambda_i -\theta_i | \prec  n^{-1/2}|t_i - t_c|^{1/2} + \phi_n|t_i - t_c| .
		\ee
		On the other hand, for any $i=\OO(1)$ with $t_i < t_c + n^{-1/3}+\phi_n$, we have that
		\be\label{boundedgeadd}
		|\wt\lambda_i - \lambda_+| \prec  n^{-2/3} + \phi_n^2.
		\ee
		
		\item[(ii)] (Anisotropic local law: 
		Theorem 3.9 of \cite{PartI}) 
		For any fixed $\e>0$ and deterministic unit vectors $\mathbf u, \mathbf v \in \mathbb C^{\mathcal I}$, the following estimate holds for all $z\in S_{out}(\epsilon)$:
		\begin{equation}\label{aniso_outstrongadd}
			\left| \langle \mathbf u, G(z) \mathbf v\rangle - \langle \mathbf u, \Pi (z)\mathbf v\rangle \right|  \prec \phi_n+n^{-1/2}(\kappa+\eta)^{-1/4} .
		\end{equation}
		
		\item[(iii)] (Eigenvalue rigidity: Theorem 2.5 of \cite{PartIII}) The eigenvalue rigidity estimate \eqref{rigidity} holds.

		\item[(iv)] (Singular value bounds: Lemma 3.3 of \cite{PartIII}) For any constant $\e>0$, the bounds \eqref{op rough1add} and \eqref{op rough2add} hold with high probability.

	\end{itemize}
	For the above results to hold,  it is not necessary to assume that the entries of $X$, $Y$ and $Z$ are identically distributed, that is, only independence and moment conditions are needed.

\end{theorem}
Moreover, Lemma \ref{largedeviation} can also be extended.

\begin{lemma}[Lemma 3.8 of \cite{EKYY1}]\label{largedeviationadd}
	Let $(x_i)$, $(y_j)$ be independent families of centered independent random variables, and $(\cal A_i)$, $(\cal B_{ij})$ be families of deterministic complex numbers. Suppose the entries $x_i$, $y_j$ have variances at most $n^{-1}$ and satisfy the bounded support condition (\ref{eq_support}). Then, the following large deviation bounds hold: 
	\begin{align*}
		 & \Big\vert \sum_i \cal A_i x_i \Big\vert \prec  \phi_n \max_{i} \left\vert \cal A_i \right\vert+ \frac{1}{\sqrt{n}}\Big(\sum_i |\cal A_i|^2 \Big)^{1/2} , \\
		&\Big\vert \sum_{i,j} x_i \cal B_{ij} y_j \Big\vert \prec \phi_n^2 \cal B_d  + \phi_n \cal B_o + \frac{1}{n}\Big(\sum_{i\ne j} |\cal B_{ij}|^2\Big)^{{1}/{2}} , \\
		 &\Big\vert \sum_{i}  \cal B_{ii} |x_i|^2 - \sum_{i} (\mathbb E|x_i|^2) \cal B_{ii}  \Big\vert  \prec \phi_n \cal B_d  ,\\ 
		& \Big\vert \sum_{i\ne j} x_i \cal B_{ij} x_j \Big\vert  \prec \phi_n\cal B_o + \frac{1}{n}\Big(\sum_{i\ne j} |\cal B_{ij}|^2\Big)^{{1}/{2}} ,
	\end{align*}
	where $\cal B_d:=\max_{i} |\cal B_{ii} |$ and $\cal B_o:= \max_{i\ne j} |\cal B_{ij}|.$
\end{lemma}  

Following the arguments in Section \ref{sec mainthm} and using Theorem \ref{thm_localadd}, we can obtain a similar equation as  \eqref{masterx4}:
\be\label{masterx4add}
\begin{split}
	&  \det \left[ f_c(\lambda) I_r -   \diag (t_1, \cdots, t_r) + \cal O^{\top} \cal E_r (\lambda) \cal O+\OO_\prec (n^{-1}+\phi_n^2) \right] =0.
\end{split}
\ee
Then, using \eqref{masterx4add} and \eqref{boundoutadd}, as in Proposition \ref{redGthm}, we can get that  
\be\label{redGadd}
\left| \left(\wt\lambda_{\al(i)} - \theta_l \right)- \mu_i \left\{ a(t_l) \left[  \diag (t_1, \cdots, t_r) - t_l - \cal O^{\top} \cal E_r (\theta_l)\cal O\right]_{\llbracket\gamma(l)\rrbracket}  \right]\right\} \prec   n^{-1/2-\e} ,
\ee
for a constant $\e>0$ depending on $c_\phi$ only. Again, the proof is the same as the one for Proposition 4.5 in \cite{KY_AOP}, so we omit the details. We also remark that this proof is the only place where we need to use the well-separation condition \eqref{well-sep}. 

With \eqref{masterx4add}, the problem is once again reduced to showing the CLT of ${\cal M}_0(\theta_l)$ in \eqref{wtMx}. Using Lemma \ref{largedeviationadd}, we can obtain a similar estimate as in \eqref{eq_iso}:
\be\label{Z_concentration2}\left\| ZZ^{\top} - I_r \right\| \prec \phi_n.\ee
Thus, similar to \eqref{V-F}, we can introduce an $n\times r$ partial orthogonal matrix $\wt Z $ such that 
\be\label{V-Fadd} \wt Z\wt Z^{\top} = I_r, \quad  \|\wt Z - Z\|_F\prec \phi_n .\ee
With \eqref{V-Fadd} and \eqref{aniso_outstrongadd}, we can check that
$$\|{\cal M}(\theta_l) - {\cal M}_0(\theta_l)\|\prec \sqrt{n}\phi_n^2 \le n^{- 2c_\phi},$$
where the matrix $\cal M$ is defined in \eqref{defM}. Thus, to prove Theorem \ref{main_thm2}, it suffices to prove the CLT for ${\cal M}(\theta_l) $. As in Section \ref{secpfmain1}, to avoid singular behaviors of the resolvent on exceptional low-probability events, we will use the regularized resolvent $\wh G(z)$ in Definition \ref{resol_not2} with $z= \theta_l +\ii n^{-4}$ throughout the rest of the proof. However, for simplicity of notations, we still use the notation $G(z)$ to denote the regularized resolvents in the following proof, while keeping in mind that the bound (\ref{op G}) holds for all resolvent entries appearing below with $\eta=n^{-4}$, and hence Lemma \ref{lem_stodomin} (iii) can be applied without worry. Finally, we remark that the rest of the proof will be conditional on $Z$ and $\wt Z$, i.e., they are regarded as deterministic matrices unless specified otherwise.

Given any random matrices $X$ and $Y$ satisfying Assumption \ref{main_assmadd}, we can construct matrices $\wt X$ and $\wt Y$, whose entries have the first four moments matching those of the entries of $X$ and $Y$, but with a smaller support $n^{-1/2}$. 

\begin{lemma} [Lemma 5.1 of \cite{LY}]\label{lem_decrease}
	Suppose $X$, $Y$ and $Z$ satisfy Assumption \ref{main_assmadd}. Then, there exist independent random matrices $\wt{X}=(\wt X_{ij})$, $\wt{Y}=(\wt Y_{ij})$ and $\wt{Z}=(\wt Z_{ij})$ satisfying Assumption \ref{main_assmadd}, 
	such that the condition (\ref{eq_support}) holds with $\phi_n$ replaced by $n^{-1/2}$. Moreover, they satisfy the following moment matching conditions: 
	\begin{equation}\label{match_moments}
		\mathbb EX_{ij}^k =\mathbb E\wt X_{ij}^k, \quad \mathbb EY_{ij}^k =\mathbb E\wt Y_{ij}^k, \quad \mathbb EZ_{ij}^k =\mathbb E\wt Z_{ij}^k, \quad k=1,2,3,4.
	\end{equation}
\end{lemma}

Note that $\wt X$, $\wt Y$ and $\wt Z$ satisfy the setting of Theorem \ref{main_thm1}. By replacing $(X,Y)$ with $(\wt X,\wt Y)$ in \eqref{linearize_block}, \eqref{def_resolvent} and \eqref{defM}, We can define $\wt H(z)$, $\wt G(z)$ and $\wt {\cal M}(z)$. In Section \ref{secpfmain1}, we have proved the CLT for $\wt{\cal M}(\theta_l)$. The rest of the proof is devoted to showing that ${\cal M}(\theta_l)$ has the same asymptotic distribution as $\wt{\cal M}(\theta_l)$. 
\begin{proposition}\label{main_propadd}
Suppose Assumption \ref{main_assmadd} holds. Let $\wt X$ and $\wt Y$ be two random matrices constructed as in Lemma \ref{lem_decrease}. Then, there exists a constant $\e>0$ such that for any function $f\in C_c^\infty(\C^{4r\times 4r})$, we have
$$ \mathbb Ef\left({\cal M}(z)\right)= \mathbb Ef(\wt{\cal M}(z)) + \OO(n^{-\e}), \quad \text{for}\quad z= \theta_l +\ii n^{-4}.$$
\end{proposition}
To prove this proposition, we will use the continuous comparison method introduced in \cite{Anisotropic}. We first introduce the following interpolation between $(X,Y)$ and $(\wt X, \wt Y)$.

\begin{definition}[Interpolating matrices]
	Introduce the notations $X^0:=\wt X $ and $X^1:=X$. Let $\rho_{i\mu}^0$ and $\rho_{i\mu}^1$ be the laws of $\wt X_{i\mu} $ and $X_{i\mu}$, respectively. For $\theta\in [0,1]$, we define the interpolated law
	$$\rho_{i\mu}^\theta := (1-\theta)\rho_{i\mu}^0+\theta\rho_{i\mu}^1.$$
	Let $\{X^\theta: \theta\in (0,1) \}$ be a collection of random matrices such that 
	for any fixed $\theta\in (0,1)$, $(X^0,X^\theta, X^1)$ is a triple of independent $\mathcal I_1\times \mathcal I_3$ random matrices, and the matrix $X^\theta=(X_{i\mu}^\theta)$ has law
	\begin{equation}\label{law_interpol}
		\prod_{i\in \mathcal I_1}\prod_{\mu\in \mathcal I_3} \rho_{i\mu}^\theta\left(\dd X_{i\mu}^\theta\right).
	\end{equation}
	Note that we do not require $X^{\theta_1}$ to be independent of $X^{\theta_2}$ for $\theta_1\ne \theta_2 \in (0,1)$. For $\lambda \in \mathbb R$, $i\in \mathcal I_1$ and $\mu\in \mathcal I_3$, we define the matrix $X_{(i\mu)}^{\theta,\lambda}$ through
	\be\label{Ximulambda} 
	\left(X_{(i\mu)}^{\theta,\lambda}\right)_{j\nu}:=\begin{cases}X_{i\mu}^{\theta}, &\text{ if }(j,\nu)\ne (i,\mu)\\ \lambda, &\text{ if }(j,\nu)=(i,\mu)\end{cases}.
	\ee
	In a similar way, we can define a collection of random matrices $\{Y^\theta: \theta\in [0,1] \}$ for $\theta\in [0,1]$ with $Y^0:=\wt Y$ and $Y^1:=Y$. We require that for any fixed $\theta\in (0,1)$, $Y^\theta$ is independent of $(X^0,X^\theta, X^1, Y^0, Y^1)$. For $\lambda \in \mathbb R$, $i\in \mathcal I_2$ and $\mu\in \mathcal I_4$, we define $Y_{(i\mu)}^{\theta,\lambda}$ in the same way as \eqref{Ximulambda}.
	We also introduce the resolvents 
	\[G^{\theta}(z):=G\left(X^{\theta},Y^{\theta},z\right),\ \ \ G^{\theta, \lambda}_{(i\mu)}(z):=\begin{cases}G\left(X_{(i\mu)}^{\theta,\lambda},Y^{\theta},z\right), & \text{ if } i\in \mathcal I_1, \mu\in \mathcal I_3 \\ G\left(X^{\theta},Y_{(i\mu)}^{\theta,\lambda},z\right), & \text{ if } i\in \mathcal I_2, \mu\in \mathcal I_4 \end{cases}.\]
\end{definition}


Using (\ref{law_interpol}) and fundamental calculus, it is easy to derive the following basic interpolation formula.
\begin{lemma}\label{lemm_comp_3}
	For any differentiable function $F:\mathbb C^{\mathcal I_1 \times\mathcal I_3} \times \mathbb C^{\mathcal I_2 \times\mathcal I_4}\rightarrow \mathbb C$, we have that
	\begin{equation}\label{basic_interp}
		\begin{split}
			\frac{\dd}{\dd\theta}\mathbb E F(X^\theta, Y^\theta)&=\sum_{i\in\mathcal I_1 , \mu\in\mathcal I_3}\left[\mathbb E F\left(X^{\theta,X_{i\mu}^1}_{(i\mu)}, Y^\theta\right)-\mathbb E F\left(X^{\theta,X_{i\mu}^0}_{(i\mu)}, Y^\theta\right)\right] \\
			&+ \sum_{i\in\mathcal I_2 , \mu\in\mathcal I_4}\left[\mathbb E F\left(X^\theta, Y^{\theta,Y_{i\mu}^1}_{(i\mu)}\right)-\mathbb E F\left(X^\theta, Y^{\theta,Y_{i\mu}^0}_{(i\mu)}\right)\right],
		\end{split}
	\end{equation}
	provided all the expectations exist.
\end{lemma}

We shall apply Lemma \ref{lemm_comp_3} to $F(X^\theta, Y^\theta) = f(\cal M\left(X^{\theta},Y^{\theta},z\right))$ for the function $f$ in Proposition \ref{main_propadd}, where $\cal M\left(X^{\theta},Y^{\theta},z\right)$ is defined by replacing $G(z)\equiv G(X,Y,z)$ with $G^{\theta}(z)\equiv G(X^\theta,Y^\theta,z)$. The main work is to show the following estimate for the right-hand side of (\ref{basic_interp}).

\begin{lemma}\label{lemm_comp_4}
	Under the assumptions of Proposition \ref{main_propadd}, there exists a constant $\e>0$ such that
	\begin{equation}\label{compxxx}
		\sum_{i\in\mathcal I_1}\sum_{\mu\in\mathcal I_3}\left[\mathbb Ef\left(\cal M\left(X^{\theta,X_{i\mu}^1}_{(i\mu)},Y^\theta\right)\right)-\mathbb Ef\left(\cal M\left(X^{\theta,X_{i\mu}^0}_{(i\mu)},Y^\theta\right)\right)\right] =\OO( n^{-\e}),
	\end{equation}
	\begin{equation}\label{compyyy}
		\sum_{i\in\mathcal I_2}\sum_{\mu\in\mathcal I_4}\left[\mathbb Ef\left(\cal M\left(X^\theta, Y^{\theta,Y_{i\mu}^1}_{(i\mu)}\right)\right)-\mathbb Ef\left(\cal M\left(X^\theta, Y^{\theta,Y_{i\mu}^0}_{(i\mu)}\right)\right)\right] =\OO(  n^{-\e}),
	\end{equation}
	for all $\theta\in[0,1]$. 
\end{lemma}
Combining Lemma \ref{lemm_comp_3} and Lemma \ref{lemm_comp_4}, we conclude Proposition \ref{main_propadd}. 
The proof of Lemma \ref{lemm_comp_4} is based on an expansion approach. As in \eqref{RESOLVENTEXPANSION} and \eqref{RESOLVENTEXPANSION2}, for any $i\in \mathcal I_1$, $\mu\in \mathcal I_3$, $\lambda,\lambda'\in \mathbb R$ and $K\in \mathbb N$, we have the resolvent expansion 
\begin{equation}\label{eq_comp_expansion}
	\begin{split}
		G_{(i \mu)}^{\theta,\lambda'} &= G_{(i\mu)}^{\theta,\lambda}+\sum_{k=1}^{K}  ({\lambda-\lambda'})^kG_{(i\mu)}^{\theta,\lambda}\left( E^{\{i,\mu\}} G_{(i\mu)}^{\theta,\lambda}\right)^k  \\
		&+ (\lambda-\lambda')^{K+1}G_{(i\mu)}^{\theta,\lambda'}\left(E^{\{i,\mu\}} G_{(i\mu)}^{\theta,\lambda}\right)^{K+1},
	\end{split}
\end{equation}
where $E^{\{i,\mu\}}$ is the matrix defined by $(E^{\{i,\mu\}})_{ab}=\mathbf 1_{(a,b)=(i,\mu)}+\mathbf 1_{(a,b)=(\mu,i)}$ as in \eqref{defnEgamma_add}. With this expansion, we can readily obtain the following estimate: if $y$ is a random variable satisfying $|y|\le \phi_n$, then for any deterministic unit vectors $\mathbf u,\mathbf v \in \mathbb C^{\mathcal I}$, we have that
\begin{equation}\label{comp_eq_apriori}
	\left\langle \bu,  \left[G_{(i\mu)}^{\theta,y}(z)-\Pi(z)\right]\bv\right\rangle \prec \phi_n  ,\quad \text{for} \quad 
	z= \theta_l +\ii n^{-4}.
\end{equation}
In fact, to prove this estimate, we will apply the expansion (\ref{eq_comp_expansion}) for a sufficiently large $K$, say $K=100$, with $\lambda'=y$ and $\lambda = X_{i\mu}^\theta$, so that \smash{$ G_{(i\mu)}^{\theta,\lambda}=G^\theta$}. Then, to bound the resulting expansion on the right-hand side of \eqref{eq_comp_expansion}, we will use $y\le \phi_n$, \smash{$|X_{i\mu}^\theta|\le \phi_n$}, the anisotropic local law \eqref{aniso_outstrongadd} for \smash{$G^{\theta}$}, and the rough bound in (\ref{op G}) for \smash{$G_{(i\mu)}^{\theta,y}$} in the last term.

\begin{proof}[Proof Lemma \ref{lemm_comp_4}]
	We only give the proof of \eqref{compxxx}, while \eqref{compyyy} obviously can be proved in the same way.  
	For simplicity of notations, we only provide the proof for a simpler version of \eqref{compxxx},
	\begin{equation}\label{compxxx2}
		\sum_{i\in\mathcal I_1}\sum_{\mu\in\mathcal I_3}\left[\mathbb Ef\left(M\left(X^{\theta,X_{i\mu}^1}_{(i\mu)},Y^\theta\right)\right)-\mathbb Ef\left(M\left(X^{\theta,X_{i\mu}^0}_{(i\mu)},Y^\theta\right)\right)\right] =\OO( n^{-\e}),
	\end{equation}
	where $M$ is defined as
	$$M(X,Y):= \sqrt{n} \langle \bu, (G(X,Y,z) - \Pi(z))\bv\rangle $$
	for some deterministic unit vectors $\bu,\bv\in \C^{\cal I}$ satisfying that  
	\be\label{delocaluv}
	\max_{\mu\in \cal I_3\cup \cal I_4}|u(\mu)| \prec \phi_n,\quad \max_{\mu\in \cal I_3\cup \cal I_4}|v(\mu)| \prec \phi_n.
	\ee
	The proof for \eqref{compxxx} is the same, except that we need to use multivariable Taylor expansions. 
	Here, the condition \eqref{delocaluv} is due to the corresponding bound on $\wt Z$,
	$$\|\wt Z\|_{\max}\le \|\wt Z-Z\|_{\max} + \|\wt Z\|_{\max} \prec \phi_n$$
	by \eqref{V-Fadd} and the bounded support condition in \eqref{eq_support}.
	
	In the following proof, for simplicity of notations, we fix a $\theta\in [0,1]$ and denote \smash{$M_{(i\mu)}(\lambda):= M(X^{\theta,\lambda}_{(i\mu)} ) $} while ignoring $Y^\theta$ from the argument. Recall that $\phi_n=n^{-1/4-c_\phi}$. Using \eqref{eq_comp_expansion} with $K=9$ and the local law \eqref{comp_eq_apriori}, we get that for a random variable $y$ satisfying $|y|\le \phi_n$,
	\be\label{My-0}M_{(i\mu)}(y)-M_{(i\mu)}(0)=  \sum_{k=1}^9  n^{1/2} (-y)^k x_k(i,\mu) + \OO_\prec ( n^{-2 - 10 c_\phi} ),\ee
	where 
	$$x_k(i,\mu) := \big\langle \bu, G_{(i\mu)}^{\theta,0}\big( E^{\{i,\mu\}} G_{(i\mu)}^{\theta,0}\big)^k\bv \big\rangle.$$
	By \eqref{comp_eq_apriori}, we have $x_k (i,\mu)\prec 1$ for $k\ge 1$. On the other hand, for $k=1$, using \eqref{comp_eq_apriori} and \eqref{delocaluv}, we can get a better bound
	\be\label{boundxk}
	x_1(i,\mu)= \big\langle \bu, G_{(i\mu)}^{\theta,0} E^{\{i,\mu\}} G_{(i\mu)}^{\theta,0} \bv \big\rangle= \big\langle \bu, \Pi E^{\{i,\mu\}} \Pi \bv \big\rangle + \OO_\prec( \phi_n) \prec  \phi_n.
	\ee
	Combining this bound with $|y| \le \phi_n$, we immediately obtain from \eqref{My-0} the rough bound 
	\be\label{My-02}
	M_{(i\mu)}(y)-M_{(i\mu)}(0)\prec n^{1/2}\phi_n^2 \le n^{-2c_\phi}.\ee
	
	Now, fix an integer $K\ge 1/c_\phi$. Using \eqref{My-0} and \eqref{My-02}, the Taylor expansion of $f$ up to the $K$-th order gives that for $\al\in\{0,1\}$,
	\begin{align*}
		 &\E f\left(M_{(i\mu)}(X_{i\mu}^{\al}) \right)-\mathbb E f \left(M_{(i\mu)}(0) \right)  \\
		 &  =\sum_{k=1}^{K} \E\frac{f^{(k)} \left(M_{(i\mu)}(0) \right)}{k!} \left[\sum_{l=1}^9  n^{1/2} (-X_{i\mu}^{\al})^l x_l(i,\mu) \right]^k  +\OO_\prec\left(n^{-2 - 2c_\phi }\right)\\
		 &= \sum_{k=1}^{K} \sum_{s=1}^{K+2k}\sum_{\mathbf s } ^* n^{k/2} \mathbb E(-X_{i\mu}^{\al})^s \E\frac{f^{(k)} \left(M_{(i\mu)}(0) \right)}{k!} \prod_{l=1}^k  x_{s_l}(i,\mu)   +\OO_\prec\left(n^{-2 - 2c_\phi }\right),
	\end{align*}
	where $\sum_{\mathbf s}^*$ means the sum over $\mathbf s =(s_1, \cdots, s_k)\in \N^k$ satisfying 
	\be\label{lkstat}
	1\le s_{i} \le 9, \quad \sum_{l=1}^k l \cdot s_l = s. 
	\ee
	Here, for the terms with $s>K+2k$, we have $n^{k/2} \mathbb E(-X_{i\mu}^{\al})^s  \le n^{-2 - 2c_\phi }$, so they are included into the error. 
	%
	Now, using the moment matching condition \eqref{match_moments}, we get that
	%
	\begin{align*}
		&\left|\E f\left(M_{(i\mu)}(X_{i\mu}^{1}) \right)-\E f\left(M_{(i\mu)}(X_{i\mu}^{0}) \right)\right|   \prec \sum_{k=1}^{K} \sum_{s=5}^{K+2k}\sum_{\mathbf s } ^* n^{k/2 - 2}\phi_n^{s-4}   \E\Big| \prod_{l=1}^k  x_{s_l}(i,\mu)\Big|   + n^{-2 - 2c_\phi } ,
	\end{align*}
	where we used that $\mathbb E |X_{i\mu}^{\al}|^s \le \phi_n^{s-4}\mathbb E |X_{i\mu}^{\al}|^4 \lesssim \phi_n^{s-4}n^{-2}$ for $s\ge 5$. Thus, to show \eqref{compxxx2}, we only need to prove that for any fixed $s\ge 5$ and $\mathbf s\in \N^k$ satisfying \eqref{lkstat},
	\begin{equation}\label{eq_comp_est}
		\sum_{i\in\mathcal I_1}\sum_{\mu\in\mathcal I_3}n^{k/2 - 2}\phi_n^{s-4}   \E\Big| \prod_{l=1}^k  x_{s_l}(i,\mu)\Big| \prec n^{-\e}\end{equation}
	for some constant $\e>0$. For the proof of \eqref{eq_comp_est}, we will consider three different cases.

	\vspace{5pt}
	
	\noindent{\bf Case 1:} Suppose $s_l \ge 2$ for all $ l = 1,\cdots, k$. Then, we have $s\ge \max\{2k,5\}$ and
	\be\label{n<2k}n^{k/2 - 2}\phi_n^{s-4} = n^{-2+k/2 - (s-4)/4 }n^{-(s-4)c_\phi} \le n^{-1-c_\phi}. \ee
	On the other hand, using \eqref{eq_comp_expansion} with $K=0$ and \eqref{comp_eq_apriori}, we get that
	\be\label{Gui}\begin{split} \big|\big\langle \mathbf e_i,G_{(i \mu)}^{\theta,0} \bu\big\rangle\big| & \le   \left|G^\theta_{i\bu }\right| + \left|X_{i\mu}^\theta\right| \big(\big|\big\langle \mathbf e_i,G_{(i \mu)}^{\theta,0} \mathbf e_{i}\big\rangle \big| \left| G ^{\theta}_{\mu \bu} \right|+\big|\big\langle \mathbf e_i,G_{(i \mu)}^{\theta,0} \mathbf e_{\mu}\big\rangle \big| \left| G ^{\theta}_{i\bu} \right| \big)  \\
	&\prec  \left|G^\theta_{i\bu }\right| + \phi_n \left|G^\theta_{\mu\bu}\right|.
	\end{split}
	\ee
	Similarly, we have that 
	\be\label{Gumu} \big|\big\langle \mathbf e_\mu,G_{(i \mu)}^{\theta,0} \bu\big\rangle\big|  \prec  \left|G^\theta_{\mu\bu }\right| + \phi_n \left|G^\theta_{i\bu}\right|.
	\ee
	Inserting \eqref{Gui} and \eqref{Gumu} into the definition of $x_l(i,\mu)$, we immediately get that
	\be\label{Rxl}
	|x_{l}(i,\mu)| \prec \left|G^\theta_{i\bu }\right|^2 + \left|G^\theta_{i\bv }\right|^2 + \left|G^\theta_{\mu\bu }\right|^2 + \left|G^\theta_{\mu\bv}\right|^2,\quad l\ge 1.
	\ee
	We claim that for any deterministic unit vector $\bu \in \C^{\cal I}$,
	\be\label{claim Ward}
	\sum_{i\in \cal I_1} \left|G^\theta_{i\bu }\right|^2 \prec 1, \quad  \sum_{\mu\in \cal I_3} \left|G^\theta_{\mu\bu }\right|^2 \prec 1.
	\ee
	We postpone its proof until we complete the proof of Lemma \ref{lemm_comp_4}. Combining \eqref{n<2k}, \eqref{Rxl} and \eqref{claim Ward}, we can bound that 
	\begin{align*}
		& \sum_{i\in\mathcal I_1}\sum_{\mu\in\mathcal I_3}n^{k/2 - 2}\phi_n^{s-4}   \E\Big| \prod_{l=1}^k  x_{s_l}(i,\mu)\Big| \\
		&\prec  \sum_{i\in\mathcal I_1}\sum_{\mu\in\mathcal I_3} n^{-1 - c_\phi} \left(\left|G^\theta_{i\bu }\right|^2 + \left|G^\theta_{i\bv }\right|^2 + \left|G^\theta_{\mu\bu }\right|^2 + \left|G^\theta_{\mu\bv}\right|^2\right) \prec n^{-c_\phi} .
	\end{align*}
	
	
	\noindent{\bf Case 2:} Suppose there are at least two $l$'s such that $s_l =1$. Without loss of generality, we assume that $s_1=s_2 = \cdots = s_j =1$ for some $2\le j\le k$. Then, we have 
	$ s \ge \max\{ 2k -j,5\} $, which gives that 
	\begin{align}\label{s>2k}
		n^{k/2 - 2}\phi_n^{s-4}   \E\Big| \prod_{l=1}^k  x_{s_l}(i,\mu)\Big| \prec n^{k/2 - 2}\phi_n^{s-4} \phi_n^{j-2} |x_1(i,\mu)|^2 \le n^{-1/2-c_\phi} |x_1(i,\mu)|^2,
	\end{align}
	where in the second step we used
	$$ n^{k/2 - 2}\phi_n^{s+j-6} = n^{-2+k/2 - (s+j-6)/4}n^{-(s+j-6)c_\phi}\le n^{-1/2-c_\phi}.$$
	Applying \eqref{Gui} and \eqref{Gumu} to \eqref{boundxk}, we can bound that
	\begin{align}
		|x_1(i,\mu)|  & \prec \left(\left|G^\theta_{i\bu }\right| +  \phi_n \left|G^\theta_{\mu\bu}\right|\right)\left(\left|G^\theta_{\mu\bv}\right| +  \phi_n \left|G^\theta_{i\bv}\right|\right)   + \left(\left|G^\theta_{\mu\bu }\right| +  \phi_n \left|G^\theta_{i\bu}\right|\right)\left(\left|G^\theta_{i\bv}\right| +  \phi_n \left|G^\theta_{\mu\bv}\right|\right) \nonumber\\
		& \lesssim \left|G^\theta_{i\bu }\right|\left|G^\theta_{\mu\bv}\right| + \left|G^\theta_{\mu\bu }\right|\left|G^\theta_{i\bv}\right|  + \phi_n \left(\left|G^\theta_{i\bu }\right|^2 + \left|G^\theta_{i\bv }\right|^2 + \left|G^\theta_{\mu\bu }\right|^2 + \left|G^\theta_{\mu\bv}\right|^2\right).\label{Rxl1}
	\end{align}
	Now, using \eqref{claim Ward} and \eqref{Rxl1}, we get that 
	\be\nonumber
	\begin{split} 
		 &\sum_{i\in\mathcal I_1}\sum_{\mu\in\mathcal I_3}  |x_1(i,\mu)|^2  \\
		&\prec  \sum_{i\in\mathcal I_1}\sum_{\mu\in\mathcal I_3}\left[ \left|G^\theta_{i\bu }\right|^2\left|G^\theta_{\mu\bv}\right|^2 +  \left|G^\theta_{\mu\bu }\right|^2\left|G^\theta_{i\bv}\right|^2 + \phi_n^2 \left(\left|G^\theta_{i\bu }\right|^4 + \left|G^\theta_{i\bv }\right|^4 + \left|G^\theta_{\mu\bu }\right|^4 + \left|G^\theta_{\mu\bv}\right|^4\right)\right] \\
		&  \prec 1+ n\phi_n^2 .
	\end{split}
	\ee
	Combining this bound with \eqref{s>2k}, we get that
	$$\sum_{i\in\mathcal I_1}\sum_{\mu\in\mathcal I_3}n^{k/2 - 2}\phi_n^{s-4}  \E\Big| \prod_{l=1}^k  x_{s_l}(i,\mu)\Big|  \prec   n^{-1/2-c_\phi} \cdot n\phi_n^2 \le n^{-3c_\phi}.$$
	
	
	\noindent{\bf Case 3:} Finally, suppose there is only one $l$ such that $s_l =1$. Without loss of generality, we assume that $s_1=1$ and $s_l\ge 2$ for $  l=2,\cdots, k$. Thus, we have 
	$ s \ge \max\{2k- 1,5\}$, which gives that
	\begin{align}
		 & n^{k/2 - 2}\phi_n^{s-4}   \E\Big| \prod_{l=1}^k  x_{s_l}(i,\mu)\Big| \nonumber \\
		&\prec n^{k/2 - 2}\phi_n^{s-4}  |x_1(i,\mu)| \left(   \left|G^\theta_{i\bu }\right|^2 + \left|G^\theta_{i\bv }\right|^2 + \left|G^\theta_{\mu\bu }\right|^2 + \left|G^\theta_{\mu\bv}\right|^2\right) \nonumber\\
		& \le n^{-3/4-c_\phi}  \left( \left|G^\theta_{i\bu }\right|\left|G^\theta_{\mu\bv}\right| + \left|G^\theta_{\mu\bu }\right|\left|G^\theta_{i\bv}\right| \right)\left(  \left|G^\theta_{i\bu }\right|^2 + \left|G^\theta_{i\bv }\right|^2 + \left|G^\theta_{\mu\bu }\right|^2 + \left|G^\theta_{\mu\bv}\right|^2\right) \nonumber  \\
		&\quad + n^{-3/4-c_\phi}\phi_n   \left(  \left|G^\theta_{i\bu }\right|^4 + \left|G^\theta_{i\bv }\right|^4 + \left|G^\theta_{\mu\bu }\right|^4 + \left|G^\theta_{\mu\bv}\right|^4\right), \label{Rxl3}
	\end{align}
	where  in the first step we used \eqref{Rxl}, and in the second step we used \eqref{Rxl1} and
	$$ n^{k/2 - 2}\phi_n^{s-4}= n^{-2+k/2- (s-4)/4 }n^{-(s-4)c_\phi}\le n^{-3/4-c_\phi}.$$
	Applying \eqref{claim Ward} and Cauchy-Schwarz inequality to \eqref{Rxl3}, we get that
	$$\sum_{i\in\mathcal I_1}\sum_{\mu\in\mathcal I_3}n^{k/2 - 2}\phi_n^{s-4}  \E\Big| \prod_{l=1}^k  x_{s_l}(i,\mu)\Big|  \prec n^{-2c_\phi}.$$
	
	\vspace{5pt}
	
	Combining the above three cases, we conclude \eqref{eq_comp_est} with $\e=c_\phi$, which further implies \eqref{compxxx2}. With similar arguments, we can conclude \eqref{compxxx} and \eqref{compyyy}. 
\end{proof}
\begin{proof}[Proof of \eqref{claim Ward}]
\eqref{claim Ward} is a simple corollary of the spectral decomposition of the resolvent in \eqref{spectral1}. 
	Using the rigidity estimate \eqref{rigidity} given by Theorem \ref{thm_localadd} (iii), we get that 
	\be\label{eigen_gap}
	\min_{1\le k \le p}|\lambda_k -z|\gtrsim 1,\quad \text{for}\quad z=\theta_l + \ii n^{-4}.
	\ee
	Combining it with the SVD \eqref{spectral1}, we see that $\|R(z)\|=\OO(1)$ with high probability. Then, using \eqref{GL1}--\eqref{GLR1} and \eqref{op rough1add}--\eqref{op rough2add} given by Theorem \ref{thm_localadd} (iv), we obtain that 
	$\|G(z)\|=\OO(1)$ with high probability. Thus, we have that for any unit vector $\bu \in \C^{\cal I}$, 
	\be 
	\sum_{\fa\in \cal I } \left|G_{\fa \bu }\right|^2 \le \|GG^*\|=\OO(1) \quad \text{with high probability},
	\ee
	where $G^*$ denotes the conjugate transpose of $G$. 
	If $G\equiv \wh G$ is the regularized resolvent, then we can apply Claim \ref{removehat} to get that 
	$$\sum_{\fa\in \cal I } \left|\wh G_{\fa \bu }\right|^2=\OO(1) \quad \text{with high probability}.$$
	The above argument also works for the resolvent $G^\theta$, which concludes \eqref{claim Ward}.
\end{proof}

Finally, we can complete the proof of Theorem \ref{main_thm2} using Proposition \ref{main_propadd}.
\begin{proof}[Proof of Theorem \ref{main_thm2}]
	First, suppose $X$, $Y$ and $Z$ satisfy Assumption \ref{main_assmadd}, and let $\wt X$, $\wt Y$ and $\wt Z$ be random matrices constructed in Lemma \ref{lem_decrease}. Then, Theorem \ref{main_thm2} holds for the SCC matrix defined with $(\wt X,\wt Y,\wt Z)$, because they satisfy the assumptions of Theorem \ref{main_thm1}. By Proposition \ref{main_propadd} (recall that it is proved for the regularized resolvents following the convention stated above Lemma \ref{lem_decrease}), we have that 
	$$\wh{\cal M}(z) \stackrel{d}{\sim}\wh{\wt{\cal M}}(z).$$ 
	By the argument in the proof of Claim \ref{removehat}, this implies that ${\cal M}(\theta_l)$ and $\wt{\cal M}(\theta_l)$ also have the same asymptotic distribution. Moreover, by classical CLT, the asymptotic distribution of $\sqrt{n} \left(ZZ^{\top} - I_r\right)$ is still given by \eqref{CLTe1}, which only depends on the first four moments of $Z$ entries.  Hence, by \eqref{redGadd}, we can conclude Theorem \ref{main_thm2} for the SCC matrix defined with $(X, Y, Z)$ satisfying Assumption \ref{main_assmadd}. Finally, using the cut-off argument at the beginning of this section and Lemma \ref{claim compcirc}, we conclude Theorem \ref{main_thm2}.
\end{proof}

\appendix

\section{Proof of Lemma \ref{asymp_Gauss}} \label{appd asymG}

In this section, we provide a proof of Lemma \ref{asymp_Gauss} using the Stein's method and cumulant expansions. With a slight abuse of notation, we consider the following $r\times r$ matrix
$$Q:=\sqrt{n} U^{\top} Y V + \sqrt{n} V^{\top} Y^{\top} U +  \sqrt{n} O^{\top}  ( 1- \E) (YY^{\top}) O, $$
where $Y$ is a $\rho\times n $ random matrix with i.i.d.\;entries satisfying \eqref{assm1} and \eqref{eq_highmoment}, 
$U$ and $O$ are two $\rho\times r$ deterministic matrices satisfying $\|U\|\le 1$ and $\|O\|\le 1$, and $V$ is an $n\times r$ deterministic matrix satisfying $\|V\|\le 1$ and
\be\label{Vdelocal}\|V\|_{\max}\le n^{-c}\ee
for some constant $0<c<1/2$. Moreover, we assume that $r=\OO(1)$ and $\rho=\OO(n^{\tau})$ for a small enough constant $\tau>0$. Then, we claim that $Q$ is asymptotically Gaussian with zero mean. Note that the items (i)--(iv) of Lemma \ref{asymp_Gauss} all follow from this general claim. In particular, if the entries of $Y$ are i.i.d. Gaussian, then the condition \eqref{Vdelocal} is not necessary, because  we can rotate $V$ as $YV\mapsto (Y O_n) (O_n^{\top} V)$, where the orthogonal matrix $O_n$ is chosen such that \eqref{Vdelocal} holds for $O_n^{\top} V$ and the distribution of $Y$ is unchanged: \smash{$Y O_n\stackrel{d}{=}Y$}. 

It is trivial to see that $\E Q=0$. To show that $Q$ is asymptotically Gaussian, with the Cram{\'e}r-Wold device, we need to prove that 
$$Q_{\Lambda}:=\sum_{a\le b}\lambda_{ab}Q_{ab}$$
is asymptotically Gaussian for any fixed vector of parameters denoted by $\Lambda:=(\lambda_{ab})_{a\le b}$. For this purpose, we use the Stein's method \cite{Stein}, 
i.e. we will show that for any $f\in C_c^\infty(\R)$, 
\be\label{stein}
\mathbb E Q_{\Lambda} f (Q_{\Lambda}) = s_{\Lambda}^2\E f' (Q_{\Lambda}) + \oo(1)   
\ee
for some deterministic parameter $s_{\Lambda}^2$.
This gives the CLT for $\sqrt{n}\sum_{a \le b} \lambda_{ab} Q_{ab}$, which implies that $ Q$ converges weakly to a centered Gaussian matrix, whose covariances can be determined through $s_{\Lambda}^2$. 

For simplicity, we denote $X:=\sqrt{n}Y$, such that the entries of $X$ are i.i.d. random variables with mean zero and variance one. Moreover, for any fixed $l\in \mathbb N$, there is a constant $\mu_l>0$ such that
\begin{equation}\label{assmX1}
	\mathbb E|X_{11}|^l \le \mu_l.
\end{equation}
We will prove \eqref{stein} with the following cumulant expansion formula, whose proof can be found in \cite[Proposition 3.1]{Cumulant1} and \cite[Section II]{Cumulant2}. 

\begin{lemma}\label{lemma_add_cumu}
	Let $f\in C^{l+1}(\mathbb R)$ for some fixed $l\in \N$. Suppose $\xi$ is a centered random variable whose first $l+2$ moments are finite. Let $\kappa_k(\xi)$ be the $k$-th cumulant of $\xi$. Then, we have that
	\be\label{cumuexp}
	\E [\xi f(\xi)] = \sum_{k=1}^l \frac{\kappa_{k+1}(\xi)}{k!} \E f^{(k)}(\xi) + \mathcal E_l,
	\ee 
	where the error term satisfies that for any $\chi>0$,
	\be\label{errorel} | \mathcal E_l| \le C_l \mathbb E \left[|\xi|^{l+2}\right] \sum_{|t|\le \chi}|f^{(l+1)}(t)| + C_l \mathbb E \left[|\xi|^{l+2}\mathbf 1(|\xi|>\chi)\right] \sum_{t\in \R}|f^{(l+1)}(t)| .\ee
\end{lemma}

We now expand the left-hand side of \eqref{stein} as
\begin{align}
	\mathbb E Q_{\Lambda} f (Q_{\Lambda}) &= \mathbb E \sum_{a\le b} \lambda_{ab}  \sum_{1\le i\le \rho,1\le \mu \le n} X_{i\mu}(U_{ia}V_{\mu b}+ U_{i b}V_{\mu a})   f (Q_{\Lambda}) \nonumber\\
	&+ \mathbb E \sum_{a\le b} \lambda_{ab}   \sum_{1\le i,j \le \rho}\frac1{\sqrt{n}}\sum_{1\le \mu \le n} (X_{i\mu}X_{j\mu} - \delta_{ij}) O_{ia}O_{jb}  f (Q_{\Lambda}) .\label{eq_cumu_expan}
\end{align}
We first study the first term on the right-hand side of \eqref{eq_cumu_expan}. For any fixed $a\le b$, we apply the expansion \eqref{cumuexp} with $\xi = X_{i\mu}$ and $l=2$ to get that
\begin{align}
	& \sum_{1\le i\le \rho,1\le \mu \le n} U_{ia}V_{\mu b} \mathbb E_{X_{i\mu}} \left[X_{i\mu} f (Q_{\Lambda})\right]   =\sum_{1\le i\le \rho,1\le \mu \le n}  U_{ia}V_{\mu b}  \mathbb E_{X_{i\mu}} \left[ \frac{\partial Q_{\Lambda}}{\partial X_{i\mu}}f' (Q_{\Lambda})\right] \nonumber\\
	&+ \frac{\kappa_3 }{2}\sum_{1\le i\le \rho,1\le \mu \le n}  U_{ia}V_{\mu b}  \mathbb E_{X_{i\mu}}\left[2 \sum_{a'\le b'} \lambda_{a'b'} \frac{O_{ia'}O_{ib'}}{\sqrt{n}}   f' (Q_{\Lambda}) + \left(\frac{\partial Q_{\Lambda}}{\partial X_{i\mu}}\right)^2f'' (Q_{\Lambda}) \right] \nonumber\\
	&+ \mathcal E_2 (X_{i\mu}),\label{Cumusingle}
\end{align}
where $\kappa_3\equiv \kappa_3(X_{i\mu})$ is the third cumulant of $X_{i\mu}$, $\mathcal E_2 (X_{i\mu})$ satisfies \eqref{errorel} for the function $ f (Q_{\Lambda}(X_{i\mu}))$, and
\be\label{bound1Q}
\begin{split}
	\frac{\partial Q_{\Lambda}}{\partial X_{i\mu}} &= \sum_{a'\le b'} \lambda_{a'b'} \left[(U_{ia'}V_{\mu b'}+ U_{i b'}V_{\mu a'}) + \sum_{1\le j \le \rho}\frac{X_{j\mu}}{\sqrt{n}}  ( O_{ia'}O_{jb'} + O_{ja'}O_{ib'} )\right]  \prec n^{-c}  .
\end{split}\ee
Here, we used \eqref{Vdelocal} in the second step. The expectation of the first term on the right-hand side of \eqref{Cumusingle} is 
\begin{align}
	& \mathbb E\sum_{1\le i\le \rho,1\le \mu \le n}  U_{ia}V_{\mu b}   \frac{\partial Q_{\Lambda}}{\partial X_{i\mu}}f '(Q_{\Lambda}) \nonumber\\
	& = \sum_{a'\le b'}  \lambda_{a'b'} \sum_{1\le i\le \rho,1\le \mu \le n}  U_{ia}V_{\mu b}  (U_{ia'}V_{\mu b'}+ U_{i b'}V_{\mu a'}) \mathbb Ef '(Q_{\Lambda})+ \OO_\prec(n^{-1/2+\tau}),\label{1stterm}
\end{align}
where we used Lemma \ref{largedeviation} to bound that
\be\label{largeVmu}\Big|\sum_{1\le \mu \le n} n^{-1/2}V_{\mu b}  X_{j\mu}\Big|\prec n^{-1/2}\Big(\sum_\mu |V_{\mu b}|^2\Big)^{1/2} \le n^{-1/2} .\ee
Next, using \eqref{bound1Q} and $\rho=\OO(n^{\tau})$, we can bound that
\begin{align}
	 &\mathbb E \sum_{1\le i\le \rho,1\le \mu \le n}  U_{ia}V_{\mu b}  \left(\frac{\partial Q_{\Lambda}}{\partial X_{i\mu}}\right)^2 f'' (Q_{\Lambda}) \nonumber\\
	 & \prec n^{-c}  \sum_{a'\le b'} \sum_{1\le i\le \rho,1\le \mu \le n}  |U_{ia}||V_{\mu b}|   (|U_{ia'}||V_{\mu b'}|+ |U_{i b'}||V_{\mu a'}|) \nonumber \\
	& + n^{-c}  \sum_{a'\le b'}  \sum_{1\le i,j\le \rho,1\le \mu \le n}  |U_{ia}||V_{\mu b}|   \frac1{\sqrt{n}}   =\OO(n^{-c+\tau/2}), \label{3rdterm}
\end{align}
where we used Cauchy-Schwarz inequality in the second step. 
Finally, we bound $\cal E_2$ by taking $\chi = n^{\e}$ for a small constant $\e>0$. We need to bound
\begin{align}\nonumber
	\frac{\partial^3 f (Q_{\Lambda}) }{\partial X_{i\mu}^3} = 4 \sum_{a'\le b'} \lambda_{a'b'} \frac{O_{ia'}O_{ib'}}{\sqrt{n}} \frac{\partial Q_{\Lambda}}{\partial X_{i\mu}}   f'' (Q_{\Lambda}) + \left(\frac{\partial Q_{\Lambda}}{\partial X_{i\mu}}\right)^3f''' (Q_{\Lambda}).
\end{align}
Using the compact support condition of $f$, it is easy to check that 
\begin{align}\nonumber
	\sup_{|X_{i\mu}|\le n^\e} \left| \frac{\partial^3 f (Q_{\Lambda}) }{\partial X_{i\mu}^3}\right|\lesssim &\ \sum_{a\le b} \frac1{\sqrt{n}}\left(\frac{n^\e}{\sqrt n} +\frac{\sum_{j\ne i }|X_{j\mu}|}{\sqrt n}+ |V_{\mu a}| + |V_{\mu b}|\right) \\
	&+  \sum_{a\le b} \left(\frac{n^\e}{\sqrt n} +\frac{\sum_{j\ne i }|X_{j\mu}|}{\sqrt n}+ |V_{\mu a}| + |V_{\mu b}|\right)^3 ,\nonumber
\end{align}
and
\begin{align}\nonumber
	\sup_{X_{i\mu}\in \R } \left| \frac{\partial^3 f (Q_{\Lambda}) }{\partial X_{i\mu}^3}\right| =\OO(1). 
\end{align}
On the other hand, applying Markov's inequality to \eqref{assmX1}, we obtain the bound 
$$\mathbb E \left[|X_{i\mu}|^{4}\mathbf 1(|X_{i\mu}|>n^\e)\right] \le n^{-D} \quad \text{for any constant $D>0$}.$$
Combining the above three estimates, we obtain that
\begin{align}
	\left| \mathcal E_2 (X_{i\mu})\right| & \lesssim  \mathbb E\sum_{1\le i\le \rho,1\le \mu \le n} \left| U_{ia}\right| \left|V_{\mu b}\right|  \sum_{a'\le b'} \frac1{\sqrt{n}}\left(\frac{n^\e}{\sqrt{n}}+\frac{\sum_{j \ne i}|X_{j\mu}|}{\sqrt n} + |V_{\mu a'}| + |V_{\mu b'}|\right)  \nonumber\\
	&+\mathbb E\sum_{1\le i\le \rho,1\le \mu \le n}  \left|U_{ia}\right|\left|V_{\mu b}\right|    \sum_{a'\le b'} \left(\frac{n^\e}{\sqrt{n}}+\frac{\sum_{j\ne i }|X_{j\mu}|}{\sqrt n} + |V_{\mu a'}| + |V_{\mu b'}|\right)^3 + n^{-D} \nonumber\\
	&\lesssim n^{-2c+\tau/2},\label{4thterm}
\end{align}
where we used \eqref{Vdelocal} in the second step. Now, plugging \eqref{1stterm}, \eqref{3rdterm} and \eqref{4thterm} into \eqref{Cumusingle}, we obtain that
\begin{align}
	 &\mathbb E \sum_{1\le i\le r,1\le \mu \le n} U_{ia}V_{\mu b} X_{i\mu} f (Q_{\Lambda}) \nonumber\\
	 &= \sum_{a'\le b'}  \lambda_{a'b'} \sum_{1\le i\le \rho,1\le \mu \le n}  U_{ia}V_{\mu b}  (U_{ia'}V_{\mu b'}+ U_{i b'}V_{\mu a'})  \mathbb Ef' (Q_{\Lambda}) \nonumber\\
	 &+  \kappa_3  \sum_{a'\le b'} \lambda_{a'b'} \sum_{1\le i\le \rho,1\le \mu \le n}  U_{ia}V_{\mu b} \frac{O_{ia'}O_{ib'}}{\sqrt{n}} \mathbb Ef' (Q_{\Lambda})+ \OO (n^{-c+\tau/2}).\label{firstIBP}
\end{align}

Then, we calculate the second term on the right-hand side of \eqref{eq_cumu_expan}. For any $a\le b$, we need to study
\begin{align} \nonumber
	\sum_{1\le i,j \le \rho, 1\le \mu \le n}\frac1{\sqrt{n}}  O_{ia}O_{jb} \mathbb E_{X_{i\mu}}\left[(X_{i\mu}X_{j\mu} - \delta_{ij})  f (Q_{\Lambda})\right] .
\end{align}
We only consider the hardest case with $i=j$, and the $i\ne j$ case can be handled in a similar way. For any fixed $1\le i \le \rho$, we apply the expansion \eqref{cumuexp} with $\xi = X_{i\mu}$ and $l=3$ to get that
\begin{align} 
	&  \sum_{ 1\le \mu \le n}\frac1{\sqrt{n}}  \mathbb E_{X_{i\mu}}\left[X_{i\mu}X_{i\mu} f (Q_{\Lambda}) - f (Q_{\Lambda})   \right] = \frac1{\sqrt{n}}  \sum_{ 1\le \mu \le n}\mathbb E_{X_{i\mu}} X_{i\mu} \frac{\partial Q_{\Lambda}}{\partial X_{i\mu}}f' (Q_{\Lambda}) \nonumber\\
	& +\frac{ \kappa_3  }{2\sqrt{n}}  \sum_{ 1\le \mu \le n} E_{X_{i\mu}} \left[2 \frac{\partial Q_{\Lambda}}{\partial X_{i\mu}}f' (Q_{\Lambda}) + C_iX_{i\mu}f' (Q_{\Lambda})  +X_{i\mu} \left(\frac{\partial Q_{\Lambda}}{\partial X_{i\mu}}\right)^2f'' (Q_{\Lambda})\right] \nonumber\\
	&  +\frac{ \kappa_4   }{6\sqrt{n}}  \sum_{ 1\le \mu \le n} E_{X_{i\mu}} \left[ 3C_if' (Q_{\Lambda}) +3 \left(\frac{\partial Q_{\Lambda}}{\partial X_{i\mu}}\right)^2f'' (Q_{\Lambda})  + 3C_iX_{i\mu}\frac{\partial Q_{\Lambda}}{\partial X_{i\mu}}f'' (Q_{\Lambda}) \right] \nonumber\\
	& +\frac{ \kappa_4 }{6\sqrt{n}}  \sum_{ 1\le \mu \le n} E_{X_{i\mu}} \left[ X_{i\mu} \left(\frac{\partial Q_{\Lambda}}{\partial X_{i\mu}}\right)^3 f''' (Q_{\Lambda})\right] + \cal E_3(X_{i\mu}), \label{Cumudouble}
\end{align}
where $\kappa_4\equiv \kappa_4(X_{i\mu})$ is the fourth cumulant of $X_{i\mu}$, $\mathcal E_3 (X_{i\mu})$ satisfies \eqref{errorel} for the function $X_{i\mu} f (Q_{\Lambda}(X_{i\mu}))$, and we have abbreviated that
\be\label{cumuCi}C_i:=\frac{\partial^2 Q_{\Lambda}}{\partial X_{i\mu}^2}= 2 \sum_{a'\le b'} \lambda_{a'b'} \frac{O_{ia'}O_{ib'}}{\sqrt{n}} =\OO(n^{-1/2}). \ee
Using \eqref{bound1Q}, we can bound that
\begin{align*}
	\frac{1}{ \sqrt{n}}  \sum_{ 1\le \mu \le n}  \left(\frac{\partial Q_{\Lambda}}{\partial X_{i\mu}}\right)^2f'' (Q_{\Lambda}) &\prec n^{-c} \sum_{a'\le b'} \lambda_{a'b'} \frac{1}{\sqrt{n}} \sum_\mu \left( |V_{\mu a'}| +  |V_{\mu b'}|  + n^{-1/2+\tau}\right)  \lesssim n^{-c+\tau}. 
\end{align*}
Similarly, we can get the bounds
\begin{align*}
	& \frac{1}{ \sqrt{n}}  \sum_{ 1\le \mu \le n} X_{i\mu} \left(\frac{\partial Q_{\Lambda}}{\partial X_{i\mu}}\right)^2f'' (Q_{\Lambda}) \prec n^{-c+\tau}, \quad \frac{1}{ \sqrt{n}}  \sum_{ 1\le \mu \le n} X_{i\mu} \left(\frac{\partial Q_{\Lambda}}{\partial X_{i\mu}}\right)^3 f''' (Q_{\Lambda}) \prec n^{-2c+\tau}, \\
	&\qquad\qquad\qquad\qquad\qquad\quad \frac{1}{ \sqrt{n}}  \sum_{ 1\le \mu \le n}  C_iX_{i\mu}\frac{\partial Q_{\Lambda}}{\partial X_{i\mu}}f'' (Q_{\Lambda}) \prec n^{-1/2+\tau}.
\end{align*}
On the other hand, with Lemma \ref{largedeviation}, we obtain the estimates
$$ \frac1{n} \sum_\mu X_{i\mu}X_{j\mu} = \delta_{ij} +\OO_\prec(n^{-1/2}),\quad \frac1{\sqrt{n}}\sum_{ 1\le \mu \le n} X_{i\mu} \prec n^{-1/2}.$$
Using these two estimates and \eqref{largeVmu}, we get that
\begin{align*}
	&  \frac1{\sqrt{n}}  \sum_{ 1\le \mu \le n}X_{i\mu} \frac{\partial Q_{\Lambda}}{\partial X_{i\mu}} f' (Q_{\Lambda}) \\
	& = \sum_{a'\le b'} \lambda_{a'b'} \sum_{1\le j \le \rho} \left(\frac1{n} \sum_\mu X_{i\mu}X_{j\mu}\right) \left( O_{ia'}O_{jb'} + O_{ja'}O_{ib'} \right)f' (Q_{\Lambda}) +\OO_\prec(n^{-1/2 })\\
	&=2 \sum_{a'\le b'} \lambda_{a'b'}  O_{ia'}O_{ib'} f' (Q_{\Lambda}) +\OO_\prec(n^{-1/2}); \\
	 &\frac1{\sqrt{n}}  \sum_{ 1\le \mu \le n} \frac{\partial Q_{\Lambda}}{\partial X_{i\mu}}f' (Q_{\Lambda}) \\
	&= \sum_{a'\le b'} \lambda_{a'b'} \left[\frac1{\sqrt{n}}\sum_\mu (U_{ia'}V_{\mu b'}+ U_{i b'}V_{\mu a'}) + \sum_{1\le j \le r}\frac1{n} \sum_\mu X_{j\mu} ( O_{ia'}O_{jb'} + O_{ja'}O_{ib'} ) \right]\\
	&= \sum_{a'\le b'} \lambda_{a'b'} \frac1{\sqrt{n}}\sum_\mu (U_{ia'}V_{\mu b'}+ U_{i b'}V_{\mu a'}) +\OO_\prec(n^{-1/2});\\
	& \frac1{\sqrt{n}}\sum_{ 1\le \mu \le n} C_i X_{i\mu}f' (Q_{\Lambda})   = \OO_\prec(n^{-1/2}).
\end{align*}
Finally, $\cal E_3(X_{i\mu})$ can be estimated in a similar way as $\cal E_2(X_{i\mu})$,
$\mathbb E\mathcal E_3 (X_{i\mu}) \le n^{-c}.$
We omit the details of its proof. Combining the above estimates and using Lemma \ref{lem_stodomin} (iii), we obtain that
\begin{align} 
	\sum_{1\le i,j \le \rho, 1\le \mu \le n}\frac1{\sqrt{n}}  O_{ia}O_{jb} \mathbb E \left[(X_{i\mu}X_{j\mu} - \delta_{ij})  f (Q_{\Lambda})\right] = s_{i}^2 \mathbb E f' (Q_{\Lambda}) + \OO_\prec(n^{-c+2\tau})\nonumber
\end{align}
for a deterministic $s_i^2$. Combining this equation with \eqref{firstIBP}, we obtain \eqref{stein}, which concludes Lemma \ref{asymp_Gauss}.

\section{Proof of Lemma \ref{lem_mbehaviorw} and Lemma \ref{lem_stabw}} \label{appd sol}

The proofs of Lemma \ref{lem_mbehaviorw} and Lemma \ref{lem_stabw} are standard applications of the contraction principle. 
\begin{proof}[Proof of Lemma \ref{lem_mbehaviorw}]
	We abbreviate $m_{\al c}\equiv m_{\al c}(\theta_l)$ and $\e_\al (z):= \omega_{\al c}(z) - m_{\al c}(\theta_l)$ with $|\e_\al| \le \wt c$ for a sufficiently small constant $\wt c>0$. From \eqref{selfomega}, we obtain the following equations for $(\omega_{1c},\omega_{2c})$:
	\begin{equation}\label{selfomega1}
		\begin{split}
			& \frac{c_1}{\omega_{1c} }=- z +   (1-\theta_l ) \frac{1 + (1-\theta_l )\omega_{2c}}{[1 + (1-\theta_l )\omega_{1c} ][1 + (1-\theta_l )\omega_{2c} ]-\theta_l ^{-1}},\\
			&\frac{c_2}{\omega_{2c} }=   (1-\theta_l) \frac{1 + (1-\theta_l )\omega_{1c}}{[1 + (1-\theta_l )\omega_{1c} ][1 + (1-\theta_l )\omega_{2c} ]-\theta_l ^{-1}}.
		\end{split}
	\end{equation}
	On the other hand,  using \eqref{selfm12}--\eqref{selfm32}, we can check that $m_{1 c}(\theta_l)$ and $m_{2 c}(\theta_l) $ satisfy the following equations:
	\begin{equation}\label{selfomega2}
		\begin{split}
			& \frac{c_1}{m_{1c} (\theta_l)}= (1-\theta_l ) \frac{1 + (1-\theta_l )m_{2c} (\theta_l)}{[1 + (1-\theta_l )m_{1c} (\theta_l)][1 + (1-\theta_l )m_{2c} (\theta_l)]-\theta_l ^{-1}},\\
			&\frac{c_2}{m_{2c}(\theta_l) }=   (1-\theta_l) \frac{1 + (1-\theta_l )m_{1c} (\theta_l)}{[1 + (1-\theta_l )m_{1c} (\theta_l) ][1 + (1-\theta_l )m_{2c} (\theta_l)]-\theta_l ^{-1}}.
		\end{split}
	\end{equation}
	Subtract \eqref{selfomega2} from \eqref{selfomega1}, we get that 
	\begin{equation}\label{selfomega3}
		\begin{split}
			 \frac{c_1\e_{1}}{(m_{1c} +\e_1) m_{1c} } 
			&= z +    \frac{ (1-\theta_l )^2 [g(m_{2c} + \e_2)g( m_{2c}) \e_1+\theta_l^{-1} \e_2]}{ [g(m_{1c} + \e_1)g( m_{2c} +\e_2)-\theta_l^{-1} ][g(m_{1c} )g( m_{2c} )-\theta_l^{-1} ]},\\
			 \frac{c_2\e_2}{(m_{2c}+\e_2) m_{2c}} 
			&=  \frac{ (1-\theta_l )^2[g(m_{1c} + \e_1)g( m_{1c}) \e_2+\theta_l^{-1} \e_1]}{ [g(m_{1c} + \e_1)g( m_{2c} +\e_2)-\theta_l^{-1} ][g(m_{1c} )g( m_{2c} )-\theta_l^{-1} ]},
		\end{split}
	\end{equation}
	where we have abbreviated $g(x):=1 + (1-\theta_l ) x $. Inspired by the above equations, we define iteratively a sequence of vectors ${\bm\e}^{(k)}=(\e_1^{(k)},\e_2^{(k)})\in \C^2$ such that ${\bm\e}^{(0)}=\b0\in \C^2$, and  
		\begin{align*}
			& \left\{\frac{c_1}{m_{1c}^2  } -  \frac{ (1-\theta_l )^2 g(m_{2c})^2 }{ [g(m_{1c})g( m_{2c} )-\theta_l^{-1} ]^2}\right\} \e^{(k+1)}_{1}  -  \frac{ (1-\theta_l )^2 \theta_l^{-1}}{ [g(m_{1c})g( m_{2c} )-\theta_l^{-1} ]^2} \e_2^{(k+1)} \\
			& = z + \frac{c_1 (\e_1^{(k)})^2}{m_{1c}^2 (m_{1c}+\e_1^{(k)})} \\
			&\quad+  \frac{ (1-\theta_l )^2}{g(m_{1c} )g( m_{2c})-\theta_l^{-1} } \left\{\frac{ g(m_{2c} + \e_2^{(k)})g( m_{2c}) \e_1^{(k)}+\theta_l^{-1} \e_2^{(k)}}{ g(m_{1c} + \e_1^{(k)})g( m_{2c} +\e_2^{(k)})-\theta_l^{-1} }  - \frac{ g(m_{2c})^2 \e_1^{(k)}+\theta_l^{-1} \e_2^{(k)}}{ g(m_{1c} )g( m_{2c})-\theta_l^{-1} } \right\}, \\
			& \left\{\frac{c_2}{m_{2c}^2  } -  \frac{ (1-\theta_l )^2 g(m_{1c})^2 }{ [g(m_{1c})g( m_{2c} )-\theta_l^{-1} ]^2}\right\} \e^{(k+1)}_{2}  -  \frac{ (1-\theta_l )^2 \theta_l^{-1}}{ [g(m_{1c})g( m_{2c} )-\theta_l^{-1} ]^2} \e_1^{(k+1)}  \\
			&= \frac{c_2 (\e_2^{(k)})^2}{m_{2c}^2 (m_{2c}+\e_2^{(k)})} \\
			&\quad +  \frac{ (1-\theta_l )^2}{g(m_{1c} )g( m_{2c})-\theta_l^{-1} } \left\{\frac{ g(m_{1c} + \e_1^{(k)})g( m_{1c}) \e_2^{(k)}+\theta_l^{-1} \e_1^{(k)}}{ g(m_{1c} + \e_1^{(k)})g( m_{2c} +\e_2^{(k)})-\theta_l^{-1} }  - \frac{ g(m_{1c})^2 \e_2^{(k)}+\theta_l^{-1} \e_1^{(k)}}{ g(m_{1c} )g( m_{2c})-\theta_l^{-1} } \right\}.
		\end{align*}
	In other words, the above two equations define a mapping $\mathbf f:\ell^\infty (\Z_2)\to \ell^\infty (\Z_2)$, so that
	\be\label{iteration} 
	{\bm \e}^{(k+1)}= \mathbf f({\bm \e}^{(k)}), \quad \mathbf f(\mathbf x):=S^{-1}\begin{pmatrix} z\\ 0 \end{pmatrix}+ S^{-1} \mathbf e(\mathbf x),
	\ee
	where 
	$$
	S:=\begin{bmatrix} \frac{c_1}{m_{1c}^2  } -  \frac{ \theta_l^2 (1-\theta_l )^2 }{ (1-t_l)^2}g(m_{2c})^2  & -  \frac{ (1-\theta_l )^2\theta_l }{ (1-t_l)^2} \\ -   \frac{ (1-\theta_l )^2\theta_l }{ (1-t_l)^2}  &\frac{c_2}{m_{2c}^2  } -  \frac{ \theta_l^2(1-\theta_l )^2 }{ (1-t_l)^2}g(m_{1c})^2  \end{bmatrix},  
	$$
	and
	$$\mathbf e(\mathbf x):= \begin{bmatrix} \frac{c_1 x_1^2}{m_{1c}^2 (m_{1c}+x_1)} -  \frac{ \theta_l(1-\theta_l )^2}{1-t_l } \left\{\frac{ g(m_{2c} + x_2)g( m_{2c}) x_1 +\theta_l^{-1} x_2}{ g(m_{1c} +x_1)g( m_{2c} +x_2)-\theta_l^{-1} }  - \frac{ g(m_{2c})^2 x_1+\theta_l^{-1} x_2}{ g(m_{1c} )g( m_{2c})-\theta_l^{-1} } \right\} \\ \frac{c_2 x_2^2}{m_{2c}^2 (m_{2c}+x_2)} -  \frac{ \theta_l (1-\theta_l )^2}{1-t_l } \left\{\frac{ g(m_{1c} + x_1)g( m_{1c}) x_2+\theta_l^{-1} x_1}{ g(m_{1c} + x_1)g( m_{2c} +x_2)-\theta_l^{-1} }  - \frac{ g(m_{1c})^2 x_2 +\theta_l^{-1} x_1}{ g(m_{1c} )g( m_{2c})-\theta_l^{-1} } \right\} \end{bmatrix}.$$
	Here, we have used $\theta_lg(m_{1c})g( m_{2c} ) = f_c(\theta_l) = t_l$ (which follows from \eqref{hz} and \eqref{fcz}) to simplify the expressions a little bit.
	
	With a direct calculation, we can check that under \eqref{tlc}, there exist constants $\wt c, \wt C>0$ depending only on $c_1, c_2$ and $\delta_l$ such that
	\be\label{dust}
	\|S^{-1}\|_{\ell^\infty\to \ell^\infty} \le \wt C, \quad \text{and} \quad \|\mathbf e(\mathbf x)\|_\infty \le \wt C\|\mathbf x\|_\infty^2 \ \ \text{for} \ \ \|\bx\|_\infty \le \wt c.
	\ee
	With \eqref{dust}, it is easy to check that there exists a sufficiently small constant $\tau>0$ depending only on $\wt C$, such that $\mathbf f$ is a self-mapping 
	$$\mathbf f: B_r \left(\ell^\infty(\Z_2)\right)\to B_r \left(\ell^\infty(\Z_2)\right), \quad B_r\left(\ell^\infty(\Z_2)\right):=\{\bx\in \ell^\infty(\Z_2): \|\bx\|_\infty \le r \},$$
	as long as $r\le \tau$ and $|z| \le c_\tau$ for some constant $c_\tau>0$ depending only on $c_1, c_2,\delta_l$ and $\tau$. 
	Now, it suffices to prove that $h$ restricted to $B_r \left(\ell^\infty(\Z_2)\right)$ is a contraction, which implies that ${\bm \e}:=\lim_{k\to\infty} {{\bm \e}}^{(k)}$ exists and is a unique solution to \eqref{selfomega3} subject to the condition $\|{\bm \e}\|_\infty \le r$.

	From the iteration relation \eqref{iteration}, using \eqref{dust}, we obtain that
	\be\label{k1k}
	{\bm \e}^{(k+1)} - {\bm \e}^{(k)}= S^{-1}\left[\mathbf e({\bm \e}^{(k)}) -\mathbf e({\bm \e}^{(k-1)})\right] \le \wt C \left\|\mathbf e({\bm \e}^{(k)}) -\mathbf e({\bm \e}^{(k-1)})\right\|_\infty .
	\ee
	From the expression of $\mathbf e$, we see that as long as $r$ is chosen to be sufficiently small compared to $\theta_l^{-1}-g(m_{1c})g( m_{2c} )= (1-t_l)\theta_l^{-1}$, then
	$$\left\|\mathbf e({\bm \e}^{(k)}) -\mathbf e({\bm \e}^{(k-1)})\right\|_\infty \le C \left(\|{\bm\e}^{(k)}\|_\infty +\|{\bm\e}^{(k-1)}\|_\infty\right)\|{\bm \e}^{(k)} - {\bm \e}^{(k-1)}\|_\infty$$
	for some constant $C >0$ depending only on $c_1, c_2$ and $\delta_l$. Thus, we can choose a sufficiently small constant $0<r \le \min\{\tau, (2C)^{-1}\}$ such that $Cr \le 1/2$. Then, $\mathbf f$ is indeed a contraction mapping on $ B_r \left(\ell^\infty(\Z_2)\right)$, which proves both the existence and uniqueness of the solution to \eqref{selfomega3} if we choose $c_0$ in \eqref{prior10} as $c_0=\min\{c_\tau, r\}$. After obtaining $\omega_{1c}=m_{1c}+\e_1$ and $\omega_{2c}=m_{2c}+\e_2$, we can define $\omega_{3c}$ and $\omega_{4c}$ using the first and third equations in \eqref{selfomega}.
	
	Note that with \eqref{dust} and ${\bm \e}^{(0)}=\b0$, we get from \eqref{iteration} that $ \|{\bm \e}^{(1)}\|_\infty\le \wt C|z| .$
	Then, with the contraction property of $\mathbf f$, we get that
	$$\|{\bm \e}\|_\infty \le \sum_{k=0}^\infty \|{\bm \e}^{(k+1)}-{\bm \e}^{(k)}\|_\infty \le 2\wt C|z|.$$
	This gives the bound \eqref{Lipomega} for $\omega_{1c}$ and $\omega_{2c}$. Then, using the first and third equations in \eqref{selfomega}, we immediately obtain the bound \eqref{Lipomega} for $\omega_{3c}$ and $\omega_{4c}$ as long as $c_0$ is sufficiently small. 
\end{proof}

\begin{proof}[Proof of Lemma \ref{lem_stabw}]
	As in the proof of Lemma \ref{lem_mbehaviorw}, we subtract the equations \eqref{selfomegaerror} from \eqref{selfomega}, and consider the contraction principle for the functions $\e_\al (z):= \omega_{\al }(z) - \omega_{\al c}(z)$. We omit the details.
\end{proof}








\end{document}